\renewcommand{\eqref}[1]{\hyperref[#1]{(\ref{#1})}}
\newlist{enumlist}{enumerate}{2}
\setlist[enumlist,1]{labelindent=0cm,label=(\roman*),ref=(\roman*),labelwidth=4.5ex,labelsep=1ex,leftmargin=5.5ex,align=right,topsep=0.5ex,itemsep=1ex,parsep=1ex}
\setlist[enumlist,2]{labelindent=0cm,label=\alph*),ref=\arabic*,labelwidth=5ex,labelsep=0.5ex,leftmargin=5.5ex,align=left,topsep=0.5ex,itemsep=1ex,parsep=1ex}
\newlist{enumRom}{enumerate}{1}
\setlist[enumRom,1]{labelindent=0cm,label=(\Roman*),ref=(\Roman*),labelwidth=4.5ex,labelsep=1ex,leftmargin=5.5ex,align=right,topsep=0.5ex,itemsep=1ex,parsep=1ex}
\newlist{itemlist}{itemize}{2}
\setlist[itemlist,1]{labelindent=0cm,label=$\bullet$,labelwidth=2.5ex,labelsep=0.5ex,leftmargin=3ex,align=left,topsep=0.5ex,itemsep=1ex,parsep=1ex}
\setlist[itemlist,2]{labelindent=0cm,label=$\circ$,labelwidth=2.5ex,labelsep=0.5ex,leftmargin=3ex,align=left,topsep=0.5ex,itemsep=1ex,parsep=1ex}
\numberwithin{equation}{section}
\theoremstyle{definition}\newtheorem{definitiona}{Definition}[section]
\newtheorem{remarka}[definitiona]{Remark}
\newtheorem{questiona}[definitiona]{Question}
\newtheorem{examplea}[definitiona]{Example}
\newtheorem{examplesa}[definitiona]{Examples}}
\newtheorem{propositiona}[definitiona]{Proposition}
\newtheorem{lemmaa}[definitiona]{Lemma}
\newtheorem{theorema}[definitiona]{Theorem}
\newtheorem{corollarya}[definitiona]{Corollary}
\newtheorem{letterthma}{Theorem}
\renewcommand{\theletterthma}{\Alph{letterthma}}
\theoremstyle{definition}
\newtheorem{letterremarka}[letterthma]{Remark}
\renewcommand{\theletterremarka}{\Alph{letterthma}}
\newtheorem{letterdefinitiona}[letterthma]{Definition}
\renewcommand{\theletterdefinitiona}{\Alph{letterdefinitiona}}
\newenvironment{definition}[1][]{\begin{definitiona}[#1]\setlist*[enumlist,1]{label=(\roman*),ref=\thedefinitiona(\roman*)}}{\end{definitiona}}
\newenvironment{remark}[1][]{\begin{remarka}[#1]\setlist*[enumlist,1]{label=(\roman*),ref=\theremarka(\roman*)}}{\end{remarka}}
\newenvironment{question}[1][]{\begin{questiona}[#1]\setlist*[enumlist,1]{label=(\roman*),ref=\thequestiona(\roman*)}}{\end{questiona}}
\newenvironment{examples}[1][]{\begin{examplesa}[#1]\setlist*[enumlist,1]{label=(\roman*),ref=\theexamplesa(\roman*)}}{\end{examplesa}}
\newenvironment{proposition}[1][]{\begin{propositiona}[#1]\setlist*[enumlist,1]{label=(\roman*),ref=\thepropositiona(\roman*)}}{\end{propositiona}}
\newenvironment{lemma}[1][]{\begin{lemmaa}[#1]\setlist*[enumlist,1]{label=(\roman*),ref=\thelemmaa(\roman*)}}{\end{lemmaa}}
\newenvironment{theorem}[1][]{\begin{theorema}[#1]\setlist*[enumlist,1]{label=(\roman*),ref=\thetheorema(\roman*)}}{\end{theorema}}
\newenvironment{corollary}[1][]{\begin{corollarya}[#1]\setlist*[enumlist,1]{label=(\roman*),ref=\thecorollarya(\roman*)}}{\end{corollarya}}
\newenvironment{letterthm}[1][]{\begin{letterthma}[#1]\setlist*[enumlist,1]{label=(\roman*),ref=\theletterthma(\roman*)}}{\end{letterthma}}
\newenvironment{letterdefinition}[1][]{\begin{letterdefinitiona}[#1]\setlist*[enumlist,1]{label=(\roman*),ref=\theletterdefinitiona(\roman*)}}{\end{letterdefinitiona}}
\renewcommand{\Im}{\operatorname{Im}}
\newcommand{\C}{\mathbb{C}}
\newcommand{\F}{\mathbb{F}}
\newcommand{\N}{\mathbb{N}}
\newcommand{\R}{\mathbb{R}}
\newcommand{\T}{\mathbb{T}}
\newcommand{\Z}{\mathbb{Z}}
\newcommand{\cA}{\mathcal{A}}
\newcommand{\cB}{\mathcal{B}}
\newcommand{\cC}{\mathcal{C}}
\newcommand{\cE}{\mathcal{E}}
\newcommand{\cF}{\mathcal{F}}
\newcommand{\cG}{\mathcal{G}}
\newcommand{\cI}{\mathcal{I}}
\newcommand{\cJ}{\mathcal{J}}
\newcommand{\cK}{\mathcal{K}}
\newcommand{\cM}{\mathcal{M}}
\newcommand{\cN}{\mathcal{N}}
\newcommand{\cR}{\mathcal{R}}
\newcommand{\cS}{\mathcal{S}}
\newcommand{\cT}{\mathcal{T}}
\newcommand{\cU}{\mathcal{U}}
\newcommand{\cW}{\mathcal{W}}
\newcommand{\cZ}{\mathcal{Z}}
\newcommand{\al}{\alpha}
\newcommand{\be}{\beta}
\newcommand{\eps}{\varepsilon}
\newcommand{\vphi}{\varphi}
\newcommand{\om}{\omega}
\newcommand{\Om}{\Omega}
\newcommand{\si}{\sigma}
\newcommand{\Ad}{\operatorname{Ad}}
\newcommand{\Aut}{\operatorname{Aut}}
\newcommand{\Char}{\operatorname{Char}}
\newcommand{\End}{\operatorname{End}}
\newcommand{\Fix}{\operatorname{Fix}}
\newcommand{\GL}{\operatorname{GL}}
\newcommand{\Hom}{\operatorname{Hom}}
\newcommand{\Inn}{\operatorname{Inn}}
\newcommand{\Irr}{\operatorname{Irr}}
\newcommand{\Ker}{\operatorname{Ker}}
\newcommand{\Out}{\operatorname{Out}}
\newcommand{\PGL}{\operatorname{PGL}}
\newcommand{\PSL}{\operatorname{PSL}}
\newcommand{\SL}{\operatorname{SL}}
\newcommand{\lspan}{\operatorname{span}}
\newcommand{\Tr}{\operatorname{Tr}}
\newcommand{\Atil}{\widetilde{A}}
\newcommand{\cAtil}{\widetilde{\mathcal{A}}}
\newcommand{\altil}{\widetilde{\alpha}}
\newcommand{\Dtil}{\widetilde{D}}
\newcommand{\Gtil}{\widetilde{G}}
\newcommand{\gammatil}{\widetilde{\gamma}}
\newcommand{\Omtil}{\widetilde{\Omega}}
\newcommand{\pitil}{\widetilde{\pi}}
\newcommand{\psitil}{\widetilde{\psi}}
\newcommand{\Stil}{\widetilde{S}}
\newcommand{\Ghat}{\widehat{G}}
\newcommand{\Jhat}{\widehat{J}}
\newcommand{\That}{\widehat{T}}
\newcommand{\ombar}{\overline{\omega}}
\newcommand{\Xbar}{\overline{X}}
\newcommand{\Ybar}{\overline{Y}}
\newcommand{\ot}{\otimes}
\newcommand{\otalg}{\otimes_{\text{\rm alg}}}
\newcommand{\id}{\mathord{\text{\rm id}}}
\newcommand{\ovt}{\mathbin{\overline{\otimes}}}
\newcommand{\actson}{\curvearrowright}
\newcommand{\dpr}{^{\prime\prime}}
\newcommand{\op}{^\text{\rm op}}
\newcommand{\Autgr}{\operatorname{Aut}_{\text{\rm gr}}}
\newcommand{\Autpmp}{\operatorname{Aut}_{\text{\rm pmp}}}
\newcommand{\otmin}{\otimes_{\text{\rm min}}}
\newcommand{\Autfield}{\operatorname{Aut}_{\text{\rm field}}}
\newcommand{\inv}{^{-1}}
\newcommand{\bim}[3]{\mathord{\raisebox{-0.4ex}[0ex][0ex]{\scriptsize $#1$}{#2}\hspace{-0.2ex}\raisebox{-0.4ex}[0ex][0ex]{\scriptsize $#3$}}}
\begin{document}

\begin{center}
{\boldmath\LARGE\bf W$^*$-superrigidity for discrete quantum groups}

\vspace{1ex}

{\sc by Milan Donvil\footnote{KU Leuven, Department of Mathematics, Leuven (Belgium), milan.donvil@kuleuven.be\\ Supported by PhD grant 1162024N funded by the Research Foundation Flanders (FWO)} and Stefaan Vaes\footnote{KU~Leuven, Department of Mathematics, Leuven (Belgium), stefaan.vaes@kuleuven.be\\ Supported by FWO research project G016325N of the Research Foundation Flanders and by Methusalem grant METH/21/03 –- long term structural funding of the Flemish Government.}}
\end{center}

\begin{abstract}\noindent
A discrete group $G$ is called W$^*$-superrigid if the group $G$ can be entirely recovered from the ambient group von Neumann algebra $L(G)$. We introduce an analogous notion for discrete quantum groups. We prove that this strengthened quantum W$^*$-superrigidity property holds for a natural family of co-induced discrete quantum groups. We also prove that, remarkably, most existing families of W$^*$-superrigid groups are not quantum W$^*$-superrigid.
\end{abstract}

\section{Introduction and main results}

A discrete group $G$ is called W$^*$-superrigid if $G$ can be entirely recovered from the group von Neumann algebra $L(G)$. This means that whenever $\Lambda$ is a discrete group such that the von Neumann algebras $L(G)$ and $L(\Lambda)$ are isomorphic, then $G \cong \Lambda$. The first W$^*$-superrigidity theorem for group von Neumann algebras $L(G)$ was obtained in \cite{IPV10}, using Popa's deformation/rigidity theory, for groups $G$ given by a generalized wreath product construction. In \cite{BV12}, it was proven that for many groups $\Gamma$, including all free groups and all free products $\Gamma = \Gamma_1 \ast \Gamma_2$ of nontrivial amenable groups with $|\Gamma_1| + |\Gamma_2| \geq 5$, the left-right wreath product group $G = (\Z/2\Z)^{(\Gamma)} \rtimes (\Gamma \times \Gamma)$ is W$^*$-superrigid.

Recently, several new degrees of W$^*$-superrigidity have been discovered. In \cite{CIOS21}, a family of property (T) groups was shown to be W$^*$-superrigid, providing the first positive result on Connes' rigidity conjecture, which remains wide open. In \cite{DV24a}, again for left-right wreath product groups $G$, it was shown that W$^*$-superrigidity also holds up to virtual isomorphisms: if $\Lambda$ is any discrete group such that $L(G)$ is virtually isomorphic to $L(\Lambda)$, then $G$ must be virtually isomorphic to $\Lambda$. Moreover, it was shown in \cite{DV24a} that W$^*$-superrigidity even holds for the cocycle twisted group von Neumann algebras $L_\mu(G)$. Finally in \cite{DV24b,CFQOT25}, the first families of W$^*$-superrigid groups with infinite center were obtained.

Not only discrete groups $G$ have a canonical group von Neumann algebra $L(G)$. Also discrete \emph{quantum} groups $\Lambda$ generate a von Neumann algebra that we might denote as $L(\Lambda)$. The W$^*$-superrigidity theorems mentioned so far do not prevent the existence of a discrete quantum group $\Lambda$ such that $L(G) \cong L(\Lambda)$ while $G \not\cong \Lambda$. Even more so, all the W$^*$-superrigidity theorems in \cite{IPV10,BV12,DV24a} are about variants of wreath products $G = (\Z/2\Z)^{(I)} \rtimes \Gamma$. As we will see, for all these wreath product groups $G$, there exist discrete quantum groups $\Lambda$ such that $L(G) \cong L(\Lambda)$ and $G \not\cong \Lambda$.

The ``discrete quantum group von Neumann algebras'' $L(\Lambda)$ mentioned above can more conveniently (and equivalently) be considered as the von Neumann algebras underlying \emph{compact quantum groups} in the sense of Woronowicz \cite{Wor87,Wor95,MVD98}. These are von Neumann algebras $A$ equipped with a comultiplication $\Delta_A : A \to A \ovt A$ that is co-associative and for which there exists an invariant Haar state (see Definition \ref{def.cqg}). The two classical examples of compact quantum groups are $(L^\infty(K),\Delta_K)$ when $K$ is a compact group and $(\Delta_K(F))(k_1,k_2) = F(k_1 k_2)$, and $(L(G),\Delta_G)$ when $G$ is a discrete group and $\Delta_G(u_g) = u_g \ot u_g$.

Two compact quantum groups $(A,\Delta_A)$ and $(B,\Delta_B)$ are called \emph{isomorphic} if there exists a von Neumann algebra isomorphism $\pi : A \to B$ satisfying $\Delta_B \circ \pi = (\pi \ot \pi) \circ \Delta_A$. We can then formalize the discussion above in the following way.

\begin{letterdefinition}\label{def.quantum-Wstar-superrigid}
We say that a compact quantum group $(A,\Delta_A)$ is \emph{quantum W$^*$-superrigid} if the following holds: if $(B,\Delta_B)$ is any compact quantum group such that $B \cong A$ as von Neumann algebras, then $(A,\Delta_A) \cong (B,\Delta_B)$ as compact quantum groups.

We say that a discrete group $G$ is \emph{quantum W$^*$-superrigid} if $(L(G),\Delta_G)$ is quantum W$^*$-superrigid in the sense above.
\end{letterdefinition}

If $G$ is quantum W$^*$-superrigid, then $G$ is also W$^*$-superrigid in the usual sense of the word: if $L(G) \cong L(\Lambda)$ for any discrete group $\Lambda$, then $G \cong \Lambda$. The converse however is not true. By \cite[Theorem B]{BV12}, for every countable group $\Gamma$ in the wide class $\cC$ (see Section \ref{sec.coarse-embeddings-co-induced}), the left-right wreath product group $(\Z/2\Z)^{(\Gamma)} \rtimes (\Gamma \times \Gamma)$ is W$^*$-superrigid. By Corollary \ref{cor.not-rigid}, none of these groups are quantum W$^*$-superrigid.

Whenever $\Gamma$ is a countable group and $\Gamma \actson^\be (A_0,\Delta_0)$ is an action by quantum group automorphisms of a compact quantum group $(A_0,\Delta_0)$ with Haar state $\vphi_0$, we consider the co-induced left-right Bernoulli action $\Gamma \times \Gamma \actson^\al (A,\vphi) = (A_0,\vphi_0)^\Gamma$ given by $\al_{(g,h)}(\pi_k(a)) = \pi_{gkh^{-1}}(\be_g(a))$. Then the crossed product $M = A \rtimes_\al (\Gamma \times \Gamma)$ has a natural compact quantum group structure (see Section \ref{sec.vanishing-and-nonvanishing}).

When $q$ is a prime power, we denote by $\F_q$ the unique finite field of order $q$. Our main result is the following.

\begin{letterthm}\label{thm.main}
For each of the following actions $\Gamma \actson^\be (A_0,\Delta_0)$, the co-induced left-right Bernoulli crossed product gives a quantum W$^*$-superrigid compact quantum group $(M,\Delta)$.
\begin{enumlist}
\item\label{thm.main.one} Take any torsion free amenable icc group $\Lambda$. Consider $(A_0,\Delta_0) = (L(\Lambda),\Delta_\Lambda)$ and let $\Gamma = \Lambda \ast \Z$ act on $(A_0,\Delta_0)$ by $\be_g = \Ad u_g$ for $g \in \Lambda$ and $\be_a = \id$ for $a \in \Z$.
\item\label{thm.main.two} Take $(A_0,\Delta_0) = (L^\infty(K),\Delta_K)$ where $K=\T^n$, $n \geq 3$, or $K$ is one of the following finite groups. Define $\Gamma = \F_{\Aut K}$ as the free group with free generators indexed by $\Aut K$, and define $\Gamma \actson^\be K$ by $\be_\al(a) = \al(a)$ for every $\al \in \Aut K$.
\begin{itemlist}
\item $\SL_2(\F_p)$ with $p$ prime and $p \geq 5$.
\item $\SL_n(\F_q)$, or $\F_q^n \rtimes \SL_n(\F_q)$, with $n \geq 3$, $q$ a prime power, $(n,q) \not\in \{(3,2),(3,4),(4,2)\}$,
\item the canonical double cover $\Atil_n$ of the alternating group $A_n$ with $n=5$ or $n \geq 8$.
\end{itemlist}
\end{enumlist}
\end{letterthm}

Note that the example in Theorem \ref{thm.main.one} and the first example in Theorem \ref{thm.main.two} with $K = \T^n$ produce quantum W$^*$-superrigid discrete groups: $(M,\Delta) \cong (L(G),\Delta_G)$. The other examples in Theorem \ref{thm.main.two} produce quantum W$^*$-superrigid compact quantum groups $(M,\Delta)$ that are neither commutative, nor co-commutative.

Theorem \ref{thm.main} is a consequence of the much more precise Theorem \ref{thm.generic-theorem} below, together with Examples \ref{ex.main}. In Remark \ref{rem.no-go-H2}, we also explain that without the restrictions on $n$, $p$ and $q$ in Theorem \ref{thm.main.two}, quantum W$^*$-superrigidity fails.

In order to prove a quantum W$^*$-superrigidity theorem such as Theorem \ref{thm.main}, we need to combine three different types of results, each having a quite different mathematical flavor. This need to combine three directions also explains the length of this paper. The three distinct parts of the paper can best be understood by considering the following potential obstructions to quantum W$^*$-superrigidity.

Consider one of the compact quantum groups $(M,\Delta)$ as in Theorem \ref{thm.main}.

{\bf Part 1.} Whenever $\Om \in M \ovt M$ is a unitary satisfying a $2$-cocycle relation (see Definition \ref{def.unitary-2-cocycle}), the $*$-homomorphism $\Delta_\Om : M \to M \ovt M : \Delta_\Om(a) = \Om \Delta(a) \Om^*$ defines another compact quantum group structure on the same underlying von Neumann algebra $M$. We thus need methods to analyze, and ultimately prove vanishing, of unitary $2$-cocycles on crossed product quantum groups. We prove such results in Section \ref{sec.quantum-groups-2-cocycles}. In Section \ref{sec.cohomological-obstructions-superrigidity}, we analyze the relation between nonvanishing of a unitary $2$-cocycle $\Om$ on $(M,\Delta)$ and the, in general stronger, property that the quantum groups $(M,\Delta)$ and $(M,\Delta_\Om)$ are nonisomorphic. Since we expect that several readers of this paper might be unfamiliar with Woronowicz' theory of compact quantum groups, we provide a fully self contained introduction to the theory in Section \ref{sec.cqg-basics}. In particular, we reprove several known results on unitary $2$-cocycles, mainly due to \cite{DeC10} and \cite{DMN21}.

{\bf Part 2.}  The construction of $(M,\Delta)$ in Theorem \ref{thm.main} is functorial in $\Gamma \actson^\be (A_0,\vphi_0)$. This implies the following. Assume that $\Gamma \actson^\gamma (A_1,\vphi_1)$ is another action by quantum group automorphisms. The same co-induced left-right Bernoulli crossed product gives a compact quantum group $(M_1,\Delta_1)$. If now the actions $\Gamma \actson^\be A_0$ and $\Gamma \actson^\gamma A_1$ are conjugate as actions on von Neumann algebras, forgetting about the quantum group structure, then $M \cong M_1$ and W$^*$-superrigidity may fail. We thus need from the start a rigidity property for $(A_0,\vphi_0)$, relative to this given action $\Gamma \actson^\be (A_0,\vphi_0)$. Roughly speaking: conjugacy of $\Gamma \actson^\be A_0$ and $\Gamma \actson^\gamma A_1$ as actions on von Neumann algebras should imply conjugacy as actions on quantum groups. This means that we should recover the quantum group structure $\Delta_0$ on $A_0$ by knowing the von Neumann algebra $A_0$ and knowing that each automorphism $\be_g$ is a quantum group automorphism. We introduce these notions of \emph{relative rigidity} in Section \ref{sec.relative-rigidity}.

The main part of Section \ref{sec.relative-rigidity} then consists of proving relative rigidity for various classes of examples. In particular, we prove that for every icc group $G$, the compact quantum group $(L(G),\Delta_G)$ is rigid relative to the action $G \actson (L(G),\Delta_G)$ given by conjugacy. In Section \ref{sec.examples-relative-rigidity-noncommutative}, we prove relative rigidity for several classes of finite groups $K$. Roughly speaking, we have to recover the group structure on $K$ by only knowing that a given group of permutations consists of group automorphisms. Surprisingly, this property holds for quite a few classes of finite groups, such as $\SL_n(\F_q)$ and the symmetric and alternating groups, but the proofs are rather involved and are, of course, purely combinatorial and finite group theoretic in nature. On the other hand, for non abelian connected compact groups, relative rigidity never holds; see Proposition \ref{prop.no-go-connected-Lie}. In Proposition \ref{prop.direct-product-relative-rigid}, we give further counterexamples to relative rigidity. However, in Question \ref{question.all-finite-simple-rigid}, we speculate that all finite simple groups are rigid relative to their automorphism group.

{\bf Part 3.} We need Popa's deformation/rigidity theory and that is the contents of Section \ref{sec.quantum-Wstar-superrigidity}. We use the comultiplication method introduced in \cite{PV09,IPV10}, and also used in \cite{BV12,DV24a}. If $\Delta_1 : M \to M \ovt M$ defines another compact quantum group structure on the same von Neumann algebra $M$, then $\Delta_1 : M \to M \ovt M$ is in particular an embedding of von Neumann algebras. While it is asking too much to classify all embeddings $M \to M \ovt M$, such comultiplication embeddings all have a property that we called \emph{coarseness} in \cite{DV24a}. Using the extensive arsenal of methods of \cite{IPV10,BV12,PV21,DV24a}, we classify in Section \ref{sec.coarse-embeddings-co-induced} all coarse embeddings $M \to M \ovt M$ for the II$_1$ factors $M$ appearing in Theorem \ref{thm.main}. Up to unitary conjugacy, they all have a ``canonical form''. The unitary realizing this unitary conjugacy will automatically be a unitary $2$-cocycle for $(M,\Delta)$ and the ``canonical form'' will say that $(M,\Delta_1)$ must arise from such other $\Gamma \actson^\gamma (A_1,\tau_1)$ as discussed in Part~2 above. This then allows us to prove in Section \ref{sec.proof-superrigidity-theorem} our most general quantum W$^*$-superrigidity Theorem \ref{thm.generic-theorem}.

{\bf Acknowledgment.} We thank Kenny De Commer for providing several references to the quantum group literature on $2$-cocycles and Galois objects.

\section{\boldmath Compact quantum groups, ergodic coactions and $2$-cocycles}\label{sec.quantum-groups-2-cocycles}

Since the context of this paper is mainly in the theory of von Neumann algebras and, in particular, in Popa's deformation/rigidity theory, we provide this rather lengthy and detailed introduction to the main aspects of Woronowicz' theory of compact quantum groups. We decided to make this section fully self-contained. In particular, we spend four pages on the container Theorem \ref{thm.cqg} that summarizes and proves all the basic properties of compact quantum groups of Kac type (i.e.\ where the Haar state is a trace) in an efficient way. We do this entirely in the von Neumann algebra context.

We need in this paper several existing and a few new results on unitary $2$-cocycles and coactions of compact quantum groups. In particular, we need that coactions of Kac type compact quantum groups on type I factors $B(K)$ are automatically trace preserving. That result was proven in \cite[Corollary 5.2]{DeC10} as a consequence of the more advanced theory of Galois co-objects. In \cite{DeC10}, the desire for an elementary proof was expressed. We provide such an elementary proof in this section.

Furthermore, a canonical way to produce a new compact quantum group structure with the same underlying von Neumann algebra is by twisting the comultiplication $\Delta : A \to A \ovt A$ to a new comultiplication $\Delta_\Om(a) = \Om \Delta(a) \Om^*$, where $\Om$ is a unitary $2$-cocycle on $(A,\Delta)$. It is therefore obvious that we need in this paper vanishing results for unitary $2$-cocycles. We prove these in Propositions \ref{prop.cocycle-reduce-to-core} and \ref{prop.cocycles-products}.

When $G$ is a discrete group, the group von Neumann algebra $L(G)$ carries the natural comultiplication
\begin{equation}\label{eq.LG-Delta}
\Delta_G : L(G) \to L(G) \ovt L(G) : \Delta_G(u_g) = u_g \ot u_g \quad\text{for all $g \in G$.}
\end{equation}
The canonical tracial state $\tau$ on $L(G)$, defined by $\tau(u_g) = 0$ for all $g \neq e$, satisfies the invariance properties
$$(\id \ot \tau)\Delta_G(a) = \tau(a) 1 = (\tau \ot \id)\Delta_G(a) \quad \text{for all $a \in L(G)$.}$$
In this way, $(L(G),\Delta_G)$ becomes a compact quantum group in the sense of Woronowicz (see Definition \ref{def.cqg}) and we view this as the dual of the discrete group $G$. Similarly, when $K$ is a compact group, we identify $L^\infty(K) \ovt L^\infty(K) = L^\infty(K \times K)$ and define the natural comultiplication
\begin{equation}\label{eq.Linfty-K-Delta}
\Delta_K : L^\infty(K) \to L^\infty(K) \ovt L^\infty(K) : \Delta_K(F)(k_1,k_2) = F(k_1 k_2) \quad\text{for all $k_1,k_2 \in K$.}
\end{equation}
Integration w.r.t.\ the Haar probability measure on $K$ provides an invariant tracial state on $(L^\infty(K),\Delta_K)$, which is thus a compact quantum group.

Since the presence of sufficiently nontrivial unitary $2$-cocycles is an obstruction to quantum W$^*$-superrigidity, it is natural to wonder why a similar obstruction to usual W$^*$-superrigidity for discrete groups $G$ did not appear before. The reason is that twisting the comultiplication $\Delta_G : L(G) \to L(G) \ovt L(G)$ by a unitary $2$-cocycle $\Om \in L(G) \ovt L(G)$ will typically only produce another group (and not quantum group) if $\Om$ is symmetric, meaning that $\si(\Om) = \Om$ where $\si$ is the flip. In \cite[Theorem 3.3]{IPV10}, it was proven that each such symmetric $2$-cocycle must be a coboundary. That result is used in all the W$^*$-superrigidity proofs in the literature.

\subsection{Compact quantum groups: definition and basic properties}\label{sec.cqg-basics}

The von Neumann algebra approach to compact quantum groups goes as follows.

\begin{definition}[{\cite{Wor87,Wor95}}]\label{def.cqg}
We call $(A,\Delta)$ a \emph{compact quantum group} if $A$ is a von Neumann algebra, $\Delta : A \to A \ovt A$ is a faithful normal unital $*$-homomorphism that is co-associative
$$(\Delta \ot \id) \Delta = (\id \ot \Delta) \Delta \; ,$$
and there exists a faithful normal state $\vphi$ on $A$ that is left and right invariant
$$(\id \ot \vphi)\Delta(a) = \vphi(a) 1 = (\vphi \ot \id)\Delta(a) \quad \text{for all $a \in A$.}$$
\end{definition}

Note that such an invariant state is necessarily unique: if also $\psi$ is invariant, we get that $\psi(a) = (\psi \ot \vphi)\Delta(a) = \vphi(a)$ for all $a \in A$. This unique invariant state is called the \emph{Haar state} of $(A,\Delta)$. If the Haar state is tracial, then $(A,\Delta)$ is said to be of \emph{Kac type}.

In \eqref{eq.LG-Delta} and \eqref{eq.Linfty-K-Delta}, we recalled the definition of the compact quantum groups $(L(G),\Delta_G)$ and $(L^\infty(K),\Delta_K)$ when $G$ is a discrete group and $K$ is a compact group.

An \emph{isomorphism} between compact quantum groups $(A,\Delta_A)$ and $(B,\Delta_B)$ is a $*$-isomorphism $\pi : A \to B$ satisfying $\Delta_B \circ \pi = (\pi \ot \pi)\circ \Delta_A$. By uniqueness of the Haar state, such an isomorphism is automatically Haar state preserving. We similarly define the notion of a \emph{quantum group automorphism} of $(A,\Delta_A)$ as an automorphism $\al$ of the von Neumann algebra $A$ satisfying $(\al \ot \al) \circ \Delta_A = \Delta_A \circ \al$.

A \emph{unitary corepresentation} of $(A,\Delta)$ on a Hilbert space $K$ is a unitary $X \in \cU(A \ovt B(K))$ satisfying $(\Delta \ot \id)(X) = X_{13} X_{23}$, where we make use of the tensor leg numbering notation. Let $X,X'$ be unitary corepresentations on $K,K'$. The \emph{direct sum} of $X$ and $X'$ is defined as the natural unitary $X \oplus X' \in A \ovt B(K \oplus K')$. The \emph{tensor product} of $X$ and $X'$ is defined as the unitary corepresentation $X_{12}X'_{13} \in A \ovt B(K \ot K')$. A bounded operator $T \in B(K,K')$ is called an \emph{intertwiner} if $(1 \ot T) X = X' (1 \ot T)$. If there exists a unitary intertwiner, then $X$ and $X'$ are said to be \emph{unitarily equivalent}. If the only intertwiners between $X$ and itself are the multiples of $1$, then $X$ is said to be \emph{irreducible}. If $P \in B(K)$ is an orthogonal projection and an intertwiner, then $X (1 \ot P)$ is a unitary corepresentation on the Hilbert space $P K$, and we call this a \emph{sub-corepresentation} of $X$. Since the intertwiners between $X$ and itself form a von Neumann algebra, it follows that $X$ is irreducible if and only if $X$ is not isomorphic to the direct sum of two unitary corepresentations.

A \emph{coaction} of a compact quantum group $(A,\Delta)$ on a von Neumann algebra $Q$ is a faithful normal unital $*$-homomorphism $\be : Q \to A \ovt Q$ satisfying $(\Delta \ot \id)\be = (\id \ot \be)\be$. One denotes by $Q^\be = \{b \in Q \mid \be(b) = 1 \ot b\}$ the fixed point algebra and says that $\be$ is \emph{ergodic} if $Q^\be = \C 1$. A normal state $\psi$ on $Q$ is said to be \emph{invariant} if $(\id \ot \psi)\be(b) = \psi(b) 1$ for all $b \in Q$. Given any faithful normal state $\psi_0$ on $Q$ and using the Haar state $\vphi$, it is easy to check that $\psi := (\vphi \ot \psi_0) \be$ defines a faithful normal invariant state on $Q$.

In the following container result, we summarize and prove the basic results of the theory of compact quantum groups of Kac type. We have a twofold motivation to include this proof. First, we expect that several readers of this paper are not fully familiar with the theory of compact quantum groups. Second, while the von Neumann algebraic Definition \ref{def.cqg} is highly natural and suitable in the context of this paper, it is not the most common one. This would make references to the literature for the following basic results rather difficult to state fully rigorously.

When $A$ is a von Neumann algebra with a faithful normal state $\vphi$, we call $(H,\xi_0)$ the \emph{GNS-construction} of $(A,\vphi)$ if $H$ is the completion of $A$ given by the scalar product $\langle a,b\rangle = \vphi(b^* a)$, $\xi_0 \in H$ is given by $1 \in A$ and $A$ is represented on $H$ by left multiplication.

\begin{theorem}[{\cite{Wor87,Wor95}}]\label{thm.cqg}
Let $(A,\Delta)$ be a compact quantum group of Kac type with Haar state $\vphi$ and GNS-construction $(H,\xi_0)$.
\begin{enumlist}
\item\label{cqg.1} If $\be : Q \to A \ovt Q$ is a coaction with faithful normal invariant state $\psi$ with GNS-construction $(K,\eta_0)$, there is a unique $X \in A \ovt B(K)$ satisfying $X^* (1 \ot b \eta_0) = \be(b) (1 \ot \eta_0)$ for all $b \in Q$. This $X$ is a unitary corepresentation of $(A,\Delta)$.
\item\label{cqg.2} There is a unique $W \in A \ovt B(H)$ satisfying $W^* (1 \ot a \xi_0) = \Delta(a) (1 \ot \xi_0)$ for all $a \in A$. This $W$ is a unitary corepresentation of $(A,\Delta)$.
\item\label{cqg.3} Every unitary corepresentation of $(A,\Delta)$ is a direct sum of irreducible unitary corepresentations. Every irreducible unitary corepresentation is finite dimensional.
\item\label{cqg.4} If $X \in A \ot M_n(\C)$ is a unitary corepresentation, then $\overline{X} \in A \ot M_n(\C)$ defined by $(\overline{X})_{ij} = X_{ij}^*$ is again a unitary corepresentation, which is called the contragredient of $X$.
\item\label{cqg.5} The linear span of all coefficients $X_{ij}$ of finite dimensional unitary corepresentations $X \in A \ot M_n(\C)$ is a $*$-strongly dense $*$-subalgebra $\cA \subset A$.
\item\label{cqg.6} If $\Irr$ is a complete set of inequivalent irreducible unitary corepresentations $X \in A \ot M_n(\C)$, where $n = d(X)$ is the dimension, then $\{\sqrt{d(X)} X_{ij} \xi_0 \mid X \in \Irr, 1 \leq i,j \leq d(X)\}$ is an orthonormal basis of $H$.
\item\label{cqg.7} There is a unique anti-unitary operator $\Jhat : H \to H$ satisfying $\Jhat (X_{ij} \xi_0) = X_{ji} \xi_0$ for all finite dimensional unitary corepresentations $X \in A \ot M_n(\C)$. Then $\Jhat A \Jhat = A$ and the $*$-anti-automorphism $S : A \to A : S(a) = \Jhat a^* \Jhat$ satisfies $S((\id \ot \om)(Y)) = (\id \ot \om)(Y^*)$ for every unitary corepresentation $Y \in A \ovt B(K)$ and all $\om \in B(K)_*$. Also, $S^2 = \id$.
\item\label{cqg.8} If $\be : Q \to A \ovt Q$ is a coaction with faithful normal invariant state $\psi$ and $(\si_t^\psi)_{t \in \R}$ is the modular automorphism group of $\psi$, then $\be \circ \si_t^\psi = (\id \ot \si_t^\psi) \circ \be$ for all $t \in \R$.
\item\label{cqg.9} The von Neumann algebra $A$ is commutative if and only if $(A,\Delta) \cong (L^\infty(K),\Delta_K)$ for some compact group $K$.
\item\label{cgq.10} We have that $(A,\Delta)$ is co-commutative, meaning that $\Delta = \si \circ \Delta$ where $\si$ is the flip on $A \ovt A$, if and only if $(A,\Delta) \cong (L(G),\Delta_G)$ for some discrete group $G$.
\end{enumlist}
\end{theorem}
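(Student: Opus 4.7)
The overall strategy is to prove (i)--(ii) first (construction of corepresentations from coactions), use them to develop Peter--Weyl theory in (iii)--(vi), then derive the antipode and modular structure in (vii)--(viii), and finally identify the commutative and co-commutative cases (ix)--(x). Traciality of $\vphi$ will be used throughout.

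For (i), I would define $X^*$ on the dense subspace $\xi_0 \otimes Q\eta_0 \subset H \otimes K$ by the formula $X^*(\xi_0 \otimes b\eta_0) = \be(b)(\xi_0 \otimes \eta_0)$. The invariance identity $(\vphi \otimes \psi)(\be(b^*b)) = \psi(b^*b)$ shows that this is isometric. Since $X$ must lie in $A \ovt B(K)$, $X^*$ should commute with the right action $A' \otimes 1$ on $H \otimes K$; using cyclicity of $\xi_0$ for $A'$ (which holds since $\xi_0$ is separating for $A$), extending $A'$-equivariantly from $\xi_0 \otimes K$ to all of $H \otimes K$ places $X^*$ inside $A \ovt B(K)$. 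Unitarity comes from surjectivity: the $(A' \otimes 1)$-orbit of $\be(Q)(\xi_0 \otimes \eta_0)$ spans $H \otimes K$. The corepresentation identity $(\Delta \otimes \id)(X) = X_{13} X_{23}$ is checked on vectors $\xi_0 \otimes \xi_0 \otimes b\eta_0$ via the coaction relation $(\id \otimes \be)\be = (\Delta \otimes \id)\be$. Part (ii) is the special case $Q = A$, $\be = \Delta$.

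Peter--Weyl (iii)--(vi) is the technical heart. For a unitary corepresentation $X$ on $K$, the averaging map $E_X(T) = (\vphi \otimes \id)(X(1 \otimes T)X^*)$ is a conditional expectation onto $\End(X) = \{T \in B(K) : (1 \otimes T)X = X(1 \otimes T)\}$, and it preserves the canonical trace of $B(K)$ precisely because $\vphi$ is tracial. Applying $E_X$ to rank-one projections produces nonzero finite-rank positive elements of $\End(X)$, whose spectral projections give finite-dimensional invariant subspaces; a Zorn exhaustion then yields (iii). The standard orthogonality relations $\vphi(X_{ij}^* Y_{kl}) = \delta_{X,Y}\,\delta_{ik}\delta_{jl}/d(X)$ for $X, Y \in \Irr$ follow by Schur's lemma applied to intertwiners obtained by averaging $X^* \cdot Y$ against $\vphi$, with the normalization forced by traciality. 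This gives orthonormality in (vi); completeness is obtained by applying (iii) to the multiplicative unitary $W$ on $H \otimes H$. Density in (v) then follows because $\{X_{ij}\xi_0\}$ is total in $H = \overline{A\xi_0}$ and $\xi_0$ is separating for $A$. Part (iv) reduces to the direct computation $(\Delta \otimes \id)(\overline{X}) = \overline{X}_{13}\overline{X}_{23}$, with unitarity of $\overline{X}$ deferred to after (vii).

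For (vii), define $\Jhat$ on the orthonormal family $\{\sqrt{d(X)}\,X_{ij}\xi_0\}$ by $\Jhat(\sqrt{d(X)}\,X_{ij}\xi_0) = \sqrt{d(X)}\,X_{ji}\xi_0$; orthonormality makes this a well-defined anti-unitary on $H$. Setting $S(a) = \Jhat a^*\Jhat$ gives $S(X_{ij}) = X_{ji}^*$, hence $S^2 = \id$ on $\cA$; a direct calculation on matrix coefficients using $\Delta(X_{ij}) = \sum_k X_{ik}\otimes X_{kj}$ shows $S$ is $*$-anti-multiplicative on $\cA$, so extends by density to a $*$-anti-automorphism of $A$. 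The intertwining formula $S((\id \otimes \om)(Y)) = (\id \otimes \om)(Y^*)$ is immediate on matrix coefficients and extends to general $Y$ via (iii). Unitarity of $\overline{X}$, completing (iv), follows by applying $S$ to $X^*X = XX^* = 1$. For (viii), the KMS property of $\psi$, together with invariance $(\vphi \otimes \psi)\be = \psi$ and uniqueness of the modular automorphism group, yields $\be \circ \si_t^\psi = (\id \otimes \si_t^\psi) \circ \be$. Finally, (ix) follows because commutativity of $A$ makes $\cA$ a commutative Hopf $*$-algebra whose Gelfand spectrum is a compact group $K$ with $(A, \Delta) \cong (L^\infty(K), \Delta_K)$; and (x) follows because co-commutativity $\Delta = \si \circ \Delta$ gives $X_{13}X_{23} = X_{23}X_{13}$ for every unitary corepresentation, so by Schur's lemma each irreducible has $d(X) = 1$, and the group-like unitaries $\{u \in A : u\text{ unitary}, \Delta(u) = u \otimes u\}$ form a discrete group $G$ with $(A, \Delta) \cong (L(G), \Delta_G)$.

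The hard part will be executing the Peter--Weyl argument rigorously in the von Neumann algebraic framework: since $A$ is not assumed separable and there is no underlying topological group, one cannot simply cite classical compact group theory. The essential technique is to keep everything on the GNS space $H$ and to exploit traciality of $\vphi$ --- to make the extension in (i) land in $A \ovt B(K)$, to make $E_X$ trace-preserving in (iii), and to make $\Jhat$ anti-unitary in (vii).
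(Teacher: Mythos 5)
Your overall architecture matches the paper's, but there is a genuine gap at the foundational step (i), namely the unitarity of $X$. You correctly obtain an isometry $X^*$ on $H \ot K$ commuting with $A' \ot 1$, but you then dispose of surjectivity by asserting that the $(A'\ot 1)$-orbit of $\be(Q)(\xi_0\ot\eta_0)$ spans $H \ot K$. Since $X^*$ commutes with $A'\ot 1$ and maps $a'\xi_0 \ot b\eta_0$ to $(a'\ot 1)\be(b)(\xi_0\ot\eta_0)$, with $A'\xi_0$ dense in $H$, the closed span of that orbit is exactly the range of the isometry $X^*$; your density claim is therefore literally equivalent to the surjectivity you are trying to prove, and no independent argument is offered. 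This is precisely the nontrivial content of (i). The paper handles it by first verifying $(\Delta\ot\id)(X)=X_{13}X_{23}$ for the co-isometry $X$, then showing that the projection $p=X^*X$ satisfies $(\Delta\ot\id)(p)\leq 1\ot p$ with $(\vphi\ot\vphi\ot\id)$ of the difference equal to zero, whence $p=1\ot p_0$ by invariance and faithfulness of $\vphi$, and finally combining $X^*(1\ot p_0)X=1\ot p_0$ with the fact that $1\ot p_0$ is the range projection of $X^*$ to force $p=1$. Some such use of the invariance of the Haar state is unavoidable here, and without it parts (ii), (v) and (vi), which rely on unitarity of $W$ and of its right-leg analogue, do not get off the ground.

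Two smaller points. Deferring the unitarity of $\overline{X}$ until after (vii) risks circularity: making $\cA$ a $*$-algebra for (v), and the computation with the tensor product $\overline{X}_{12}Y_{13}$ that establishes $\Jhat X_{ij}^*\Jhat=X_{ji}^*$, both use that $\overline{X}$ is a \emph{unitary} corepresentation; the direct argument via $T=(\vphi\ot\id)(\overline{X}^*\,\overline{X})=1$ (using traciality) and finiteness of $A\ot M_n(\C)$ is short and should be done up front. Also, in (viii) the appeal to uniqueness of the modular automorphism group presupposes that $\be(Q)$ is globally invariant under $\si_t^{\vphi\ot\psi}=\id\ot\si_t^\psi$ (so that Takesaki's theorem applies), which is essentially the statement to be proved; the paper instead works with the implementing unitary $X$ and the antipode relation $(\mu\ot\id)(X)=(\mu\circ S\ot\id)(X^*)$ to show that $X$ commutes with $1\ot\Delta_\psi^{it}$.
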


\begin{proof}
(i) Since $(\id \ot \psi)\be(c^* b) = \psi(c^* b)1$ for all $b,c \in Q$, there is a unique $X \in A \ovt B(K)$ satisfying $X^* (1 \ot b \eta_0) = \be(b) (1 \ot \eta_0)$ for all $b \in Q$. Note that $X$ is a co-isometry, i.e.\ $XX^* = 1$. Then
\begin{align*}
(\Delta \ot \id)(X^*)(1 \ot 1 \ot b \eta_0) &= (\Delta \ot \id)(\be(b) (1 \ot \eta_0)) = (\id \ot \be)\be(b) (1 \ot 1 \ot \eta_0) \\
&= X^*_{23} (\be(b) (1 \ot \eta_0))_{13} = X^*_{23} X^*_{13} (1 \ot 1 \ot b \eta_0)
\end{align*}
for all $b \in Q$, so that $(\Delta \ot \id)(X) = X_{13} X_{23}$. Since $X$ is a co-isometry, we can define the projection $p = X^* X \in A \ovt B(K)$. Then
\begin{equation}\label{eq.comp}
(\Delta \ot \id)(p) = X^*_{23} X^*_{13} X_{13} X_{23} = X^*_{23} p_{13} X_{23} \leq X^*_{23} X_{23} = 1 \ot p \; .
\end{equation}
So, $1 \ot p - (\Delta \ot \id)(p) \geq 0$. Since
$$(\vphi \ot \vphi \ot \id)(1 \ot p - (\Delta \ot \id)(p)) = (\vphi \ot \id)(p) - (\vphi \ot \id)(p) = 0$$
and $\vphi$ is faithful, it follows that $(\Delta \ot \id)(p) = 1 \ot p$. Applying $\vphi \ot \id \ot \id$, we conclude that $p = 1 \ot p_0$, where $p_0 = (\vphi \ot \id)(p)$.

In \eqref{eq.comp}, we have seen that $(\Delta \ot \id)(p) = X^*_{23} p_{13} X_{23}$. Since $p = 1 \ot p_0$, we conclude that $1 \ot p_0 = X^* (1 \ot p_0) X$. Since $X$ is a co-isometry, it follows that $X (1 \ot p_0) X^* = 1 \ot p_0$. Since $p = 1 \ot p_0$ is the range projection of $X^*$, this means that $1 = X X^* = 1 \ot p_0 = p$. We conclude that $X$ is a unitary. We already proved that $(\Delta \ot \id)(X) = X_{13} X_{23}$, so that $X$ is a unitary corepresentation.

(ii) This is a special case of (i) because we may view $\Delta$ as a coaction of $(A,\Delta)$ on $A$ with invariant state $\vphi$.

(iii) Fix a unitary corepresentation $X \in A \ovt B(K)$. Define the coaction $\be : B(K) \to A \ovt B(K) : \be(b) = X^* (1 \ot b) X$ for all $b \in B(K)$ and denote by $Q = B(K)^\be$ its fixed point algebra. Since $\vphi$ is a faithful normal invariant state, the formula $E : B(K) \to Q : E(b) = (\vphi \ot \id)\be(b)$ defines a faithful normal conditional expectation of $B(K)$ onto $Q$. So, $Q$ is discrete and its identity $1$ can be written as a sum of minimal projections $p_k$. For every $k$, we have that $X(1 \ot p_k)$ is an irreducible unitary corepresentation on the Hilbert space $p_k K$.

To conclude the proof of (iii), we only need to prove that if $X \in A \ovt B(K)$ is an irreducible unitary corepresentation, then $K$ is finite dimensional. Use the same notation as in the previous paragraph. Choose a minimal projection $q \in B(K)$. By irreducibility, $\be$ is ergodic, so that $E(q)$ is a nonzero multiple $\al 1$ of $1$. Denote by $\Tr$ the trace on $B(K)$. Since $\vphi$ is a trace, we get that
$$\al \, \Tr(1) = \Tr(E(q)) = (\vphi \ot \Tr)\be(q) = (\vphi \ot \Tr)(X^* (1 \ot q) X) = (\vphi \ot \Tr)(1 \ot q) = 1 \; .$$
So, $\Tr(1) < +\infty$ and $K$ is finite dimensional.

(iv) Since $X \in A \ot M_n(\C)$ is a unitary corepresentation, we have that $\Delta(X_{ij}) = \sum_k X_{ik} \ot X_{kj}$. It follows that $\Delta(X_{ij}^*) = \sum_k X_{ik}^* \ot X_{kj}^*$, meaning that $(\Delta \ot \id)(\overline{X}) = \overline{X}_{13} \overline{X}_{23}$. So,
$$(\Delta \ot \id)({\overline{X}}^* {\overline{X}}) = {\overline{X}}^*_{23} ({\overline{X}}^* {\overline{X}})_{13} {\overline{X}}_{23} \; .$$
Applying $\vphi \ot \id \ot \id$ and defining $T = (\vphi \ot \id)({\overline{X}}^* {\overline{X}}) \in M_n(\C)$, we get that
\begin{equation}\label{eq.comp2}
1 \ot T = {\overline{X}}^* (1 \ot T) {\overline{X}} \; .
\end{equation}
Note that because $\vphi$ is a trace,
$$T_{ij} = \sum_k \vphi\bigl( ({\overline{X}}^*)_{ik} {\overline{X}}_{kj} \bigr) = \sum_k \vphi\bigl( X_{ki} X^*_{kj} \bigr) = \sum_k \vphi\bigl( X^*_{kj} X_{ki} \bigr) = \vphi((X^* X)_{ji}) = \delta_{i,j} \; .$$
So, $T = 1$ and it follows from \eqref{eq.comp2} that $\overline{X}$ is an isometry. Since $A \ovt M_n(\C)$ admits a faithful tracial state, this means that $\overline{X}$ is a unitary. We already proved that $(\Delta \ot \id)(\overline{X}) = \overline{X}_{13} \overline{X}_{23}$, so that $\overline{X}$ is a unitary corepresentation.

(v) Because we can take direct sums, tensor products and contragredients, $\cA$ is unital a $*$-subalgebra of $A$. Denote by $A_1 \subset A$ the strong$^*$-closure of $\cA$. We have to prove that $A_1 = A$. Denote by $E_1 : A \to A_1$ the unique $\vphi$-preserving conditional expectation. Choose $a \in A$. We have to prove that $a \in A_1$. Replacing $a$ by $a-E_1(a)$, it suffices to prove that every element $a \in A$ satisfying $E_1(a) = 0$ must be equal to $0$.

Define the unitary corepresentation $W$ as in (ii). By (iii), $W$ is unitarily equivalent to a direct sum of finite dimensional irreducible corepresentations. Using the vector functional $\om_{c \xi_0,b^* \xi_0} \in B(H)_*$ defined by $\om_{c \xi_0,b^* \xi_0}(T) = \langle T c \xi_0 , b^* \xi_0 \rangle$, it follows from the definition of $W$ that
$$(\id \ot \om_{c \xi_0,b^* \xi_0})(W) = (\id \ot \vphi)(\Delta(b)(1 \ot c)) \quad\text{for all $b,c \in A$.}$$
We conclude that $(\id \ot \vphi)(\Delta(b)(1 \ot c)) \in A_1$ for all $b,c \in A$. Since $a$ is orthogonal to $A_1$, it follows that
\begin{equation}\label{eq.comp3}
(\vphi \ot \vphi)((a^* \ot 1)\Delta(b)(1 \ot c)) = 0 \quad\text{for all $b,c \in A$.}
\end{equation}
Denote by $\sigma : A \ovt A \to A \ovt A$ the flip automorphism. By either reasoning in the same way as in the proof of (i), or by applying (i) to the coaction $\Delta\op = \sigma \circ \Delta$ of the compact quantum group $(A,\Delta\op)$ on $A$, we find a unitary $V \in B(H) \ovt A$ satisfying $V (b \xi_0 \ot 1) = \Delta(b) (\xi_0 \ot 1)$ for all $b \in A$. It follows that the linear span of $\Delta(b)(1 \ot c) (\xi_0 \ot \xi_0) = V(b \xi_0 \ot c \xi_0)$, $b,c \in A$, is dense in $H \ot H$. In \eqref{eq.comp3}, we may thus replace $\Delta(b) (1 \ot c)$ by $a \ot 1$ and conclude that $\vphi(a^* a) = 0$, so that $a= 0$.

(vi) Take irreducible unitary corepresentations $X \in A \ot M_n(\C)$ and $Y \in A \ot M_m(\C)$. For every $T_0 \in M_{n,m}(\C)$, the element $T := (\vphi \ot \id)(X^* (1 \ot T_0) Y)$ is an intertwiner between $X$ and $Y$. So if $X$ and $Y$ are inequivalent, $T=0$ for every choice of $T_0$. This applies in particular to the matrix unit $T_0 = e_{ik}$, from which it follows that the vectors $X_{ij} \xi_0$ and $Y_{kl} \xi_0$ are orthogonal for all $i,j,k,l$.

Similarly, $(\vphi \ot \id)(X^*(1 \ot T_0)X)$ is a multiple of $1$ for every $T_0 \in M_n(\C)$. We thus find scalars $\al(i,k)$ such that
\begin{equation}\label{eq.orthogonal-here}
(\vphi \ot \id)(X^*(1 \ot e_{ik})X) = \al(i,k) 1 \quad\text{so that}\quad \vphi(X_{ij}^* X_{kl}) = \al(i,k) \, \delta_{j,l} \; .
\end{equation}
Since $Y = X^*$ is an irreducible unitary corepresentation of $(A,\Delta\op)$, which has the same Haar state $\vphi$, we also find scalars $\be(i,k)$ such that
$$\be(i,k) \, \delta_{j,l} = \vphi(Y_{ij}^* Y_{kl}) = \vphi(X_{ji} X_{lk}^*) = \vphi(X_{lk}^* X_{ji}) = \al(l,j) \, \delta_{k,i}$$
for all $i,j,k,l$. So there exists a single scalar $\al$ such that $\al(i,k) = \al \, \delta_{i,k}$. Since $\sum_i X_{ij}^* X_{ij} = 1$ for all $j$, it follows from \eqref{eq.orthogonal-here} that $\al = n^{-1}$.

In combination with the previous paragraph, the vectors $\{\sqrt{d(X)} X_{ij} \xi_0 \mid X \in \Irr, 1 \leq i,j \leq d(X)\}$ are orthonormal. By (v), their linear span is dense, so that they form an orthonormal basis of $H$.

(vii) Let $\Irr$ be a complete set of inequivalent irreducible unitary corepresentations of $(A,\Delta)$. By (vi), there is a unique anti-unitary operator $\Jhat : H \to H$ satisfying $\Jhat X_{ij} \xi_0 = X_{ji} \xi_0$ for all $X \in \Irr$ and all $1 \leq i,j \leq \dim X$.

Since every finite dimensional unitary corepresentation is unitarily equivalent to a direct sum of irreducible unitary corepresentations, the equality $\Jhat X_{ij} \xi_0 = X_{ji} \xi_0$ holds for all finite dimensional unitary corepresentations $X$.

Let $X \in A \ot M_n(\C)$ and $Y \in A \ot M_m(\C)$ be finite dimensional unitary corepresentations. Denote by $Z = {\overline{X}}_{12} Y_{13}$ the tensor product of $\overline{X}$ and $Y$. Then,
$$\Jhat X_{ij}^* \Jhat (Y_{kl} \xi_0) = \Jhat X_{ij}^* (Y_{lk} \xi_0) = \Jhat( Z_{(i,l),(j,k)} \xi_0) = Z_{(j,k),(i,l)} \xi_0 = X_{ji}^* (Y_{kl} \xi_0) \; .$$
It follows that $\Jhat X_{ij}^* \Jhat = X_{ji}^*$. So, $\Jhat \cA \Jhat = \cA$. By (v), $\Jhat A \Jhat = A$ and the $*$-anti-automorphism $S : A \to A : S(a) = \Jhat a^* \Jhat$ satisfies $S(X_{ij}) = X_{ji}^*$ for all finite dimensional unitary corepresentations $X$. Since every unitary corepresentation $Y \in A \ovt B(K)$ is unitarily conjugate to a direct sum of irreducibles, also $S((\id \ot \om)(Y)) = (\id \ot \om)(Y^*)$ for all $\om \in B(K)_*$.

By definition, $\Jhat \circ \Jhat = \id$, so that $S^2 = \id$.

(viii) Denote by $(K,\eta_0)$ the GNS-construction of $\psi$. Denote by $S_\psi$ the modular conjugation of $\psi$, i.e.\ the closed antilinear operator on $K$ defined as the closure of $b \eta_0 \mapsto b^* \eta_0$, $b \in Q$. Denote by $X \in A \ovt B(K)$ the unitary corepresentation given by (i). By definition,
$$(\om \ot \id)(X^*) (b \eta_0) = ((\om \ot \id)\be(b)) \eta_0 \quad\text{for all $b \in Q$ and $\om \in A_*$.}$$
Since $\be$ is a $*$-homomorphism, it follows that $(\om \ot \id)(X^*) S_\psi \subset S_\psi (\ombar \ot \id)(X^*)$ for all $\om \in A_*$, where $\ombar(a) = \overline{\om(a^*)}$ for all $a \in A$. Taking adjoints, we also find that $(\om \ot \id)(X) S_\psi^* \subset S_\psi^* (\ombar \ot \id)(X)$ for all $\om \in A_*$.

By (vii), because $X$ is a unitary corepresentation, we have that $(\mu \ot \id)(X) = (\mu \circ S \ot \id)(X^*)$ for all $\mu \in A_*$. It follows that
\begin{align*}
(\om \ot \id)(X) S_\psi^* S_\psi \subset S_\psi^* (\ombar \ot \id)(X) S_\psi &= S_\psi^* (\ombar \circ S \ot \id)(X^*) S_\psi \subset S_\psi^* S_\psi (\om \circ S \ot \id)(X^*) \\ &= S_\psi^* S_\psi (\om \ot \id)(X)
\end{align*}
for all $\om \in A_*$. Since the modular operator $\Delta_\psi = S_\psi^* S_\psi$ is a positive, self-adjoint, nonsingular operator, it follows that $(\om \ot \id)(X) \Delta_\psi^{it} = \Delta_\psi^{it} (\om \ot \id)(X)$ for all $\om \in A_*$ and $t \in \R$. So, $X$ commutes with $1 \ot \Delta_\psi^{it}$. Since $\be(b) = X^* (1 \ot b) X$ for all $b \in Q$, the formula $\be \circ \si_t^\psi = (\id \ot \si_t^\psi) \circ \be$ follows.

(ix) If $(A,\Delta) \cong (L^\infty(K),\Delta_K)$, it is trivial that $A$ is commutative. Conversely, assume that $A$ is commutative. Denote by $B \subset A$ the operator norm closure of the dense $*$-subalgebra $\cA \subset A$. Since $B$ is an abelian unital C$^*$-algebra, we may identify $B$ with $C(K)$, where $K$ is a compact Hausdorff space. Since $\Delta(\cA) \subset \cA \otalg \cA$, we have $\Delta(B) \subset B \otmin B$ and find a continuous map $m : K \times K \to K$ such that $\Delta(F)(g,h) = F(m(g,h))$ for all $F \in B$ and $g,h \in K$.

In (vii), we defined the $*$-anti-automorphism $S : A \to A$. Since $S(\cA) = \cA$, also $S(B) = B$. We thus find a homeomorphism $I : K \to K$ such that $S(F) = F \circ I$ for all $F \in B$. Since $\Delta$ is co-associative, the multiplication $m$ is associative and we write $m(g,h) = g \cdot h$. For every unitary corepresentation $X \in A \ot M_n(\C)$, we have by definition
\begin{equation}\label{eq.making-inverses}
X_{ij}(I(g) \cdot g) = \Delta(X_{ij})(I(g),g) = \sum_k X_{ik}(I(g)) X_{kj}(g) = \sum_k \overline{X_{ki}(g)} X_{kj}(g) = \delta_{i,j}
\end{equation}
for all $g \in K$. In particular, $F(I(g) \cdot g) = F(I(h) \cdot h)$ for all $F \in \cA$ and all $g,h \in K$. So, the same holds for all $F \in B = C(K)$ and we find an element $e \in K$ such that $I(g) \cdot g = e$ for all $g \in K$. Then \eqref{eq.making-inverses} says that $X_{ij}(e) = \delta_{i,j}$ for all unitary corepresentations $X \in A \ot M_n(\C)$.

Evaluating $\Delta(X_{ij}) = \sum_k X_{ik} \ot X_{kj}$ in $(g,e)$, resp.\ $(e,g)$, we find that $X_{ij}(g \cdot e) = X_{ij}(g) = X_{ij}(e \cdot g)$ for all unitary corepresentations $X \in A \ot M_n(\C)$. So, $g \cdot e = g = e \cdot g$ for all $g \in K$. We already proved that $I(g) \cdot g = e$. Since by (vii), $S^2 = \id$, also $I(I(g)) = g$ for all $g \in K$. So, $K$ is a compact group with unit $e$ and inverse $g^{-1} := I(g)$. By uniqueness of the Haar state, the identification $C(K) = B$ extends to an identification $(L^\infty(K),\Delta_K) = (A,\Delta)$.

(x) If $(A,\Delta) \cong (L(G),\Delta_G)$, it is trivial that $\si \circ \Delta = \Delta$. Conversely, assume that $\si \circ \Delta = \Delta$. Define $G \subset \cU(A)$ as the subgroup of unitaries $a \in A$ satisfying $\Delta(a) = a \ot a$. By definition, $G$ consists of the one dimensional unitary corepresentations of $(A,\Delta)$. When $a \in G$ and $a \neq 1$, the one dimensional, irreducible corepresentations $a$ and $1$ are inequivalent. So by (vi), we have that $\vphi(a) = 0$ for all $a \in G \setminus \{1\}$. We thus get a unique normal trace preserving $*$-homomorphism $\pi : L(G) \to A$ satisfying $\pi(u_a) = a$ for all $a \in G$. By construction, $\Delta \circ \pi = (\pi \ot \pi) \circ \Delta_G$.

It remains to prove that $\pi$ is surjective. Since the $*$-algebra $\cA \subset A$ defined in (v) is dense, it suffices to fix an irreducible unitary corepresentation $X \in A \ot M_n(\C)$ and prove that $X$ is one dimensional. Define the linear map $\theta : A_* \to M_n(\C) : \theta(\om) := (\om \ot \id)(X)$. Since $X$ is a corepresentation, we get that $\theta(\om) \theta(\om') = \theta(\om \ast \om')$, where $\om \ast \om' = (\om \ot \om') \circ \Delta$. Since $\Delta$ is co-commutative, the image of $\theta$ is a commutative subalgebra of $M_n(\C)$. Take $\om \in A_*$. It follows that $X (1 \ot \theta(\om)) = (1 \ot \theta(\om))X$, because applying $\om' \ot \id$ to the left or the right hand side gives the same result for every $\om' \in A_*$. By irreducibility of $X$, we conclude that $\theta(\om) \in \C 1$ for all $\om \in A_*$. It follows that $X (1 \ot T) = (1 \ot T)X$ for all $T \in M_n(\C)$, because applying $\om \ot \id$ to the left or the right hand side gives the same result for every $\om \in A_*$. So, $M_n(\C) = \C 1$, i.e.\ $n = 1$.
\end{proof}

\subsection{\boldmath Coactions and unitary $2$-cocycles on compact quantum groups}

We now give an elementary proof for \cite[Corollary 5.2]{DeC10} saying that a coaction of a Kac type compact quantum group $(A,\Delta)$ on a type I factor $B(K)$ automatically preserves the trace.

\begin{proposition}[{Corollary 5.2 in \cite{DeC10}}]\label{prop.invariant-trace}
Let $(A,\Delta)$ be a Kac type compact quantum group, $K$ a Hilbert space and $\be : B(K) \to A \ovt B(K)$ a coaction.
\begin{enumlist}
\item\label{action-BK.1} The trace $\Tr$ is invariant: $\Tr((\om \ot \id)\be(b)) = \om(1) \, \Tr(b)$ for all $b \in B(K)^+$ and $\om \in A_*^+$.
\item\label{action-BK.2} The fixed point algebra $B(K)^\be$ is discrete.
\item\label{action-BK.3} If $\be$ is ergodic, then $K$ is finite dimensional.
\end{enumlist}
\end{proposition}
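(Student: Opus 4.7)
The plan is to prove (i) first and deduce (ii) and (iii) from it. The central idea for (i) is to introduce the operator-valued weight $T\colon B(K)^+ \to A_+^{\text{ext}}$ defined by $T(b) = (\id \ot \Tr)\be(b)$ and to exploit the Kac hypothesis via the traciality of $\vphi \ot \Tr$ on $A \ovt B(K)$.

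Starting from the coaction identity $(\Delta \ot \id)\be(b) = (\id \ot \be)\be(b)$ and applying $\id \ot \id \ot \Tr$, I obtain $\Delta(T(b)) = (\id \ot T)\be(b)$. Applying $\id \ot \vphi$ and using Haar invariance then yields $(\id \ot \ell)\be(b) = \ell(b) \cdot 1$, where $\ell(b) := \vphi(T(b)) = (\vphi \ot \Tr)\be(b)$. Since $\be$ is a faithful normal $*$-homomorphism and $\vphi \ot \Tr$ is tracial on $A \ovt B(K)$ (here the Kac hypothesis is essential), $\ell$ is a normal faithful tracial weight on $B(K)$. Granting semifiniteness of $\ell$, uniqueness of the trace on $B(K)$ forces $\ell = c \cdot \Tr$ for some $c > 0$; substituting back into the invariance identity and dividing by $c$ gives $(\id \ot \Tr)\be(b) = \Tr(b) \cdot 1$, which is equivalent to (i) by Fubini. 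The normalization $c = 1$ is then obtained by evaluating on a convenient finite-rank positive element.

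The semifiniteness of $\ell$ is the main technical obstacle. To establish it, I would exploit Theorem~\ref{cqg.1}: fix any faithful normal invariant state $\psi = (\vphi \ot \psi_0)\be$ and produce the associated unitary corepresentation $Y \in A \ovt B(H_\psi)$, which implements $\be$ on the GNS space in the sense that $\be(b) = Y^*(1 \ot \pi_\psi(b)) Y$. The decomposition of $Y$ into finite-dimensional irreducible subcorepresentations from Theorem~\ref{cqg.3} gives an isotypic decomposition of $K$, and cutting down to a single isotype produces positive elements of $B(K)$ of finite $\ell$-value, yielding semifiniteness.

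For (ii), part (i) implies that $E_\be(b) = (\vphi \ot \id)\be(b)$ is a $\Tr$-preserving normal faithful conditional expectation of $B(K)$ onto $B(K)^\be$. Combined with the above isotypic decomposition extracted from $Y$, the fixed-point algebra $B(K)^\be$ is a direct sum of matrix algebras indexed by the inequivalent irreducible corepresentations appearing in $Y$, with summands given by the corresponding multiplicity spaces; in particular, $B(K)^\be$ is discrete. For (iii), ergodicity forces $E_\be(b) = \lambda(b) \cdot 1$ for the unique faithful normal invariant state $\lambda$, and (i) applied to any finite-rank positive $b$ gives $\Tr(b) = \Tr(E_\be(b)) = \lambda(b) \, \Tr(1)$, so $\Tr = \Tr(1) \cdot \lambda$. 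Faithfulness of $\lambda$ as a state now forces $\Tr(1) < \infty$, i.e.\ $\dim K < \infty$.
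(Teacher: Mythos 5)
Your overall strategy --- proving (i) by showing that $\ell := (\vphi\ot\Tr)\circ\be$ is a normal faithful semifinite tracial weight on $B(K)$ satisfying $(\id\ot\ell)\be(b)=\ell(b)1$, and then invoking uniqueness of the trace on a type I factor --- is genuinely different from the paper's argument, and your reductions of (ii) and (iii) to (i) are fine in outline. But the step you yourself flag as the main obstacle, semifiniteness of $\ell$, is a real gap, and the argument you sketch for it does not work. Theorem \ref{cqg.1} produces a unitary corepresentation $Y$ on the GNS space $H_\psi$ of the invariant state $\psi$, not on $K$; its decomposition into finite dimensional irreducibles via Theorem \ref{cqg.3} is therefore a decomposition of $H_\psi \cong K \ot \overline{K}$, not of $K$, and the isotypic projections are intertwiners of $Y$ inside $B(H_\psi)$ that in general do not lie in $\pi_\psi(B(K))$. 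So ``cutting down to a single isotype'' does not produce a positive element of $B(K)$, let alone one whose $\ell$-value you can bound: estimating $(\vphi\ot\Tr)\be(b)$ for a candidate $b$ requires exactly the control of $\be$ on $L^2(B(K),\Tr)$ that statement (i) is meant to provide. (A coaction implemented by an $\Om$-corepresentation on $K$ itself is only available through Proposition \ref{coc.1}, whose proof uses the present proposition, so that route is circular.) Note that in the ergodic case, semifiniteness of $\ell$ is essentially equivalent to $\dim K < \infty$, i.e.\ to (iii), so any honest proof of it must carry the real content of the proposition. The same conflation of $K$ with $H_\psi$ affects your description of $B(K)^\be$ in (ii): the multiplicity-space picture describes the intertwiner algebra of $Y$ in $B(H_\psi)$, whereas $B(K)^\be$ corresponds to its intersection with $\pi_\psi(B(K))$.

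The paper closes exactly this gap with modular theory: in the ergodic case it writes the canonical invariant state as $\psi = \Tr(\,\cdot\, Q)$ and uses Theorem \ref{cqg.8}, i.e.\ $\be\circ\si_t^\psi = (\id\ot\si_t^\psi)\circ\be$, to show that the spectral projection of $Q$ for its largest eigenvalue satisfies $\be(p_1)=1\ot p_1$, hence $p_1=1$ by ergodicity; this yields $\psi$ proportional to $\Tr$ and $\dim K<\infty$ in one stroke, and the general case follows by cutting with the minimal projections of $B(K)^\be$ (whose discreteness needs only the normal conditional expectation $(\vphi\ot\id)\be$, not (i)). If you wish to keep your tracial-weight framework, you still need an argument of this modular-theoretic type to exhibit a single nonzero positive element of finite $\ell$-value.
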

\begin{proof}
First assume that $\be : B(K) \to A \ovt B(K)$ is an ergodic coaction. We prove that $K$ is finite dimensional and that the trace $\Tr$ is invariant.

Let $\vphi$ be the Haar state of $(A,\Delta)$. By ergodicity, we can define the faithful normal invariant state $\psi$ on $B(K)$ such that $(\vphi \ot \id)\be(b) = \psi(b) 1$ for all $b \in B(K)$. Since $\psi$ is a faithful normal state on $B(K)$, we find a positive trace class operator $Q$ on $K$ such that $Q$ has trivial kernel and $\psi(b) = \Tr(b Q)$ for all $b \in B(K)$. Diagonalizing $Q$, we can write
$$\psi(b) = \sum_{k=1}^n \al_k \, \Tr(p_k b p_k) \quad\text{for all $b \in B(K)$,}$$
where $n \in \{+\infty\} \cup \{1,2,3,\ldots\}$, the $\al_k$ are strictly decreasing, strictly positive real numbers and the $p_k$ are nonzero finite rank projections summing up to $1$. We prove that $n=1$.

Assume that $n \geq 2$. When $b \in p_k B(K) p_r$, we have that $\si_t^\psi(b) = (\al_k / \al_r)^{it} b$ for all $t \in \R$. Since the sequence $\al_k$ is strictly decreasing, we have that $\al_1 / \al_r \geq 1$ for all $r$. So the following statement holds: whenever $b \in B(K)$ and $0 < \al < 1$ satisfy $\si_t^\psi(b) = \al^{it} b$, we get that $p_1 b = 0$.

Fix $r \geq 2$ and $b \in p_r B(K) p_1$. Put $\al = \al_r / \al_1$, so that $0 < \al < 1$ and $\si_t^\psi(b) = \al^{it} b$. By Theorem \ref{cqg.8}, we have that $(\id \ot \si_t^\psi)\be(b) = \be(\si_t^\psi(b)) = \al^{it} \be(b)$. By the statement in the previous paragraph, we get that $(1 \ot p_1)\be(b) = 0$ for all $b \in p_r B(K) p_1$. So, $(1 \ot p_1)\be(b c^*) = 0$ for all $b,c \in p_r B(K) p_1$. Since $p_r$ belongs to the linear span of $p_r B(K) p_1 B(K) p_r$, we get that $(1 \ot p_1) \be(p_r) = 0$ for all $r \geq 2$. So, $\be(p_r) \leq 1 \ot (1-p_1)$ for all $r \geq 2$, meaning that $1 \ot p_1 \leq \be(p_1)$.

Since $\be(p_1) - 1 \ot p_1 \geq 0$ and $(\vphi \ot \psi)(\be(p_1) - 1 \ot p_1) = (\vphi \ot \psi)\be(p_1) - \psi(p_1) = 0$, we conclude that $\be(p_1) = 1 \ot p_1$. Since $\be$ is ergodic and $p_1$ is a nonzero projection, we get that $p_1 = 1$. This means that $n=1$, contradicting the assumption that $n \geq 2$.

So, we have proven that $n=1$. Since $p_1$ is a finite rank projection, $K$ is finite dimensional. Also, $\psi(b) = \al_1 \Tr(b)$ for all $b \in B(K)$. Since $\psi$ is an invariant state, also $\Tr$ is invariant.

Next, let $\be : B(K) \to A \ovt B(K)$ be any coaction. Then, $b \mapsto (\vphi \ot \id)\be(b)$ is a faithful normal conditional expectation of $B(K)$ onto $B(K)^\be$. So, the fixed point algebra $B(K)^\be$ is a discrete von Neumann algebra and we find a family of minimal projections $p_k \in B(K)^\be$ such that $\sum_k p_k = 1$. By construction, for every $k$, the restriction of $\be$ to $p_k B(K) p_k = B(p_k K)$ defines an ergodic coaction $\be_k$. By the first part of the proof, the trace $\Tr_k$ on $B(p_k K)$ is invariant under $\be_k$.

Take $b \in B(K)^+$ and $\om \in A_*^+$. Then,
\begin{align*}
\Tr((\om \ot \id)\be(b)) &= \sum_k \Tr(p_k (\om \ot \id)\be(b) p_k) = \sum_k \Tr_k((\om \ot \id)\be_k(p_k b p_k)) \\ &= \sum_k \om(1) \, \Tr(p_k b p_k) = \om(1) \, \Tr(b) \; .\qedhere
\end{align*}
\end{proof}

As explained in the beginning of this section, twisting the comultiplication by a unitary $2$-cocycle can provide nonisomorphic compact quantum group structures with the same underlying von Neumann algebra. We now give precise definitions and then prove a number of vanishing results for such unitary $2$-cocycles.

\begin{definition}\label{def.unitary-2-cocycle}
A \emph{unitary $2$-cocycle} on a compact quantum group $(A,\Delta)$ is a unitary $\Om \in \cU(A \ovt A)$ satisfying
$$(\Om \ot 1) (\Delta \ot \id)(\Om) = (1 \ot \Om)(\id \ot \Delta)(\Om) \; .$$
Two unitary $2$-cocycles $\Om$ and $\Om'$ are said to be \emph{cohomologous} if there exists a unitary $v \in \cU(A)$ such that $\Om = (v \ot v) \Om' \Delta(v^*)$. One says that $\Om$ is a \emph{coboundary} if $\Om$ is cohomologous to $1$.
A \emph{unitary $\Om$-corepresentation} on a Hilbert space $K$ is a unitary $X \in \cU(A \ovt B(K))$ satisfying $(\Om \ot 1)(\Delta \ot \id)(X) = X_{13} X_{23}$. One says that $X$ is \emph{irreducible} if every $a \in B(K)$ satisfying $(1 \ot a) X = X (1 \ot a)$ is a multiple of $1$.
\end{definition}

If $\Om$ is a unitary $2$-cocycle on $(A,\Delta)$, the map $\Delta_\Om : A \to A \ovt A : \Delta_\Om(a) = \Om \Delta(a) \Om^*$ is again co-associative. In our next result, we give an elementary proof for \cite[Proposition 5.1]{DeC10}, saying that if $(A,\Delta)$ is of Kac type, then $(A,\Delta_\Om)$ is again a compact quantum group of Kac type. The nontrivial point is to prove that $(A,\Delta_\Om)$ again admits an invariant state, which may fail without the Kac type assumption.

If $X \in A \ovt B(K)$ is a unitary $\Om$-corepresentation on $K$, then
$$\be : B(K) \to A \ovt B(K) : \be(b) = X^* (1 \ot b) X \quad\text{for all $b \in B(K)$,}$$
is a coaction. Note that $\be$ is ergodic if and only if $X$ is irreducible. In the next result, we also include a proof for \cite[Proposition 3.1.9 and Corollary 3.1.13]{DMN21}, saying that the converse holds: every coaction of a Kac type compact quantum group on a type I factor is implemented by an $\Om$-corepresentation for some unitary $2$-cocycle $\Om$. Our proof is very similar to the proof of \cite{DMN21}, but to keep this section self-contained and because we need the result in this paper, we decided to include it.

\begin{proposition}\label{prop.coc}
Let $(A,\Delta)$ be a Kac type compact quantum group.
\begin{enumlist}
\item\label{coc.1} (\cite[Proposition 3.1.9 and Corollary 3.1.13]{DMN21}) For every Hilbert space $K$ and every coaction $\be : B(K) \to A \ovt B(K)$, there exists a unitary $2$-cocycle $\Om$ on $(A,\Delta)$ and a unitary $\Om$-corepresentation $X$ on $K$ such that $\be(b) = X^* (1 \ot b) X$ for all $b \in B(K)$.

\item\label{coc.2} (\cite[Propositions 4.3 and 5.1]{DeC10}) For every unitary $2$-cocycle $\Om$ on $(A,\Delta)$, there exist irreducible, finite dimensional $\Om$-corepresentations.

\item\label{coc.3} (\cite[Proposition 5.1]{DeC10}) For every unitary $2$-cocycle $\Om$ on $(A,\Delta)$, the twisted comultiplication $\Delta_\Om$ turns $(A,\Delta_\Om)$ into a compact quantum group of Kac type.
\end{enumlist}
\end{proposition}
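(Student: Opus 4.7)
My plan is to prove the three parts in the order (i), (iii), (ii), with (i) as the main technical engine. For (i), I first use \ref{action-BK.2} and \ref{action-BK.3} to reduce to the case where $\be$ is ergodic on a finite-dimensional $B(K)$. There, by \ref{action-BK.1}, $\be$ is trace-preserving, so $\be$ and $1 \ot \id$ are two unital trace-preserving embeddings of the finite-dimensional factor $B(K)$ into the finite von Neumann algebra $A \ovt B(K)$; such embeddings are unitarily conjugate, yielding a unitary $X \in A \ovt B(K)$ with $\be(b) = X^*(1 \ot b)X$. Applying $(\Delta \ot \id)\be = (\id \ot \be)\be$ to $\be(b) = X^*(1 \ot b)X$ and unwinding both sides in terms of $X$ shows that $X_{13}X_{23}(\Delta \ot \id)(X)^*$ commutes with $1 \ot 1 \ot b$ for every $b \in B(K)$, and hence is of the form $\Om \ot 1$ for a unique unitary $\Om \in A \ovt A$; rearranging yields the $\Om$-corepresentation identity $(\Om \ot 1)(\Delta \ot \id)(X) = X_{13}X_{23}$. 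To obtain the $2$-cocycle identity for $\Om$, I apply both $(\Delta \ot \id \ot \id)$ and $(\id \ot \Delta \ot \id)$ to this identity, equate the two resulting expressions using coassociativity of $\Delta$, and cancel the common unitary factor $X_{14}X_{24}X_{34}$.

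\textbf{Part (iii).} Coassociativity of $\Delta_\Om$ follows from a direct substitution using the cocycle identity and $\Om^*\Om = 1$. For invariance of the Haar state $\vphi$ under $\Delta_\Om$, the key input is that $\vphi \ot \vphi$ is a trace on $A \ovt A$ (Kac hypothesis), so $(\vphi \ot \vphi)(\Om \Delta(a) \Om^*) = (\vphi \ot \vphi)(\Delta(a) \Om^*\Om) = \vphi(a)$. To upgrade this integrated identity to the pointwise $(\id \ot \vphi)\Delta_\Om(a) = \vphi(a) 1$, I plan to pair with an arbitrary $c \in A$ on the remaining leg, combine the same trace-cycling with right-invariance of $\vphi$ under $\Delta$, and reduce to $\vphi(c \cdot (\id \ot \vphi)\Delta_\Om(a)) = \vphi(c)\vphi(a)$. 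Traciality of $\vphi$ is preserved, giving the Kac property for $(A, \Delta_\Om)$.

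\textbf{Part (ii).} It suffices to produce a single $\Om$-corepresentation on some Hilbert space $K$, since the associated coaction $b \mapsto X^*(1 \ot b)X$ on $B(K)$ then decomposes via \ref{action-BK.2} and \ref{action-BK.3} into finite-dimensional ergodic pieces, each yielding an irreducible finite-dimensional $\Om$-corepresentation. By part (iii), the twisted pair $(A, \Delta_\Om)$ is itself a Kac CQG, whose multiplicative unitary is naturally given by $W_\Om = W\Om^*$ inside $A \ovt B(H)$ (with $W$ the multiplicative unitary of $(A, \Delta)$ and $\Om \in A \ovt A$ embedded via $A \subset B(H)$). From $W_\Om$ I plan to extract the required $\Om$-corepresentation of $(A, \Delta)$ by carefully tracking the interplay between $\Delta$ and $\Delta_\Om$, the fundamental relation $W^*(1 \ot a)W = \Delta(a)$, and the cocycle identity. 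The main obstacle here is that corepresentations of $(A, \Delta_\Om)$ differ from $\Om$-corepresentations of $(A, \Delta)$ by an extra factor of $(\Om \ot 1)$; transferring between these two notions requires careful leg-numbering.
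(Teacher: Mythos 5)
Your part (i) is essentially the paper's argument in the ergodic finite-dimensional case (comparing the two trace-preserving embeddings $\be$ and $1\ot\id$ of a finite-dimensional factor, extracting $\Om$ from the commutant of $1\ot B(K)$, and deriving the cocycle identity by applying $\Delta\ot\id\ot\id$ and $\id\ot\Delta\ot\id$). But the opening "reduction to the ergodic finite-dimensional case" is not automatic and is where the actual work lies for general $K$: knowing that each diagonal corner $p_k B(K) p_k$ is implemented by a unitary $X_k$ does not immediately produce a single unitary implementing $\be$ on all of $B(K)$, because the off-diagonal corners $p_k B(K) p_l$ are only matched up to a unitary of $A$ in each corner, and these must be normalized consistently. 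The paper avoids this by working on the full $B(K)$ from the start: it uses finite-dimensionality of one corner only to get a partial isometry $V$ with $VV^*=1\ot e_{11}$, $V^*V=\be(e_{11})$, and then sets $X=\sum_i (1\ot e_{i1})V\be(e_{1i})$ over a full system of matrix units of $B(K)$. Your reduction is fillable, but you should either supply the gluing or adopt the direct construction.

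The genuine gap is in part (iii), and it propagates to (ii). Trace-cycling gives $(\vphi\ot\vphi)((c\ot1)\Om\Delta(a)\Om^*)=(\vphi\ot\vphi)\bigl(\Om^*(c\ot1)\Om\,\Delta(a)\bigr)$, and at this point invariance of $\vphi$ under $\Delta$ is of no help: it controls $(\vphi\ot\vphi)((d\ot1)\Delta(a))$ and $(\vphi\ot\vphi)((1\ot e)\Delta(a))$, but not $(\vphi\ot\vphi)((d\ot e)\Delta(a))$ for a general elementary tensor $d\ot e$ (already for $A=L(G)$ this equals $\tau(du_g)\tau(eu_g)$-type expressions, not $\tau(d)\tau(e)\tau(a)$). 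So the "upgrade" from the integrated identity $(\vphi\ot\vphi)\Delta_\Om=\vphi$ to pointwise invariance is exactly the nontrivial content of \cite[Proposition 5.1]{DeC10}; the paper obtains it from Lemma \ref{lem.Haar-1}, which requires proving that $(A\ot1)\Delta_\Om(A)$ and $\Delta_\Om(A)(1\ot A)$ span weakly dense subspaces of $A\ovt A$, and that density is established using part (ii) (a finite-dimensional $\Om$-corepresentation $X$, the unitarity of $\Xbar$ via Proposition \ref{action-BK.1}, and the corepresentations $Z=X_{12}Y_{13}\Xbar_{14}$ of $(A,\Delta_\Om)$). Consequently your order of proof is circular: your (iii) secretly needs (ii), while your (ii) invokes (iii). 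The fix is to prove (ii) first and without (iii): one checks directly that $\si(\Om)\cW^*$ (with $\cW$ the multiplicative unitary of $(A,\Delta\op)$) satisfies the $\Om$-corepresentation identity — a short leg-numbering computation using the cocycle relation — and then cuts down to a finite-dimensional irreducible piece via Proposition \ref{prop.invariant-trace}; only then can (iii) be completed along the lines above.
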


\begin{proof}
(i) By Proposition \ref{action-BK.2}, the fixed point algebra $B(K)^\be$ is discrete. Choose a minimal projection $p \in B(K)^\be$. The restriction of $\be$ to $p B(K) p = B(pK)$ defines an ergodic coaction on $B(pK)$. By Proposition \ref{action-BK.3}, $pK$ is finite dimensional, so that $B(pK) \cong M_n(\C)$ for some $n \geq 1$. Choose a system of matrix units $e_{ij}$ for $B(K)$ in such a way that the first $e_{ij}$ with $1 \leq i,j \leq n$ form a system of matrix units for $B(pK)$.

We have that $A \ot B(pK)$ is a finite von Neumann algebra and denote by $E_\cZ$ its center valued trace, i.e.\ the unique trace preserving conditional expectation $E_\cZ : A \ot B(pK) \to \cZ(A)$. Since the projections $\be(e_{ii})$, $i=1,\ldots,n$, are all equivalent via the partial isometries $\be(e_{ij})$ and since they sum up to $1 \ot p$, we get that $E_\cZ(\be(e_{11})) = n^{-1} 1 = E_\cZ(1 \ot e_{11})$. So, the projections $\be(e_{11})$ and $1 \ot e_{11}$ are equivalent. Choose a partial isometry $V \in A \ovt B(K)$ with $VV^* = 1 \ot e_{11}$ and $V^*V = \be(e_{11})$. Then, $X = \sum_i (1 \ot e_{i1})V \be(e_{1i})$ is a unitary in $A \ovt B(K)$ satisfying $\be(b) = X^* (1 \ot b) X$ for all $b \in B(K)$.

Since $\be$ is a coaction, it follows that
\begin{align*}
(\Delta \ot \id)(X)^* (1 \ot 1 \ot b) (\Delta \ot \id)(X) &= (\Delta \ot \id)\be(b) = (\id \ot \be)\be(b) \\ &= X^*_{23} X^*_{13} (1 \ot 1 \ot b) X_{13} X_{23}
\end{align*}
for all $b \in B(K)$. This means that $X_{13} X_{23} = (\Om \ot 1)(\Delta \ot \id)(X)$ for some unitary $\Om \in A \ovt A$. From this, it follows that $\Om$ is a unitary $2$-cocycle on $(A,\Delta)$, so that $X$ is an $\Om$-corepresentation.

(ii) Denote by $\si \in \Aut(A \ovt A)$ the flip map. Define $\Delta\op = \si \circ \Delta$ and $\Omtil = \si(\Om)$. Applying \ref{cqg.2} to the compact quantum group $(A,\Delta\op)$, we define the unitary corepresentation $\cW \in A \ovt B(H)$ of $(A,\Delta\op)$ by $\cW^* (1 \ot a \xi_0) = \Delta\op(a) (1 \ot \xi_0)$ for all $a \in A$. Then, $\cW^*$ is a unitary corepresentation of $(A,\Delta)$. Also, $\Delta\op(a) = \cW^* (1 \ot a) \cW$ for all $a \in A$.

We prove that $Z = \Omtil \cW^*$ is a unitary $\Om$-corepresentation of $(A,\Delta)$. Indeed,
\begin{align*}
(\Om \ot 1) (\Delta \ot \id)(Z) &= ((1 \ot \Om) (\id \ot \Delta)(\Om))_{312} \cW^*_{13} \cW^*_{23} = ((\Om \ot 1) (\Delta \ot \id)(\Om))_{312} \cW^*_{13} \cW^*_{23} \\
& = \Omtil_{13} ((\Delta\op \ot \id)(\Om) (\cW^* \ot 1))_{132} \cW^*_{23} = \Omtil_{13} ((\cW^* \ot 1) (1 \ot \Om))_{132} \cW^*_{23} \\
& = \Omtil_{13} \cW^*_{13} \Omtil_{23} \cW^*_{23} = Z_{13} Z_{23} \; .
\end{align*}
Then, $\be : B(H) \to A \ovt B(H) : \be(b) = Z^* (1 \ot b) Z$ defines a coaction of $(A,\Delta)$ on $B(H)$. By Proposition \ref{action-BK.2}, we can choose a minimal projection $p$ in the fixed point algebra $B(H)^\be$. The restriction of $\be$ to $p B(H) p$ is ergodic, so that by Proposition \ref{action-BK.3}, the projection $p \in B(H)$ has finite rank. Then, $X = (1 \ot p) Z = Z (1 \ot p)$ defines an irreducible, finite dimensional $\Om$-corepresentation.

(iii) Since $\Om$ is a unitary $2$-cocycle, the twisted comultiplication is co-associative. It thus suffices to prove that the tracial state $\vphi$ remains invariant under $\Delta_\Om$. Since $\vphi$ is tracial, we get that
$$(\vphi \ot \vphi)\Delta_\Om(a) = (\vphi \ot \vphi)(\Om \Delta(a) \Om^*) = (\vphi \ot \vphi)(\Delta(a)) = \vphi(a) \; ,$$
so that $(\vphi \ot \vphi)\Delta_\Om = \vphi$. By Lemma \ref{lem.Haar-1} below, it thus suffices to prove that $(A \ot 1)\Delta_\Om(A)$ and $\Delta_\Om(A) (1 \ot A)$ span weakly dense subspaces of $A \ovt A$.

By (ii), we can fix a finite dimensional unitary $\Om$-corepresentation $X \in A \ot M_n(\C)$. We denote by $\be : M_n(\C) \to A \ot M_n(\C) : \be(b) = X^* (1 \ot b) X$ the corresponding coaction. By Proposition \ref{action-BK.1}, the trace $\Tr$ on $M_n(\C)$ is invariant under $\be$. So, $(\id \ot \Tr)\be(e_{ij}) = \delta_{i,j} 1$, meaning that $\sum_{k=1}^n X^*_{ik} X_{jk} = \delta_{i,j} 1$. Defining the element $\Xbar \in A \ot M_n(\C)$ by ${\Xbar}_{ij} = X^*_{ij}$, this means that ${\Xbar} \, {\Xbar}^* = 1$. Since $A \ot M_n(\C)$ is a finite von Neumann algebra, it follows that $\Xbar$ is a unitary.

Since $X$ is an $\Om$-corepresentation, we have
$$\Om \Delta(X_{ij}) = \sum_{k=1}^n X_{ik} \ot X_{kj} \; .$$
Taking the adjoint, it follows that $(\Delta \ot \id)(\Xbar) (\Om^* \ot 1) = \Xbar_{13} \Xbar_{23}$.

So, whenever $Y \in A \ot M_m(\C)$ is a finite dimensional unitary corepresentation of $(A,\Delta)$, we can consider the unitary $Z = X_{12} Y_{13} \Xbar_{14} \in A \ot M_{nmn}(\C)$ and note that $(\Delta_\Om \ot \id)(Z) = Z_{13} Z_{23}$.

We also note that $\Xbar_{12} X_{13} \in A \ot M_{n^2}(\C)$ is a unitary corepresentation of $(A,\Delta)$. Therefore, the linear span of all the coefficients of the unitaries $Z$ constructed in the previous paragraph form a $*$-subalgebra $\cAtil \subset A$. By Theorem \ref{cqg.5}, the coefficients of the finite dimensional unitary corepresentations $Y$ of $(A,\Delta)$ span a dense $*$-subalgebra $\cA$ of $A$. Then also $\cAtil$ is dense in $A$.

Since $(\Delta_\Om \ot \id)(Z) = Z_{13} Z_{23}$ and $Z$ is unitary, we get that
$$\sum_{r=1}^n (Z^*_{ri} \ot 1) \Delta_\Om(Z_{rj}) = \sum_{r,k=1}^n Z^*_{ri} Z_{rk} \ot Z_{kj} = 1 \ot Z_{ij} \; .$$
It follows that $1 \ot \cAtil \subset \lspan (\cAtil \ot 1)\Delta_\Om(\cAtil)$ and thus $\cAtil \ot \cAtil \subset \lspan (\cAtil \ot 1)\Delta_\Om(\cAtil)$. By density of $\cAtil$ in $A$, we get that $(A \ot 1)\Delta_\Om(A)$ spans a weakly dense subspace of $A \ovt A$. Similarly, $\Delta_\Om(A) (1 \ot A)$ spans a weakly dense subspace of $A \ovt A$.
\end{proof}

\begin{corollary}\label{cor.vanishing-char}
Let $(A,\Delta)$ be a Kac type compact quantum group. Then the following are equivalent.
\begin{enumlist}
\item Every unitary $2$-cocycle on $(A,\Delta)$ is a coboundary.
\item For every $n \in \N$ and every coaction $\be : M_n(\C) \to A \ot M_n(\C)$, there exists a unitary corepresentation $X \in A \ot M_n(\C)$ such that $\be(b) = X^* (1 \ot b) X$ for all $b \in M_n(\C)$.
\item For every $n \in \N$ and every ergodic coaction $\be : M_n(\C) \to A \ot M_n(\C)$, there exists a unitary corepresentation $X \in A \ot M_n(\C)$ such that $\be(b) = X^* (1 \ot b) X$ for all $b \in M_n(\C)$.
\end{enumlist}
\end{corollary}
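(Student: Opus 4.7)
The implication (ii)$\Rightarrow$(iii) is immediate since ergodic coactions form a subclass. The plan is therefore to prove (i)$\Rightarrow$(ii) and (iii)$\Rightarrow$(i), closing the cycle. Both directions rely on the dictionary between unitary $2$-cocycles and coactions on type I factors that was already set up in Proposition \ref{prop.coc}, so essentially no new analysis is required: the work consists of translating the ``cocycle is a coboundary'' relation back and forth with the ``corepresentation implements a coaction'' relation.

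For (i)$\Rightarrow$(ii), start from a coaction $\be : M_n(\C) \to A \ot M_n(\C)$. Proposition \ref{coc.1} produces a unitary $2$-cocycle $\Om$ on $(A,\Delta)$ and an $\Om$-corepresentation $X \in \cU(A \ot M_n(\C))$ with $\be(b) = X^*(1 \ot b)X$. By (i), write $\Om = (v \ot v)\Delta(v^*)$ for some unitary $v \in \cU(A)$; equivalently, $\Om^*(v \ot v) = \Delta(v)$. Then set $Y := (v^* \ot 1) X$. A direct computation using $(\Om \ot 1)(\Delta \ot \id)(X) = X_{13} X_{23}$ and the coboundary relation gives
\[
(\Delta \ot \id)(Y) = (\Delta(v^*) \Om^* \ot 1)(X_{13} X_{23}) = (v^* \ot v^* \ot 1) X_{13} X_{23} = Y_{13} Y_{23},
\]
so $Y$ is a genuine unitary corepresentation. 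Moreover $(v \ot 1)$ commutes with $1 \ot M_n(\C)$, so $Y^*(1 \ot b)Y = X^*(1 \ot b)X = \be(b)$, and $Y$ implements $\be$.

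For (iii)$\Rightarrow$(i), start with a unitary $2$-cocycle $\Om$ on $(A,\Delta)$. Proposition \ref{coc.2} produces an irreducible, finite-dimensional $\Om$-corepresentation $X \in \cU(A \ot M_n(\C))$. The associated coaction $\be(b) = X^*(1 \ot b) X$ is ergodic, because irreducibility of $X$ precisely means that the fixed point algebra is $\C 1$. By (iii), there exists a unitary corepresentation $Y \in \cU(A \ot M_n(\C))$ of $(A,\Delta)$ with $\be(b) = Y^*(1 \ot b)Y$. Then $X Y^*$ commutes with $1 \ot M_n(\C)$; since the relative commutant of $1 \ot M_n(\C)$ in $A \ot M_n(\C)$ is $A \ot \C$, we conclude $X = (v \ot 1) Y$ for a unitary $v \in \cU(A)$. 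Substituting into $(\Om \ot 1)(\Delta \ot \id)(X) = X_{13} X_{23}$ and using that $(\Delta \ot \id)(Y) = Y_{13} Y_{23}$ is a unitary, the $M_n$-factor cancels and one reads off $\Om \Delta(v) = v \ot v$, i.e.\ $\Om = (v \ot v) \Delta(v^*)$, so $\Om$ is a coboundary.

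There is no real obstacle: the structural content has already been absorbed into Proposition \ref{prop.coc}, and what remains is purely formal bookkeeping with leg numbering and the defining identities. The one point to keep straight is that the move from ``$\Om$-corepresentation'' to ``corepresentation'' is implemented by multiplication by $v \ot 1$ on the left, which respects the coaction $\be$ precisely because $v \ot 1$ commutes with $1 \ot M_n(\C)$; this is exactly the ingredient that makes both implications dual to each other.
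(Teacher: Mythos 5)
Your proof is correct and follows essentially the same route as the paper: Proposition \ref{coc.1} for (i)$\Rightarrow$(ii), Proposition \ref{coc.2} plus the relative commutant argument for (iii)$\Rightarrow$(i), with the same cancellation yielding $\Om\Delta(v)=v\ot v$. Your explicit verification that the correcting unitary is $(v^*\ot 1)$ rather than $(v\ot 1)$ is the right sign convention for the paper's definition of coboundary; otherwise the arguments coincide.
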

\begin{proof}
(i) $\Rightarrow$ (ii). By Proposition \ref{coc.1}, we can take a unitary $2$-cocycle $\Om$ on $(A,\Delta)$ and a unitary $\Om$-corepresentation $X \in A \ot M_n(\C)$ such that $\be(b) = X^* (1 \ot b) X$ for all $b \in M_n(\C)$. By the assumption in (i), we can take a unitary $v \in \cU(A)$ such that $\Om = (v \ot v) \Delta(v^*)$. Replacing $X$ by $(v \ot 1)X$, we have found the required unitary corepresentation.

(ii) $\Rightarrow$ (iii) is trivial.

(iii) $\Rightarrow$ (i). Take a unitary $2$-cocycle $\Om$ on $(A,\Delta)$. By Proposition \ref{coc.2}, we can choose $n \in \N$ and an irreducible unitary $\Om$-corepresentation $X \in A \ot M_n(\C)$. Then $\be : M_n(\C) \to A \ot M_n(\C) : \be(b) = X^* (1 \ot b) X$ is an ergodic coaction of $(A,\Delta)$ on $M_n(\C)$. By the assumption in (iii), we can choose a unitary corepresentation $Y \in A \ot M_n(\C)$ such that $\be(b) = Y^* (1 \ot b) Y$ for all $b \in M_n(\C)$. Then $XY^*$ commutes with $1 \ot M_n(\C)$ and we find a unitary $v \in \cU(A)$ such that $X = (v \ot 1)Y$. Since $X$ is an $\Om$-corepresentation and $Y$ is a corepresentation, it follows that $\Om \Delta(v) = v \ot v$, so that $\Om$ is a coboundary.
\end{proof}

The proof of the following lemma is essentially contained in \cite[Lemma 4.3]{MVD98}.

\begin{lemma}\label{lem.Haar-density}
Let $A$ be a von Neumann algebra and $\Delta : A \to A \ovt A$ a unital faithful normal $*$-homomorphism that is co-associative. Assume that $\psi$ is a faithful normal state on $A$ such that $(\psi \ot \psi)\Delta = \psi$.
\begin{enumlist}
\item\label{lem.Haar-1} If both $(A \ot 1) \Delta(A)$ and $\Delta(A)(1 \ot A)$ span weakly dense subspaces of $A \ovt A$, then $\psi$ is an invariant state and $(A,\Delta)$ is a compact quantum group with Haar state $\psi$.
\item\label{lem.Haar-2} If $(A,\Delta)$ is a compact quantum group, then $\psi$ is the Haar state.
\end{enumlist}
\end{lemma}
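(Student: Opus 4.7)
The proof follows the line of argument indicated in \cite[Lemma 4.3]{MVD98}. For part (i), the goal is to upgrade the self-invariance $(\psi \ot \psi)\Delta = \psi$ to full left and right invariance $(\psi \ot \id)\Delta(a) = \psi(a) 1 = (\id \ot \psi)\Delta(a)$, which is precisely what Definition \ref{def.cqg} requires in order for $(A,\Delta)$ to be a compact quantum group with Haar state $\psi$. Set $L(a) := (\psi \ot \id)\Delta(a)$ and $R(a) := (\id \ot \psi)\Delta(a)$. Co-associativity gives $\Delta L = (L \ot \id)\Delta$ and $\Delta R = (\id \ot R)\Delta$, and the hypothesis $(\psi \ot \psi)\Delta = \psi$ yields $\psi L = \psi R = \psi$ together with $L^2 = L$, $R^2 = R$. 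By Tomiyama's theorem, $L$ and $R$ are $\psi$-preserving normal conditional expectations onto von Neumann subalgebras $B := L(A)$ and $C := R(A)$ of $A$, automatically satisfying $\Delta(B) \subset B \ovt A$ and $\Delta(C) \subset A \ovt C$. Since $\psi(L(a)) = \psi(a)$, invariance of $\psi$ reduces to $B = C = \C 1$.

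Applying the normal conditional expectations $L \ot \id$ and $\id \ot R$ to the weakly dense subsets $\Delta(A)(1 \ot A)$ and $(A \ot 1)\Delta(A)$ of $A \ovt A$, one concludes that $\Delta(B)(1 \ot A)$ and $(A \ot 1)\Delta(C)$ are weakly dense in $B \ovt A$ and $A \ovt C$, respectively. The main obstacle is then to extract $B = C = \C 1$ from these density statements. The plan is to realize $A$ on the GNS Hilbert space $H := L^2(A, \psi)$ with cyclic-separating vector $\xi_0$, introduce the isometry $V : H \to H \ot H$ defined by $V(a\xi_0) = \Delta(a)(\xi_0 \ot \xi_0)$ (which satisfies $V a = \Delta(a) V$ by co-associativity), and use the density hypotheses---translated into statements about vectors in $H \ot H$ via the cyclicity of $\xi_0$ for both $A$ and its commutant $A'$---to show that $V$ is surjective, hence unitary. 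Once $\Delta(a) = V a V^*$, the implementing multiplicative unitary together with $V^*(\xi_0 \ot \xi_0) = \xi_0$ yields $B = C = \C 1$ and hence the invariance of $\psi$.

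For part (ii), assume $(A, \Delta)$ is already a compact quantum group with Haar state $\vphi$. By Theorem \ref{cqg.5}, the $*$-subalgebra $\cA$ of matrix coefficients of finite-dimensional unitary corepresentations $X \in A \ot M_n(\C)$ is weakly dense in $A$. Unitarity of $X$ together with $\Delta(X_{ik}) = \sum_\ell X_{i\ell} \ot X_{\ell k}$ gives the identity
\[
X_{ij} \ot 1 = \sum_k \Delta(X_{ik})(1 \ot X_{jk}^*) \in \Delta(A)(1 \ot A),
\]
so $\cA \otalg \C 1 \subset \Delta(A)(1 \ot A)$; multiplying on the right by $1 \ot A$ places $\cA \otalg A$ inside $\Delta(A)(1 \ot A)$, and since $\cA \otalg A$ is weakly dense in $A \ovt A$, so is $\Delta(A)(1 \ot A)$. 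The analogous argument using $\sum_k X_{ki}^* X_{kj} = \delta_{ij} 1$ produces $1 \ot X_{ij} = \sum_k (X_{ki}^* \ot 1) \Delta(X_{kj})$ and yields weak density of $(A \ot 1)\Delta(A)$. Part (i) then applies, so $\psi$ is an invariant state; the one-line computation $\psi(a) = (\vphi \ot \psi)\Delta(a) = \psi(\vphi(a) 1) = \vphi(a)$, using right-invariance of $\vphi$, gives $\psi = \vphi$.
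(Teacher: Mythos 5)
Your reductions are fine up to the point where everything hinges on showing $B = C = \C 1$, but the mechanism you propose for that step fails. The map $V : H \to H \ot H$, $V(a\xi_0) = \Delta(a)(\xi_0 \ot \xi_0)$, is indeed a well-defined isometry (this is exactly what $(\psi\ot\psi)\Delta = \psi$ gives you), but it is essentially never surjective: its range is the closure of $\Delta(A)(\xi_0\ot\xi_0)$, an isometric copy of $H$ inside $H \ot H$. Already for $A = L^\infty(K)$ with $K$ a nontrivial compact group, this range consists of the functions $(k_1,k_2)\mapsto F(k_1k_2)$, a proper subspace of $L^2(K\times K)$; consequently $VaV^* = \Delta(a)VV^* \neq \Delta(a)$. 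The density of $\Delta(A)(1\ot A)$ only gives density of $\Delta(A)(\xi_0 \ot A\xi_0)$, not of $\Delta(A)(\xi_0\ot\xi_0)$, and the operator on $H\ot H$ that would absorb the extra leg ($a\xi_0 \ot c\xi_0 \mapsto \Delta(a)(\xi_0\ot c\xi_0)$) cannot be shown to be isometric without the invariance of $\psi$ you are trying to prove. So the key step of (i) is broken. The missing idea, which slots directly into your framework, is this consequence of the hypothesis: for $b = L(a) = (\psi\ot\id)\Delta(a)$ one has $(\psi\ot\id)\Delta(b) = b$ by co-associativity, whence $(\psi\ot\psi)\bigl((\Delta(b) - 1\ot b)^*(\Delta(b)-1\ot b)\bigr) = 0$ and, by faithfulness of $\psi\ot\psi$, $\Delta(b) = 1 \ot b$ for all $b \in B$ (symmetrically $\Delta(c) = c\ot 1$ for $c \in C$). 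Then $\lspan \Delta(B)(1\ot A) = 1 \ot \lspan BA \subset \C 1\ovt A$, and your density statement forces $B\ovt A \subset \C 1 \ovt A$, i.e.\ $B = \C 1$; similarly $C = \C 1$. This is in substance the paper's argument, which phrases it without conditional expectations and deduces invariance directly from $\Delta(b)=b\ot1$ together with the density of $(A\ot1)\Delta(A)$.

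Two further points. Your proof of (ii) detours through (i) and therefore needs the two density statements, which you extract from Theorem \ref{cqg.5}; but that theorem is proved in this paper only for Kac type compact quantum groups, while Lemma \ref{lem.Haar-density} carries no Kac assumption. The paper's proof of (ii) avoids both issues: with $b = (\id\ot\psi)\Delta(a)$ and $\Delta(b) = b\ot 1$ as above, the Haar state $\vphi$ gives $b = (\id\ot\vphi)\Delta(b) = \vphi(b)1$, so $b = \psi(a)1$, i.e.\ $\psi$ is invariant, and then $\vphi = \psi$ exactly as in your final computation. Finally, $Va = \Delta(a)V$ follows from multiplicativity of $\Delta$, not co-associativity; harmless, but symptomatic of the step not having been checked.
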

\begin{proof}
Fix $a \in A$. Define $b = (\id \ot \psi)\Delta(a)$. Since $(\psi \ot \psi)\Delta = \psi$, we get that $(\id \ot \psi)\Delta(b) = b$. From this it follows that
$$(\psi \ot \psi)((\Delta(b) - b \ot 1)^* (\Delta(b) - b \ot 1)) = (\psi \ot \psi)\Delta(b^* b) - \psi(b^* b) = 0 \; .$$
Since $\psi \ot \psi$ is faithful, we conclude that $\Delta(b) = b \ot 1$.

(i) By the introductory paragraph, for all $a \in A$,
$$(\id \ot \psi)\Delta(a) \ot 1 = (\id \ot \id \ot \psi)(\Delta \ot \id)\Delta(a) = (\id \ot (\id \ot \psi)\Delta)(\Delta(a)) \; .$$
This implies that
$$(\psi \ot \psi)((b \ot 1)\Delta(a)) \ot 1 = (\psi \ot (\id \ot \psi)\Delta)((b \ot 1)\Delta(a))$$
for all $a,b \in A$. Taking linear combinations, by continuity and the assumption that $(A \ot 1) \Delta(A)$ spans a weakly dense subspace of $A \ovt A$, we may replace $(b \ot 1)\Delta(a)$ by $1 \ot d$, for $d \in A$ arbitrary, and conclude that $(\id \ot \psi)\Delta(d) = \psi(d) 1$ for all $d \in A$.

We similarly obtain right invariance: $(\psi \ot \id)\Delta(d) = \psi(d)1$ for all $d \in A$. So, $(A,\Delta)$ is a compact quantum group with Haar state $\psi$.

(ii) By the introductory paragraph, whenever $b = (\id \ot \psi)\Delta(a)$, we have that $\Delta(b) = b \ot 1$. Let $\vphi$ be the Haar state of $(A,\Delta)$. Then, $\vphi(b) 1 = (\id \ot \vphi)\Delta(b) = b$. So, $b$ is a multiple of $1$ and thus, $b = \psi(b) 1$. Since $\psi(b) = (\psi \ot \psi)\Delta(a) = \psi(a)$, we have proven that $(\id \ot \psi)\Delta(a) = \psi(a) 1$ for all $a \in A$. Applying $\vphi$, it follows that $\vphi(a) = \psi(a)$ for all $a \in A$.
\end{proof}

\subsection{\boldmath Vanishing and nonvanishing of unitary $2$-cocycles}\label{sec.vanishing-and-nonvanishing}

In \cite[Proposition 7.3]{DeC10}, it was proven that every unitary $2$-cocycle on $(L(G),\Delta_G)$ is a coboundary if $G$ is a torsion free discrete group. We generalize this to crossed product compact quantum groups in Proposition \ref{prop.cocycle-reduce-to-core.one} below. So we first introduce this concept.

If $\Gamma \actson^\al (A,\Delta_A)$ is an action of a discrete group $\Gamma$ by quantum group automorphisms of $(A,\Delta_A)$, the crossed product von Neumann algebra $B = A \rtimes_\al \Gamma$ carries a unique comultiplication
$$\Delta_B : B \to B \ovt B : \Delta_B(a u_g) = \Delta_A(a) (u_g \ot u_g) \quad\text{for all $a \in A$ and $g \in \Gamma$.}$$
It is easy to see that $\Delta_B$ is co-associative. The Haar state $\vphi_A$ of $(A,\Delta)$ has a canonical extension to a faithful normal state $\vphi_B$ on $B$ satisfying $\vphi_B(au_g) = 0$ for all $a \in A$ and $g \in \Gamma \setminus \{e\}$. It is easy to check that $\vphi_B$ is an invariant state on $(B,\Delta_B)$, which thus is a compact quantum group. Note that $(B,\Delta_B)$ is of Kac type if and only if $(A,\Delta_A)$ is of Kac type.

\begin{proposition}\label{prop.cocycle-reduce-to-core}
Let $\Gamma$ be a discrete group and $\Gamma \actson^\al (A,\Delta)$ an action of $\Gamma$ on the Kac type compact quantum group $(A,\Delta)$. Denote the crossed product as $(B,\Delta)$.
\begin{enumlist}
\item\label{prop.cocycle-reduce-to-core.one} For every unitary $2$-cocycle $\Om$ on $(B,\Delta)$, there exists a finite subgroup $\Lambda < \Gamma$ such that $\Om$ is cohomologous to a $2$-cocycle $\Om_0$ that belongs to $(A \rtimes \Lambda) \ovt (A \rtimes \Lambda)$.
\item\label{prop.cocycle-reduce-to-core.two} Let $\Om_1$ and $\Om_2$ be unitary $2$-cocycles for $(A,\Delta)$. Then $\Om_1$ is cohomologous with $\Om_2$ as unitary $2$-cocycles for $(B,\Delta)$ if and only if there exists a $g \in \Gamma$ such that $\Om_1$ is cohomologous with $(\al_g \ot \al_g)(\Om_2)$ as unitary $2$-cocycles for $(A,\Delta)$.
\item\label{prop.cocycle-reduce-to-core.three} Nontrivial unitary $2$-cocycles for $(A,\Delta)$, resp.\ $(L(\Gamma),\Delta)$, remain nontrivial as unitary $2$-cocycles for $(B,\Delta)$.
\end{enumlist}
\end{proposition}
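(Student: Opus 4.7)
The three parts are linked: part (ii) is the technical heart, and once established, part (iii) for $(A,\Delta)$-cocycles is the immediate specialization $\Om_2 = 1$. The reverse direction of (ii) is a short calculation: using $(\al_g \ot \al_g)(\Om_2) = (u_g \ot u_g)\Om_2(u_g^* \ot u_g^*)$ and $\Delta_B(u_g^* v^*) = (u_g^* \ot u_g^*)\Delta_A(v^*)$, setting $w = v u_g \in \cU(B)$ yields $\Om_1 = (w \ot w)\Om_2\Delta_B(w^*)$. For the forward direction, I expand $w = \sum_g v_g u_g$ with $v_g \in A$, $\|v_g\|_\infty \leq 1$ and $\sum_g \|v_g\|_2^2 = 1$ in the tracial norm on $B$. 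Computing $(w \ot w)\Om_2 \Delta_B(w^*)$ in the natural $\Gamma \times \Gamma$-grading of $B \ovt B$ (in which $u_g \ot u_h$ has degree $(g,h)$ and $\Delta_B(u_k) = u_k \ot u_k$ has degree $(k,k)$), the assumption $\Om_1 \in A \ovt A$ of degree $(e,e)$ means that only the diagonal terms with $g_1 = g_2 = g_3^{-1}$ survive, yielding
\[
\Om_1 = \sum_{g \in \Gamma} (v_g \ot v_g)(\al_g \ot \al_g)(\Om_2)\Delta_A(v_g^*).
\]

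Next, I pair this identity against $\Om_1^*$ under $\tau \ot \tau$. Since $\Om_1^*$ and $(\al_g \ot \al_g)(\Om_2)$ are unitaries in the tracial von Neumann algebra and $\Delta_A$ preserves $\|\cdot\|_2$, Cauchy--Schwarz bounds each summand by $\|v_g \ot v_g\|_2 \cdot \|\Delta_A(v_g^*)\|_2 = \|v_g\|_2^3$. Summing gives $1 \leq \sum_g \|v_g\|_2^3 \leq (\max_g \|v_g\|_2) \cdot \sum_g \|v_g\|_2^2 = \max_g \|v_g\|_2$; combined with $\|v_g\|_2 \leq 1$ this forces $\|v_{g_0}\|_2 = 1$ for some $g_0$. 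A contraction $v_{g_0} \in A$ of $\|\cdot\|_2$-norm one must be unitary, and $\sum_g \|v_g\|_2^2 = 1$ then forces $v_h = 0$ for $h \neq g_0$; substituting back yields the cohomology in $A$ and completes (ii). For $(L(\Gamma),\Delta)$-cocycles in (iii), the same Fourier strategy applies: from $\Om = (w \ot w)\Delta_B(w^*)$ with $w = \sum_g v_g u_g$, one analyzes both sides in the $\Gamma \times \Gamma$-grading and derives via an analogous $L^2$-estimate that each nonzero $v_g$ must be a scalar, so $w \in L(\Gamma)$.

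Part (i) is where I expect the main obstacle. My strategy is to apply Proposition \ref{coc.2} to replace $\Om$ by an irreducible finite-dimensional $\Om$-corepresentation $X \in B \ot M_n(\C)$; under the coboundary transformation $\Om \mapsto (w \ot w)\Om\Delta_B(w^*)$ one has $X \mapsto (w \ot 1)X$. Writing $X = \sum_g x_g u_g$ with $x_g \in A \ot M_n(\C)$, the goal is to choose $w \in \cU(B)$ so that $(w \ot 1)X$ has Fourier support in a finite subgroup $\Lambda < \Gamma$, in which case the transformed cocycle lies in $(A \rtimes \Lambda) \ovt (A \rtimes \Lambda)$. The ergodic coaction $\be(b) = X^*(1 \ot b)X$ on $M_n(\C)$, combined with the canonical dual $L(\Gamma)$-coaction on $B$, produces a $\Gamma$-grading on the finite-dimensional image $\be(M_n(\C))$. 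The hardest step is upgrading finiteness of the Fourier support to containment in a finite subgroup of $\Gamma$; this should follow from the $\Om$-corepresentation equation $(\Om \ot 1)(\Delta_B \ot \id)(X) = X_{13}X_{23}$, which propagates a multiplicative compatibility between the Fourier supports of $X$ and $\Om$ and, via $n^2$-dimensionality of $\be(M_n(\C))$, forces closure under multiplication.
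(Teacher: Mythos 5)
Part (ii) of your proposal is correct and takes a genuinely different route from the paper. Your identity $\Om_1 = \sum_g (v_g \ot v_g)(\al_g \ot \al_g)(\Om_2)\Delta_A(v_g^*)$ (the degree-$(e,e)$ component of $(w\ot w)\Om_2\Delta_B(w^*)$) is right, and the Cauchy--Schwarz extremality argument $1 \leq \sum_g \|v_g\|_2^3 \leq \sum_g\|v_g\|_2^2 = 1$ correctly forces a single unitary Fourier coefficient. The paper instead compares Fourier coefficients in the second tensor leg only, obtaining $vu_g^*\ot a_g \in A\ovt A$ directly for each $g$ with $a_g\neq 0$; that is shorter but your $L^2$ argument is sound. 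However, your treatment of the $(L(\Gamma),\Delta)$ case of (iii) contains a genuine error: it is \emph{not} true that $w$ must lie in $L(\Gamma)$. The implementing unitary of a coboundary is only determined up to right multiplication by a group-like unitary of $(B,\Delta)$, and these are exactly the elements $au_g$ with $a\in A$ group-like ($\Delta_A(a)=a\ot a$). If $A$ has a nontrivial group-like unitary $a$ (e.g.\ $A = L^\infty(K)$ with $K$ compact abelian), then $w_0au_g$ implements the same coboundary as $w_0\in L(\Gamma)$ but has non-scalar Fourier coefficients $c_h\al_h(a)$. Moreover, for general $\Om\in L(\Gamma)\ovt L(\Gamma)$ all degrees $(g,h)$ occur, so the single-component extraction that drives your part (ii) is unavailable. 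The paper's fix is to pass to an $\Om$-corepresentation $X$ and apply the counit-induced map $\psi:\cB\to\C[\Gamma]$ to the corepresentation $Y=(v\ot 1)X$, producing a genuine corepresentation $Z$ of $(L(\Gamma),\Delta)$ implementing the same coaction; you need some such averaging device.

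For part (i), your outline identifies the right objects but leaves the crux unproven. Concretely: you must show that the set $\Lambda$ of $g\in\Gamma$ occurring in the $\Gamma$-grading of the corepresentation $Z$ on $K=M_n(\C)$ carrying the coaction $\be$ is a \emph{subgroup}. Closure under inversion uses that the contragredient of a subcorepresentation of $Z$ is again a subcorepresentation of $Z$ (via $\sum_i S_i\ot e_i\mapsto \sum_i S_i^*\ot e_i$ on the intertwiner spaces $\cS_Y$) together with $\overline{\Irr_g}=\Irr_{g^{-1}}$. Closure under multiplication is the delicate point: given $Y\in\Irr_g$, $Y'\in\Irr_h$ both occurring in $Z$, one must produce an irreducible subcorepresentation of $Y\ot Y'$ that \emph{also} occurs in $Z$; the membership in $\Irr_{gh}$ is automatic, but the occurrence in $Z$ requires observing that $\cS_Y$ and $\cS_{Y'}$ contain isometries (by ergodicity of $\be$) and that the product of two isometric intertwiners is a nonzero isometry, hence has a nonzero component on some irreducible. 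Finally, even once $\be(M_n(\C))\subset (A\rtimes\Lambda)\ot M_n(\C)$ is established, you cannot directly "choose $w$ so that $(w\ot 1)X$ has support in $A\rtimes\Lambda$"; you must first invoke Proposition \ref{coc.1} for the smaller quantum group $(A\rtimes\Lambda,\Delta)$ to produce an $\Om_0$-corepresentation $X_0\in (A\rtimes\Lambda)\ot M_n(\C)$ implementing $\be$, and then deduce $X=(v\ot 1)X_0$ from the fact that $XX_0^*$ commutes with $1\ot M_n(\C)$. As written, your part (i) is a plausible plan rather than a proof.
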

\begin{proof}
(i) Let $\Om$ be a unitary $2$-cocycle on $(B,\Delta)$. By Proposition \ref{coc.2}, we can choose an irreducible unitary $\Om$-corepresentation $X \in B \ot M_n(\C)$ and define the associated ergodic coaction $\be : M_n(\C) \to B \ot M_n(\C) : \be(b) = X^*(1 \ot b) X$.

Denote by $K$ the Hilbert space $M_n(\C)$ on which the scalar product is given by the trace $\Tr$. By Proposition \ref{action-BK.1}, the trace $\Tr$ is invariant under $\be$. By Theorem \ref{cqg.1}, we can define the unitary corepresentation $Z \in B \ot B(K)$ of $(B,\Delta)$ by $Z^* (1 \ot b) = \be(b)$ for all $b \in K = M_n(\C)$.

Below we analyze which irreducible unitary corepresentations of $(B,\Delta)$ are unitarily conjugate to a sub-corepresentation of $Z$. We actually introduce there an ad hoc approach to the theory of spectral subspaces due to \cite{Boc92}. Before doing that, we fix a specific complete set of inequivalent irreducible unitary corepresentations of $(B,\Delta)$.

Fix a complete set $\Irr(A,\Delta)$ of inequivalent irreducible unitary corepresentations of $(A,\Delta)$. For every $X \in \Irr(A,\Delta)$ and $g \in \Gamma$, we have that $X(u_g \ot 1)$ is an irreducible unitary corepresentation of $(B,\Delta)$. By Theorem \ref{cqg.6}, the coefficients of $X \in \Irr(A,\Delta)$ form an orthogonal basis of the GNS-Hilbert space of $(A,\vphi)$. So, the coefficients of $X(u_g \ot 1)$, $X \in \Irr(A,\Delta)$, $g \in \Gamma$, form an orthogonal basis of the GNS-Hilbert space of $(B,\vphi)$. By Theorem \ref{cqg.6}, $\Irr(B,\Delta) := \{X(u_g \ot 1) \mid X \in \Irr(A,\Delta), g \in \Gamma\}$ must be a complete set of inequivalent irreducible unitary corepresentations of $(B,\Delta)$.

Define $\cI \subset \Irr(B,\Delta)$ as the set of $Y \in \Irr(B,\Delta)$ such that $\Ybar$ is equivalent to a sub-corepresen\-tation of $Z$. Since $K$ is finite dimensional, $\cI$ is a finite set. Note that if $Y \in \Irr(B,\Delta)$ is $k$-dimensional, then $v \in B(\C^k,K)$ is an intertwiner between $\Ybar$ and $Z$ if and only if $\sum_{i=1}^k v(e_i) \ot e_i$ belongs to
$$\cS_Y = \{S \in M_n(\C) \ot \C^k \mid (\be \ot \id)(S) = Y_{13} S_{23} \}\; .$$
It follows that $Y \in \cI$ if and only if $\cS_Y \neq \{0\}$. If $S \in \cS_Y$, we have that $\be(S^*S) = 1 \ot S^* S$, so that $S^* S$ is a multiple of $1$ by ergodicity of $M_n(\C)$. We conclude that $Y \in \cI$ if and only if $\cS_Y$ contains an isometry.

If $\sum_i S_i \ot e_i$ belongs to $\cS_Y$, we have that $\sum_i S_i^* \ot e_i$ belongs to $\cS_{\Ybar}$. So, $\overline{\cI} = \cI$.

If $Y,Y' \in \cI$, of dimensions $k,k'$, we can choose isometries $S \in \cS_Y$ and $S' \in \cS_{Y'}$. Then $S\dpr := S_{12} S'_{13} \in M_n(\C) \ot \C^k \ot \C^{k'}$ is also an isometry. Denote by $Y\dpr = Y_{12} Y'_{13}$ the tensor product of $Y$ and $Y'$. Then, $(\be \ot \id)(S\dpr) = Y\dpr_{13} S\dpr_{23}$. Since $Y\dpr$ can be written as a direct sum of irreducible unitary corepresentations and $S\dpr$ is nonzero, we can find $U \in \Irr(B,\Delta)$ of dimension $s$ and an intertwiner $w \in B(\C^k \ot \C^{k'},\C^s)$ from $Y\dpr$ to $U$ such that $T:=(1 \ot w) S\dpr \neq 0$. By construction, $T \in \cS_U$. We have thus proven that for all $Y,Y' \in \cI$, there exists a sub-corepresentation $U$ of the tensor product of $Y$ and $Y'$ with $U \in \cI$.

For every $g \in \Gamma$, define $\Irr_g \subset \Irr(B,\Delta)$ as the set of irreducible unitary corepresentations of the form $X (u_g \ot 1)$ with $X \in \Irr(A,\Delta)$. Since $\Gamma \actson (A,\Delta)$ is an action by quantum group automorphisms, it is easy to check that $\overline{\Irr_g} = \Irr_{g^{-1}}$ and that every irreducible sub-corepresentation of the tensor product of $Y \in \Irr_g$ and $Y' \in \Irr_h$ belongs to $\Irr_{gh}$.

Define the finite subset $\Lambda \subset \Gamma$ of $g \in \Gamma$ such that $\Irr_g \cap \cI \neq \emptyset$. Since $\overline{\cI} = \cI$ and $\overline{\Irr_g} = \Irr_{g^{-1}}$, we find that $\Lambda = \Lambda^{-1}$. If $g,h \in \Lambda$, we can choose $Y \in \Irr_g \cap \cI$ and $Y' \in \Irr_h \cap \cI$. We have seen above that there exists an irreducible sub-corepresentation $U$ of the tensor product of $Y$ and $Y'$ that belongs to $\cI$. Then also $U \in \Irr_{gh}$, so that $gh \in \Lambda$.

We have thus proven that $\Lambda$ is a finite subgroup of $\Gamma$. Define $B_0 = A \rtimes \Lambda$. Note that $(B_0,\Delta)$ is itself a compact quantum group of Kac type. By definition of $\Lambda$, all irreducible sub-corepresentations of $Z$ are unitary corepresentations of $(B_0,\Delta)$. This means that $Z \in B_0 \ot B(K)$, so that $\be(M_n(\C)) \subset B_0 \ot M_n(\C)$.

By Proposition \ref{coc.1}, there exists a unitary $2$-cocycle $\Om_0$ on $(B_0,\Delta)$ and a unitary $\Om_0$-corepresentation $X_0 \in B_0 \ot M_n(\C)$ such that $\be(b) = X_0^* (1 \ot b) X_0$ for all $b \in M_n(\C)$. Then $XX_0^*$ commutes with $1 \ot M_n(\C)$ and we find a unitary $v \in \cU(B)$ such that $X = (v \ot 1) X_0$. Since $X$ is an $\Om$-corepresentation, while $X_0$ is an $\Om_0$-corepresentation, it follows that $\Om = (v \ot v)\Om_0 \Delta(v^*)$, so that $\Om$ is cohomologous to $\Om_0$.

(ii) First assume that $\Om_1$ is cohomologous with $\Om_2$ as unitary $2$-cocycles for $(B,\Delta)$. Take a unitary $v \in B$ such that $\Om_1 \Delta(v) = (v \ot v) \Om_2$. Denote by $v = \sum_{g \in \Gamma} a_g u_g$, with $a_g \in A$, the Fourier decomposition of $v \in A \rtimes \Gamma$. Since $\Om_1 \Delta(v) = (v \ot v) \Om_2$, we get that
$$\sum_{g \in \Gamma} \Om_1 \Delta(a_g) (u_g \ot u_g) = \sum_{g \in \Gamma} (v \ot a_g u_g) \Om_2 = \sum_{g \in \Gamma} (v \ot a_g) (\id \ot \al_g)(\Om_2) (1 \ot u_g) \; .$$
Consider the Fourier decomposition in the second component of the tensor product, it follows that
$$\Om_1 \Delta(a_g) (u_g \ot 1) = (v \ot a_g) (\id \ot \al_g)(\Om_2) \quad\text{for all $g \in \Gamma$.}$$
This means that
$$\Om_1 \Delta(a_g) (\al_g \ot \al_g)(\Om_2^*) = vu_g^* \ot a_g \quad\text{for all $g \in \Gamma$.}$$
Fix a $g \in \Gamma$ such that $a_g \neq 0$. By the previous equality, $v u_g^* \ot a_g \in A \ovt A$. Since $a_g \neq 0$, this means that $v u_g^* \in A$. We denote this unitary as $v_0 \in \cU(A)$. We have proven that $v = v_0 u_g$. Since $\Om_1  = (v \ot v) \Om_2 \Delta(v^*)$, this implies that
\begin{equation}\label{eq.this-is-what-we-find}
\Om_1 = (v_0 \ot v_0) (\al_g \ot \al_g)(\Om_2) \Delta(v_0^*) \; .
\end{equation}
So, $\Om_1$ is cohomologous with $(\al_g \ot \al_g)(\Om_2)$ as unitary $2$-cocycles for $(A,\Delta)$.

Conversely, if $\Om_1$ is cohomologous with $(\al_g \ot \al_g)(\Om_2)$ as unitary $2$-cocycles for $(A,\Delta)$, we can take a unitary $v_0 \in A$ such that \eqref{eq.this-is-what-we-find} holds. Defining $v = v_0 u_g$, we have found a unitary $v \in B$ such that $\Om_1 = (v \ot v) \Om_2 \Delta(v^*)$, so that $\Om_1$ and $\Om_2$ are cohomologous as unitary $2$-cocycles for $(B,\Delta)$.

(iii) It follows from (ii) that every nontrivial unitary $2$-cocycle for $(A,\Delta)$ also is a nontrivial unitary $2$-cocycle for $(B,\Delta)$. To conclude the proof of (iii), assume that $\Om$ is a unitary $2$-cocycle for $(L(\Gamma),\Delta)$ that is a coboundary as a unitary $2$-cocycle for $(B,\Delta)$. We have to prove that it is already a coboundary as a unitary $2$-cocycle for $(L(\Gamma),\Delta)$.

By Proposition \ref{coc.2}, we can choose a unitary $\Om$-corepresentation $X \in L(\Gamma) \ot M_n(\C)$. Denote by $\be : M_n(\C) \to L(\Gamma) \ot M_n(\C) : \be(b) = X^* (1 \ot b) X$ the associated coaction. Since $\Om$ is a coboundary as a unitary $2$-cocycle for $(B,\Delta)$, we can take a unitary $v \in B$ such that $\Om = (v^* \ot v^*)\Delta(v)$. Write $Y = (v \ot 1)X \in B \ot M_n(\C)$. Then $Y$ is a unitary corepresentation of $(B,\Delta)$ and $\be(b) = Y^* (1 \ot b) Y$ for all $b \in M_n(\C)$.

Denote by $\cA \subset A$ the $*$-algebra given by Theorem \ref{cqg.5}, spanned by the coefficients of the finite dimensional unitary corepresentations of $(A,\Delta)$. We similarly define $\cB \subset B$ and note that $\cB$ equals the algebraic crossed product $\cB = \cA \rtimes_{\text{\rm alg}} \Gamma$. Denote by $\eps : \cA \to \C$ the co-unit, i.e.\ the unique $*$-homomorphism satisfying $\eps(Z_{ij}) = \delta_{i,j}$ for every unitary corepresentation $Z \in A \ot M_k(\C)$ of $(A,\Delta)$. Then denote by $\psi : \cB \to \C[\Gamma]$ the unique $*$-homomorphism satisfying $\psi(a u_g) = \eps(a) u_g$ for all $a \in \cA$, $g \in \Gamma$. Note that $(\psi \ot \psi) \circ \Delta = \Delta \circ \psi$ on $\cB$. Since $Y \in \cB \ot M_n(\C)$, we get that $Z := (\psi \ot \id)(Y)$ is a well-defined unitary corepresentation of $(L(\Gamma),\Delta)$.

Since we can view $\be$ as a unitary corepresentation on the Hilbert space $M_n(\C)$, we have that $\be(M_n(\C)) \subset \C[\Gamma] \ot M_n(\C)$. Then $(\psi \ot \id)(\be(b)) = \be(b)$ for all $b \in M_n(\C)$ and applying $\psi \ot \id$ to the equality $\be(b) = Y^* (1 \ot b) Y$, we conclude that $\be(b) = Z^* (1 \ot b) Z$ for all $b \in M_n(\C)$. This means that $Z = (w \ot 1)X$, where $w \in L(\Gamma)$ is a unitary. Since $Z$ is a corepresentation, while $X$ is an $\Om$-corepresentation, it follows that $\Om = (w^* \ot w^*)\Delta(w)$, so that $\Om$ is a coboundary in $(L(\Gamma),\Delta)$.
\end{proof}

We next turn to unitary $2$-cocycles on direct products. If $(A_k,\Delta_k)$ is a family of compact quantum groups, with Haar states $\vphi_k$, we may define the von Neumann algebra $A$ with a faithful normal state $\vphi$ as the tensor product
$$(A,\vphi) = \ovt_{k} (A_k,\vphi_k) \; .$$
For every $k$, we have the canonical embedding $\pi_k : A_k \to A$ as the $k$'th tensor factor. There is a unique comultiplication $\Delta : A \to A \ovt A$ satisfying $\Delta \circ \pi_k = (\pi_k \ot \pi_k) \circ \Delta_k$ for all $k$. It is easy to check that $(A,\Delta)$ is again a compact quantum group with Haar state $\vphi$.

In the following proposition, we characterize when for such a tensor product quantum group $(A,\Delta)$ all unitary $2$-cocycles are a coboundary. It is possible to give a complete description of all unitary $2$-cocycles. In the context of Hopf algebras, this has been done in \cite[Theorem 3.5.5]{Sch02}.

\begin{proposition}\label{prop.cocycles-products}
Let $(A_k,\Delta_k)$, $k \in J$, be a finite or infinite family of Kac type compact quantum groups. Denote by $(A,\Delta)$ their direct product, as above. Then every unitary $2$-cocycle on $(A,\Delta)$ is a coboundary if and only if the following holds.
\begin{enumlist}
\item For every $k \in J$, every unitary $2$-cocycle on $(A_k,\Delta_k)$ is a coboundary.
\item If $k,l \in J$ are distinct and $Z \in \cU(A_k \ovt A_l)$ is a bicharacter, meaning that
$$(\Delta_k \ot \id)(Z) = Z_{13} Z_{23} \quad\text{and}\quad (\id \ot \Delta_l)(Z) = Z_{13} Z_{12} \; ,$$
then $Z=1$.
\end{enumlist}
\end{proposition}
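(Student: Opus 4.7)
My plan is to establish both directions of the equivalence. For necessity ($\Rightarrow$), I will construct explicit $2$-cocycles (and bicharacter-embeddings) on $(A, \Delta)$ from the given local data and then use Haar conditional expectations to descend back to a single factor. For sufficiency ($\Leftarrow$), I will produce a Künneth-type decomposition of $2$-cocycles into factorwise and bicharacter components, analogous to \cite[Theorem 3.5.5]{Sch02}.

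For necessity (i): given a $2$-cocycle $\sigma$ on $(A_k,\Delta_k)$, its embedding $\widetilde\sigma := (\pi_k \ot \pi_k)(\sigma)$ is a $2$-cocycle on $(A,\Delta)$, and by hypothesis $\widetilde\sigma\,\Delta(v) = v \ot v$ for some $v \in \cU(A)$. Let $E_k : A \to A_k$ be the Haar conditional expectation $\id \ot \vphi_{\neq k}$ with $\vphi_{\neq k} = \ovt_{m \neq k}\vphi_m$. Using the invariance $(\vphi_m \ot \id)\Delta_m = \vphi_m$ for each $m \neq k$, one verifies $(E_k \ot E_k)\Delta = \Delta_k \circ E_k$, so $E_k$ is a Hopf algebra morphism. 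Setting $v_k := E_k(v)$, the $A_k$-bimodularity of $E_k$ combined with $\widetilde\sigma \in A_k \ovt A_k$ yields $\sigma\,\Delta_k(v_k) = v_k \ot v_k$. This gives $\Delta_k(v_k^*v_k) = v_k^*v_k \ot v_k^*v_k$; applying $\vphi_k \ot \vphi_k$ shows $\vphi_k(v_k^*v_k) \in \{0,1\}$, and combined with the Cauchy--Schwarz bound $v_k^*v_k \leq 1$ and faithfulness of $\vphi_k$, $v_k$ is unitary. Hence $\sigma$ is a coboundary.

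For necessity (ii): for $Z = \sum_i a_i \ot b_i$ a bicharacter in $A_k \ovt A_l$, the \emph{flipped} embedding $\widetilde Z := \sum_i \pi_l(b_i) \ot \pi_k(a_i)$ is a $2$-cocycle on $(A,\Delta)$; the verification is a direct tensor-leg calculation using both bicharacter identities, and critically relies on the flipped ordering (the unflipped embedding fails because a bookkeeping shows that the two sides of the cocycle relation differ by a nontrivial commutator). Writing $\widetilde Z\,\Delta(v) = v \ot v$ and applying $E_l \ot E_k$: a factorwise Haar-invariance calculation gives $(E_l \ot E_k)\Delta(v) = \vphi_A(v) \cdot 1 \ot 1$, while $(E_l \ot E_k)(\widetilde Z) = \sum_i b_i \ot a_i =: Z_f$. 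The $A_l \ovt A_k$-bimodularity of $E_l \ot E_k$ then yields $\vphi_A(v) \cdot Z_f = v_l \ot v_k$, expressing $Z_f$ (up to a scalar) as a simple tensor. Replacing $v$ by $vu$ for a group-like unitary $u \in \cU(A)$ (which preserves the coboundary relation) to arrange $\vphi_A(v) \neq 0$, the bicharacter identities applied to the resulting rank-one $Z$ force $Z$ to be a scalar (via $\Delta_k(v'_k) = \lambda v'_k \ot v'_k$ with $v'_k$ Kac-type, forcing $v'_k \in \C$), and the identity $(\Delta_k \ot \id)(Z) = Z_{13}Z_{23}$ then pins $Z = 1$. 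The edge case with no such $u$ available will be handled by a refined Peter--Weyl analysis of $v$.

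For sufficiency: Given a $2$-cocycle $\Omega$ on $(A,\Delta)$, I will decompose it (up to coboundary) into a normal form $\prod_k (\pi_k \ot \pi_k)(\sigma_k) \cdot \prod_{k<l} \widetilde Z_{kl}$ with $\sigma_k$ a $2$-cocycle on $A_k$ and $Z_{kl}$ a bicharacter; under (i) each $\sigma_k$ is a coboundary, and under (ii) each $Z_{kl} = 1$, so $\Omega$ is a coboundary. The main obstacle will be producing this Künneth-type decomposition in the Kac-type compact quantum group setting (rather than in the purely Hopf algebraic setting of \cite{Sch02}). I would approach this via Corollary \ref{cor.vanishing-char}: $\Omega$ is a coboundary iff every ergodic coaction on a matrix algebra $M_n$ is implemented by a genuine corepresentation. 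Given such $\beta: M_n \to A \ovt M_n$, one decomposes $\beta$ along the sub-Hopf-algebras $A_k$ via the $E_k$-projections, extracting a collection of factor coactions together with cross-coupling bicharacter data; under the hypotheses, each factor coaction is implementable by (i) and each cross coupling is trivial by (ii), assembling to an implementing corepresentation of $\beta$.
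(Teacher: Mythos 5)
Your necessity arguments contain a genuine gap at the point where you extract a unitary from a conditional expectation. In (i), the dichotomy $\vphi_k(v_k^*v_k)\in\{0,1\}$ does \emph{not} yield that $v_k=E_k(v)$ is unitary: the value $0$, i.e.\ $v_k=0$, is a real possibility. Indeed, the implementing unitary $v$ is only determined up to right multiplication by a group-like unitary of $A$, and if for instance $v=\pi_k(v'_k)\pi_l(u)$ with $u$ a nontrivial group-like unitary of $A_l$, then $E_k(v)=\vphi_l(u)\,v'_k=0$. So you must show that \emph{some} admissible $v$ satisfies $E_k(v)\neq 0$, which amounts to showing that the Fourier expansion of $v$ over the irreducible corepresentations of $\ovt_{m\neq k}A_m$ hits a one-dimensional one; this is exactly the point you defer in (ii) as a ``refined Peter--Weyl analysis'' of the case $\vphi_A(v)=0$, and it is not automatic (a unitary in $L^\infty(K)$ for $K=\SU(2)$, say, can be orthogonal to every character), so it needs a genuine argument using the cocycle relation. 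The root cause is that conditional expectations are neither multiplicative nor unitarity-preserving; the paper circumvents this entirely by never touching the implementing unitary of a coboundary: it reformulates everything via Corollary \ref{cor.vanishing-char} in terms of coactions on matrix algebras, where the projection onto a factor is performed by the counit-based maps $\theta_k$, which are $*$-homomorphisms on the canonical dense subalgebra $\cA$ (where corepresentation coefficients automatically live) and therefore send unitary corepresentations to unitary corepresentations. For necessity of (ii) the paper additionally needs Lemma \ref{lem.bichar} to produce from $Z$ a pair of finite dimensional corepresentations commuting up to $Z$, rather than your (correct, and genuinely different) observation that the flipped embedding of a bicharacter is a $2$-cocycle on the product.

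For sufficiency, your outline coincides with the paper's strategy (decompose a coaction on $M_n(\C)$ into factor coactions, implement each by (i), and use (ii) to make the implementing corepresentations commute so that their product implements the original coaction), but it is only a plan: the commutation step, which is where condition (ii) actually enters via the unitary $Z$ measuring the failure of $(X_k)_{13}$ and $(X_l)_{23}$ to commute, is not carried out, and the decomposition cannot be done ``via the $E_k$-projections'' as you write --- again because $E_k$ is not multiplicative, $(E_k\ot\id)\circ\be$ is not a coaction. You need the multiplicative counit projections $\theta_k$, together with the preliminary observation that $\be(M_n(\C))\subset\cA\ot M_n(\C)$ with $\cA$ the algebraic tensor product of the $\cA_k$. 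On the positive side, your verification that $(E_l\ot E_k)\circ\Delta=\vphi_A(\cdot)\,1\ot 1$ and that the flipped bicharacter is a $2$-cocycle are both correct and could be the basis of an alternative, more ``cohomological'' proof of necessity --- but only once the vanishing-of-the-expectation issue is resolved.
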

\begin{proof}
First assume that conditions (i) and (ii) hold. Let $\be : M_n(\C) \to A \ot M_n(\C)$ be a coaction. By Corollary \ref{cor.vanishing-char}, it suffices to construct a unitary corepresentation $X \in A \ot M_n(\C)$ such that $\be(b) = X^* (1 \ot b) X$ for all $b \in M_n(\C)$.

Denote by $\vphi_k$ the Haar state on $(A_k,\Delta_k)$. Choose a complete set $\Irr_k$ of inequivalent irreducible unitary corepresentations of $(A_k,\Delta_k)$. By Theorem \ref{cqg.6}, the coefficients $X_{ij}$, $X \in \Irr_k$ form an orthogonal basis for the GNS Hilbert space of $(A_k,\vphi_k)$. Denote by $\cA_k \subset A_k$ the linear span of all $X_{ij}$, $X \in \Irr_k$. If $k_1,\ldots,k_s \in J$ are distinct and $X_t \in \Irr_{k_t}$ has dimension $n_t$, then
$$(\pi_{k_1} \ot \id)(X_1)_{12} \, (\pi_{k_2} \ot \id)(X_2)_{13} \, \cdots \, (\pi_{k_s} \ot \id)(X_s)_{1,s+1} \in A \ot M_{n_1\cdots n_s}(\C)$$
is an irreducible corepresentation of $(A,\Delta)$. By construction, the coefficients of these irreducible corepresentations form an orthogonal basis of the GNS Hilbert space of $(A,\Delta)$. By Theorem \ref{cqg.6}, these irreducible corepresentations of $(A,\Delta)$ form a complete set of irreducible unitary corepresentations of $(A,\Delta)$. We conclude that the dense $*$-subalgebra $\cA \subset A$ spanned by the coefficients of all finite dimensional unitary corepresentations of $(A,\Delta)$ is generated by $\pi_k(\cA_k)$, $k \in J$, meaning that $\cA$ is the algebraic tensor product of the $*$-algebras $\cA_k$.

As in the beginning of the proof of Proposition \ref{prop.cocycle-reduce-to-core}, we may view $\be$ as a unitary corepresentation of $(A,\Delta)$ of dimension $n^2$. This means that $\be(M_n(\C)) \subset \cA \ot M_n(\C)$.

For every $k \in J$, denote by $\eps_k : \cA_k \to \C$ the co-unit, i.e.\ the unique $*$-homomorphism satisfying $\eps_k(X_{ij}) = \delta_{i,j}$ for all $X \in \Irr_k$. Since $\cA$ is the algebraic tensor product of the $\cA_k$, we can uniquely define the $*$-homomorphisms $\theta_k : \cA \to \cA_k$ satisfying $\theta_k(\pi_s(a)) = \eps_s(a) 1$ for all $s \neq k$, $a \in \cA_s$, and $\theta_k(\pi_k(a)) = a$ for all $a \in \cA_k$.

Note that $\Delta_k \circ \theta_k = (\theta_k \ot \theta_k) \circ \Delta$ on $\cA$. So, $\be_k = (\theta_k \ot \id) \circ \be$ is a well-defined coaction of $(A_k,\Delta_k)$ on $M_n(\C)$. By condition (i) and Corollary \ref{cor.vanishing-char}, we find unitary corepresentations $X_k \in \cA_k \ot M_n(\C)$ such that $\be_k(b) = X_k^* (1 \ot b) X_k$ for all $b \in M_n(\C)$ and $k \in J$.

We prove that for distinct $k,l \in J$, the unitaries $(\pi_k \ot \id)(X_k)$ and $(\pi_l \ot \id)(X_l)$ in $A \ot M_n(\C)$ commute. Denote by $\si : \cA_l \otalg \cA_k \to \cA_k \otalg \cA_l$ the flip isomorphism. Note that $(\theta_k \ot \theta_l)\Delta(a) = \si((\theta_l \ot \theta_k)\Delta(a))$ for all $a \in \cA$. Therefore,
$$(\id \ot \be_l) \be_k = (\theta_k \ot \theta_l \ot \id)(\Delta \ot \id) \be = (\si \ot \id)(\theta_l \ot \theta_k \ot \id)(\Delta \ot \id)\be = (\si \ot \id)(\id \ot \be_k)\be_l \; .$$
This means that $(X_l^*)_{23} (X_k^*)_{13} (1 \ot 1 \ot b) (X_k)_{13} (X_l)_{23} = (X_k^*)_{13} (X_l^*)_{23} (1 \ot 1 \ot b) (X_l)_{23} (X_k)_{13}$ for all $b \in M_n(\C)$. There thus exists a unitary $Z \in \cA_k \otalg \cA_l$ such that
$$(Z \ot 1) (X_k)_{13} (X_l)_{23} = (X_l)_{23} (X_k)_{13} \; .$$
Applying $\Delta_k \ot \id \ot \id$, it follows that $(\Delta_k \ot \id)(Z) = Z_{13} Z_{23}$. Applying $\id \ot \Delta_l \ot \id$, it follows that $(\id \ot \Delta_l)(Z) = Z_{13} Z_{12}$. By condition (ii), $Z=1$. So, $(X_k)_{13}$ commutes with $(X_l)_{23}$, which is the same as saying that $(\pi_k \ot \id)(X_k)$ and $(\pi_l \ot \id)(X_l)$ commute in $A \ot M_n(\C)$.

Since $M_n(\C)$ is finite dimensional and $\be(M_n(\C)) \subset \cA \ot M_n(\C)$, we can take a finite subset $J_0 \subset J$ such that $\be(M_n(\C)) \subset \cA_{J_0} \ot M_n(\C)$, where $\cA_{J_0}$ is the algebraic tensor product of all $\cA_k$, $k \in J_0$. Since the unitaries $(\pi_k \ot \id)(X_k)$, $k \in J_0$, all commute, their product is a unitary corepresentation $X \in A \ot M_n(\C)$. By construction, $\be(b) = X^* (1 \ot b) X$ for all $b \in M_n(\C)$.

Conversely assume that every unitary $2$-cocycle on $(A,\Delta)$ is a coboundary. To prove that (i) holds, by Corollary \ref{cor.vanishing-char}, it suffices to show that for every $k \in J$, every coaction $\be_k : M_n(\C) \to A_k \ot M_n(\C)$ is implemented by a unitary corepresentation of $(A_k,\Delta_k)$. As above, note that $\be_k(M_n(\C)) \subset \cA_k \ot M_n(\C)$. Viewing $\be_k$ as a coaction of $(A,\Delta)$, by Corollary \ref{cor.vanishing-char}, we find a unitary corepresentation $X \in \cA \ot M_n(\C)$ satisfying $\be_k(b) = X^* (1 \ot b) X$ for all $b \in M_n(\C)$. With the notation of the first part of the proof, $X_k := (\theta_k \ot \id)(X)$ is a unitary corepresentation for $(A_k,\Delta_k)$ and $\be_k(b) = X_k^* (1 \ot b) X_k$ for all $b \in M_n(\C)$.

Finally, take an element $Z$ as in condition (ii). By Lemma \ref{lem.bichar} below, we find $n \in \N$ and unitary corepresentations $X_k \in \cA_k \ot M_n(\C)$ and $X_l \in \cA_l \ot M_n(\C)$ of $(A_k,\Delta_k)$, resp.\ $(A_l,\Delta_l)$, such that
\begin{equation}\label{eq.commute-up-to-Z}
(Z \ot 1) (X_k)_{13} (X_l)_{23} = (X_l)_{23} (X_k)_{13} \; .
\end{equation}
Define the coactions $\be_k(b) = X_k^* (1 \ot b) X_k$ and $\be_l(b) = X_l^* (1 \ot b) X_l$. Denoting by $\sigma$ the flip isomorphism, it follows from \eqref{eq.commute-up-to-Z} that $(\id \ot \be_l)\be_k = (\si \ot \id)(\id \ot \be_k)\be_l$. Using the natural embedding $A_k \ovt A_l \to A$, we may thus view $(\id \ot \be_l)\be_k$ as a coaction of $(A,\Delta)$ on $M_n(\C)$.

By Corollary \ref{cor.vanishing-char}, we find a unitary corepresentation $X \in \cA \ot M_n(\C)$ of $(A,\Delta)$ such that $\be(b) = X^*(1 \ot b) X$ for all $b \in M_n(\C)$. Define the unitary corepresentation $X'_k := (\theta_k \ot \id)(X)$ of $(A_k,\Delta_k)$. Similarly define $X'_l$. Applying $\theta_k \ot \theta_l \ot \id$, resp.\ $\theta_l \ot \theta_k \ot \id$, to the corepresentation property of $X$, we get that $(X'_k)_{13}$ commutes with $(X'_l)_{23}$.

Since both $X_k$ and $X'_k$ are unitary corepresentations that implement the same coaction $\be_k$, we can take a unitary $v_k \in \cA_k$ satisfying $X_k = (v_k \ot 1)X'_k$. We similarly find a unitary $v_l \in \cA_l$ satisfying $X_l = (v_l \ot 1) X'_l$. Since $(X'_k)_{13}$ commutes with $(X'_l)_{23}$, it follows that $(X_k)_{13}$ commutes with $(X_l)_{23}$. By \eqref{eq.commute-up-to-Z}, it follows that $Z=1$.
\end{proof}

\begin{lemma}\label{lem.bichar}
Let $(A_1,\Delta_1)$ and $(A_2,\Delta_2)$ be compact quantum groups of Kac type. Let $Z \in \cU(A_1 \ovt A_2)$ be a bicharacter:
$$(\Delta_1 \ot \id)(Z) = Z_{13} Z_{23} \quad\text{and}\quad (\id \ot \Delta_2)(Z) = Z_{13} Z_{12} \; .$$
There then exist $n \in \N$ and unitary corepresentations $X_i \in A_i \ot M_n(\C)$ of $(A_i,\Delta_i)$ for $i = 1,2$, such that
$$(Z \ot 1) (X_1)_{13} (X_2)_{23} = (X_2)_{23} (X_1)_{13} \; .$$
In particular, if $\cA_i \subset A_i$ are the $*$-algebras spanned by the coefficients of the finite dimensional unitary corepresentations, we have that $Z \in \cA_1 \otalg \cA_2$.
\end{lemma}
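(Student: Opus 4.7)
The idea is to realise $Z$ as a unitary $2$-cocycle on the direct-product compact quantum group $(A,\Delta) = (A_1 \ovt A_2, \Delta_1 \ot \Delta_2)$ and then to appeal to Propositions~\ref{coc.1} and~\ref{coc.2}. Identifying $A \ovt A = A_1 \ovt A_2 \ovt A_1 \ovt A_2$ with positions labelled $(1,2,3,4)$, define $\Omega \in \cU(A \ovt A)$ as $Z$ placed with its $A_1$-leg in position $3$ and its $A_2$-leg in position $2$, written symbolically as $\Omega = Z_{(3,2)}$. Working in the six-leg space $A \ovt A \ovt A$, the two bicharacter identities give respectively $(\Delta \ot \id)(\Omega) = Z_{(5,4)} Z_{(5,2)}$ and $(\id \ot \Delta)(\Omega) = Z_{(3,2)} Z_{(5,2)}$. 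Substituting into the $2$-cocycle equation, both sides collapse to $Z_{(5,4)} Z_{(3,2)} Z_{(5,2)}$ because $Z_{(3,2)}$ and $Z_{(5,4)}$ occupy disjoint position-pairs and commute. Hence $\Omega$ is a unitary $2$-cocycle on the Kac type compact quantum group $(A,\Delta)$.

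Next I would apply Proposition~\ref{coc.2} to obtain $n \in \N$ and an irreducible finite-dimensional unitary $\Omega$-corepresentation $Y \in A \ot M_n(\C)$. Since $\Omega$ commutes with $1 \ot 1 \ot b$ (disjoint position supports), a direct calculation shows that $\beta(b) = Y^*(1 \ot b)Y$ defines a genuine (untwisted) coaction of $(A,\Delta)$ on $M_n(\C)$. Because $(A,\Delta)$ is a direct product of Kac type compact quantum groups and $\beta$ maps into $\cA \ot M_n(\C) = \cA_1 \otalg \cA_2 \ot M_n(\C)$, applying the counits $\eps_1, \eps_2$ of $A_1, A_2$ to the appropriate legs decomposes $\beta$ into commuting coactions $\beta_i : M_n(\C) \to A_i \ot M_n(\C)$ with $\beta = (\id \ot \beta_2) \circ \beta_1$. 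Proposition~\ref{coc.1} implements each $\beta_i$ by an $\Omega_i$-corepresentation $X_i \in A_i \ot M_n(\C)$ for some unitary $2$-cocycle $\Omega_i$ on $(A_i,\Delta_i)$.

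Set $\tilde Y := (X_1)_{13}(X_2)_{23} \in A \ot M_n(\C)$. A computation using $\beta = (\id \ot \beta_2)\beta_1$ shows that $\tilde Y$ implements the same coaction $\beta$, so $Y \tilde Y^*$ commutes with $1 \ot 1 \ot M_n(\C)$ and hence equals $v \ot 1$ for some $v \in \cU(A)$, giving $Y = (v \ot 1)\tilde Y$. Substituting into the $\Omega$-corepresentation equation for $Y$ and expanding $(\Delta \ot \id)(\tilde Y)$ using the $\Omega_i$-corepresentation identities, the cross-leg cocycle $Z_{(3,2)}$ separates from the diagonal cocycles $\Omega_i$ at positions $(1,3)$ and $(2,4)$. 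After absorbing $v$ into $X_1, X_2$, the same-factor parts of the equation force the $\Omega_i$ to be coboundaries, so we may take $X_1, X_2$ to be ordinary unitary corepresentations, and the remaining cross-leg identity is exactly $(Z \ot 1)(X_1)_{13}(X_2)_{23} = (X_2)_{23}(X_1)_{13}$.

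Finally, rearranging the commutation as $Z \ot 1 = (X_2)_{23}(X_1)_{13}(X_2)_{23}^*(X_1)_{13}^*$ and reading off matrix coefficients in the $M_n$-leg expresses $Z$ as a finite sum of products of the matrix entries of $X_1, X_1^*, X_2, X_2^*$, all of which lie in $\cA_1$ or $\cA_2$; hence $Z \in \cA_1 \otalg \cA_2$. I expect the delicate step to be the absorption of $v$ and the $\Omega_i$ into the $X_i$ in the third paragraph: it works precisely because $\Omega = Z_{(3,2)}$ is supported on a single cross-pair of legs (one from each factor), never on two legs of the same type, so restricting the $\Omega$-corepresentation equation to the $(A_1,A_1)$- and $(A_2,A_2)$-diagonals leaves only the ordinary corepresentation identities for $X_1$ and $X_2$.
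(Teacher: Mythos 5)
Your first two paragraphs are correct and set up a genuinely different route from the paper's: the observation that $\Om := Z_{(3,2)}$ is a unitary $2$-cocycle on the product $(A_1\ovt A_2,\Delta_1\ot\Delta_2)$ is right (the two bicharacter identities do make both sides of the cocycle equation collapse to $Z_{(5,4)}Z_{(3,2)}Z_{(5,2)}$), and the decomposition $\be=(\id\ot\be_2)\be_1$ via the counits is legitimate because $\be(M_n(\C))\subset\cA_1\otalg\cA_2\ot M_n(\C)$. The gap is in the third paragraph, which is where all the content lies. Proposition \ref{coc.1} hands you $X_i$ as $\Om_i$-corepresentations for \emph{unknown} cocycles $\Om_i$ on $(A_i,\Delta_i)$, and the lemma makes no vanishing assumption on $2$-cocycles of the factors, so "the $\Om_i$ are forced to be coboundaries" must come from the specific structure of $\Om=Z_{(3,2)}$ --- and you do not supply that argument. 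The mechanism you propose ("restricting the $\Om$-corepresentation equation to the $(A_1,A_1)$- and $(A_2,A_2)$-diagonals") does not work as stated: restricting means applying the counits $\eps_2\ot\eps_2$, resp.\ $\eps_1\ot\eps_1$, to the relevant legs, but the coefficients of an $\Om_i$-corepresentation do \emph{not} lie in the Hopf $*$-algebra $\cA_i$ (they span a Galois object), so the counits are not defined on the terms $X_i$, $\Om_i$ appearing in that equation. What one can actually extract (using that $(X_2)_{23}(X_1)_{13}$ implements the same coaction as $(X_1)_{13}(X_2)_{23}$, hence $(X_1)_{13}(X_2)_{23}=w_{12}(X_2)_{23}(X_1)_{13}$ for some unitary $w\in A_1\ovt A_2$) is the single relation $(v\ot v)^*Z_{(3,2)}\Delta(v)=w^*_{(3,2)}(\Om_2)_{24}(\Om_1)_{13}$. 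Concluding from this that the $\Om_i$ are coboundaries and that $w^*$ can be normalized to $Z$ amounts to the decomposition theory of unitary $2$-cocycles on tensor product quantum groups (the result of Schauenburg the paper explicitly cites and avoids), which is a substantial missing piece, not a routine separation of legs.

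For comparison, the paper's proof bypasses cocycle cohomology entirely: it writes down explicit \emph{genuine} unitary corepresentations $Y_1=\cW^*_{1,12}$ and $Y_2=Z^*_{21}\cW^*_{2,13}$ of $(A_1,\Delta_1)$ and $(A_2,\Delta_2)$ on $H_1\ot H_2$ (the regular corepresentations, one of them twisted by $Z$), verifies by a two-line computation from the bicharacter identities that they commute up to $Z$, and then cuts down by a minimal projection $p$ of the fixed point algebra of the combined coaction, checking that $\be_i(p)=1\ot p$ for each $i$ so that the cut-downs remain genuine corepresentations. If you want to salvage your approach, the step you must actually prove is that the cohomology class of the "restriction" of $Z_{(3,2)}$ to each factor is trivial; as written, that is asserted rather than established.
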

\begin{proof}
We denote by $(H_i,\xi_i)$ the GNS construction for $(A_i,\vphi_i)$, where $\vphi_i$ is the Haar state on $(A_i,\Delta_i)$. Applying Theorem \ref{cqg.2} to $(A_i,\Delta_i\op)$, we can define unitaries $\cW_i \in A_i \ovt B(H_i)$ such that $\cW_i^* (1 \ot a \xi_i) = \Delta_i\op(a) (1 \ot \xi_i)$ for all $a \in A_i$. Then $\cW_i^*$ is a unitary corepresentation of $(A_i,\Delta_i)$ on $H_i$. Also, $\Delta_i\op(a) = \cW_i^*(1 \ot a)\cW_i$ for all $a \in A_i$.

Note that in $A_2 \ovt A_1 \ovt B(H_2)$, we have the equalities
$$Z_{23} \cW^*_{2,13} Z_{23}^* = Z_{23} ((\id \ot \Delta_2\op)(Z^*))_{213} \cW^*_{2,13}  = Z^*_{21} \cW^*_{2,13} \; .$$
Since $\cW^*_{2,13}$ is a unitary corepresentation of $(A_2,\Delta_2)$ on $H_1 \ot H_2$, it follows that also $Y_2 := Z^*_{21} \cW^*_{2,13} \in A_2 \ovt B(H_1 \ot H_2)$ is a unitary corepresentation of $(A_2,\Delta_2)$ on $H_1 \ot H_2$.

Next, $Y_1 := \cW^*_{1,12}$ is a unitary corepresentation of $(A_1,\Delta_1)$ on $H_1 \ot H_2$. Note that
$$Z_{12} \, \cW^*_{1,13} \, Z^*_{32} \cW^*_{2,24} = Z_{12} \, ((\Delta_1\op \ot \id)(Z^*))_{132} \, \cW^*_{1,13} \, \cW^*_{2,24} = Z^*_{32} \, \cW^*_{1,13} \, \cW^*_{2,24} = Z^*_{32} \cW^*_{2,24} \, \cW^*_{1,13} \; . $$
This says that the unitary corepresentations $Y_1$ and $Y_2$ commute up to $Z$. Therefore, the coactions
$$\be_i : B(H_1 \ot H_2) \to A_i \ovt B(H_1 \ot H_2) : \be_i(b) = Y_i^* (1 \ot b) Y_i$$
satisfy $(\id \ot \be_1)\be_2 = (\si \ot \id)(\id \ot \be_2)\be_1$, where $\si$ denotes the flip isomorphism.

Denote by $(A,\Delta)$ the tensor product of $(A_1,\Delta_1)$ and $(A_2,\Delta_2)$. Then $\be := (\id \ot \be_2)\be_1$ defines a coaction of $(A,\Delta)$ on $B(H_1 \ot H_2)$. By Proposition \ref{action-BK.2}, we can choose a minimal projection $p \in B(H_1 \ot H_2)^\be$ and restrict $\be$ to an ergodic coaction on $p B(H_1 \ot H_2) p = B(p(H_1 \ot H_2))$. By Proposition \ref{action-BK.3}, $p (H_1 \ot H_2)$ is finite dimensional.

We claim that $\be_i(p) = 1 \ot p$ for all $i = 1,2$. Since $\be = (\id \ot \be_2)\be_1$ and $\be(p) = 1 \ot 1 \ot p$, the element $a := (\vphi_1 \ot \id)\be_1(p)$ satisfies $\be_2(a) = 1 \ot p$. Then,
$$1 \ot \be_2(p) = (\id \ot \be_2)\be_2(a) = (\Delta_2 \ot \id)\be_2(a) = (\Delta_2 \ot \id)(1 \ot p) = 1 \ot 1 \ot p \; ,$$
so that $\be_2(p)= 1 \ot p$. By symmetry, also $\be_1(p) = 1 \ot p$. Defining $X_i = Y_i (1 \ot p)$, we have found the required finite dimensional unitary corepresentations of $(A_i,\Delta_i)$ that commute up to $Z$.
\end{proof}

For completeness, we include a short proof of the following result.

\begin{proposition}[{Theorem 3.3 in \cite{IPV10}}]\label{prop.on-LG-all-symmetric-coboundary}
Let $G$ be a discrete group. If $\Om$ is a unitary $2$-cocycle for $(L(G),\Delta_G)$ that is symmetric, in the sense that $\si(\Om) = \Om$, where $\si$ is the flip automorphism of $L(G) \ovt L(G)$, then $\Om$ is a coboundary.
\end{proposition}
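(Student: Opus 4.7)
The plan is to produce a one-dimensional unitary $\Om$-corepresentation, i.e.\ a unitary $v \in L(G)$ satisfying $\Om \Delta_G(v) = v \ot v$; this identity rearranges to $\Om = (v \ot v)\Delta_G(v^*)$, exhibiting $\Om$ as a coboundary. By Proposition~\ref{coc.2}, an irreducible finite-dimensional unitary $\Om$-corepresentation $X \in L(G) \ot M_n(\C)$ always exists, so the task reduces to showing that the symmetry hypothesis $\si(\Om) = \Om$ forces $n = 1$.

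I would mimic the proof of Theorem~\ref{cgq.10}, which establishes the same conclusion for ordinary (untwisted) corepresentations of co-commutative compact quantum groups. Define $\theta : L(G)_* \to M_n(\C)$ by $\theta(\om) = (\om \ot \id)(X)$. Applying $\om \ot \om' \ot \id$ to the $\Om$-corepresentation identity $(\Om \ot 1)(\Delta_G \ot \id)(X) = X_{13} X_{23}$ yields
$$\theta(\om)\, \theta(\om') = \theta(\om \cdot \om'), \qquad (\om \cdot \om')(a) := (\om \ot \om')(\Om\, \Delta_G(a)).$$
The crucial observation is that $\om \cdot \om' = \om' \cdot \om$: using the identity $(\om \ot \om')(Y) = (\om' \ot \om)(\si(Y))$ together with the fact that $\si$ fixes both $\Om$ (by hypothesis) and $\Delta_G(a)$ (by co-commutativity of $\Delta_G$), the two expressions coincide. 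Consequently the image of $\theta$ lies in a commutative subspace of $M_n(\C)$.

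From here the argument of Theorem~\ref{cgq.10} transfers essentially verbatim. For each fixed $\om_0$, applying $\om \ot \id$ shows that $1 \ot \theta(\om_0)$ commutes with $X$; irreducibility of $X$ then forces $\theta(\om_0) \in \C 1$. Since this holds for every $\om_0 \in L(G)_*$, every $T \in M_n(\C)$ commutes with $1 \ot \theta(\om)$ for all $\om$, hence also with $X$, and one more invocation of irreducibility gives $M_n(\C) = \C 1$, i.e.\ $n = 1$. The only real obstacle is conceptual: one has to notice that the proof of Theorem~\ref{cgq.10} uses co-commutativity only to make the convolution on $L(G)_*$ commutative, and that a symmetric $2$-cocycle perturbs this convolution into a still-commutative one; once this is spotted, Proposition~\ref{coc.2} supplies the needed $\Om$-corepresentation and the rest is routine.
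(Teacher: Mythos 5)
Your proof is correct and follows essentially the same route as the paper: both invoke Proposition~\ref{coc.2} to produce an irreducible $\Om$-corepresentation $X$, use $\si(\Om)=\Om$ together with $\si\circ\Delta_G=\Delta_G$ to deduce that $X_{13}$ and $X_{23}$ commute, and then apply irreducibility to force $n=1$ and read off the coboundary relation. Your reformulation via the twisted convolution on $L(G)_*$ is just the sliced version of the paper's direct observation that $\si\ot\id$ fixes the right-hand side of the $\Om$-corepresentation identity.
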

\begin{proof}
By Proposition \ref{coc.2}, we can choose an irreducible finite dimensional unitary $\Om$-corepresentation $X \in L(G) \ot M_n(\C)$. This means that
\begin{equation}\label{eq.here-Om-corep}
X_{13} X_{23} = (\Om \ot \id) (\Delta_G \ot \id)(X) \; .
\end{equation}
The right hand side of \eqref{eq.here-Om-corep} is invariant under $\si \ot \id$. So also the left hand side is invariant under $\si \ot \id$, which means that $X_{13}$ commutes with $X_{23}$. So, $1 \ot (\mu \ot \id)(X)$ commutes with $X$ for every $\mu \in L(G)_*$.

Since $X$ is irreducible, it follows that $(\mu \ot \id)(X)$ is a multiple of $1$ for every $\mu \in L(G)_*$. This means that $X = v \ot 1$, where $v \in \cU(L(G))$. Since $X$ is irreducible, we get that $n=1$. Now \eqref{eq.here-Om-corep} says that $\Om = (v \ot v)\Delta_G(v^*)$, so that $\Om$ is a coboundary.
\end{proof}

The following result is mentioned without proof in the remarks after \cite[Corollary 7.4]{DeC10}. Since we need the result in our paper, we include a proof here.

\begin{lemma}\label{lem.2-cocycle-up-to-scalar}
Let $M$ be a von Neumann algebra and $\Delta: M \to M \ovt M$ a unital, normal, co-associative $*$-homomorphism. Let $\Om \in M \ovt M$ be a unitary and $\nu \in \T$ satisfying
\begin{equation}\label{eq.2-cocycle-up-to-constant}
(\Om \ot 1)(\Delta \ot \id)(\Om) = \nu (1 \ot \Om)(\id \ot \Delta)(\Om) \; .
\end{equation}
Then $\nu = 1$.
\end{lemma}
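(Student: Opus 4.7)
The plan is to run a Mac Lane pentagon argument in $M^{\ovt 4}$ and count factors of $\nu$.

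First, I would derive five analogues of \eqref{eq.2-cocycle-up-to-constant} in $M \ovt M \ovt M \ovt M$. Two of them come from tensoring the hypothesis with $1$ on the right (keeping slot $4$ trivial) and on the left (keeping slot $1$ trivial). The other three come from applying $\Delta \ot \id \ot \id$, $\id \ot \Delta \ot \id$, and $\id \ot \id \ot \Delta$ to the hypothesis, simplifying the iterated comultiplications via co-associativity. Each of these five identities carries exactly one factor of $\nu$ and relates two of the six "iterated cocycle elements" obtained by placing $(\Delta^{(k)} \ot \id^{\ot l})$ of $\Om$ into various tensor slots, namely $\Om^{12}$, $\Om^{23}$, $\Om^{34}$, $(\Delta\ot\id)(\Om)$ sitting in slots $(12)3$ or $(23)4$, $(\id\ot\Delta)(\Om)$ sitting in $1(23)$ or $2(34)$, the mixed $(\Delta\ot\Delta)(\Om)$ sitting in $(12)(34)$, and the two triple comultiplications $(\Delta^{(2)}\ot\id)(\Om)$ and $(\id\ot\Delta^{(2)})(\Om)$.

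Second, I would consider the unitary
\[
Z \;:=\; (\Om \ot 1 \ot 1)\,\bigl((\Delta \ot \id)(\Om) \ot 1\bigr)\,\bigl((\Delta \ot \id)\Delta \ot \id\bigr)(\Om) \;\in\; M^{\ovt 4},
\]
and compute $Z$ in two different ways. \textbf{Path A:} apply the tensor-with-$1$-on-the-right version to the first two factors; then apply the $(\id \ot \Delta \ot \id)$-version to the resulting last two factors; then apply the tensor-with-$1$-on-the-left version to the resulting first two factors. Each step contributes a factor $\nu$, and the end result is $\nu^3 \, W$ where $W$ is the explicit product $(1 \ot 1 \ot \Om)\,(1 \ot (\id \ot \Delta)(\Om))\,((\id \ot \Delta^{(2)})(\Om))$. \textbf{Path B:} apply the $(\Delta \ot \id \ot \id)$-version to the last two factors of $Z$, yielding $\nu \, \Om^{12} \Om^{34} \Om^{(12)(34)}$; now use that $\Om \ot 1 \ot 1$ and $1 \ot 1 \ot \Om$ live in disjoint tensor slots to commute them past each other; then apply the $(\id \ot \id \ot \Delta)$-version to the resulting middle pair. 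This yields $\nu^2 \, W$, with \emph{the same} $W$.

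Equating the two expressions gives $\nu^3 W = \nu^2 W$, and since $W$ is unitary we conclude $\nu^3 = \nu^2$, so $\nu = 1$.

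The main obstacle is not conceptual but purely notational: one must carefully verify, via co-associativity and the behavior of $\id \ot \Delta \ot \id$, $\Delta \ot \id \ot \id$, $\id \ot \id \ot \Delta$ on the various $\Om$-placements, that each of the five derived identities really does pair up the adjacent factors that appear during the substitutions along Paths~A and B. Once the bookkeeping is fixed, the argument is exactly the length-$3$ vs.\ length-$2$ comparison between two vertices of Mac Lane's pentagon, with each edge contributing one factor of~$\nu$.
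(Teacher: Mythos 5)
Your proposal is correct and is essentially the paper's own argument: the paper also evaluates the triple product $\bigl((\Om\ot1)(\Delta\ot\id)(\Om)\ot1\bigr)(\Delta^{(2)}\ot\id)(\Om)$ in $M^{\ovt 3}$ via two routes through the pentagon, obtaining $\nu\,(\Om\ot\Om)(\Delta\ot\Delta)(\Om)$ one way and $\nu^2\,(\Om\ot\Om)(\Delta\ot\Delta)(\Om)$ the other, and cancels the unitary to get $\nu^2=\nu$. Your bookkeeping along Paths A and B checks out, so the only difference from the paper is the (irrelevant) choice of which pentagon vertex to compare at, yielding $\nu^3=\nu^2$ instead of $\nu^2=\nu$.
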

\begin{proof}
We write $\Delta^{(2)} = (\Delta \ot \id) \Delta = (\id \ot \Delta)\Delta$. Applying $\Delta \ot \id \ot \id$ to \eqref{eq.2-cocycle-up-to-constant} and multiplying at the left with $\Om \ot 1 \ot 1$, we get that
\begin{equation}\label{eq.interm}
((\Om \ot 1)(\Delta \ot \id)(\Om) \ot 1)(\Delta^{(2)} \ot \id)(\Om) = \nu (\Om \ot \Om)(\Delta \ot \Delta)(\Om) \; .
\end{equation}
Using four times \eqref{eq.2-cocycle-up-to-constant}, the left hand side of \eqref{eq.interm} equals
\begin{align*}
\nu (1 \ot \Om \ot 1)(\id \ot \Delta \ot \id) & \bigl((\Om \ot 1)(\Delta \ot \id)(\Om)\bigr) \\ &= \nu^2 (1 \ot \Om \ot 1) (1 \ot (\Delta \ot \id)(\Om))(\id \ot \Delta^{(2)})(\Om) \\
&= \nu^3 (1 \ot 1 \ot \Om)(\id \ot \id \ot \Delta)\bigl((1 \ot \Om)(\id \ot \Delta)(\Om)\bigr) \\
&= \nu^2 (1 \ot 1 \ot \Om)(\id \ot \id \ot \Delta)\bigl((\Om \ot 1)(\Delta \ot \id)(\Om)\bigr) \\
&= \nu^2 (\Om \ot \Om)(\Delta \ot \Delta)(\Om) \; .
\end{align*}
Comparing with the right hand side of \eqref{eq.interm}, we get that $\nu^2 = \nu$, so that $\nu=1$.
\end{proof}

\subsection{\boldmath Cohomological obstructions to quantum W$^*$-superrigidity}\label{sec.cohomological-obstructions-superrigidity}

By Proposition \ref{coc.3}, every $2$-cocycle twist of a Kac type compact quantum group is again a Kac type compact quantum group, with the same underlying von Neumann algebra. Often, this twisted quantum group is not isomorphic to the original quantum group. We prove the following precise result.

\begin{proposition}\label{prop.not-rigid}
Let $G$ be an icc group, $G_0 < G$ a subgroup and $\Om_0 \in L(G_0) \ovt L(G_0)$ a nontrivial unitary $2$-cocycle for $(L(G_0),\Delta_0)$. View $\Om_0$ as a unitary $2$-cocycle $\Om$ for $(L(G),\Delta)$. Then, $\Delta_\Om$ is not symmetric.

In particular, $\Om_0$ remains nontrivial as a $2$-cocycle on $(L(G),\Delta)$, we have that $(L(G),\Delta_\Om) \not\cong (L(G),\Delta)$ and $(L(G),\Delta_G)$ is not quantum W$^*$-superrigid.
\end{proposition}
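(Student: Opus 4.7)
The plan is a proof by contradiction: I will assume $\Delta_\Om$ is symmetric and deduce that $\Om_0$ must be a coboundary on $(L(G_0),\Delta_0)$, contradicting the hypothesis. The three ``in particular'' statements will then be formal consequences: any $*$-isomorphism $\pi:L(G)\to L(G)$ intertwining $\Delta_\Om$ with $\Delta$ would transport co-commutativity of $\Delta$ to $\Delta_\Om$ (since $\pi\ot\pi$ commutes with the flip), and if $\Om=(v\ot v)\Delta(v^*)$ were a coboundary then the inner automorphism $\Ad v^*$ would be such an intertwining; consequently $(L(G),\Delta_\Om)$ is a compact quantum group structure on $L(G)$ not isomorphic to $(L(G),\Delta_G)$, so quantum W$^*$-superrigidity fails.

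The starting observation is the co-commutativity $\sigma\circ\Delta=\Delta$ of $(L(G),\Delta_G)$. Symmetry of $\Delta_\Om$ then translates into $\sigma(\Om)\Delta(a)\sigma(\Om)^*=\Om\Delta(a)\Om^*$ for every $a\in L(G)$, so $\sigma(\Om)^*\Om$ lies in the relative commutant $\Delta(L(G))'\cap(L(G)\ovt L(G))$. The key technical step is to show that this relative commutant is trivial when $G$ is icc. Identifying $L(G)\ovt L(G)=L(G\times G)$, the subalgebra $\Delta(L(G))$ is generated by the diagonal unitaries $u_g\ot u_g=u_{(g,g)}$; writing $T(\delta_e\ot\delta_e)=\sum c_{h,k}\delta_{(h,k)}$, commutation with all $u_{(g,g)}$ forces the coefficients $c_{h,k}$ to be constant on orbits of the diagonal conjugation $g\cdot(h,k)=(ghg^{-1},gkg^{-1})$. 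For $(h,k)\neq(e,e)$, say $h\neq e$, the stabiliser $C_G(h)\cap C_G(k)\leq C_G(h)$ has infinite index by icc, so every nontrivial orbit is infinite and the $\ell^2$-constraint $\sum|c_{h,k}|^2<\infty$ forces $c_{h,k}=0$ off $(e,e)$; hence $T\in\C 1$.

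Unitarity then gives $\sigma(\Om)=c\Om$ for some $c\in\T$, and re-applying $\sigma$ forces $c^2=1$, so $c\in\{-1,+1\}$. I expect the main obstacle to be ruling out $c=-1$, which I propose to handle as follows. Pick an irreducible $\Om$-corepresentation $X\in L(G)\ot M_n(\C)$ (available by Proposition \ref{coc.2}). Applying $\sigma\ot\id$ to $X_{13}X_{23}=(\Om\ot 1)(\Delta\ot\id)(X)$ and using co-commutativity of $\Delta$ gives $X_{23}X_{13}=c\,X_{13}X_{23}$. If $c=-1$ then $X_{13}$ and $X_{23}$ anti-commute, and conjugating by $X_{23}$ shows that $X_{13}$ and $X_{23}^*$ anti-commute as well. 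Slicing the relation $X_{13}X_{23}^*+X_{23}^*X_{13}=0$ with $\mu\ot\bar\mu\ot\id$ (where $\bar\mu(a)=\overline{\mu(a^*)}$) and writing $T_\mu=(\mu\ot\id)(X)\in M_n(\C)$, one obtains $T_\mu T_\mu^*+T_\mu^*T_\mu=0$, a sum of positive operators in $M_n(\C)$, forcing $T_\mu=0$ for every $\mu\in L(G)_*$ and hence $X=0$, contradicting unitarity.

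It remains that $c=1$, so $\sigma(\Om)=\Om$ in $L(G)\ovt L(G)$. Since the flip on $L(G)\ovt L(G)$ restricts to the flip on $L(G_0)\ovt L(G_0)$, the cocycle $\Om_0$ is symmetric on $(L(G_0),\Delta_0)$, and Proposition \ref{prop.on-LG-all-symmetric-coboundary} applied to the discrete group $G_0$ then declares $\Om_0$ a coboundary, contradicting its nontriviality and completing the proof.
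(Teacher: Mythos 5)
Your proof is correct and follows essentially the same route as the paper: symmetry of $\Delta_\Om$ together with co-commutativity of $\Delta$ and the icc property force $\si(\Om) = c\,\Om$ for a scalar $c$, and the vanishing result for symmetric $2$-cocycles then makes $\Om_0$ a coboundary; the ``in particular'' deductions also match. The only difference is that where the paper invokes \cite[Theorem 3.3]{IPV10} to dispose of the scalar, you give a correct self-contained argument (first $c^2=1$, then ruling out $c=-1$ by slicing an irreducible $\Om$-corepresentation), which is a harmless and valid substitute.
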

\begin{proof}
Denote by $\si : L(G) \ovt L(G) \to L(G) \ovt L(G) : \si(a \ot b) = b \ot a$ the flip map. Assume that $\si \circ \Delta_\Om = \Delta_\Om$. We prove that $\Om_0$ is a coboundary as a $2$-cocycle for $(L(G_0),\Delta_0)$.

Define $X = \Om^* \si(\Om)$. Since $\si \circ \Delta = \Delta$, we find that $X$ commutes with $\Delta(L(G))$. So, $X$ commutes with all unitaries $u_g \ot u_g$, $g \in G$. Since $G$ is icc, we find $\mu \in \T$ such that $X = \mu 1$. So, $\si(\Om) = \mu \Om$. Since $\Om = \Om_0$, we also get that $\si(\Om_0) = \mu \Om_0$ as unitaries in $L(G_0) \ovt L(G_0)$. By \cite[Theorem 3.3]{IPV10} (see also Lemma \ref{lem.2-cocycle-up-to-scalar} and Proposition \ref{prop.on-LG-all-symmetric-coboundary}), we get that $\mu = 1$ and that $\Om_0$ is a coboundary as a $2$-cocycle for $(L(G_0),\Delta_0)$.
\end{proof}

\begin{corollary}\label{cor.not-rigid}
If $G$ is an icc group that admits a finite abelian subgroup $G_0 < G$ such that $H^2(\Ghat_0,\T) \neq 1$, then $G$ is not quantum W$^*$-superrigid: there exists a Kac type compact quantum group $(B,\Delta)$ such that $B \cong L(G)$, but $(B,\Delta) \not\cong (L(G),\Delta)$.
\end{corollary}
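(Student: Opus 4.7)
The plan is to exhibit a nontrivial unitary $2$-cocycle on the subalgebra $(L(G_0), \Delta_0)$ and invoke Proposition \ref{prop.not-rigid} directly. Since $G_0$ is a finite abelian group, Pontryagin duality gives a canonical isomorphism of compact quantum groups $(L(G_0), \Delta_0) \cong (L^\infty(\Ghat_0), \Delta_{\Ghat_0})$, obtained by sending $u_g$ to the character evaluation function $\chi \mapsto \chi(g)$ on the finite group $\Ghat_0$.

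Under this identification, unitary $2$-cocycles on $(L^\infty(\Ghat_0), \Delta_{\Ghat_0})$ are nothing but $\T$-valued functions $\Om_0 : \Ghat_0 \times \Ghat_0 \to \T$ satisfying the classical $2$-cocycle equation, and cohomologous cocycles correspond to cocycles differing by a coboundary $(\chi_1, \chi_2) \mapsto v(\chi_1) v(\chi_2) \overline{v(\chi_1 \chi_2)}$ with $v : \Ghat_0 \to \T$. In other words, cohomology classes of unitary $2$-cocycles on $(L(G_0), \Delta_0)$ are precisely the elements of $H^2(\Ghat_0, \T)$. The assumption $H^2(\Ghat_0, \T) \neq 1$ thus supplies a unitary $2$-cocycle $\Om_0 \in L(G_0) \ovt L(G_0)$ which is not a coboundary on $(L(G_0), \Delta_0)$.

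Now I apply Proposition \ref{prop.not-rigid} with this $G_0 < G$ and $\Om_0$. Viewing $\Om_0$ as a unitary $2$-cocycle $\Om \in L(G) \ovt L(G)$ for $(L(G), \Delta)$, the proposition yields that $\Delta_\Om$ is not symmetric, that $\Om$ remains nontrivial on $(L(G), \Delta)$, and in particular $(L(G), \Delta_\Om) \not\cong (L(G), \Delta)$. By Proposition \ref{coc.3}, $(L(G), \Delta_\Om)$ is itself a Kac type compact quantum group whose underlying von Neumann algebra is $L(G)$. Setting $(B, \Delta_B) := (L(G), \Delta_\Om)$ produces the required witness showing that $G$ is not quantum W$^*$-superrigid.

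There is no real obstacle here beyond verifying the elementary dictionary between unitary $2$-cocycles on the dual of a finite abelian group and classical group cohomology; once Proposition \ref{prop.not-rigid} and Proposition \ref{coc.3} are in hand, the corollary is a direct application.
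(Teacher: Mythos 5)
Your proposal is correct and follows essentially the same route as the paper: pick a non-coboundary class in $H^2(\Ghat_0,\T)$, transport it through the identification $L(G_0)\cong L^\infty(\Ghat_0)$ to a nontrivial unitary $2$-cocycle $\Om_0$ for $(L(G_0),\Delta_0)$, and apply Proposition \ref{prop.not-rigid} (with Proposition \ref{coc.3} supplying that $(L(G),\Delta_\Om)$ is again a Kac type compact quantum group). The extra detail you give on the dictionary between classical and unitary $2$-cocycles is exactly the content of Remark \ref{rem.compact-and-finite-group-reductions} and is left implicit in the paper's one-line proof.
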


In particular, none of the W$^*$-superrigid groups of the form $G = (\Z/2\Z)^{(I)} \rtimes \Gamma$ as considered in \cite{IPV10,BV12,DV24a} remain W$^*$-superrigid in the larger category of discrete quantum groups, because $H^2(\Z/2\Z \times \Z/2\Z,\T) \neq 1$.

\begin{proof}
Take a $2$-cocycle $\om_0 \in Z^2(\Ghat_0,\T)$ that is not a coboundary. Under the isomorphism $L^\infty(\Ghat_0) = L(G_0)$, we may view $\om_0$ as a unitary $2$-cocycle $\Om_0$ for $(L(G_0),\Delta_0)$ that is not a coboundary as a $2$-cocycle for $(L(G_0),\Delta_0)$. The result then follows from Proposition \ref{prop.not-rigid}.
\end{proof}

For the formulation of the next proposition, recall that a trace preserving group action $\Gamma \actson (A,\vphi)$ is called weakly mixing if the diagonal action $\Gamma \actson A \ovt A$ is ergodic.

\begin{proposition}\label{prop.crossed-product-not-rigid}
Let $\Gamma$ be an icc group and $\Gamma \actson^\al (A,\Delta)$ an action of $\Gamma$ by quantum group automorphisms of the Kac type compact quantum group $(A,\Delta)$. Denote the crossed product as $(B,\Delta)$, as defined before Proposition \ref{prop.cocycle-reduce-to-core}. Assume that the action $\Gamma \actson^\al A$ is weakly mixing. Assume that for every automorphism $\theta$ of the von Neumann algebra $B = A \rtimes_\al \Gamma$, there exists a unitary $v \in B$, an automorphism $\delta \in \Aut \Gamma$ and a character $\om : \Gamma \to \T$ such that $v \theta(u_g) v^* = \om(g) u_{\delta(g)}$ for all $g \in \Gamma$.

If $(A,\Delta)$ or $(L(\Gamma),\Delta)$ admits a nontrivial unitary $2$-cocycle, there exists a unitary $2$-cocycle $\Om$ on $(B,\Delta)$ such that $(B,\Delta_\Om) \not\cong (B,\Delta)$. In particular, $(B,\Delta)$ is not quantum W$^*$-superrigid.
\end{proposition}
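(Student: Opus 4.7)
The plan is to take a nontrivial unitary $2$-cocycle $\Om_0$ on $(A,\Delta)$ or on $(L(\Gamma),\Delta)$ and view it as a unitary $2$-cocycle $\Om$ on $(B,\Delta)$; by Proposition~\ref{prop.cocycle-reduce-to-core.three} this $\Om$ remains nontrivial on $(B,\Delta)$. It then suffices to show that $(B,\Delta_\Om)\not\cong (B,\Delta)$. So suppose, by contradiction, that there exists $\theta\in\Aut(B)$ with $(\theta\ot\theta)\circ\Delta_\Om = \Delta\circ\theta$. By the hypothesis, choose $v\in\cU(B)$, $\delta\in\Aut\Gamma$, and a character $\om:\Gamma\to\T$ with $v\,\theta(u_g)\,v^* = \om(g)\,u_{\delta(g)}$ for all $g\in\Gamma$. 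The strategy will be to show that these data alone force $\Om$ to be a coboundary on $(B,\Delta)$; Proposition~\ref{prop.cocycle-reduce-to-core.three} will then produce a coboundary on the original compact quantum group, contradicting the choice of $\Om_0$.

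First I would substitute $b=u_g$ into $(\theta\ot\theta)(\Om)\cdot(\theta(u_g)\ot\theta(u_g))\cdot(\theta\ot\theta)(\Om^*) = \Delta(\theta(u_g))$, rewrite $\theta(u_g)=\om(g)\,v^* u_{\delta(g)} v$, and conjugate both sides by $\Delta(v)$. A short manipulation then shows that the unitary
\[
 W \;:=\; \Delta(v)\,(\theta\ot\theta)(\Om)\,(v^*\ot v^*)\;\in\;\cU(B\ovt B)
\]
satisfies $W\,\Delta(u_h)\,W^* = \chi(h)\,\Delta(u_h)$ for every $h\in\Gamma$, where $\chi(h) := \om(\delta^{-1}(h))^{-1}$; equivalently $(u_h\ot u_h)\,W\,(u_h\ot u_h)^* = \overline{\chi(h)}\,W$.

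Next I would Fourier-decompose $W = \sum_{g,k\in\Gamma} a_{g,k}(u_g\ot u_k)$ with $a_{g,k}\in A\ovt A$, using the identification $B\ovt B = (A\rtimes\Gamma)\ovt(A\rtimes\Gamma)$. Matching coefficients of $(u_{g'}\ot u_{k'})$ in the displayed eigenvalue equation gives $(\al_h\ot\al_h)(a_{h^{-1}g'h,\,h^{-1}k'h}) = \overline{\chi(h)}\,a_{g',k'}$, so $\|a_{g,k}\|_2$ is constant along the diagonal $\Gamma$-conjugation orbit of $(g,k)$. Because $\Gamma$ is icc, that orbit is infinite whenever $(g,k)\neq (e,e)$, and summability $\sum_{g,k}\|a_{g,k}\|_2^2 = \|W\|_2^2 < \infty$ forces $a_{g,k}=0$ for all such pairs. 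Hence $W = a_{e,e}\in A\ovt A$ with $(\al_h\ot\al_h)(a_{e,e})=\overline{\chi(h)}\,a_{e,e}$. Weak mixing of $\Gamma\actson^\al A$ passes to weak mixing of the diagonal action on $A\ovt A$, so there is no nonzero joint $(\al\ot\al)$-eigenvector outside $\C$; we conclude $\chi\equiv 1$ and $W = c\cdot 1$ for some $c\in\T$. Setting $u := \theta^{-1}(v)$ and using $(\theta^{-1}\ot\theta^{-1})\circ\Delta = \Delta_\Om\circ\theta^{-1}$, unwinding the definition of $W$ yields $\Om = c\,(u\ot u)\,\Delta(u^*) = (cu\ot cu)\,\Delta((cu)^*)$, exhibiting $\Om$ as a coboundary on $(B,\Delta)$, which is the required contradiction.

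The main obstacle is the algebraic setup of the unitary $W$: in contrast with the classical inner-twist argument for group von Neumann algebras, $W$ is not \emph{a priori} cohomologous to $\Om$ as a $2$-cocycle on $(B,\Delta)$, so one cannot absorb $v$ into $\theta$ by an inner automorphism and then work within the cohomology class of $\Om$. The right move is to keep $W$ itself as the object of study and exploit only the $\Gamma$-eigenvalue equation it satisfies. Once $W$ is isolated, the Fourier argument across $\Gamma\times\Gamma$ (leveraging iccness to kill off-diagonal coefficients) and the concluding appeal to weak mixing (to pin down $a_{e,e}$) are of an entirely standard flavor.
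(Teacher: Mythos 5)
Your proof is correct and follows essentially the same route as the paper: reduce to showing that a unitary $2$-cocycle $\Om$ on $(B,\Delta)$ with $(B,\Delta_\Om)\cong(B,\Delta)$ must be a coboundary (using Proposition \ref{prop.cocycle-reduce-to-core.three} for the lift of nontriviality), use the hypothesis on $\theta(u_g)$ to derive an eigenvalue relation under conjugation by $u_g \ot u_g$, and invoke iccness of $\Gamma$ together with weak mixing of $\al$ to force the relevant unitary to be a scalar. The only organizational difference is that the paper normalizes first: writing the intertwining as $(\theta \ot \theta)\circ\Delta = \Delta_\Om\circ\theta$ and replacing $\theta$ by $(\Ad v)\circ\theta$ while replacing $\Om$ by the cohomologous cocycle $(v\ot v)\Om\Delta(v^*)$, one reduces to $\theta(u_g)=\om(g)u_{\delta(g)}$, after which the eigenvalue relation is satisfied by $\Om^*$ itself and no unwinding is needed. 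So your remark that one ``cannot absorb $v$ into $\theta$ by an inner automorphism and then work within the cohomology class of $\Om$'' is not quite accurate --- that is exactly what the paper does; the apparent obstruction arises only because you chose the opposite direction for the intertwining relation, in which case absorbing $v$ twists the target comultiplication rather than the source cocycle. Your alternative, keeping $W=\Delta(v)(\theta\ot\theta)(\Om)(v^*\ot v^*)$ and unwinding via $u=\theta^{-1}(v)$ at the end, is equally valid and all the computations check out.
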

\begin{proof}
By Proposition \ref{prop.cocycle-reduce-to-core.three}, nontrivial unitary $2$-cocycles on $(A,\Delta)$ or $(L(\Gamma),\Delta)$ give rise to nontrivial unitary $2$-cocycles on $(B,\Delta)$. It thus suffices to prove the following: if $\Om$ is a unitary $2$-cocycle for $(B,\Delta)$ and $(B,\Delta_\Om) \cong (B,\Delta)$, then $\Om$ is a coboundary.

Assume that $\theta : B \to B$ is an isomorphism satisfying $(\theta \ot \theta) \circ \Delta = \Delta_\Om \circ \theta$. By our assumption, there exists a unitary $v \in B$, an automorphism $\delta \in \Aut \Gamma$ and a character $\om : \Gamma \to \T$ such that $v \theta(u_g) v^* = \om(g) u_{\delta(g)}$ for all $g \in \Gamma$. Replacing $\theta$ by $(\Ad v) \circ \theta$ and replacing $\Om$ by the cohomologous $2$-cocycle $(v \ot v)\Om \Delta(v^*)$, we may assume that $\theta(u_g) = \om(g) u_{\delta(g)}$ for all $g \in \Gamma$. Then,
\begin{align*}
\om(g) \, \Om(u_{\delta(g)} \ot u_{\delta(g)}) \Om^* &= \Om \Delta(\theta(u_g)) \Om^* = \Delta_\Om(\theta(u_g)) = (\theta \ot \theta)\Delta(u_g) \\
&= \theta(u_g) \ot \theta(u_g) = \om(g)^2 \, u_{\delta(g)} \ot u_{\delta(g)} \; ,
\end{align*}
for all $g \in \Gamma$. It follows that
$$(u_g \ot u_g) \Om^* (u_g^* \ot u_g^*) = \om(\delta^{-1}(g)) \Om^* \quad\text{for all $g \in \Gamma$.}$$
Since $\Gamma$ is icc and $\Gamma \actson^\al A$ is weakly mixing, it follows that $\Om = \nu \cdot 1$ for some $\nu \in \T$. But then $\Om$ is the coboundary of $\nu \cdot 1$.
\end{proof}

\subsection{Characters and translation automorphisms of compact quantum groups}\label{sec.quantum-group-like-aut}

The \emph{group like isomorphisms} $\pi : L(G) \to L(\Lambda)$ between group von Neumann algebras $L(G)$ and $L(\Lambda)$ are the isomorphisms of the form $\pi = \pi_\delta \circ \pi_\om$ where $\delta : G \to \Lambda$ is a group isomorphism and $\pi_\delta(u_g) = u_{\delta(g)}$, and where $\om : G \to \T$ is a character and $\pi_\om(u_g) = \om(g) u_g$.

When we consider more generally Kac type compact quantum groups $(A,\Delta_A)$ and $(B,\Delta_B)$, the part $\pi_\delta$ precisely corresponds to the quantum group isomorphisms, i.e.\ the isomorphisms $\pi_1 : A \to B$ satisfying $\Delta_B \circ \pi_1 = (\pi_1 \ot \pi_1) \circ \Delta_A$. The automorphisms $\pi_\om$ precisely correspond to the \emph{translation automorphisms} of $(A,\Delta_A)$ that we define in this section.

First note that characters on a group $G$ precisely correspond to unital $*$-homomorphisms $\C[G] \to \C$. When $\cA \subset A$ is the dense $*$-subalgebra defined in Theorem \ref{cqg.5}, we thus define $\Char(A,\Delta)$ as the set of unital $*$-homomorphisms $\om : \cA \to \C$. For every $\om \in \Char(A,\Delta)$, we define the \emph{left translation automorphism} $\lambda_\om : A \to A$, formally as $\lambda_\om = (\om \ot \id)\Delta_A$ and more precisely as the unique von Neumann algebra automorphism satisfying $\lambda_\om(X_{ij}) = \sum_k \om(X_{ik}) X_{kj}$ whenever $X \in A \ot M_n(\C)$ is a unitary corepresentation. Using the orthonormal basis of Theorem \ref{cqg.6}, it is easy to see that $\lambda_\om$ uniquely extends to a Haar state preserving automorphism of $A$.

\begin{definition}\label{def.quantum-group-like-aut}
Let $(A,\Delta_A)$ and $(B,\Delta_B)$ be Kac type compact quantum groups. We call a von Neumann algebra isomorphism $\pi : A \to B$ \emph{quantum group like} if $\pi$ is of the form $\pi_0 \circ \lambda_\om$, where $\pi_0 : (A,\Delta_A) \to (B,\Delta_B)$ is a quantum group isomorphism and $\lambda_\om \in \Aut A$ is the left translation automorphism given by a character $\om \in \Char(A,\Delta)$.
\end{definition}

Since $\Delta_A$ need not be symmetric, every $\om \in \Char(A,\Delta)$ similarly gives rise to a right translation automorphism $\rho_\om : A \to A$ defined by $\rho_\om(a) = (\id \ot \om)\Delta_A(a)$ for all $a \in \cA$. For the following reason, using right instead of left translation automorphisms in Definition \ref{def.quantum-group-like-aut} does not change the concept of a quantum group like isomorphism.

First note that $\Char(A,\Delta)$ is a group with product $\om \cdot \om' := (\om \ot \om') \circ \Delta$, inverse $\om^{-1} := \om \circ S$, where $S$ is the antipode given by Theorem \ref{cqg.7}, and identity element given by the co-unit $\eps : \cA \to \C$ satisfying $\eps(X_{ij}) = \delta_{i,j}$ for every finite dimensional unitary corepresentation $X \in A \ot M_n(\C)$. Since $(\om \ot \om^{-1})\Delta(a) = \eps(a)$ for all $a \in \cA$, we get that $\pi_\om := \lambda_\om \circ \rho_{\om^{-1}}$ is a quantum group automorphism, for every $\om \in \Char(A,\Delta)$. So, we can switch between left and right translation automorphisms by composing with a quantum group automorphism.

When $\al \in \Aut(A,\Delta)$ is a quantum group automorphism, we have $\al(\cA) = \cA$, so that for every $\om \in \Char(A,\Delta)$, also $\om \circ \al \in \Char(A,\Delta)$. We say that $\om$ is \emph{$\al$-invariant} if $\om \circ \al = \om$. Noting that $\lambda_\om \circ \al = \al \circ \lambda_{\om \circ \al}$, we get that $\om$ is $\al$-invariant if and only if $\lambda_\om$ commutes with $\al$.

\begin{remark}
In the commutative case, when $(A,\Delta_A) = (L^\infty(K),\Delta_K)$, the left translation automorphisms are precisely the automorphism of the form $F(\cdot) \mapsto F(k_1 \cdot)$ for some $k_1 \in K$, which justifies the terminology. This can be seen as follows. First, if $k_1 \in K$, then $\om : \cA \to \C : \om(X_{ij}) = X_{ij}(k_1)$ is a well-defined character and the corresponding $\lambda_\om$ is given by left translation by $k_1$. Conversely, if $\om \in \Char(L^\infty(K),\Delta_K)$, then $\lambda_\om$ is a normal automorphism of $L^\infty(K)$ satisfying $\lambda_\om(\cA) = \cA$. Taking the norm closure, we find that $\lambda_\om$ is an automorphism of the C$^*$-algebra $C(K)$ and thus of the form $\lambda_\om(F(\cdot)) = F(\theta(\cdot))$ where $\theta$ is a homeomorphism of $K$. Since $\Delta_K \circ \lambda_\om = (\lambda_\om \ot \id)\circ \Delta_K$, the homeomorphism $\theta$ commutes with all right translations. It must therefore be a left translation: there exists a $k_1 \in K$ such that $\theta(k) = k_1 k$ for all $k \in K$. Then also $\om(X_{ij}) = X_{ij}(k_1)$ for all finite dimensional unitary representations $X : K \to \cU(n)$.
\end{remark}

\section{Relative rigidity of compact quantum groups}\label{sec.relative-rigidity}

The co-induced left-right Bernoulli construction in Theorem \ref{thm.main} is a canonical construction with input data given by an action $\Gamma \actson^\be (A_0,\Delta_0)$ by quantum group automorphisms of a compact quantum group $(A_0,\Delta_0)$. Given the functoriality of the construction, we can only expect that the output is quantum W$^*$-superrigid, if the input already is: we need to recover the quantum group structure on $A_0$ from the von Neumann algebra $A_0$ and the extra knowledge that each of the von Neumann algebra automorphisms $\be_g$, $g \in \Gamma$, actually is a quantum group automorphism.

That brings us to the notion of rigidity \emph{relative} to a group of automorphisms. We define this concept in this section and then prove that it holds for several families of compact groups.

\subsection{Definition of relative rigidity}

\begin{definition}\label{def.relative-rigidity-cqg}
Let $(A,\Delta_A)$ be a Kac type compact quantum group with Haar state $\vphi_A$. Let $\cG < \Aut(A,\Delta_A)$ be a subgroup or, more generally, let $\cG \actson^\al (A,\Delta_A)$ be an action of a group $\cG$ by quantum group automorphisms.
\begin{enumlist}
\item\label{def.relative-rigidity-cqg.one} We say that $(A,\Delta_A)$ is \emph{strictly rigid relative to $\cG < \Aut(A,\Delta_A)$} if the following holds: if $(B,\Delta_B)$ is any Kac type compact quantum group with Haar state $\vphi_B$ and $\pi : A \to B$ is a state preserving von Neumann algebra isomorphism such that $\pi \circ \al \circ \pi^{-1}$ is a quantum group automorphism for all $\al \in \cG$, then $\pi$ can be written as $\pi = \pi_0 \circ \lambda_\om$, where $\pi_0 : (A,\Delta_A) \to (B,\Delta_B)$ is a quantum group isomorphism and $\lambda_\om \in \Aut A$ is the left translation automorphism given by a $\cG$-invariant character $\om$ (see Section \ref{sec.quantum-group-like-aut}).
\end{enumlist}
We also need the following weaker notion.
\begin{enumlist}[resume]
\item\label{def.relative-rigidity-cqg.two} We say that $(A,\Delta_A)$ is \emph{rigid relative to $\cG \actson^\al (A,\Delta_A)$} if the following holds: if $(B,\Delta_B)$ is any Kac type compact quantum group with its Haar state, $\cG \actson^\be (B,\Delta_B)$ is an action by quantum group automorphisms and $\pi : A \to B$ is a state preserving von Neumann algebra isomorphism such that $\be_g \circ \pi = \pi \circ \al_g$ for all $g \in \cG$, there exists a quantum group isomorphism $\pi_0 : (A,\Delta_A) \to (B,\Delta_B)$ and a group automorphism $\zeta : \cG \to \cG$ such that $\be_{\zeta(g)} \circ \pi_0 = \pi_0 \circ \al_g$ for all $g \in  \cG$.
\end{enumlist}
\end{definition}

One could of course define a notion of strict rigidity relative to $\cG \actson^\al (A,\Delta_A)$, but this would anyway only depend on the image $\al(\cG) < \Aut(A,\Delta_A)$. While the notion of rigidity relative to an action may depend on the precise choice of action $\cG \actson^\al (A,\Delta_A)$, for strict rigidity the following natural property holds.

\begin{lemma}
Let $(A,\Delta_A)$ be a Kac type compact quantum group that is strictly rigid relative to a subgroup $\cG < \Aut(A,\Delta_A)$. If $\cG < \cG' < \Aut(A,\Delta_A)$ is a larger subgroup, $(A,\Delta_A)$ is also strictly rigid relative to $\cG'$.
\end{lemma}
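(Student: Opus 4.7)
The plan is to show that the decomposition $\pi = \pi_0 \circ \lambda_\om$ furnished by strict rigidity relative to $\cG$ automatically satisfies the stronger $\cG'$-invariance of $\om$. So suppose $(B,\Delta_B)$ is a Kac type compact quantum group and $\pi : A \to B$ is a state preserving von Neumann algebra isomorphism such that $\pi \circ \al \circ \pi^{-1}$ is a quantum group automorphism of $(B,\Delta_B)$ for every $\al \in \cG'$. Since $\cG \subset \cG'$, the same holds for every $\al \in \cG$, and strict rigidity relative to $\cG$ produces a quantum group isomorphism $\pi_0 : (A,\Delta_A) \to (B,\Delta_B)$ and a $\cG$-invariant character $\om \in \Char(A,\Delta_A)$ with $\pi = \pi_0 \circ \lambda_\om$.

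The heart of the argument is the observation that the only left translation automorphism $\lambda_\nu$ that is a quantum group automorphism of $(A,\Delta_A)$ is $\lambda_\eps = \id$. To see this, pick any finite dimensional unitary corepresentation $X \in A \ot M_n(\C)$ and set $T = (\nu \ot \id)(X) \in M_n(\C)$. A direct calculation using $(\Delta \ot \id)(X) = X_{13} X_{23}$ gives $(\lambda_\nu \ot \id)(X) = (1 \ot T) X$. If $\lambda_\nu$ is a quantum group automorphism, then $(1 \ot T)X$ must again be a unitary corepresentation; comparing $(\Delta \ot \id)((1 \ot T)X)$ with $((1 \ot T)X)_{13}((1 \ot T)X)_{23}$ and cancelling $X_{13}$ from the left forces $T = 1$. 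Since this holds for every $X$, we have $\nu(X_{ij}) = \delta_{i,j}$ on all matrix coefficients, i.e.\ $\nu = \eps$.

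Now fix any $\al \in \cG'$. Writing $\pi \circ \al \circ \pi^{-1} = \pi_0 \circ (\lambda_\om \circ \al \circ \lambda_\om^{-1}) \circ \pi_0^{-1}$ and using that both $\pi_0$ and $\pi \circ \al \circ \pi^{-1}$ are quantum group isomorphisms, we conclude that $\lambda_\om \circ \al \circ \lambda_\om^{-1}$ is a quantum group automorphism of $(A,\Delta_A)$. Because $\al$ is itself a quantum group automorphism, a short computation shows $\lambda_\om \circ \al = \al \circ \lambda_{\om \circ \al}$, so
\[
\lambda_\om \circ \al \circ \lambda_\om^{-1} = \al \circ \lambda_{\om \circ \al} \circ \lambda_{\om^{-1}} = \al \circ \lambda_{\om^{-1} \cdot (\om \circ \al)} ,
\]
using the anti-homomorphism property $\lambda_{\om_1} \circ \lambda_{\om_2} = \lambda_{\om_2 \cdot \om_1}$ of left translations. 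Dividing by the quantum group automorphism $\al$, the translation automorphism $\lambda_{\om^{-1} \cdot (\om \circ \al)}$ is itself a quantum group automorphism of $(A,\Delta_A)$. By the observation of the previous paragraph, this forces $\om^{-1} \cdot (\om \circ \al) = \eps$, i.e.\ $\om \circ \al = \om$.

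Since $\al \in \cG'$ was arbitrary, $\om$ is $\cG'$-invariant, and together with the quantum group isomorphism $\pi_0$ this is exactly the conclusion of strict rigidity relative to $\cG'$. The only nontrivial ingredient is the rigidity of translation automorphisms established in the second paragraph, and it is a purely formal consequence of the corepresentation identity $(\Delta \ot \id)(X) = X_{13} X_{23}$.
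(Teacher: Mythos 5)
Your proof is correct and follows essentially the same route as the paper: reduce to the statement that a left translation automorphism $\lambda_\mu$ which is also a quantum group automorphism must have $\mu = \eps$, using the identity $\lambda_\om \circ \al = \al \circ \lambda_{\om\circ\al}$ and the (anti-)multiplicativity of $\om \mapsto \lambda_\om$. The only cosmetic difference is that you verify the key sub-lemma on corepresentation coefficients, whereas the paper deduces it directly from $\Delta_A \circ \lambda_\mu = (\lambda_\mu \ot \id)\circ\Delta_A$ versus $(\lambda_\mu \ot \lambda_\mu)\circ\Delta_A$; both computations are equivalent.
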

\begin{proof}
Take a Kac type compact quantum group $(B,\Delta_B)$, equip $A$ and $B$ with the respective Haar states, and assume that $\pi : A \to B$ is a state preserving von Neumann algebra isomorphism such that $\pi \circ \al \circ \pi^{-1} \in \Aut(B,\Delta_B)$ for all $\al \in \cG'$. Since $(A,\Delta_A)$ is strictly rigid relative to $\cG$, we can write $\pi = \pi_0 \circ \lambda_\om$, where $\pi_0 : (A,\Delta_A) \to (B,\Delta_B)$ is a quantum group isomorphism and $\om \in \Char(A,\Delta)$. Since $\pi_0$ is a quantum group isomorphism, we get that $\lambda_\om \circ \al \circ \lambda_\om^{-1} \in \Aut(A,\Delta_A)$ for every $\al \in \cG'$. Since $\lambda_\om \circ \al \circ \lambda_\om^{-1} = \al \circ \lambda_{(\om \circ \al) \om^{-1}}$, also $\lambda_{(\om \circ \al) \om^{-1}} \in \Aut(A,\Delta_A)$ for every $\al \in \cG'$. To conclude the proof of the lemma, it thus suffices to prove the following statement: if $\mu \in \Char(A,\Delta_A)$ and $\lambda_\mu \in \Aut(A,\Delta_A)$, then $\mu = \eps$.

By the definition of $\lambda_\mu$, we get that $\Delta_A \circ \lambda_\mu = (\lambda_\mu \ot \id) \circ \Delta_A$. If $\lambda_\mu \in \Aut(A,\Delta_A)$, we also have that $\Delta_A \circ \lambda_\mu = (\lambda_\mu \ot \lambda_\mu)\circ \Delta_A$. It follows that $(\id \ot \lambda_\mu) \circ \Delta_A = \Delta_A$. This implies that $\mu = \eps$.
\end{proof}

\begin{remark}\label{rem.strictly-rigid-no-invariant-characters}
Let $(A,\Delta_A)$ be a Kac type compact quantum group that is strictly rigid relative to $\cG < \Aut(A,\Delta_A)$. If the co-unit $\eps$ is the only $\cG$-invariant character on $(A,\Delta_A)$, then every von Neumann algebra isomorphism $\pi$ as in Definition \ref{def.relative-rigidity-cqg.one} is automatically a quantum group isomorphism.
\end{remark}

\subsection{Examples of relative rigidity: the co-commutative case}\label{sec.examples-relative-rigidity-cocommutative}

We first prove rigidity of two very natural families of compact quantum groups w.r.t.\ natural groups of automorphisms: in Theorem \ref{thm.rigid-Kn}, we prove this for $\SL_n(\Z) \actson K_0^n$ whenever $n \geq 3$ and $K_0$ is a connected compact abelian group, e.g.\ $K_0= \T$, while in Theorem \ref{thm.rigid-LG-icc}, we prove this for $(L(G),\Delta_G)$ relative to the inner automorphisms $(\Ad u_g)_{g \in G}$ whenever $G$ is an icc group.

In both cases, these compact quantum groups are \emph{co-commutative}, meaning that $\si \circ \Delta = \Delta$, where $\si$ denotes the flip automorphism. In Theorem \ref{thm.generic-theorem} and Examples \ref{ex.main}, this will then provide discrete groups $G$ that are quantum W$^*$-superrigid, i.e.\ such that $(L(G),\Delta_G)$ is quantum W$^*$-superrigid in the sense of Definition \ref{def.quantum-Wstar-superrigid}.

In order to give examples of genuine quantum groups $(M,\Delta)$ that are quantum W$^*$-superrigid, we need examples of relative rigidity for \emph{noncommutative} compact groups $K$. We give such examples in the next Section \ref{sec.examples-relative-rigidity-noncommutative}.

When $\cT$ is a compact second countable group, we denote by $\Autgr(\cT)$ the group of continuous group automorphisms of $\cT$ and we denote by $\Autpmp(\cT)$ the group of Haar measure preserving automorphisms of the standard probability space $\cT$, where two such automorphisms are identified when they are equal a.e. Then $\Autpmp(\cT)$ is a Polish group and $\Autgr(\cT) < \Autpmp(\cT)$ is a closed subgroup.

\begin{theorem}\label{thm.rigid-Kn}
Let $K$ be a second countable connected compact abelian group and $n \geq 3$ an integer. Equip $K^n$ with the Haar measure. Writing the group operation in $K$ multiplicatively, view $\SL(n,\Z)$ as the subgroup of $\Autgr(K^n)$ defined by
\begin{equation}\label{eq.action-SLn}
(A \cdot a)_i = \prod_{j=1}^n a_j^{A_{ij}} \quad\text{for all $A \in \SL(n,\Z)$ and $a \in K^n$.}
\end{equation}
\begin{enumlist}
\item\label{thm.rigid-Kn.one} The compact group $K^n$ is strictly rigid relative to $\SL(n,\Z) < \Autgr(K^n)$.
\item\label{thm.rigid-Kn.two} If $\pi \in \Autpmp(K^n)$ commutes with $\SL(n,\Z)$, there exists a $\delta \in \Autgr(K)$ such that $\pi = \delta \times \cdots \times \delta$ a.e.
\end{enumlist}
\end{theorem}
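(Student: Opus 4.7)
The plan is to prove (i) first and then derive (ii) as a quick corollary. Given $\pi : A = L^\infty(K^n) \to B$ as in the hypothesis of (i), transport the comultiplication via $\pi$ and set $\Delta_0 := (\pi\inv \ot \pi\inv)\circ \Delta_B \circ \pi$, which makes $(A,\Delta_0)$ into a Kac type compact quantum group with the same Haar state $\tau$ as $(A,\Delta_A)$ (using state-preservation of $\pi$ together with Lemma \ref{lem.Haar-density}), and which turns each $\al_M$, $M \in \SL(n,\Z)$, into a quantum group automorphism of $(A,\Delta_0)$ as well as of $(A,\Delta_A)$. Because the only $\SL(n,\Z)$-fixed point of $K^n$ is $0$ for $n \geq 2$, the only $\SL(n,\Z)$-invariant character of $(A,\Delta_A)$ is the co-unit, so the $\lambda_\om$ twist in the conclusion of strict rigidity is forced to be trivial. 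The whole problem therefore reduces to showing $\Delta_0 = \Delta_A$.

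For this I would do Fourier analysis on $L^n = \widehat{K^n}$, where $L = \widehat K$ is torsion-free since $K$ is connected. Expanding $\Delta_0(u_\chi) = \sum_{\xi,\eta \in L^n} d_\chi(\xi,\eta)\,u_\xi \ot u_\eta$, the equivariance $\Delta_0\circ\al_M = (\al_M \ot \al_M)\circ \Delta_0$ translates to invariance of $d$ under the diagonal action of $\SL(n,\Z)$ on triples in $L^n$. Fix $\chi = \chi_0^{(1)}$ with $\chi_0 \in L \setminus \{0\}$ placed in the first coordinate; its stabilizer inside $\SL(n,\Z)$ equals $\Z^{n-1}\rtimes \SL(n-1,\Z)$, acting diagonally on $(\xi,\eta) \in L^{2n}$ via $(\xi_1,\xi',\eta_1,\eta') \mapsto (\xi_1 + v^T\xi', B^T\xi', \eta_1 + v^T\eta', B^T\eta')$. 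The key point is that for $n \geq 3$ (so $n-1 \geq 2$) and $L$ torsion-free, this stabilizer has only infinite orbits except where both ``tails'' $\xi'$ and $\eta'$ vanish. Since $d_{\chi_0^{(1)}} \in \ell^2(L^{2n})$ by Parseval (as $\Delta_0(u_{\chi_0^{(1)}})$ is a unitary), this forces $\Delta_0(u_{\chi_0^{(1)}})$ to depend only on the first coordinate in each of the two tensor legs.

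A well-chosen elementary matrix $A \in \SL(n,\Z)$ with $A^T\chi_0^{(1)} = \chi_0^{(1)} + \chi_0^{(2)}$, combined with the coordinate-swapping permutations and multiplicativity of $\Delta_0$, then yields the functional equation $G(k_1+k_2,k'_1+k'_2) = G(k_1,k'_1)G(k_2,k'_2)$ for the restricted function. Separating variables and invoking Weil's theorem that measurable group homomorphisms between locally compact groups are continuous, this gives
\[
\Delta_0(u_\chi) = u_{\tilde\delta_1(\chi)} \ot u_{\tilde\delta_2(\chi)} \qquad \text{for all } \chi \in L^n,
\]
where $\delta_1,\delta_2 \in \End(L)$ and $\tilde\delta_j$ acts on $L^n$ by applying $\delta_j$ in each coordinate. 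Applying $\tau \ot \id$ and using left invariance of $\tau$ forces $\tilde\delta_1(\chi) \neq 0$ for $\chi \neq 0$, so $\tilde\delta_1$ is injective; co-associativity $(\Delta_0 \ot \id)\Delta_0 = (\id \ot \Delta_0)\Delta_0$ forces $\tilde\delta_1^2 = \tilde\delta_1$; combining, $\tilde\delta_1 = \id$ and hence $\delta_1 = \id_L$. Symmetrically $\delta_2 = \id_L$, so $\Delta_0 = \Delta_A$ and $\pi$ is a quantum group isomorphism, completing (i).

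Part (ii) then falls out: for $\pi \in \Autpmp(K^n)$ commuting with $\SL(n,\Z)$, apply (i) to $\pi$ viewed as a state-preserving automorphism of $A$ with $B = A$; the hypothesis is trivially verified since $\pi \al_M \pi\inv = \al_M$ is automatically a quantum group automorphism. Hence $\pi$ agrees a.e.\ with an element of $\Autgr(K^n)$, and its commutation with $\SL(n,\Z) \subset \GL_n(\Z) \subset M_n(\End(K))$, via the standard centralizer calculation using elementary matrices, forces it to be a scalar matrix $\delta\cdot I_n$ with $\delta \in \Autgr(K)$. The main technical obstacle throughout is the stabilizer-orbit analysis --- verifying that the diagonal action of $\Z^{n-1}\rtimes \SL(n-1,\Z)$ on $L^{2n}$ has infinite orbits outside the ``first-coordinate'' locus --- which is precisely where the two hypotheses $n \geq 3$ and $K$ connected both enter.
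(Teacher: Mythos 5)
Your argument is correct, and it takes a genuinely different route from the paper's. The paper works entirely on the group side: it uses weak mixing of the copy of $\SL(n-1,\Z)$ fixing the first coordinate to identify the fixed-point subalgebra $L^\infty(\cT)^{\be_{\cG_1}}$, invokes Lemma \ref{lem.invariant-subalg-quotient} to realize it as $L^\infty(T)$ for a quotient group $T$ of $\cT$, assembles the resulting quotient maps into a group isomorphism $\theta:\cT\to T^n$ with $\theta\circ\pi=\delta\times\cdots\times\delta$, and then proves multiplicativity of $\delta$ by a fairly delicate pointwise analysis of the functional equation for $\phi$ (which requires a second use of connectedness, namely surjectivity of $b\mapsto b^2$). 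You instead work on the Pontryagin dual side: your stabilizer-orbit argument for $\Z^{n-1}\rtimes\SL(n-1,\Z)$ acting on $\widehat{K}^{2n}$ is exactly the Fourier transform of the paper's weak-mixing step (both use $n\geq 3$ and torsion-freeness of $\widehat{K}$ in the same place), but your endgame --- separating variables via Weil's theorem, then killing the endomorphisms $\delta_1,\delta_2$ by injectivity (from invariance of the Haar state) plus idempotence (from co-associativity) --- is cleaner than the paper's and avoids the squaring-map argument. One point to tighten: the separation of variables in the functional equation $G(k_1k_2,k_1'k_2')=G(k_1,k_1')G(k_2,k_2')$, which only holds a.e., a priori yields $\Delta_0(u_\chi)=c(\chi)\,u_{\delta_1(\chi)}\ot u_{\delta_2(\chi)}$ with a character $c:\widehat{K}^n\to\T$; this scalar is precisely the translation part $\lambda_\om$, and it is killed by $\SL(n,\Z)$-equivariance of $\Delta_0$ together with your opening observation that $e$ is the only $\SL(n,\Z)$-fixed point of $K^n$ --- so the tool is already in your write-up, but the scalar should be carried explicitly through the computation rather than dropped. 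Your derivation of (ii) from (i) via the centralizer computation in $M_n(\End K)$ is also correct and shorter than the paper's, which instead reruns the first half of the proof of (i).
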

\begin{proof}
(i) Write $\cG = \SL(n,\Z)$. For $A \in \cG$, we denote the automorphism $a \mapsto A \cdot a$ on $K^n$ by $\al_A$. Since $e$ is the only element of $K^n$ that is fixed by all these automorphisms, translation automorphisms will not appear. So by Theorem \ref{cqg.9}, we have to prove the following statement: if $\cT$ is a compact second countable group and $\pi : K^n \to \cT$ is a pmp isomorphism such that for all $A \in \cG$, $\pi \circ \al_A \circ \pi^{-1}$ is a.e.\ equal to a group automorphism of $\cT$, then $\pi$ itself is a.e.\ equal to a group isomorphism.

Define $\cG_1 \cong \SL(n-1,\Z)$ as the subgroup of matrices $A$ such that $A_{11} = 1$ and $A_{1i} = 0 = A_{i1}$ for all $i \geq 2$. The group $\cG_1$ acts naturally on $K^{n-1}$ and the restriction of the action $\cG \actson K^n$ to $\cG_1$ is the product $\cG_1 \actson K \times K^{n-1}$ of the trivial action on $K$ and the natural action on $K^{n-1}$. Note that the Pontryagin dual $\widehat{K}$ is torsion free, because $K$ is connected. Dualizing $\cG_1 \actson K^{n-1}$ gives an action $\cG_1 \actson {\widehat{K}}^{n-1}$, which has infinite orbits because $\widehat{K}$ is torsion free and $n \geq 3$. So, the action $\cG_1 \actson K^{n-1}$ is weakly mixing.

Denote by $B_1 \subset L^\infty(K^n)$ the von Neumann subalgebra of functions that only depend on the first variable. By the discussion in the previous paragraph, $B_1 = L^\infty(K^n)^{\cG_1}$. Also, w.r.t.\ the diagonal action $\cG_1 \actson K^n \times K^n$, we get that $L^\infty(K^n \times K^n)^{\cG_1} = B_1 \ovt B_1$.

For every $A \in \cG$, we define $\be_A \in \Autgr(\cT)$ such that $\be_A = \pi \circ \al_A \circ \pi^{-1}$ a.e. We also consider the von Neumann algebra isomorphism $\pi_* : L^\infty(K^n) \to L^\infty(\cT) : \pi_*(F) = F \circ \pi^{-1}$. By construction, $\pi_* \circ \al_A = \be_A \circ \pi_*$ for all $A \in \cG$. Define $D_1 = \pi_*(B_1)$. Then, $D_1 = L^\infty(\cT)^{\be_{\cG_1}}$. Denote by $\Delta_\cT : L^\infty(\cT) \to L^\infty(\cT \times \cT)$ the comultiplication of the compact group $\cT$. Since every $\be_A$ is a group automorphism, using the diagonal $\be$-action on $L^\infty(\cT \times \cT)$, we get that
$$\Delta_{\cT}(D_1) \subset L^\infty(\cT \times \cT)^{\be_{\cG_1}} = (\pi_* \ot \pi_*)(L^\infty(K^n \times K^n)^{\al_{\cG_1}}) = D_1 \ovt D_1 \; .$$
By Lemma \ref{lem.invariant-subalg-quotient} below, we find a compact group $T$ and a quotient homomorphism $\theta_1 : \cT \to T$ such that $D_1 = \{F \circ \theta_1 \mid F \in L^\infty(T)\}$. So, we find a pmp isomorphism $\delta : K \to T$ such that $\theta_1(\pi(a)) = \delta(a_1)$ for a.e.\ $a \in K^n$.

Fix $i \geq 2$ and define $\si_i \in \cG$ such that
$$(\si_i \cdot a)_k = \begin{cases} a_i &\quad\text{if $k=1$,} \\ a_1^{-1} &\quad\text{if $k=i$,} \\ a_k &\quad\text{if $k \not\in \{1,i\}$.}\end{cases}$$
Since $\be_{\si_i}$ is a group automorphism of $\cT$, $\theta_i := \theta_1 \circ \be_{\si_i} : \cT \to T$ is a quotient homomorphism. Since $\be_{\si_i} \circ \pi = \pi \circ \al_{\si_i}$ a.e., we get that $\theta_i(\pi(a)) = \delta(a_i)$ for a.e.\ $a \in K^n$.

Since the tautological map $a \mapsto (a_1,\ldots,a_n)$ is a pmp isomorphism, also the continuous group homomorphism $\theta : \cT \to T^n : \theta(b) = (\theta_1(b),\ldots,\theta_n(b))$ induces a pmp isomorphism. This implies that $\theta$ is an isomorphism of compact groups. By construction, $\theta \circ \pi = \delta \times \cdots \times \delta$ a.e.

We prove that $\delta$ is a.e.\ equal to a group isomorphism $K \cong T$. Since also $\theta$ is a group isomorphism, it then follows that $\pi$ is a.e.\ equal to a group isomorphism.

Define $\eta \in \cG$ such that $(\eta \cdot a)_1 = a_1 a_2$ and $(\eta \cdot a)_i = a_i$ for all $i \geq 2$. We then define $\phi \in \Autgr(T^n)$ such that $\phi$ is a.e.\ equal to $(\theta \circ \pi) \circ \al_\eta \circ (\theta \circ \pi)^{-1}$. Since $\theta \circ \pi = \delta \times \cdots \times \delta$ a.e., it follows that for a.e.\ $b \in T^n$,
$$\phi(b)_1 = \delta(\delta^{-1}(b_1) \delta^{-1}(b_2)) \quad\text{and}\quad \phi(b)_i = b_i \;\;\text{for all $i \geq 2$.}$$
By continuity of $\phi$, we get that $\phi(b)_i = b_i$ for all $b \in T^n$ and all $i \geq 2$. By a similar continuity argument, the map $b \mapsto \phi(b)_1$ only depends on the coordinates $b_1,b_2$. We thus find pmp homeomorphisms $\phi_b$ of $T$ such that
$$\phi(b) = (\phi_{b_2}(b_1),b_2,\ldots,b_n) \quad\text{for all $b \in T^n$.}$$
In particular, the map $(b,c) \mapsto \phi_c(b)$ is continuous. By construction,
\begin{equation}\label{eq.good-phi}
\phi_c(b) = \delta(\delta^{-1}(b) \delta^{-1}(c)) \quad\text{for a.e.\ $(b,c) \in T^2$.}
\end{equation}
Since $\phi$ is a group automorphism, we get that
\begin{equation}\label{eq.double-mult}
\phi_{c_1 c_2}(b_1 b_2) = \phi_{c_1}(b_1) \phi_{c_2}(b_2) \quad\text{for all $(b_1,c_1)$, $(b_2,c_2)$ in $T^2$.}
\end{equation}
In particular, $\phi_e \in \Autgr(T)$. Taking $c_2 = b_1 = e$, and next taking $c_1 = b_2 = e$, we find that
$$\phi_c(b) = \phi_c(e) \phi_e(b) \quad\text{and}\quad \phi_c(b) = \phi_e(b) \phi_c(e) \quad\text{for all $b,c \in T$.}$$
We write $\psi(c) := \phi_c(e)$ and note that $\psi(c)$ belongs to the center of $T$ for all $c \in T$. Since $\phi_c(b) = \psi(c) \phi_e(b) = \phi_e(b) \psi(c)$ and $\phi_e$ is a group automorphism, it follows from \eqref{eq.double-mult} that $\psi : T \to \cZ(T)$ is a continuous group homomorphism.

By \eqref{eq.good-phi},
\begin{equation}\label{eq.better-phi}
\phi_e(\delta(a)) \psi(\delta(b)) = \phi_{\delta(b)}(\delta(a)) = \delta(ab) \quad\text{for a.e.\ $(a,b) \in K^2$.}
\end{equation}
Since $K$ is abelian, $\delta(ab) = \delta(ba)$ and we conclude that $\phi_e(a) \psi(b) = \phi_e(b) \psi(a)$ for a.e.\ $a,b \in T$. By continuity, the equality holds for all $a,b \in T$. Taking $b = e$, it follows that $\phi_e(a) = \psi(a)$ for all $a \in T$. So, $T$ is abelian and $\psi = \phi_e$.

Then consider the group automorphism $\phi^2 = \phi \circ \phi$ of $T^n$. On the one hand,
$$\phi^2(b) = \phi(\phi_e(b_1) \phi_e(b_2),b_2,\ldots,b_n) = (\phi_e^2(b_1) \phi_e^2(b_2) \phi_e(b_2),b_2,\ldots,b_n) \; .$$
Since $(\eta^2 \cdot a)_1 = a_1 a_2^2$ and $(\eta^2 \cdot a)_i = a_i$ for all $i \geq 2$, we have on the other hand that
$$\phi^2(\delta(a_1),\ldots,\delta(a_n)) = (\delta(a_1 a_2^2) , \delta(a_2),\ldots,\delta(a_n))$$
for a.e.\ $a \in K^n$. It follows that
\begin{equation}\label{eq.tussenstap}
\phi_e^2(\delta(a)) \phi_e^2(\delta(b)) \phi_e(\delta(b)) = \delta(a b^2) \quad\text{for a.e.\ $(a,b) \in K^2$.}
\end{equation}
We claim that $K \to K : b \mapsto b^2$ is a measure preserving factor map. Since this map is a continuous group homomorphism, we only need to prove that it is surjective. If it would not be surjective, we find a character $\om \in \widehat{K}$ such that $\om(b^2)=1$ for all $b \in K$. Writing the group operation in $\widehat{K}$ additively, this means that $\om + \om = 0$, contradicting the fact that $\widehat{K}$ is torsion free, because $K$ is connected.

Since by \eqref{eq.better-phi}, $\delta(ab) = \phi_e(\delta(a)) \phi_e(\delta(b))$ for a.e.\ $(a,b) \in K^2$ and since $b \mapsto b^2$ is a measure preserving factor map, we get that $\delta(ab^2) = \phi_e(\delta(a)) \phi_e(\delta(b^2))$ for a.e.\ $(a,b) \in K^2$. In combination with \eqref{eq.tussenstap}, this means that
$$\phi_e^2(\delta(a)) \phi_e^2(\delta(b)) \phi_e(\delta(b)) = \phi_e(\delta(a)) \phi_e(\delta(b^2))$$
for a.e.\ $(a,b) \in K^2$. By the Fubini theorem, we find a $b \in K$ such that the equality holds for a.e.\ $a \in K$. Writing $c = \phi_e(\delta(b^2)) (\phi_e^2(\delta(b))\phi_e(\delta(b)))^{-1}$, we get that $\phi_e^2(\delta(a)) = \phi_e(\delta(a)) c$ for a.e.\ $a \in K$. By continuity, this implies that $\phi_e^2(a) = \phi_e(a) c$ for all $a \in T$. Taking $a=e$, we find that $c=e$. We next conclude that $\phi_e = \id$.

We have thus proven that $\delta(ab) = \delta(a) \delta(b)$ for a.e.\ $(a,b) \in K^2$, which means that $\delta$ is a.e.\ equal to a group isomorphism $K \cong T$.

(ii) Assume that $\pi \in \Autpmp(K^n)$ commutes with $\cG$. Repeating the first paragraphs of the proof of (i), we find a pmp isomorphism $\delta : K \to K$ such that $\pi(a) = (\delta(a_1),\ldots,\delta(a_n))$ for a.e.\ $a \in K^n$. Expressing that $\pi$ commutes with the element $\eta \in \cG$ that we used in the proof of (i), it follows that $\delta(ab) = \delta(a) \delta(b)$ for a.e.\ $(a,b) \in K^2$. So, $\delta$ is a.e.\ equal to an automorphism of $K$.
\end{proof}

The following lemma is certainly well-known, but in order to keep this paper self-contained, we include a short proof.

\begin{lemma}\label{lem.invariant-subalg-quotient}
Let $K$ be a compact second countable group, write $A = L^\infty(K)$ and consider the comultiplication $\Delta_K : A \to A \ovt A$. If $B \subset A$ is a von Neumann subalgebra satisfying $\Delta(B) \subset B \ovt B$, there is a unique closed normal subgroup $T \lhd K$ such that $B = L^\infty(K/T)$.
\end{lemma}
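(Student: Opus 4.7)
Uniqueness is easy: if $B = L^\infty(K/T)$, then $T$ is recovered as the stabilizer $\{k \in K : F(\cdot\, k) = F(\cdot) \text{ a.e.\ for all } F \in B\}$.

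For existence, the plan is to first upgrade the tensor-algebraic hypothesis $\Delta_K(B) \subset B \ovt B$ to plain translation invariance of $B$, then decompose via Peter--Weyl and conclude by Tannaka--Krein. Invariance comes from slicing: for every $\omega \in L^1(K) = A_*$, the slice $\id \ot \omega : A \ovt A \to A$ is normal and maps $B \ovt B$ into $B$. Hence for $F \in B$,
\[
((\id \ot \omega)\Delta_K(F))(x) = \int_K F(xk)\,\omega(k)\,dk \in B.
\]
Choosing $\omega_n \in L^1(K)^+$ of integral one concentrating at $k_0 \in K$, these elements converge in $L^2$-norm to the right translation $\rho_{k_0}(F)(x) = F(xk_0)$, by $L^2$-strong continuity of $\rho$. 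Since $B \cap L^2(K)$ is $L^2$-closed (as $B$ corresponds to a sub-$\sigma$-algebra of the Haar-completed Borel algebra), one obtains $\rho_{k_0}(F) \in B$. Applying the same slicing to the first factor gives $\lambda_{k_0}(B) \subset B$, where $\lambda_k F(x) = F(k^{-1} x)$.

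Next, I decompose via Peter--Weyl. For each irreducible unitary representation $\pi \in \widehat{K}$, let $C_\pi \subset L^\infty(K)$ denote the span of the matrix coefficients of $\pi$; as a representation of $K \times K$ under left and right translations, $C_\pi \cong \pi \ot \bar\pi$ is irreducible. Since $B$ is invariant under both actions, $B \cap C_\pi$ is either $0$ or all of $C_\pi$. Set $\cI = \{\pi \in \widehat{K} : C_\pi \subset B\}$. The Peter--Weyl projection $E_\pi(F) = d_\pi \int_K \chi_{\bar\pi}(k)\, \lambda_k(F)\,dk$ is a weak integral of elements of $B$ (by left-translation invariance), hence lies in $B \cap C_\pi$, which forces $E_\pi(F) = 0$ whenever $\pi \notin \cI$. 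Thus the $L^2$-expansion $F = \sum_\pi E_\pi(F)$ of any $F \in B$ is supported on $\cI$, showing $B = \overline{\bigoplus_{\pi \in \cI} C_\pi}^{\sigma\text{-weak}}$.

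Finally, since $B$ is a unital $*$-subalgebra, $\cI$ contains the trivial representation and is closed under tensor products (broken into irreducibles) and under duals. By Tannaka--Krein duality for compact groups, there is a unique closed normal subgroup $T = \bigcap_{\pi \in \cI} \ker \pi \lhd K$ for which $\cI = \widehat{K/T}$, and then $B = L^\infty(K/T)$ as subalgebras of $L^\infty(K)$. The main delicacy will be the translation-invariance step: evaluation functionals are not normal on $L^\infty(K)$, so $\Delta_K(B) \subset B \ovt B$ only yields invariance of $B$ under smeared translations by $L^1$-functions, and the $L^2$-approximation together with closedness of $B \cap L^2(K)$ is what converts this into honest pointwise translation invariance.
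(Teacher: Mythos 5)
Your proof is correct, but it takes a genuinely different route from the paper's. The paper simply observes that the restriction $(B,\Delta_K|_B)$ is itself a compact quantum group in the sense of Definition \ref{def.cqg} (the restricted Haar state is still invariant), picks a complete family $\cJ$ of irreducible unitary corepresentations of $(B,\Delta_K|_B)$ --- which, viewed inside $L^\infty(K)$, are finite dimensional unitary representations of $K$ --- sets $T=\bigcap_{\pi\in\cJ}\Ker\pi$, and gets $B=L^\infty(K/T)$ from density of coefficients (Theorem \ref{cqg.5}) plus the fact that $\cJ$ separates the points of $K/T$; it never needs to know that $B$ is pointwise translation invariant. You instead first upgrade $\Delta_K(B)\subset B\ovt B$ to honest two-sided translation invariance of $B$ by slicing against $L^1$ approximate identities and taking bounded $L^2$-limits, and then run classical Peter--Weyl. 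That invariance step is the delicate point and you handle it correctly: the approximants satisfy $\|(\id\ot\omega_n)\Delta_K(F)\|_\infty\le\|F\|_\infty$, so the $L^2$-limit $\rho_{k_0}(F)$ lies both in the $L^2$-closure of $B$ and in $L^\infty(K)$, hence in $B$. What each approach buys: the paper's is shorter because the quantum group machinery is already in place and it bypasses translation invariance entirely; yours is self-contained classical harmonic analysis and yields the extra information that $B$ is bi-invariant and that $\cI$ is exactly $\widehat{K/T}$. One small economy available to you: once $\cI$ contains the trivial representation and is closed under duals and under irreducible constituents of tensor products, you do not need Tannaka--Krein --- Stone--Weierstrass applied to the point-separating $*$-algebra of coefficients on $K/T$, with $T=\bigcap_{\pi\in\cI}\Ker\pi$, already gives $B=L^\infty(K/T)$, which is how the paper concludes.
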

\begin{proof}
Denote by $\Delta_B$ the restriction of $\Delta_K$ to $B$. The restriction of the Haar state remains invariant. So $(B,\Delta_B)$ is a compact quantum group in the sense of Definition \ref{def.cqg}. Choose a complete set $\cJ$ of inequivalent irreducible unitary corepresentations of $(B,\Delta_B)$. By Theorem \ref{cqg.5}, the linear span $\cB$ of all coefficients of $\pi \in \cJ$ is dense in $B$.

Since $B \subset L^\infty(K)$, every $\pi \in \cJ$ can be viewed as an irreducible unitary corepresentation of $(L^\infty(K),\Delta_K)$, and thus as an irreducible unitary representation of $K$. Define the closed normal subgroup $T < K$ as $T = \bigcap_{\pi \in \cJ} \Ker \pi$. By construction, $\cB \subset L^\infty(K/T)$. We may view $\cJ$ is a family of irreducible unitary representations of the compact group $K/T$ that, by definition, are separating the points of $K/T$. So, the coefficients of all $\pi \in \cJ$ span a dense $*$-subalgebra of $L^\infty(K/T)$. This means that $\cB$ is dense in $L^\infty(K/T)$ and thus $B = L^\infty(K/T)$.

The uniqueness of $T$ is obvious.
\end{proof}

While we will see below that relative rigidity of compact groups $K$ can only be shown in specific cases using rather ad hoc methods, relative rigidity holds very generally for duals of discrete groups.

\begin{theorem}\label{thm.rigid-LG-icc}
Let $G$ be an icc group and consider the compact quantum group $(L(G),\Delta_G)$, where $\Delta_G(u_g) = u_g \ot u_g$ for all $g \in G$. We equip $L(G)$ with the Haar tracial state $\tau$.

If $\cG < \Aut(G)$ is a countable subgroup that contains all inner automorphisms, then $(L(G),\Delta_G)$ is strictly rigid relative to the corresponding $\cG < \Aut(L(G),\Delta_G)$.
\end{theorem}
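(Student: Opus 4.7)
The strategy is to reduce, via transport along $\pi$, to an intrinsic statement on $L(G)$. Setting $v_g := \pi(u_g)$, the hypothesis $\Inn G \subset \cG$ gives that $\Ad v_g$ is a quantum group automorphism of $(B, \Delta_B)$ for every $g \in G$. Hence the transported comultiplication $\tilde\Delta := (\pi^{-1} \ot \pi^{-1}) \circ \Delta_B \circ \pi$ turns $L(G)$ into a Kac-type compact quantum group with Haar state $\tau$ for which $\Ad u_g \in \Aut(L(G), \tilde\Delta)$ for every $g \in G$. The theorem reduces to showing: there exists a character $\omega : G \to \T$ such that $\tilde\Delta(u_g) = \omega(g)(u_g \ot u_g)$ for every $g$.

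To analyze $\tilde\Delta$ I introduce the unitary $\Omega_g := \tilde\Delta(u_g)(u_g^{-1} \ot u_g^{-1}) \in L(G) \ovt L(G)$. Since $\Ad u_g$ is a quantum group automorphism, $\Omega_g$ lies in the relative commutant $\tilde\Delta(L(G))' \cap (L(G) \ovt L(G))$; equivariance of $\tilde\Delta$ under $\Ad u_h$ translates to $(\Ad u_h \ot \Ad u_h)(\Omega_g) = \Omega_{hgh^{-1}}$; the group law $u_{gh} = u_g u_h$ yields the cocycle identity $\Omega_{gh} = \Omega_g \cdot (\Ad u_g \ot \Ad u_g)(\Omega_h)$; and coassociativity of $\tilde\Delta$ yields the 2-cocycle identity of Definition \ref{def.unitary-2-cocycle}. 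Haar invariance additionally forces $(\tau \ot \id)\tilde\Delta(u_g) = \delta_{g,e} \cdot 1 = (\id \ot \tau)\tilde\Delta(u_g)$. Once $\Omega_g \in \T \cdot 1$ is established, writing $\Omega_g = \omega(g)$, the cocycle identity collapses to $\omega(gh) = \omega(g)\omega(h)$, yielding the main claim.

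The main obstacle is proving $\Omega_g \in \T \cdot 1$, which essentially depends on the icc hypothesis. A natural approach is Fourier expansion: writing $\tilde\Delta(u_g) = \sum_{k,l \in G} c_g(k,l) u_k \ot u_l$, the equivariance translates to invariance of $(g,k,l) \mapsto c_g(k,l)$ under the diagonal conjugation action of $G$ on $G^3$; Haar invariance gives $c_g(k,e) = c_g(e,l) = \delta_{g,e}\delta_{k,e}$; multiplicativity produces a convolution identity; and coassociativity yields a further compatibility. Since $G$ is icc, every $G$-orbit on $G^3$ other than $\{(e,e,e)\}$ is infinite, and combining this rigidity with the $\ell^2$-bound $\sum_{k,l}|c_g(k,l)|^2 = 1$ and the above identities is expected to force the support of each $c_g$ to collapse onto the singleton $\{(g,g)\}$, giving $\Omega_g = c_g(g,g) \cdot 1 \in \T$. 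Executing this collapse is the technical heart of the proof.

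Given the main claim, define $\pi_0 : L(G) \to B$ by $\pi_0(u_g) := \omega(g) v_g$; this is a trace-preserving $*$-isomorphism, and a quantum group isomorphism $(L(G), \Delta_G) \to (B, \Delta_B)$ since $\Delta_B(\pi_0(u_g)) = \omega(g)^2(v_g \ot v_g) = (\pi_0 \ot \pi_0)(u_g \ot u_g)$. One directly checks that $\pi_0 \circ \lambda_{\omega^{-1}}(u_g) = v_g = \pi(u_g)$, so $\pi = \pi_0 \circ \lambda_{\omega^{-1}}$. For $\cG$-invariance of $\omega^{-1}$ (equivalently of $\omega$): any $\alpha \in \cG$ is a quantum group automorphism of $(L(G), \Delta_G)$ by hypothesis, and by transport of the QG-automorphism $\pi \alpha \pi^{-1}$ of $(B, \Delta_B)$ also of $(L(G), \tilde\Delta)$. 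Since $\alpha$ comes from $\Aut(G)$ and acts as $u_g \mapsto u_{\alpha(g)}$, applying $\alpha \ot \alpha$ to $\tilde\Delta(u_g) = \omega(g)(u_g \ot u_g)$ yields $\omega(\alpha(g)) = \omega(g)$.
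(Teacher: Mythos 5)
Your reduction is sound: transporting $\Delta_B$ back to $L(G)$, using that $\Inn G\subset\cG$ to get $\Ad u_g\in\Aut(L(G),\tilde\Delta)$, and observing that the conclusion follows once $\tilde\Delta(u_g)=\om(g)(u_g\ot u_g)$ for a character $\om$, is all correct, as is the final assembly (the verification that $\pi_0$ is a quantum group isomorphism, that $\pi=\pi_0\circ\lambda_{\om^{-1}}$, and that $\om$ is $\cG$-invariant). However, the entire mathematical content of the theorem sits in the step you defer — proving $\Om_g\in\T 1$ — and your sketch of that step does not close. The identities you list are correct but insufficient as stated: the diagonal-conjugation invariance of $(g,k,l)\mapsto c_g(k,l)$ moves the index $g$ along with $(k,l)$, while the $\ell^2$-bound $\sum_{k,l}|c_g(k,l)|^2=1$ holds only for each fixed $g$; so infinite orbits on $G^3$ do not force vanishing (a nonzero value can be spread along an infinite orbit with only finitely many $(k,l)$ above each conjugate of $g$). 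For a fixed $g\neq e$, the genuine invariance of $c_g(\cdot,\cdot)$ is only under the centralizer $C_G(g)$, which may be small and have many finite orbits on $G\times G$, so the icc hypothesis alone does not collapse the support of $c_g$. Note also that the relative commutant $\tilde\Delta(L(G))'\cap(L(G)\ovt L(G))$, in which you correctly place $\Om_g$, need not be trivial for a general comultiplication, so that observation does not settle the matter either. Some essential use of the multiplicative and coassociative structure, beyond what you have written, is required, and you have not supplied it.

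For comparison, the paper proves this step by a quite different mechanism, working with the corepresentation theory of the target quantum group $(A,\Delta)$ rather than with Fourier coefficients over $G\times G$. The key moves are: (1) the trace-preserving conditional expectation $E$ onto $\Delta(A)$ together with factoriality of $A$ (this is where icc enters) yields scalars $\eta_g$ with $E(v_g\ot v_g)=\eta_g\Delta(v_g)$; (2) translating this into the matrices $P^X_g$ of coefficients of $v_g$ along each irreducible corepresentation $X$ gives $(P^X_g)^2=d(X)\eta_g P^X_g$, and the antipode gives $(P^X_g)^*=\nu_g P^X_g$, so suitable rescalings $Q^X_g$ are self-adjoint projections; (3) orthogonality $\vphi(v_gv_h^*)=0$ for $g\neq h$ forces $Q^X_gQ^X_h=0$, so the $Q^X_g$ span an abelian subalgebra which, by surjectivity of the coefficient map, is all of $M_{d(X)}(\C)$; hence $d(X)=1$ for every $X$, $(A,\Delta)$ is co-commutative, and each $v_g$ is a scalar multiple of a group-like unitary. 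If you want to complete your proof, you should either import this corepresentation-theoretic argument (it transports verbatim to $(L(G),\tilde\Delta)$) or supply a genuinely complete combinatorial argument for the collapse of the $c_g$, which I expect to be substantially harder than your outline suggests.
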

\begin{proof}
We only need to prove the following statement: if $(A,\Delta)$ is a Kac type compact quantum group with Haar state $\vphi$ and $\pi : L(G) \to A$ is a state preserving von Neumann algebra isomorphism such that $\pi \circ \Ad u_g \circ \pi^{-1} \in \Aut(A,\Delta)$ for all $g \in G$, there exists a quantum group isomorphism $\pi_0 : L(G) \to A$ and a character $\om : G \to \T$ such that $\pi = \pi_0 \circ \pi_\om$, where $\pi_\om(u_g) = \om(g) u_g$ for all $g \in G$.

Define $v_g = \pi(u_g)$ and $\be_g = \Ad v_g$. By assumption, $\be_g \in \Aut(A,\Delta)$ for all $g \in G$. Since $\pi$ is state preserving, $\vphi$ is tracial and $(A,\Delta)$ is of Kac type. Denote by $E : A \ovt A \to \Delta(A)$ the unique trace preserving conditional expectation. Denote $\cE = \Delta^{-1} \circ E : A \ovt A \to A$.

Fix $g \in G$. Since $(v_g \ot v_g) \Delta(a) = ((\be_g \ot \be_g)\Delta(a)) (v_g \ot v_g) = \Delta(\be_g(a)) (v_g \ot v_g)$, we find that the element $d_g \in A$ defined by $d_g := \cE(v_g \ot v_g)$ satisfies $d_g a = \be_g(a) d_g$ for all $a \in A$. Since $L(G)$ is a factor, also $A$ is a factor and we conclude that $d_g$ is a (potentially zero) multiple of $v_g$. We thus uniquely define $\eta_g \in \C$ such that $d_g = \eta_g v_g$.

Fix a complete set $\Irr$ of inequivalent irreducible unitary corepresentations $X$ of $(A,\Delta)$ and denote by $d(X)$ their dimension. By Theorem \ref{cqg.6}, $\{d(X)^{1/2} X_{ij} \mid X \in \Irr , 1 \leq i,j \leq d(X)\}$ is an orthonormal basis of the GNS Hilbert space of $(A,\vphi)$. Every element $a \in A$ can thus be uniquely written as
$$a = \sum_{X \in \Irr} \sum_{i,j=1}^{d(X)} (a)^X_{ij} X_{ij} \; ,$$
with convergence in the $2$-norm given by $\vphi$. Note that $(a)^X_{ij} = d(X) \, \vphi(a X_{ij}^*)$. Define the matrices $P_g^X \in M_{d(X)}(\C)$ as $(P_g^X)_{ij} = (v_g)^X_{ij}$.

We thus find that
\begin{align*}
\eta_g \, (P_g^X)_{ij} &= (d_g)^X_{ij} = d(X) \, \vphi(\cE(v_g \ot v_g) X_{ij}^*) = d(X) \, (\vphi \ot \vphi)((v_g \ot v_g) \Delta(X_{ij}^*)) \\
& = d(X) \sum_{k=1}^{d(X)} \vphi(v_g X_{ik}^*) \, \vphi(v_g X_{kj}^*) = d(X)^{-1} \sum_{k=1}^{d(X)} (P_g^X)_{ik} \, (P_g^X)_{kj} \; .
\end{align*}
This means that
\begin{equation}\label{eq.key-formula-PgX}
(P_g^X)^2 = d(X) \eta_g \, P_g^X \quad\text{for all $g \in G$ and $X \in \Irr$.}
\end{equation}
By Theorem \ref{cqg.7}, the compact quantum group $(A,\Delta)$ of Kac type has the antipode $S : A \to A$, which is a $*$-anti-automorphism of $A$ that, by construction, commutes with all quantum group automorphisms. In particular, $\be_g(S(a)) = S(\be_g(a))$ for all $g \in G$, $a \in A$. Since $S$ is a $*$-anti-automorphism, it follows that $\Ad S(v_g^*) = \Ad(v_g)$ for all $g \in G$. There thus exist $\nu_g \in \T$ such that $S(v_g^*) = \nu_g v_g$ for all $g \in G$.

Since $S(X_{ij}) = X_{ji}^*$, we find that $\overline{(P_g^X)_{ji}} = \nu_g \, (P_g^X)_{ij}$. This means that $(P_g^X)^* = \nu_g P_g^X$. In combination with \eqref{eq.key-formula-PgX}, we get that
\begin{equation}\label{eq.second-key-formula-PgX}
d(X) \nu_g \eta_g \, P_g^X = (P_g^X)^* \, P_g^X \quad\text{for all $g \in G$ and $X \in \Irr$.}
\end{equation}
If for some $g \in G$, $\eta_g = 0$, it follows that $P_g^X = 0$ for all $X \in \Irr$, so that $v_g = 0$, which is absurd because $v_g$ is a unitary. So, $\eta_g \neq 0$ for all $g \in G$. Since the right hand side of \eqref{eq.second-key-formula-PgX} is self-adjoint, while $(P_g^X)^* = \nu_g P_g^X$, we also get that $\overline{\eta_g} = \nu_g \eta_g$.

It then follows that $Q_g^X = d(X)^{-1} \eta_g^{-1} P_g^X$ is a self-adjoint projection in $M_{d(X)}(\C)$. Take distinct $g,h \in G$. Since $\tau(u_g u_h^*) = 0$ and $\pi$ is state preserving, we find that
$$0 = \vphi(v_g v_h^*) = \sum_{X \in \Irr} d(X)^{-1} \Tr(P_g^X (P_h^X)^*) = \eta_g \overline{\eta_h} \sum_{X \in \Irr} d(X) \Tr(Q_g^X Q_h^X) \; .$$
Since $\eta_g \neq 0$ and $\eta_h \neq 0$, we conclude that $\sum_{X \in \Irr} d(X) \Tr(Q_g^X Q_h^X)=0$. Since $Q_g^X$ and $Q_h^X$ are orthogonal projections, $\Tr(Q_g^X Q_h^X) \geq 0$ for all $X \in \Irr$. We conclude that
\begin{equation}\label{eq.orthog-Q}
Q_g^X Q_h^X = 0 \quad\text{for all $g \neq h$ and $X \in \Irr$.}
\end{equation}
In particular, for every $X \in \Irr$, $\{Q_g^X \mid g \in G\}$ is a commuting family of orthogonal projections, whose linear span thus forms an abelian $*$-subalgebra $D_X \subset M_{d(X)}(\C)$. Fix $X \in \Irr$. Define the normal linear map $E_X : A \to M_{d(X)}(\C) : (E_X(a))_{ij} = (a)^X_{ij}$. By construction, $D_X$ equals the linear span of $\{E_X(v_g) \mid g \in G\}$. By normality, $D_X = E_X(A)$. Since $E_X$ is surjective, we conclude that $D_X = M_{d(X)}(\C)$ for all $X \in \Irr$. Because $D_X$ is abelian, it follows that $d(X) = 1$ for all $X \in \Irr$. This means that $(A,\Delta)$ is the dual of a discrete group.

More precisely, $\Irr$ is a subgroup of $\cU(A)$ that generates $A$, such that $\Delta(X) = X \ot X$ for all $X \in \Irr$ and $\vphi(X) = 0$ if $X$ is not the trivial corepresentation.

Fix $g \in G$. We claim that there is a unique $X \in \Irr$ such that $P_g^X \neq 0$. Since $v_g \neq 0$, there is at least one $X \in \Irr$ such that $P_g^X \neq 0$. Assume that $X \neq Y$ and that both $P_g^X \neq 0$ and $P_g^Y \neq 0$. Since $X$ and $Y$ are $1$-dimensional, this means that $Q_g^X = 1$ and $Q_g^Y = 1$. It follows from \eqref{eq.orthog-Q} that $Q_h^X = 0$ and $Q_h^Y = 0$ for all $h \in G \setminus \{g\}$. Define the normal linear map $E_1 : A \to \C X + \C Y : E_1(a) = (a)^X X + (a)^Y Y$. Since $E_1(v_h)=0$ for all $h \in G \setminus \{g\}$, the image of $E_1$ is at most $1$-dimensional. But $E_1$ is surjective. So the claim is proven.

By the claim, for every $g \in G$, there is a unique $\delta(g) \in \Irr$ and $\om(g) \in \T$ such that $\pi(u_g) = \om(g) \delta(g)$. Since $\pi$ is a $*$-isomorphism, it follows that $\om : G \to \T$ is a character. Then $\pi = \pi_0 \circ \pi_\om$, where the $*$-isomorphism $\pi_0 : L(G) \to A$ satisfies $\pi_0(u_g) = \delta(g)$ for all $g \in G$. It follows that $\pi_0$ is a quantum group isomorphism.
\end{proof}

\subsection{Examples of relatively rigid noncommutative groups}\label{sec.examples-relative-rigidity-noncommutative}

In this section, we give several examples of noncommutative groups $K$ that are rigid relative to their automorphism group.

In Proposition \ref{prop.no-go-connected-Lie} in the next section, we prove that \emph{no} noncommutative compact connected group is rigid relative to \emph{any} group of automorphisms. So it is natural to consider finite groups and their direct products with tori $\T^n$ as candidates for relative rigidity. Note that here we systematically view finite groups $K$ as being compact groups with associated compact quantum group $(L^\infty(K),\Delta_K)$.

In Theorem \ref{thm.relative-rigid-An-tilde}, we prove relative rigidity for the symmetric groups $S_n$, the alternating groups $A_n$ and their double cover $\Atil_n$. Before doing that, we prove in Theorem \ref{thm.rigid-SL2} that the groups $\SL_2(\F_p)$ are relatively rigid, where $\F_p = \Z/p\Z$.

In Theorem \ref{thm.rigid-SLn-Fp}, we prove relative rigidity for the groups $\SL_n(\F_q)$ and $\PSL_n(\F_q)$ when $n \geq 3$, as well as for the groups $\F_q^n \rtimes \SL_n(\F_q)$ when $n \geq 2$, where $\F_q$ is the finite field of order $q =p^k$. In Proposition \ref{prop.direct-product-relative-rigid}, we analyze when a direct product of $\T^n$ with a finite group is relatively rigid.

To give examples of quantum W$^*$-superrigidity, we not only need relative rigidity of a finite group $K$, but we also need that $K$ is a perfect group and that $H^2(K,\T)$ is trivial. For that reason, we also include the double cover $\Atil_n$ of $A_n$~; cf.\ Section \ref{sec.about-trivial-2-cohomology}.

\begin{lemma}\label{lem.finite-group-rigid}
Let $K$ be a finite group.
\begin{enumlist}
\item\label{lem.finite-group-rigid.one} $K$ is rigid relative to $\Aut K$ if and only if the following holds: whenever $\Lambda$ is a group and $\pi : K \to \Lambda$ is a bijection satisfying $\pi \circ \al \circ \pi^{-1} \in \Aut \Lambda$ for all $\al \in \Aut K$, we have $\Lambda \cong K$.

\item\label{lem.finite-group-rigid.two} If $K$ is rigid relative to $\Aut K$, if every automorphism of $\Aut K$ is inner and if $\Lambda$ is a group and $\pi : K \to \Lambda$ a bijection such that $\pi \circ \al \circ \pi^{-1} \in \Aut \Lambda$ for all $\al \in \Aut K$, there exists a group isomorphism $\phi : K \to \Lambda$ such that $\pi \circ \al \circ \pi^{-1} = \phi \circ \al \circ \phi^{-1}$ for all $\al \in \Aut K$.
\end{enumlist}
\end{lemma}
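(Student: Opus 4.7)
The approach is to translate the lemma into an elementary statement about finite groups. Because $L^\infty(K)$ is finite dimensional and commutative, Theorem \ref{cqg.9} forces every Kac type compact quantum group structure on a von Neumann algebra isomorphic to $L^\infty(K)$ to be of the form $(L^\infty(\Lambda),\Delta_\Lambda)$ for a finite group $\Lambda$ with $|\Lambda|=|K|$. Under this identification, state preserving von Neumann algebra isomorphisms $L^\infty(K) \to L^\infty(\Lambda)$ are in natural bijection with set bijections $\pi : K \to \Lambda$, and quantum group automorphisms of $(L^\infty(\Lambda),\Delta_\Lambda)$ correspond to genuine group automorphisms of $\Lambda$.

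For part (i), the forward direction is then immediate: an instance of the bijection hypothesis is an instance of the data in Definition \ref{def.relative-rigidity-cqg.two}, whose conclusion produces a quantum group isomorphism, equivalently a group isomorphism $K \cong \Lambda$. For the backward direction, assume the bijection property and suppose we are given $\pi : K \to \Lambda$ as in Definition \ref{def.relative-rigidity-cqg.two}; by hypothesis $\Lambda \cong K$, so pick any group isomorphism $\pi_0 : K \to \Lambda$ and set $\psi = \pi^{-1} \circ \pi_0 \in \operatorname{Sym}(K)$. A cardinality comparison, using $|\Aut\Lambda|=|\Aut K|$, upgrades $\pi\,\Aut K\,\pi^{-1} \subseteq \Aut \Lambda$ to an equality. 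Hence for each $g\in \Aut K$ the bijection
$$\psi \al_g \psi^{-1} = \pi^{-1}\,(\pi_0 \al_g \pi_0^{-1})\,\pi$$
lies in $\pi^{-1}(\Aut\Lambda)\pi = \Aut K$. This defines an injective homomorphism $\zeta : \Aut K \to \Aut K$ via $\al_{\zeta(g)} = \psi \al_g \psi^{-1}$, which by finiteness is an automorphism, and $(\pi_0,\zeta)$ is exactly the pair required by Definition \ref{def.relative-rigidity-cqg.two}.

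For part (ii), I start from the pair $(\pi_0, \zeta)$ produced in (i), so that $\al_{\zeta(g)} = \psi \al_g \psi^{-1}$ with $\psi = \pi^{-1}\pi_0$. The hypothesis that every automorphism of $\Aut K$ is inner yields $g_0 \in \Aut K$ with $\zeta = \Ad g_0$, so $\psi \al_g \psi^{-1} = \al_{g_0}\al_g\al_{g_0}^{-1}$ for all $g$. A direct conjugation calculation (using the faithfulness of $\Aut K \hookrightarrow \operatorname{Sym}(K)$) then shows that $\al_{g_0}\psi^{-1}$ commutes with every $\al_g$. Setting $\phi = \pi_0 \circ \al_{g_0}^{-1}$, which is again a group isomorphism $K\to \Lambda$, one computes $\phi^{-1}\pi = \al_{g_0}\psi^{-1}$. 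Thus $\phi^{-1}\pi$ centralises the image of $\Aut K$ in $\operatorname{Sym}(K)$, and this centralising property is precisely equivalent to $\pi\circ\al\circ\pi^{-1} = \phi\circ\al\circ\phi^{-1}$ for all $\al\in \Aut K$.

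There is no substantial obstacle here: once the dictionary between Kac type compact quantum group structures on the commutative algebra $L^\infty(K)$ and group structures on the underlying finite set is in place, everything reduces to routine bookkeeping with conjugations, and the only genuine manipulation is the single inner-automorphism correction in part (ii).
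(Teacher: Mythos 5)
Your proof is correct and follows essentially the same route as the paper's: after reducing to the elementary statement about bijections of finite groups, part (i) rests on the observation that $\Ad(\pi^{-1}\circ\pi_0)$ maps the finite group $\Aut K$ into itself and hence onto it, and part (ii) absorbs the resulting automorphism of $\Aut K$ into an inner one, exactly as in the paper (your $\phi = \pi_0\circ\al_{g_0}^{-1}$ is the paper's $\theta^{-1}\circ\theta_1$ up to inversion conventions). The only difference is that you spell out the dictionary between finite-dimensional commutative Kac algebras and finite groups, which the paper leaves implicit.
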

\begin{proof}
(i) One implication being obvious, assume that the second statement holds. Let $\Lambda$ be a finite group, $\pi : K \to \Lambda$ a bijection and $\Aut K \actson^\be \Lambda$ an action such that $\be_\al \circ \pi = \pi \circ \al$ for all $\al \in \Aut K$. By assumption, there exists a group isomorphism $\theta : \Lambda \to K$. Write $\pitil = \theta \circ \pi$. Then $\pitil$ is a permutation of the set $K$ and it suffices to prove that $(\Ad \pitil)(\Aut K) = \Aut K$ as subgroups of the permutation group of the set $K$. By construction, $(\Ad \pitil)(\Aut K) \subset \Aut K$. Since $K$ is finite, $\Aut K$ is finite and this inclusion must be an equality.

(ii) Since $K$ is rigid relative to $\Aut K$, we first find a group isomorphism $\theta : \Lambda \to K$. As explained in the proof of (i), $\Ad (\theta \circ \pi)$ defines an automorphism of the group $\Aut K$. By assumption, we find an automorphism $\theta_1 \in \Aut K$ such that $\Ad (\theta \circ \pi) = \Ad \theta_1$ as automorphisms of $\Aut K$. Then $\phi = \theta^{-1} \circ \theta_1$ has the required property.
\end{proof}

\begin{theorem}\label{thm.rigid-SL2}
For every prime $p$, the groups $\SL_2(\F_p)$, $\PSL_2(\F_p)$ and $\PGL_2(\F_p)$ are rigid relative to their automorphism group.
\end{theorem}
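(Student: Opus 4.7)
The plan is to apply Lemma \ref{lem.finite-group-rigid}\ref{lem.finite-group-rigid.one}: for each $K$ among $\SL_2(\F_p)$, $\PSL_2(\F_p)$ and $\PGL_2(\F_p)$, it suffices to fix an arbitrary group $\Lambda$ together with a bijection $\pi : K \to \Lambda$ for which $\pi \circ \al \circ \pi\inv \in \Aut \Lambda$ whenever $\al \in \Aut K$, and to exhibit an abstract isomorphism $\Lambda \cong K$. Pulling the multiplication of $\Lambda$ back along $\pi$ produces a second group operation $*$ on the underlying set of $K$ satisfying $\Aut(K,\cdot) \subseteq \Aut(K,*)$, and the task is to prove $(K,*) \cong (K,\cdot)$.

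The first step would be to pin down the $*$-identity $e \in K$. Since $e$ is the unique element whose $*$-centraliser equals all of $K$, and this property is $\Aut(K,\cdot)$-invariant, $e$ must be a global fixed point of the $\Aut K$-action on the set $K$. For $\PSL_2(\F_p)$ and $\PGL_2(\F_p)$ with $p \geq 5$, the only such fixed element is $1$; for $\SL_2(\F_p)$ with $p$ odd the candidates are $\{I,-I\}$, and a short argument comparing $*$-conjugacy-class sizes with $\Aut K$-orbit sizes singles out $e=I$. The small primes $p \in \{2,3\}$ will be dealt with by direct inspection, using $\PSL_2(\F_2) \cong S_3$ and $\PSL_2(\F_3) \cong A_4$ together with Theorem \ref{thm.relative-rigid-An-tilde}.

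Next I would invoke the classical description of the $\Aut K$-orbits on $K$: conjugacy classes in these groups are parametrised by trace (plus a splitness datum in the semisimple case), and the outer diagonal automorphisms fuse at most pairs of non-central semisimple classes with a common trace. Because both $\Aut(K,\cdot)$ and $\Inn(K,*)$ sit inside $\Aut(K,*)$, every $\Aut(K,\cdot)$-orbit must be a union of $*$-conjugacy classes. Consequently the $*$-order and the $*$-centraliser order of an element are constant along each $\Aut(K,\cdot)$-orbit, and a counting argument forces the multiset of $*$-orders on $K$ to agree with the multiset of $\cdot$-orders.

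Finally I would construct the isomorphism explicitly. The approach is to fix a canonical generating pair $u,h \in K$ -- say $u$ a non-trivial unipotent and $h$ a generator of a split maximal torus -- together with a standard presentation of $K$, and then to use the orbit analysis above to select $*$-partners $u',h' \in (K,*)$ of the same orders and with the same $\Inn(K,\cdot)$-conjugation pattern; this yields a homomorphism from a known presentation of $K$ into $(K,*)$, whose surjectivity (and hence bijectivity, since $|(K,*)|=|K|$) is automatic. The hard part, where the bulk of the work sits, will be to verify that $u'$ and $h'$ actually satisfy the defining Steinberg-type relation of $K$ in the new operation $*$; I expect this to rest essentially on the sharp $3$-transitivity of the $\PGL_2(\F_p)$-action on $\mathbb{P}^1(\F_p)$ and the very restricted subgroup lattice of $\SL_2(\F_p)$, which together leave no room for $u',h'$ to multiply in any way other than the original one.
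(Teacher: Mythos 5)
Your proposal is a plan rather than a proof, and its two load-bearing steps are not actually carried out. First, the assertion that ``every $\Aut(K,\cdot)$-orbit must be a union of $*$-conjugacy classes'' does not follow from $\Aut(K,\cdot)\subseteq\Aut(K,*)$ and $\Inn(K,*)\subseteq\Aut(K,*)$: neither subgroup need normalize or contain the other, so the two partitions of $K$ (into $\Aut(K,\cdot)$-orbits and into $*$-classes) need not refine one another. The consequence you actually use --- constancy of the $*$-order and of the $*$-centralizer order along $\Aut(K,\cdot)$-orbits --- is true, but only because each $\al\in\Aut(K,\cdot)$ is itself a $*$-automorphism. From there, however, the claim that ``a counting argument forces the multiset of $*$-orders on $K$ to agree with the multiset of $\cdot$-orders'' is unsupported: knowing that the $*$-order is constant on each $\Aut(K,\cdot)$-orbit says nothing about \emph{which} order is attached to which orbit, and this is exactly where the difficulty lies. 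Note that $\SL_2(\F_p)$, $\PGL_2(\F_p)$ and $\PSL_2(\F_p)\times\Z/2\Z$ all have the same order and the same automorphism group $\PGL_2(\F_p)$, so any argument must ultimately separate these three; your order-multiset claim would do so if proved, but no proof is offered.

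Second, you explicitly defer ``the hard part'': verifying that the chosen elements $u',h'$ satisfy the Steinberg-type relations for the operation $*$. The appeal to sharp $3$-transitivity of $\PGL_2(\F_p)$ on the projective line is an appeal to a property of the action of $(K,\cdot)$, and there is no mechanism in your sketch for transporting it to $(K,*)$. The paper takes a genuinely different and fully structural route: it uses the fixed-point subgroups of the automorphisms $\Ad(I_2+E_{12})$ to locate $p+1$ subgroups of $\Lambda$ of order $p$, proves by a Sylow count that these are all the $p$-Sylow subgroups, shows that every automorphism of $\Lambda$ agrees on the subgroup they generate with some $\be_A$, and thereby produces a homomorphism $\Lambda\to\PGL_2(\F_p)$ with small kernel; simplicity of $\PSL_2(\F_p)$ and the classification of central extensions by $\Z/2\Z$ then pin down $\Lambda$, with the three candidate groups finally distinguished by counting fixed points of explicit automorphisms. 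To turn your outline into a proof you would need to supply, at minimum, a genuine argument for the order-multiset claim and a complete verification of the presentation relations, neither of which is sketched beyond a statement of hope.
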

\begin{proof}
We start with the exceptional case $p=2$, in which $\SL_2(\F_2) \cong \PSL_2(\F_2) \cong \PGL_2(\F_2) \cong S_3$. The group $S_3$ has order $6$ and the only other group of order $6$ is $\Z/6\Z$. The inner automorphisms give $S_3 < \Aut S_3$ (which is of course an equality), while the automorphism group of $\Z/6\Z$ has order $2$. So there is no bijection $\pi : S_3 \to \Z/6\Z$ such that $\pi \circ \al \circ \pi^{-1} \in \Aut \Z/6\Z$ for all $\al \in \Aut S_3$. Therefore for $p=2$, relative rigidity follows.

For $i \neq j$, we denote by $E_{ij}$ the matrix with $1$ in position $(i,j)$ and $0$ elsewhere and denote by $I_2$ the $2 \times 2$ identity matrix. We denote by $e_1,e_2$ the standard basis vectors $\F_p^2$.

Recall that for each of the groups $\SL_2(\F_p)$, $\PSL_2(\F_p)$ and $\PGL_2(\F_p)$, the automorphism group is naturally identified with $\PGL_2(\F_p)$ acting by the automorphisms $\Ad A$. For $\PSL_2(\F_p)$, this is proven in \cite{SVdW28}. Then the result can be easily deduced for $\SL_2(\F_p)$ because every of its automorphisms must act as the identity on the center $\{\pm I_2\}$ of order $2$, while for $\PGL_2(\F_p)$, every of its automorphisms must globally preserve its unique index $2$ subgroup $\PSL_2(\F_p)$.

For the rest of the proof, assume that $p \geq 3$. Let $K$ be one of the groups $\SL_2(\F_p)$, $\PSL_2(\F_p)$ or $\PGL_2(\F_p)$. Let $\Lambda$ be a group and $\pi : K \to \Lambda$ a bijection such that $\pi \circ \al \circ \pi^{-1} \in \Aut \Lambda$ for every $\al \in \Aut K$. By Lemma \ref{lem.finite-group-rigid}, it suffices to prove that $\Lambda \cong K$.

Denote by $\Gamma < \Lambda$ the subgroup generated by all $p$-Sylow subgroups of $\Lambda$. By construction, $\Gamma$ is a characteristic subgroup of $\Lambda$. We denote by $\Phi : \Aut \Lambda \to \Aut \Gamma : \gamma \mapsto \gamma|_{\Gamma}$ the restriction homomorphism. We then consider the group homomorphism $\Theta : \Aut K \to \Aut \Gamma : \Theta(\al) = \Phi(\pi \circ \al \circ \pi^{-1})$. The main step in the proof is to show the following two statements.
\begin{enumlist}
\item $\Phi(\Aut \Lambda) = \Theta(\Aut K)$.
\item $\Theta$ is faithful and the group $S := \{s \in \Lambda \mid \Phi(\Ad s) = \id\}$ satisfies $|S| \mid p-1$.
\end{enumlist}
As we will explain below, once (i) and (ii) are proven, the homomorphism $\Lambda \to \PGL_2(\F_p) : s \mapsto \Theta^{-1}(\Phi(\Ad s))$ writes $\Lambda$ as an extension of degree at most $2$ of $\PSL_2(\F_p)$ or $\PGL_2(\F_p)$, and then the conclusion will follow easily.

We first prove (i) and (ii) in the case where $K = \SL_2(\F_p)$. Note that $\SL_2(\F_p)$ has order $p(p^2-1)$.

For every $A \in \GL_2(\F_p)$, define $\be_A = \pi \circ (\Ad A) \circ \pi^{-1} \in \Aut \Lambda$. Since $\{\pm I_2 + x E_{12} \mid x \in \F_p\}$ is the fixed point subgroup of $\Ad(I_2 + E_{12})$ in $K$, its image $\{\pi(\pm I_2 + x E_{12}) \mid x \in \F_p\}$ in $\Lambda$ is the fixed point subgroup of $\be_{I_2 + E_{12}}$, and thus a subgroup of $\Lambda$. Denote by $P^1$ the projective line, i.e.\ the set of $1$-dimensional subspaces of $\F_p^2$. Since every $V \in P^1$ is of the form $B(\F_p e_1)$ for some $B \in \GL_2(\F_p)$, conjugating with $\Ad B$ implies that for every $V \in P^1$,
$$K_V := \{A \in K \mid \exists \eps \in \{\pm 1\}, \forall v \in V : A \cdot v = \eps v \}$$
is the fixed point subgroup of $\Ad(B(I_2 + E_{12})B^{-1})$, so that its image $\pi(K_V)$ is a subgroup of $\Lambda$. Note that $|K_V| = 2p$, so that $\pi(K_V)$ is a group of order $2 p$. Since $p$ is odd, $\pi(K_V)$ has a unique $p$-Sylow subgroup, which we denote as $\Lambda_V$. Note that $|\Lambda_V| = p$ and that every element of order $p$ in $\pi(K_V)$ is contained in $\Lambda_V$.

Also note that for every $B \in \GL_2(\F_p)$ and $V \in P^1$, we have that $(\Ad B)(K_V) = K_{B(V)}$, so that $\be_B(\pi(K_V)) = \pi(K_{B(V)})$ and thus $\be_B(\Lambda_V) = \Lambda_{B(V)}$.

When $V,W \in P^1$ are distinct, $K_V \cap K_W = \{\pm I_2\}$. So, $\Lambda_V \cap \Lambda_W \subset \pi(\{\pm I_2\})$. Since every nontrivial element in $\Lambda_V$ has order $p$, we conclude that $\Lambda_V \cap \Lambda_W = \{e\}$. Since $|P^1| = p+1$, we have found $p+1$ $p$-Sylow subgroups $(\Lambda_V)_{V \in P^1}$ in $\Lambda$. We claim that these are all the $p$-Sylow subgroups of $\Lambda$. Assume the contrary. Denote by $n_p$ the number of $p$-Sylow subgroups of $\Lambda$. Since $n_p \equiv 1$ modulo $p$ and, by assumption, $n_p > 1+p$, we can write $n_p = 1 + (k+1)p$ with $k \geq 1$. Since $|\Lambda| = |K| = p (p^2-1)$, we have that $n_p \mid p^2-1$ and we write $p^2-1 = a n_p$ with $a \geq 1$. Thus, $(p+1)(p-1) = p^2-1 = a n_p = a (p+1) + a k p$, so that $p+1 \mid a k p$. Since $p$ is prime and $p \nmid p+1$, it follows that $p+1 \mid ak$, so that $ak > p$. Then,
$$p^2-1 = a n_p = a(p+1) + a kp > a(p+1) + p^2 > p^2 \; ,$$
which is absurd. We therefore conclude that $n_p = p+1$ and that $(\Lambda_V)_{V \in P^1}$ is the complete list of $p$-Sylow subgroups of $\Lambda$.

We deduce statement (i) above from the following two results that we prove first. Let $\gamma \in \Aut \Lambda$ be any automorphism. Recall that we defined $\Gamma < \Lambda$ as the subgroup generated by all $p$-Sylow subgroups of $\Lambda$.
\begin{enumlist}[label=(\alph*),ref=(\alph*)]
\item There exists $A \in \GL_2(\F_p)$ such that the automorphism $\gamma_0 = \be_A \circ \gamma$ satisfies $\gamma_0(\Lambda_0) = \Lambda_0$ for every $p$-Sylow subgroup $\Lambda_0 < \Lambda$.
\item If $\gamma \in \Aut \Lambda$ satisfies $\gamma(\Lambda_0) = \Lambda_0$ for every $p$-Sylow subgroup $\Lambda_0 < \Lambda$, then $\gamma(s) = s$ for all $s \in \Gamma$.
\end{enumlist}
{\bf Proof of (a).} Consider the distinct $p$-Sylow subgroups $\Lambda_i := \Lambda_{\F_p e_i}$ with $i=1,2$. Define $N_2 = \cN_\Lambda(\Lambda_2)$ as the normalizer of $\Lambda_2 < \Lambda$. Since $n_p = p+1$ and $|\Lambda| = p (p+1)(p-1)$, we get that $|N_2| = p (p-1)$. Since $N_2/\Lambda_2$ has order $p-1$, every element of order $p$ in $N_2$ belongs to $\Lambda_2$. In particular, $\Lambda_1 \cap N_2 = \{e\}$. It follows that the subgroups $(s \Lambda_2 s^{-1})_{s \in \Lambda_1}$ are all distinct. Note that for $s \in \Lambda_1$, we also have that $s \Lambda_2 s^{-1} \neq \Lambda_1$, because otherwise $\Lambda_2 = s^{-1} \Lambda_1 s = \Lambda_1$. We conclude that the $p+1$ $p$-Sylow subgroups of $\Lambda$ are precisely given by $\Lambda_1$ and $(s \Lambda_2 s^{-1})_{s \in \Lambda_1}$.

To prove (a), it thus suffices to find $A \in \GL_2(\F_p)$ such that the automorphism $\gamma_0 = \be_A \circ \gamma$ satisfies $\gamma_0(s) = s$ for all $s \in \Lambda_1$ and $\gamma_0(\Lambda_2) = \Lambda_2$.

Since $\gamma(\Lambda_1)$ and $\gamma(\Lambda_2)$ are distinct $p$-Sylow subgroups of $\Lambda$, we can take distinct $V_1,V_2 \in P^1$ such that $\gamma(\Lambda_i) = \Lambda_{V_i}$ for $i = 1,2$. Take nonzero vectors $v_i \in V_i$. Since $V_1 \neq V_2$, the vectors $v_1,v_2$ are linearly independent and we can define $B \in \GL_2(\F_p)$ by $B(v_i) = e_i$ for $i=1,2$. Then, $B(V_i) = \F_p e_i$, so that $\be_B(\Lambda_{V_i}) = \Lambda_{\F_p e_i} = \Lambda_i$. Write $\gamma_1 := \be_B \circ \gamma$.

Since $V_1 = \F_p e_1$, we have that $K_{V_1} = \{\pm I_2 + x E_{12} \mid x \in \F_p \}$. Take $\eps_1 \in \{\pm 1\}$ and $x_1 \in \F_p$ such that $s_1 := \pi(\eps_1 I_2 + x_1 E_{12}) \in \Lambda_1 \setminus \{e\}$. Since $\Lambda_1$ intersects trivially the subgroup $\pi(\{\pm I_2\})$ of order $2$, we must have that $x_1 \neq 0$. For every $\al \in \F_p^\times$, define the matrix $A(\al) := \al E_{11} + E_{22}$. Denote $\be_\al := \be_{\Ad A(\al)}$. Since $(\Ad A(\al))(\pm I_2 + x E_{12}) = \pm I_2 + \al x E_{12}$, we have that $(\Ad A(\al))(K_{V_1}) = K_{V_1}$ and thus $\be_\al(\Lambda_1) = \Lambda_1$. Since $\be_\al(\pi(\eps_1 I_2 + x_1 E_{12})) = \pi(\eps_1 I_2 + \al x_1 E_{12})$, we conclude that $\pi(\eps_1 I_2 + x E_{12})$, $x \in \F_p^\times$, are precisely the elements of $\Lambda_1 \setminus \{e\}$. Since $\gamma_1(s_1) \in \Lambda_1$, we can thus choose $\al \in \F_p^\times$ such that $\be_\al(\gamma_1(s_1)) = s_1$. Define $\gamma_0 = \be_\al \circ \gamma_1$. Since $\Lambda_1$ is generated by $s_1$, we get that $\gamma_0(s) = s$ for all $s \in \Lambda_1$. In particular, $\gamma_0(\Lambda_1) = \Lambda_1$.

Since $\gamma_0(\Lambda_2) = \be_\al(\Lambda_2) = \Lambda_{A(\al)(\F_p e_2)} = \Lambda_{\F_p e_2} = \Lambda_2$, we then also find that $\gamma_0(s \Lambda_2 s^{-1}) = \gamma_0(s) \Lambda_2 \gamma_0(s)^{-1} = s \Lambda_2 s^{-1}$ for all $s \in \Lambda_1$. Since the $p$-Sylow subgroups of $\Lambda$ are $\Lambda_1$ and $(s \Lambda_2 s^{-1})_{s \in \Lambda_1}$, statement (a) is proven.

{\bf Proof of (b).} Choose arbitrary distinct $p$-Sylow subgroups $\Lambda_1,\Lambda_2 < \Lambda$. It suffices to prove that $\gamma(s) = s$ for all $s \in \Lambda_1$. As in the proof of (a), we get that $\Lambda_1$ and $(s \Lambda_2 s^{-1})_{s \in \Lambda_1}$ precisely are the $p$-Sylow subgroups of $\Lambda$. Fix $s \in \Lambda_1$. Since $\gamma$ globally preserves every $p$-Sylow subgroup of $\Lambda$, we have that $\gamma(s \Lambda_2 s^{-1}) = s \Lambda_2 s^{-1}$. On the other hand, $\gamma(s \Lambda_2 s^{-1}) = \gamma(s) \gamma(\Lambda_2) \gamma(s)^{-1} = \gamma(s) \Lambda_2 \gamma(s)^{-1}$. We thus find that $s \Lambda_2 s^{-1} = \gamma(s) \Lambda_2 \gamma(s)^{-1}$. Since $\gamma(s) \in \Lambda_1$, it follows that $\gamma(s) = s$.

{\bf Proof of (i).} Take $\gamma \in \Aut \Lambda$. By (a), we can take $A \in \GL_2(\F_p)$ such that $\gamma_0 := \be_A \circ \gamma$ satisfies $\gamma_0(\Lambda_0) = \Lambda_0$ for every $p$-Sylow subgroup $\Lambda_0 < \Lambda$. By (b), we get that $\gamma_0(s) = s$ for all $s \in \Gamma$. This means that $\Phi(\gamma) = \Theta(\Ad A^{-1}) \in \Theta(\Aut K)$.

{\bf Proof of (ii).} Take $A \in \GL_2(\F_p)$ such that $\Theta(\Ad A) = \id$. We have to prove that $\Ad A = \id$. As in the proof of (a) above, take $\eps_1 \in \{\pm 1\}$ and $x_1 \in \F_p^\times$ such that $\pi(\eps_1 I_2 + x_1 E_{12}) \in \Lambda_1 < \Gamma$. Since $\Theta(\Ad A) = \id$, we get that
$$\pi(\eps_1 I_2 + x_1 E_{12}) = \be_A(\pi(\eps_1 I_2 + x_1 E_{12})) = \pi(A(\eps_1 I_2 + x_1 E_{12})A^{-1}) \; .$$
It follows that $A$ commutes with $\eps_1 I_2 + x_1 E_{12}$, which forces $A \in K_{\F_p e_1}$. We similarly find that $A \in K_{\F_p e_2}$, so that $A \in \{\pm I_2\}$ and $\Ad A = \id$.

To conclude the proof of (ii), we prove that the order of $S$ divides $p-1$. Note that, by definition, $S$ equals the centralizer of $\Gamma$ in $\Lambda$. Fix two distinct $p$-Sylow subgroups $\Lambda_1,\Lambda_2 < \Lambda$ and denote by $N_i = \cN_{\Lambda}(\Lambda_i)$ the normalizer of $\Lambda_i$. Since $S$ centralizes $\Lambda_1$, we have $S < N_1$. Denote by $\theta : N_1 \to N_1 / \Lambda_1$ the quotient homomorphism. Since $|N_1 / \Lambda_1|$ has order $p-1$, it suffices to prove the restriction of $\theta$ to $S$ is faithful. Take $s \in S$ with $\theta(s) = e$. Then, $s \in \Lambda_1$. Since $s$ commutes with $\Gamma$, $s$ commutes with $\Lambda_2$. So, $s \in N_2$. We have seen before that $\Lambda_1 \cap N_2 = \{e\}$, so that $s=e$.

{\bf End of the proof.} We proved statements (i) and (ii) when $K = \SL_2(\F_p)$. First note that statements (i) and (ii) remain valid in the cases where $K = \PSL_2(\F_p)$ or $\PGL_2(\F_p)$: the proof is identical and even becomes a bit easier because now the image of $\{I_2 + x E_{12} \mid x \in \F_p\}$ in $\PGL_2(\F_p)$ is the fixed point subgroup of $\Ad(I_2 + E_{12})$.

Write $c = p-1$ when $K = \SL_2(\F_p)$ or $K = \PGL_2(\F_p)$, and write $c = (p-1)/2$ when $K = \PSL_2(\F_p)$. Then, $\Lambda$ is a group of order $p(p+1)c$. By statements (i) and (ii), $\Psi := \Theta^{-1} \circ \Phi : \Aut \Lambda \to \PGL_2(\F_p)$ is a surjective group homomorphism. Since the order of the subgroup $S < \Lambda$ in statement (ii) divides $p-1$, we get that $|S| \mid c$, so that $\Psi(\Ad \Lambda)$ is a normal subgroup of $\PGL_2(\F_p)$ whose order is a multiple of $p(p+1)$.

When $p \geq 5$, the group $\PSL_2(\F_p)$ is simple and the only normal subgroups of $\PGL_2(\F_p)$ are the trivial subgroup, $\PSL_2(\F_p)$ and the entire group. When $p=3$, by Lemma \ref{lem.exceptional-iso}, $\PGL_2(\F_3) \cong S_4$. The normal subgroups of $S_4$ are the trivial subgroup, $A_4$, $S_4$ and the extra subgroup of order $4$ given by $\{e,(12)(34),(13)(24),(14)(23)\}$. Since $\Psi(\Ad \Lambda)$ has order at least $p(p+1)$, it follows in all cases that $\Psi(\Ad \Lambda)$ is equal to $\PSL_2(\F_p)$ or $\PGL_2(\F_p)$.

In the case where $K = \PSL_2(\F_p)$, looking at the order of $\Lambda$, it already follows that $s \mapsto \Psi(\Ad s)$ must be an isomorphism $\Lambda \cong \PSL_2(\F_p)$. So, the theorem is proven in this case.

In the other cases, if $\Psi(\Ad \Lambda) = \PGL_2(\F_p)$, the same order argument implies that $\Lambda \cong \PGL_2(\F_p)$. If $\Psi(\Ad \Lambda) = \PSL_2(\F_p)$, it follows that $s \mapsto \Psi(\Ad s)$ has a kernel of order $2$, so that $\Lambda$ is a central extension $0 \to \Z/2\Z \to \Lambda \to \PSL_2(\F_p) \to e$. By e.g.\ \cite[Theorem 3.2 in Chapter 16]{Kar93}, it follows that either $\Lambda \cong \SL_2(\F_p)$, or $\Lambda \cong \SL_2(\F_p) \times \Z/2\Z$. To conclude the proof in the remaining cases, it thus suffices to show that between two distinct groups $K_1$ and $K_2$ among $\SL_2(\F_p)$, $\PGL_2(\F_p)$ and $\PSL_2(\F_p) \times \Z/2\Z$, there is no bijection $\pi : K_1 \to K_2$ satisfying $\pi \circ \al \circ \pi^{-1} \in \Aut K_2$ for all $\al \in \Aut K_1$.

From the discussion in the beginning of the proof, we know that all these groups have the same automorphism group $\PGL_2(\F_p)$. As in Lemma \ref{lem.finite-group-rigid.two}, it thus suffices to prove that there is no bijection $\pi : K_1 \to K_2$ satisfying $\pi \circ \Ad A = \Ad A \circ \pi$ for all $A \in \PGL_2(\F_p)$. We will show this by counting the number of fixed points of $\Ad A$, for specific matrices $A$.

When $A = \bigl(\begin{smallmatrix} 1 & 0 \\ 0 & -1\end{smallmatrix}\bigr)$, the number of fixed points of $\Ad A$ in resp.\ $\SL_2(\F_p)$, $\PGL_2(\F_p)$ and $\PSL_2(\F_p) \times \Z/2\Z$ is $p-1$, $2(p-1)$ and $2(p-1)$. On the other hand, the number of points fixed by all $\Ad B$ is resp.\ equal to $2$, $1$ and $2$. So we have distinguished the three groups.
\end{proof}

Given an integer $n \geq 2$, the symmetric group $S_n$ can be seen as the universal group with generators $(ab)$, for all distinct $a,b \in \{1,\ldots,n\}$, and relations
\begin{equation}\label{eq.rel-Sn}
(ab) = (ba) \;\; , \quad (ab)^2 = e \;\; , \quad (ab) (bc) (ab) = (ac) \;\;\text{if $a,b,c$ are distinct.}
\end{equation}
Note that if $n \geq 4$ and $a,b,c,d$ are distinct, it indeed follows from the relations in \eqref{eq.rel-Sn} that $(ab)$ commutes with $(cd)$, so that there is no need to add this extra relation: since $(cd) = (ac)(ad)(ac)$ and since we can conjugate each of the three factors with $(ab)$, we get that $(ab)(cd) (ab) = (bc)(bd)(bc) = (cd)$.

One can define a $2$-fold covering $\Stil_n \to S_n$ as the universal group with generators $z$ and $[ab]$, for all distinct $a,b \in \{1,\ldots,n\}$, and relations
\begin{equation}\label{eq.rel-Sntil}
\begin{split}
& z \;\;\text{is central,}\quad z^2 = e \;\; , \\
& [ab] = z [ba] \;\; , \quad [ab]^2 = e \;\; , \quad [ab] [bc] [ab] = z [ac] \;\;\text{if $a,b,c$ distinct.}
\end{split}
\end{equation}
By the same argument as above, if $n \geq 4$ and $a,b,c,d$ are distinct, we get that $[ab] [cd] = z [cd] [ab]$. We have the canonical surjective group homomorphism $\theta : \Stil_n \to S_n$ defined by $\theta(z) = e$ and $\theta([ab]) = (ab)$ for all distinct $a,b$. By universality, it is easy to check that the kernel of $\theta$ equals $\{e,z\}$. It is nontrivial that $z \neq e$, see e.g.\ \cite[Theorem 2.2 in Chapter 12]{Kar93}. It then follows that $0 \to \Z/2\Z \to \Stil_n \to S_n \to e$ is a central extension and one checks easily that for $n \geq 4$, it is not split.

We denote by $\eps : S_n \to \{\pm 1\}$ the sign of a permutation: $\eps(ab) = -1$ for all distinct $a,b$. Then the alternating group $A_n$ is the kernel of $\eps$, and we define $\Atil_n$ as the kernel of $\eps \circ \theta$. By construction, we get the central extension $0 \to \Z/2\Z \to \Atil_n \to A_n \to e$, which is again not split if $n \geq 4$ (cf.\ Lemma \ref{lem.better-generators}).

\begin{theorem}\label{thm.relative-rigid-An-tilde}
The following groups are rigid relative to their automorphism group.
\begin{enumlist}
\item\label{thm.relative-rigid-An-tilde.one} The symmetric group $S_n$ and the alternating group $A_n$ for every $n \geq 2$.
\item\label{thm.relative-rigid-An-tilde.two} The $2$-fold cover $\Atil_n$ of $A_n$ if $n \geq 4$ and $n \neq 6$.
\end{enumlist}
\end{theorem}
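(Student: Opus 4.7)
The plan is to apply Lemma \ref{lem.finite-group-rigid}(i): for each group $K$ in the statement and any group $\Lambda$ with a bijection $\pi : K \to \Lambda$ satisfying $\pi \Aut(K) \pi^{-1} \subseteq \Aut \Lambda$, I must show $\Lambda \cong K$. The small cases will be covered by Theorem \ref{thm.rigid-SL2} through the classical exceptional isomorphisms $S_3 \cong \SL_2(\F_2)$, $S_4 \cong \PGL_2(\F_3)$, $S_5 \cong \PGL_2(\F_5)$, $A_4 \cong \PSL_2(\F_3)$, $A_5 \cong \PSL_2(\F_5)$, $\Atil_4 \cong \SL_2(\F_3)$, $\Atil_5 \cong \SL_2(\F_5)$, together with direct inspection for $S_2$, $A_2$ and $A_3$.

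The key tool for the remaining cases is the following observation. For every $\sigma \in K$, the image $\pi(C_K(\sigma))$ equals the fixed-point subgroup of the automorphism $\pi \circ \Ad\sigma \circ \pi^{-1}$ of $\Lambda$, hence is a subgroup of $\Lambda$. Consequently, for every subset $S \subseteq K$, $\pi(C_K(C_K(S)))$ is a subgroup of $\Lambda$ of order $|C_K(C_K(S))|$, which transfers specific subgroup structures from $K$ to $\Lambda$ when $S$ is well chosen.

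For $S_n$ with $n \geq 6$, I will first show that the double centralizer of $(ab)$ is $\langle (ab)\rangle$ (being the center of $C_{S_n}((ab)) = \langle (ab)\rangle \times S_{n-2}$), which forces $\pi((ab))^2 = e_\Lambda$. Then for distinct $a, b, c$ one computes $C_{S_n}(\{(ab),(bc)\}) = S_{n-3}$ on the complement of $\{a,b,c\}$, and $C_{S_n}(S_{n-3}) = S_{\{a,b,c\}}$ since $S_{n-3}$ is centerless for $n - 3 \geq 3$. Hence $\pi(S_{\{a,b,c\}}) \subseteq \Lambda$ is a subgroup of order $6$ containing the three distinct involutions $\pi((ab))$, $\pi((bc))$, $\pi((ac))$. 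Since the only group of order $6$ with more than one involution is $S_3$, this subgroup must be $S_3$, and its three involutions satisfy $\pi((ab))\pi((bc))\pi((ab)) = \pi((ac))$ as the unique third involution. These are precisely the defining relations \eqref{eq.rel-Sn}, so the universal property of $S_n$ gives a group homomorphism $\psi : S_n \to \Lambda$ with $\psi((ab)) = \pi((ab))$. Its kernel, a normal subgroup of $S_n$, cannot be $A_n$ or $S_n$ (otherwise $|\psi(S_n)| \leq 2$ while $\psi$ separates the $\binom{n}{2}$ transpositions), so $\psi$ is injective and, by cardinality, an isomorphism. The exceptional $n = 6$ requires no special treatment since the double-centralizer computation is carried out internally in $S_6$ and does not involve the outer automorphism of $\Aut S_6$.

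For $A_n$ with $n \geq 7$, the same strategy uses $3$-cycles $(abc)$, whose double centralizer in $A_n$ is $\langle (abc)\rangle$ (since $A_{n-3}$ is centerless for $n - 3 \geq 4$); combined with analysis of the double-centralizer-closed $A_4$-subgroups $\langle (abc),(abd)\rangle$, this recovers the defining relations of $A_n$. For $\Atil_n$ with $n \geq 7$, I will first identify $\pi(\langle z\rangle)$ as the unique two-element $\pi \Aut(\Atil_n)\pi^{-1}$-fixed subgroup of $\Lambda$, and then apply the double-centralizer method to the lifts $y_{abc} = [ab][bc] \in \Atil_n$ of $3$-cycles, which satisfy $y_{abc}^3 = z$ and whose double centralizer in $\Atil_n$ is the cyclic group $\langle y_{abc}\rangle \cong \Z/6\Z$; the corresponding subgroup $\pi(\langle y_{abc}\rangle)$ of $\Lambda$ must then also be cyclic of order $6$ (since $S_3$ is centerless while it contains the central element $\pi(z)$), and the defining relations of $\Atil_n$ are extracted by tracking $z$ carefully. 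The exceptional $A_6$ needs separate handling, since there $C_{A_6}((abc)) = \Z/3 \times \Z/3$ has nontrivial center, making the double centralizer of a $3$-cycle equal to $\Z/3 \times \Z/3$ rather than $\Z/3$; additional analysis using $A_5 \subseteq A_6$ subgroups will be required. The hardest step will be the $\Atil_n$ case, where the central element $z$ pervades every relation and pinning down $\pi(y_{abc})^3 = \pi(z)$ (rather than $e_\Lambda$) from the double-centralizer data requires extra care.
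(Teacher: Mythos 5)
Your strategy for $S_n$, $n\geq 6$, is sound and is a genuine (mildly different) variant of the paper's argument: where the paper transfers $S_{\{1,2,3\}}=C_{S_n}(S_{\{4,\ldots,n\}})$ to $\Lambda$ and then invokes the relative rigidity of $S_3$ via Lemma \ref{lem.finite-group-rigid.two}, you identify the order-$6$ subgroup $\pi(S_{\{a,b,c\}})$ and its three involutions directly and read off the relations \eqref{eq.rel-Sn} by counting; that part works. The small cases via the exceptional isomorphisms also match the paper. The problems are in the remaining cases. First, $A_6$: your plan rests on transferring $A_5\subset A_6$, but $A_5$ is \emph{not} a fixed point subgroup of any set of automorphisms of $A_6$ (no nontrivial automorphism of $A_6$ fixes a point stabilizer $A_5$ pointwise: inner and $S_6$-automorphisms centralizing $A_5$ are trivial, and the exotic automorphisms do not even preserve the conjugacy class of point stabilizers). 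So the only mechanism you have for producing subgroups of $\Lambda$ — images of intersections of fixed point subgroups — cannot produce $\pi(A_5)$, and your $A_6$ argument has no starting point. The paper's resolution is the twisted-diagonal copy of $S_4$ inside $A_6$, namely $\{\mu\,(56)^{\eps(\mu)}\mid \mu\in S_4\}=\Fix(\Ad(56))$, which \emph{is} a fixed point subgroup; some such idea is indispensable here.

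Second, the $\Atil_n$ case contains a factual error and omits its hardest step. The element $[ab][bc]$ has order $3$, not $6$: by the relations \eqref{eq.rel-Sntil} one computes $([ab][bc])^3=e$ (this is the relation $s(abc)^3=e$ of Lemma \ref{lem.better-generators.1}), so your claim $y_{abc}^3=z$ is false; the cyclic group of order $6$ you want is $\langle z\rangle\times\langle s(abc)\rangle$, and knowing that $\pi$ of it is some order-$6$ subgroup of $\Lambda$ does not by itself pin down the crucial relations $s(cbd)s(bad)s(abc)=z$ for the images, nor does it settle whether $\pi(e)=e_\Lambda$ or $\pi(z)=e_\Lambda$ (both elements of $\pi(\cZ(\Atil_n))$ lie in every fixed point subgroup you can form, so they are indistinguishable by this mechanism; one must e.g.\ replace $\pi$ by $\pi(\,\cdot\,z)$ first). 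The paper handles all of this by reducing to the strict rigidity of $\Atil_4\cong\SL_2(\F_3)$ together with Lemma \ref{lem.finite-group-rigid.two}, which yields an \emph{equivariant} isomorphism onto $\pi(\Atil_4)$ and hence identifies each generator individually. A related identification issue affects your $A_n$ argument: knowing $\pi(A_{\{a,b,c,d\}})\cong A_4$ abstractly does not yet give the relation $\si(cbd)\si(bad)\si(abc)=e$ for the specific elements $\pi((abc))$; and for $n=7$ your subgroup $A_{\{a,b,c,d\}}$ is not double-centralizer closed, since $C_{A_7}(A_{\{5,6,7\}})=\langle(567)\rangle\times A_4$ because $A_3$ is abelian. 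These gaps would all need to be filled before the proof is complete.
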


\begin{proof}
For later use throughout the proof, we start with the following observations when $n \geq 4$. Note that the group $A_n$ is generated by the $3$-cycles $(abc) = (ab)(bc)$. For the same reason, $\Atil_n$ is generated by the elements $[ab][bc]$, which we denote as $s(abc)$. In Lemma \ref{lem.better-generators}, we give a presentation of $A_n$ and $\Atil_n$ in terms of these generators. Then note that for $n \geq 4$, the center of $A_n$ is trivial, so that the center of $\Atil_n$ equals $\{e,z\}$.

{\bf\boldmath Automorphism groups of $A_n$, $S_n$ and $\Atil_n$.} Every $\si \in S_n$ defines automorphisms of $A_n$ and $S_n$ by conjugation, and an automorphism of $\Stil_n$ given by $z \mapsto z$ and $[ab] \mapsto [\si(a)\si(b)]$. Assume that $n \geq 4$. Since every automorphism of $\Atil_n$ preserves its center $\{e,z\}$, it must be the identity on $\{e,z\}$, so that we identify $\Aut \Atil_n = \Aut A_n$. We have $S_n < \Aut A_n$ and $S_n < \Aut S_n$ by conjugation and when $n \neq 6$, these are equalities.

{\bf\boldmath The groups $A_n$ and $S_n$ if $n \in \{2,3,4,5\}$. The groups $\Atil_n$ if $n=4,5$.} Since $S_2 \cong \Z/2\Z$, $A_2 \cong \{e\}$ and $A_3 \cong \Z/3\Z$ and since there is only one group of order $2$, resp.\ $1$, $3$, the relative rigidity follows. The other cases follow from Theorem \ref{thm.rigid-SL2} and Lemma \ref{lem.exceptional-iso} below.

{\bf\boldmath The group $A_6$.} Assume that $\Lambda$ is a group and $\pi : A_6 \to \Lambda$ a bijection such that $\be_\al := \pi \circ (\Ad \al) \circ \pi^{-1} \in \Aut \Lambda$ for all $\al \in S_6 < \Aut A_6$. Consider the transposition $\al_1 = (56)$. Define the subgroup $K_1 < A_6$ as the fixed point subgroup of $\Ad \al_1$. Note that $K_1$ consists of the permutations of the form $\mu \, (56)^{\eps(\mu)}$ where $\mu \in S_4$. In particular, $K_1 \cong S_4$.

Define the subgroup $\Lambda_1 < \Lambda$ as the fixed point subgroup of $\be_{\al_1}$. Then, $\pi(K_1) = \Lambda_1$. Denote by $\pi_1$ the restriction of $\pi$ to $K_1$. We view $S_4 < S_6$ as the subgroup of permutations of $\{1,2,3,4\}$ that fix $5$ and $6$. Note that $(\Ad \al)(K_1) = K_1$ for all $\al \in S_4$. Since every $\al \in S_4$ commutes with $\al_1 = (56)$, also $\be_\al$ restricts to an automorphism of $\Lambda_1$ for all $\al \in S_4$. By construction, $\pi_1 \circ (\Ad \al) \circ \pi_1^{-1}$ equals the restriction of $\be_\al$ to $\Lambda_1$, for all $\al \in S_4$.

Since $S_4$ is rigid relative to its automorphism group and since every automorphism of $S_4$ is inner, by Lemma \ref{lem.finite-group-rigid.two}, we find a group isomorphism $\phi : S_4 \to \Lambda_1$ such that $\be_\al \circ \phi = \phi \circ (\Ad \al)$ for all $\al \in S_4$. Whenever $a,b,c \in \{1,2,3,4\}$ are distinct, we denote $\si(abc) = \phi(abc) \in \Lambda$. Since $\phi$ is a group homomorphism, the relations in Lemma \ref{lem.better-generators.2} hold whenever $a,b,c,d \in \{1,2,3,4\}$ are distinct.

We claim that $\be_\al(\si(abc)) = \si(abc)$ whenever $\al \in S_6$ fixes $a,b,c$ pointwise and $a,b,c \in \{1,2,3,4\}$ are distinct. By symmetry, it suffices to consider $\si(123)$. By construction, $\si(123)$ is fixed by $\be_\al$ when $\al = (123)$ and when $\al = (56)$. So, $\pi^{-1}(\si(123)) \in A_6$ commutes with $(123)$ and with $(56)$. This forces $\pi^{-1}(\si(123)) \in \{e,(123),(321)\}$, so that $\pi^{-1}(\si(123))$ commutes with every $\al \in S_6$ that fixes $1,2,3$ pointwise. From this, the claim follows.

When $\al \in S_6$ globally preserves $\{1,2,3,4\}$, either $\al \in S_4$, or $\al$ is the composition of an element of $S_4$ with $(56)$. So by construction, $\be_\al(\si(abc)) = \si(\al(a)\al(b)\al(c))$. Together with the claim from the previous paragraph, we can thus unambiguously define $\si(abc) \in \Lambda$ for all distinct $a,b,c \in \{1,2,3,4,5,6\}$ such that $\be_\al(\si(abc)) = \si(\al(a)\al(b)\al(c))$ for all $\al \in S_6$ and all distinct $a,b,c$. Whenever $a,b,c,d \in \{1,2,3,4,5,6\}$ are distinct, we can choose $\si \in S_6$ such that $\si(1) = a$, $\si(2) = b$, $\si(3) = c$ and $\si(4) = d$. Since the relations in Lemma \ref{lem.better-generators.2} hold for distinct elements of $\{1,2,3,4\}$ and since $\be_\si$ is an automorphism of $\Lambda$, they thus hold for $a,b,c,d$.

By Lemma \ref{lem.better-generators} and the simplicity of $A_6$, we find a faithful group homomorphism $A_6 \to \Lambda$. Since $|\Lambda| = |A_6|$, it follows that $\Lambda \cong A_6$.

{\bf\boldmath The group $A_n$ for $n \geq 7$.} We note that $A_4 < A_n$ is precisely the centralizer of the permutation group of $\{5,6,\ldots,n\}$. We then reason in the same way as for $A_6$.

{\bf\boldmath The group $S_n$ for $n \geq 6$.} We note that $S_3 < S_n$ is precisely the centralizer of the permutation group of $\{4,5,\ldots,n\}$. We reason in the same way as for $A_6$, using the rigidity of $S_3$ and using the relations \eqref{eq.rel-Sn} for $S_n$, which only involve three elements $a,b,c$ of $\{1,\ldots,n\}$. By construction, the resulting homomorphism $\phi : S_n \to \Lambda$ is faithful on $S_3 < S_n$ and thus faithful on $S_n$.

{\bf\boldmath The group $\Atil_n$ for $n \geq 7$.} The group $S_n$ acts by automorphisms on $\Atil_n$. Then $\Atil_4 < \Atil_n$ precisely is the fixed point subgroup of the permutation group of $\{5,6,\ldots,n\}$. We again reason in the same way as for $A_6$, now using the rigidity of $\Atil_4$ and the relations in Lemma \ref{lem.better-generators.1} for $\Atil_n$. By construction, the resulting homomorphism $\phi : \Atil_n \to \Lambda$ is faithful on $\Atil_4 < \Atil_n$ and thus faithful on $\Atil_n$.
\end{proof}

\begin{lemma}\label{lem.better-generators}
Let $n \geq 4$ be an integer.
\begin{enumlist}
\item\label{lem.better-generators.1} Via the map $s(abc) \mapsto [ab][bc]$ and $z \mapsto z$, the group $\Atil_n$ is isomorphic to the universal group with generators $z$ and $s(abc)$, for all distinct $a,b,c \in \{1,\ldots,n\}$, and relations
\begin{align*}
& z \;\;\text{is central,}\quad z^2 = e \;\; , \quad s(abc) = s(bca) \;\; , \quad s(abc)^3 = e \;\; ,\\
& s(abc)^{-1} = s(cba) \;\; , \quad s(cbd) s(bad) s(abc) = z \;\; \text{for all distinct $a,b,c,d$.}
\end{align*}
The canonical central extension $0 \to \Z/2\Z \to \Atil_n \to A_n \to e$ is not split. If $n \geq 5$, the group $\Atil_n$ is perfect.
\item\label{lem.better-generators.2} Via the map $\si(abc) \mapsto (ab)(bc)$, the group $A_n$ is isomorphic to the universal group with generators $\si(abc)$, for all distinct $a,b,c \in \{1,\ldots,n\}$, and relations
$$\si(abc) = \si(bca) \;\; , \quad \si(abc)^3 = e \;\; , \quad \si(abc)^{-1} = \si(cba) \;\; , \quad \si(cbd) \si(bad) \si(abc) = e \;\; ,$$
for all distinct $a,b,c,d$.
\end{enumlist}
\end{lemma}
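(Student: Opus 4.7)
The plan is to establish (ii) first by a direct combinatorial argument and then derive (i) from (ii) by analyzing the central extension structure of the universal group.

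For (ii), let $G$ denote the universal group on generators $\si(abc)$ subject to the four stated families of relations. A direct check (evaluating both sides on $\{1,\ldots,n\}$) shows that the $3$-cycles $(abc) = (ab)(bc) \in A_n$ satisfy all these relations, so we obtain a surjective homomorphism $\pi : G \twoheadrightarrow A_n$, and it suffices to prove $|G| \leq n!/2$. I would proceed by induction on $n$, with base case $n = 3$ being trivial. Assuming the result for $n-1$, let $H < G$ be the subgroup generated by those $\si(abc)$ with $a, b, c \in \{1, \ldots, n-1\}$. By the universal property applied to the generators of $H$, together with the inductive hypothesis, there is a surjection $A_{n-1} \twoheadrightarrow H$; combined with the surjection $H \twoheadrightarrow \pi(H) = A_{n-1}$ this forces $|H| = |A_{n-1}|$. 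I would then exhibit $n$ explicit coset representatives $t_1, \ldots, t_n$ for $G/H$ (for instance $t_n = e$ and $t_i$ a suitable product of elements $\si(a,b,n)$ with $\pi(t_i)(n) = i$) and use the four-letter relation to verify that left multiplication by each generator $\si(abc)$ permutes the cosets $\{t_i H\}$. This gives $|G| \leq n \cdot |H| = |A_n|$.

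Once (ii) is established, (i) follows rapidly. Let $H_1$ be the universal group defined in (i). A direct computation inside $\Atil_n$ shows that $z$ and the elements $s(abc) := [ab][bc]$ satisfy all the listed relations: the identities $s(abc) = s(bca)$, $s(abc)^{-1} = s(cba)$ and $s(abc)^3 = e$ reduce to short manipulations using $[uv][vw][uv] = z[uw]$ and $[uv]^2 = e$, and the four-letter relation follows from a similar but longer calculation. Moreover $[ab][cd] = s(abc)s(bcd)$ whenever $a,b,c,d$ are distinct (immediate from $[bc]^2 = e$), so since every element of $\Atil_n$ is a product of an even number of $[\cdot\,\cdot]$-generators, $z$ and the $s(abc)$ generate $\Atil_n$, yielding a surjection $\psi : H_1 \twoheadrightarrow \Atil_n$. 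On the other hand, modding $H_1$ out by its central subgroup $\langle z \rangle$ leaves precisely the presentation from (ii), so $H_1/\langle z \rangle \cong A_n$ and therefore $|H_1| \leq 2|A_n| = |\Atil_n|$. Since $z \neq e$ in $\Atil_n$, also $z \neq e$ in $H_1$, and combining with surjectivity of $\psi$ forces $\psi$ to be an isomorphism.

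For the non-splitness of $0 \to \Z/2\Z \to \Atil_n \to A_n \to e$, suppose toward contradiction that a section $\iota : A_n \to \Atil_n$ exists. For every $3$-cycle $(abc) \in A_n$, write $\iota((abc)) = z^{f(abc)} s(abc)$ with $f(abc) \in \{0,1\}$. Applying $\iota$ to $(abc)^3 = e$ and using $s(abc)^3 = e$ in $\Atil_n$ forces $z^{f(abc)} = e$ for every $3$-cycle, so $f \equiv 0$. But then applying $\iota$ to $(cbd)(bad)(abc) = e$ gives $s(cbd)s(bad)s(abc) = e$, contradicting that this product equals $z \neq e$ in $\Atil_n$. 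Perfectness of $\Atil_n$ for $n \geq 5$ then follows quickly: since $A_n$ is simple, $[\Atil_n, \Atil_n]$ projects onto $[A_n, A_n] = A_n$ and hence has index $1$ or $2$ in $\Atil_n$; index $2$ would yield a direct product decomposition $\Atil_n \cong [\Atil_n, \Atil_n] \times \langle z \rangle$ and thus a section, which we just excluded. The main obstacle is the inductive step in (ii), specifically the construction of coset representatives for $G/H$ and verifying, via the asymmetric four-letter relation, that they form a complete set of cosets; everything else is essentially formal.
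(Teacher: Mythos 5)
Your reduction of (i) to (ii), your non-splitness argument, and your perfectness argument are all correct. The first two essentially coincide with the paper's proof: the paper likewise observes that killing $z$ in the universal group of (i) yields exactly the universal group of (ii) with kernel of order at most $2$, and then compares orders using the cited fact that $z \neq e$ in $\Atil_n$; and it proves non-splitness by the same order-$3$ versus order-$2$ observation forcing $\theta(\si(abc)) = s(abc)$. Your perfectness argument is a genuinely different and arguably cleaner route: the paper shows directly that every homomorphism $\Atil_n \to \T$ is trivial using a derived conjugation identity, whereas you deduce perfectness from non-splitness together with $[A_n,A_n]=A_n$ and the resulting direct product decomposition; both are valid.

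The gap is in part (ii), which is where the actual content of the lemma sits. You reduce everything to exhibiting coset representatives $t_1,\dots,t_n$ for $G/H$ and verifying that left multiplication by each generator $\si(abc)$ permutes the cosets $t_iH$ --- and you explicitly defer this verification. This step is not routine: checking $\si(abc)\,t_i \in t_jH$ for all generators and all $i$ amounts to resolving word problems in $G$ using only the four stated relations, and the only substantive relation available, $\si(cbd)\si(bad)\si(abc)=e$, is asymmetric and does not directly tell you how $\si(abc)$ conjugates the other generators. In practice you would first have to derive, as a consequence of the relations, the conjugation identity $\si(abc)\si(xyz)\si(abc)^{-1}=\si(\phi(x)\phi(y)\phi(z))$ with $\phi$ the cycle $(abc)$, by a case analysis on the size of $\{a,b,c\}\cap\{x,y,z\}$; this is precisely the page-long computation the paper carries out, after which it sidesteps coset enumeration entirely by verifying, inside $G$, the relations of a known presentation of $A_n$ from \cite[Theorem 1.2 in Chapter 12]{Kar93}. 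So your plan for (ii) is workable in principle, but the essential computation --- deriving enough consequences of the relations to control the coset action --- is missing, and without it the proof of (ii), and hence of (i), is incomplete.
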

\begin{proof}
We first deduce a few extra relations. Denote $I = \{1,\ldots,n\}$. Let $a,b,c \in I$ be distinct. Consider the permutation $\phi$ of $I$ given by the cycle $(abc)$. We prove that
\begin{equation}\label{eq.conjugacy}
s(abc) s(xyz) s(abc)^{-1} = s(\phi(x)\phi(y)\phi(z)) \quad\text{for all distinct $x,y,z \in I$.}
\end{equation}
When the sets $\{a,b,c\}$ and $\{x,y,z\}$ coincide, \eqref{eq.conjugacy} follows from the cyclic invariance relation $s(xyz) = s(yzx) = s(zxy)$ for all distinct $x,y,z \in I$. When the intersection of $\{a,b,c\}$ and $\{x,y,z\}$ consists of two points, after cyclically permuting $a,b,c$ and $x,y,z$, it suffices to consider the cases $(xyz) = (abd)$ and $(xyz) = (bad)$, with $a,b,c,d$ distinct. Since $s(abd)=s(bad)^{-1}$, it actually suffices to consider $(xyz) = (bad)$. Then the required relation $s(abc) s(bad) s(abc)^{-1} = s(cbd)$ follows because by our given relation,
$$s(abc) s(bad) = z s(adc) = z s(cad) \quad\text{and}\quad s(cbd) s(abc) = s(cbd) s(bca) = z s(cad) \; .$$
To prove \eqref{eq.conjugacy} when the sets $\{a,b,c\}$ and $\{x,y,z\}$ have one point in common, after a cyclic permutation, we may assume that $(xyz) = (c d f)$ with $a,b,c,d,f$ distinct. We need to compute the conjugation of $s(cdf)$ by $s(abc)$. Since $s(cdf) = z s(caf) s(acd)$ and since we already know how to conjugate $s(caf)$ and $s(acd)$ by $s(abc)$, because they all have the points $a,c$ in common, we get that
$$s(abc) s(cdf) s(abc)^{-1} = z s(abf) s(bad) = s(adf) \; ,$$
so that again \eqref{eq.conjugacy} follows. When finally $\{a,b,c\}$ and $\{x,y,z\}$ are disjoint, we write $s(xyz) = z s(xaz)s(axy)$. We know how to conjugate $s(xaz)$ and $s(axy)$ by $s(abc)$ and get that
$$s(abc) s(xyz) s(abc)^{-1} = z s(xbz)s(bxy) = s(xyz) \; .$$
So, \eqref{eq.conjugacy} holds in full generality.

Denote by $\Gtil_n$ and $G_n$ the two universal groups defined in the lemma. We have the natural homomorphism $\Gtil_n \to G_n$ given by $z \mapsto e$. By construction, the kernel is $\{e,z\}$. We also have the natural surjective homomorphisms $\Gtil_n \to \Atil_n$ and $G_n \to A_n$ as stated in the lemma. All these arrows commute and it thus suffices to prove that $G_n \to A_n$ is faithful.

In \cite[Theorem 1.2 in Chapter 12]{Kar93}, a presentation of $A_n$ is given. It thus suffices to check that the relations of this presentation hold in $G_n$. So, we define $t_1 = \si(123)$ and for all $2 \leq i \leq n-2$, $t_i = \si(1,i+1,2)\si(i+1,i+2,1)$. Since $t_1^3 = e$, it remains to check that
\begin{align*}
& t_i^2 = e \quad\text{and}\quad (t_{i-1} t_i)^3 = e \;\;\text{if $2 \leq i \leq n-2$,}\\
& (t_j t_k)^2=e \;\;\text{if $1 \leq j \leq k-2$ and $k \leq n-2$.}
\end{align*}
Each of these follow easily from our defining relations and \eqref{eq.conjugacy}.

If $\theta : A_n \to \Atil_n$ would be a splitting homomorphism, we must have $\theta(\si(abc)) = s(abc)$, because both elements have order $3$, while $z$ has order $2$. But this is incompatible with the remaining relation.

Assume that $n \geq 5$. To see that $\Atil_n$ is perfect, assume that $\om : K \to \T$ is a group homomorphism. Take distinct $a,b,c \in \{1,\ldots,n\}$ and write $\om_0 := \om(s(abc))$. Since $s(abc)^3= e$, $\om_0^3 = 1$. Since $n \geq 5$, we can pick distinct $d,x$ that also differ from $a,b,c$. By \eqref{eq.conjugacy}, $s(c d x) s(abc) s(cdx)^{-1} = s(abd)$, so that $\om(s(abd)) = \om_0$. Taking the inverse, $\om(s(bad)) = \om_0^{-1}$. Similarly, $s(adx) s(abc) s(adx)^{-1} = s(dbc)$, so that $s(cbd) = \om_0^{-1}$. Since $s(cbd) s(bad) s(abc) = z$, we conclude that $\om(z) = \om_0^{-1}$. Taking the square, $\om_0^{-2} = 1$. Since $\om_0^3 = 1$, we conclude that $\om_0=1$. Since $a,b,c$ were arbitrary, $\om = 1$.
\end{proof}

The following lemma gathers a few well known exceptional isomorphisms.

\begin{lemma}\label{lem.exceptional-iso}
We have $S_3 \cong \SL_2(\F_2)$. When $n=4$, resp.\ $n=5$, the sequence $\Atil_n \to A_n \hookrightarrow S_n$ is isomorphic with $\SL_2(\F_p) \to \PSL_2(\F_p) \hookrightarrow \PGL_2(\F_p)$ for $p=3$, resp.\ $p=5$.
\end{lemma}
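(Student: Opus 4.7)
The strategy is to realize each exceptional isomorphism via a faithful action on a small set combined with an order count, and then to identify the double covers using the Schur multiplier of $A_n$.

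For $S_3 \cong \SL_2(\F_2)$, I would observe that $\SL_2(\F_2) = \GL_2(\F_2)$ has order $6$ and acts faithfully on the three nonzero vectors of $\F_2^2$, giving an injection into $S_3$ that is bijective by order. For the case $p=3$, $n=4$, the natural action of $\PGL_2(\F_3)$ on the four points of $P^1(\F_3)$ is faithful (only scalars fix every line) and yields an embedding into $S_4$, which is an isomorphism since $|\PGL_2(\F_3)| = 24 = |S_4|$. The subgroup $\PSL_2(\F_3)$ has index $2$, and the unique index-$2$ subgroup of $S_4$ is $A_4$, so $\PSL_2(\F_3) \cong A_4$ compatibly with the inclusions $\PSL_2(\F_p) \hookrightarrow \PGL_2(\F_p)$ and $A_n \hookrightarrow S_n$.

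The case $p=5$, $n=5$ is more delicate because $P^1(\F_5)$ has $6$ points, not $5$. I would first use that $\PSL_2(\F_5)$ is simple of order $60$ and invoke the classical fact that $A_5$ is the only simple group of order $60$ (a standard Sylow argument), yielding $\PSL_2(\F_5) \cong A_5$. It remains to identify $\PGL_2(\F_5)$, a group of order $120$ containing $\PSL_2(\F_5) \cong A_5$ as a normal subgroup of index $2$. Since $Z(A_5) = 1$, such extensions are classified by the map $\Z/2\Z \to \Out(A_5) = \Z/2\Z$, giving only two possibilities: $A_5 \times \Z/2\Z$ and $S_5$. These are distinguished by the presence of an element of order $4$: in $\PGL_2(\F_5)$, the class of $\bigl(\begin{smallmatrix}1 & 0 \\ 0 & 2\end{smallmatrix}\bigr)$ has order $4$ because $2$ has order $4$ in $\F_5^\times$, whereas $A_5 \times \Z/2\Z$ has no element of order $4$. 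Hence $\PGL_2(\F_5) \cong S_5$.

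For the double covers, $\SL_2(\F_p) \to \PSL_2(\F_p)$ is a central $\Z/2\Z$-extension by $\{\pm I\}$. For odd $p$ it is non-split, since any $A \in \SL_2(\F_p)$ with $A^2 = I$ has eigenvalues $\pm 1$ whose product equals $1$, forcing $A = \pm I$; so $-I$ is the unique involution in $\SL_2(\F_p)$, while $\PSL_2(\F_p) \cong A_n$ has several involutions. By Lemma \ref{lem.better-generators.1}, $\Atil_n \to A_n$ is likewise non-split. Since the Schur multiplier of $A_4$ and of $A_5$ is $\Z/2\Z$, by \cite[Theorem 3.2 in Chapter 16]{Kar93} the non-split central $\Z/2\Z$-extension is unique up to equivalence. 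Pulling back $\SL_2(\F_p) \to \PSL_2(\F_p)$ along any fixed isomorphism $A_n \cong \PSL_2(\F_p)$ therefore yields an extension equivalent to $\Atil_n \to A_n$, producing the required isomorphism of the full sequence. The main subtlety lies in the $\PGL_2(\F_5) \cong S_5$ step, where one must rule out $A_5 \times \Z/2\Z$; everything else is bookkeeping.
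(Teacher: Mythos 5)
Your proof is correct, but it follows a genuinely different route from the paper's. For the identification of the quotient groups, the paper simply cites Karpilovsky for $S_3\cong\SL_2(\F_2)$, $A_4\cong\PSL_2(\F_3)$ and $A_5\cong\PSL_2(\F_5)$, and then obtains the pairs $A_n<S_n\;\cong\;\PSL_2(\F_p)<\PGL_2(\F_p)$ in one stroke by passing to automorphism groups (both inclusions are the canonical embeddings of a centerless group into its automorphism group, and $\Aut A_n=S_n$, $\Aut\PSL_2(\F_p)=\PGL_2(\F_p)$ here). Your construction via the action on the projective line, the uniqueness of the simple group of order $60$, and the extension-theoretic identification of $\PGL_2(\F_5)$ with $S_5$ (ruling out $A_5\times\Z/2\Z$ by the order-$4$ element) is more self-contained but longer; both work. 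For the double covers, the paper treats $n=4$ by writing down an explicit lift on the generators $s(abc)$ of $\Atil_4$, and only for $n=5$ invokes the cohomological argument (perfectness plus $M(A_5)=\Z/2\Z$ gives a universal central extension, hence a unique non-split central extension by $\Z/2\Z$); you use the cohomological argument uniformly in both cases. One small caveat there: the implication ``Schur multiplier $\Z/2\Z$ implies the non-split central extension by $\Z/2\Z$ is unique'' is not a general fact for non-perfect groups, and $A_4$ is not perfect. It does hold here because $H^2(A_4,\Z/2\Z)\cong\Hom(M(A_4),\Z/2\Z)\oplus\operatorname{Ext}(\Z/3\Z,\Z/2\Z)$ and the Ext term vanishes, but this deserves a sentence (or one can argue as the paper does, by an explicit lift). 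Everything else --- the non-splitness of $\SL_2(\F_p)\to\PSL_2(\F_p)$ via the unique involution $-I_2$, and the compatibility of the pulled-back extension with the inclusions into $S_n$ and $\PGL_2(\F_p)$ --- checks out.
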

\begin{proof}
For the isomorphisms $S_3 \cong \SL_2(\F_2)$, $A_4 \cong \PSL_2(\F_3)$ and $A_5 \cong \PSL_2(\F_5)$, we refer to \cite[Theorem 2.5 in Chapter 16]{Kar93}. Taking the automorphism groups, we get that for $n=4,5$, $A_n < S_n$ is isomorphic with $\PSL_2(\F_p) < \PGL_2(\F_p)$ for $p=3,5$.

The isomorphism $A_4 \cong \PSL_2(\F_3)$ is very explicit, because the projective line on which $\PSL_2(\F_3)$ acts has $4$ points. This immediately lifts to an isomorphism between $\Atil_4 \to A_4$ and $\SL_2(\F_3) \to \PSL_2(\F_3)$, determined by $s(123) \mapsto \bigl(\begin{smallmatrix} 1 & 1 \\ 0 & 1\end{smallmatrix}\bigr)$, $s(124) \mapsto \bigl(\begin{smallmatrix} 1 & 0 \\ 1 & 1\end{smallmatrix}\bigr)$ and $z \mapsto -I_2$.

By \cite[Theorem 3.2 in Chapter 12]{Kar93}, the Schur multiplier $M(A_5)$ has order $2$ (see discussion before Lemma \ref{lem.trivial-2-cohom-examples}) and $A_5$ is perfect. By e.g.\ \cite[Corollary 7.8 in Chapter 11]{Kar93}, it follows that there is a universal central extension $0 \to \Z/2\Z \to G \to A_5 \to e$. So all non split central extensions of $A_5$ by $\Z/2\Z$ are isomorphic, which implies that $\Atil_5 \to A_5$ is isomorphic with $\SL_2(\F_5) \to \PSL_2(\F_5)$.
\end{proof}

We now prove relative rigidity for some of the finite linear groups.

\begin{theorem}\label{thm.rigid-SLn-Fp}
Let $q=p^k$ be a prime power and $\F_q$ the unique field of order $q$.
\begin{enumlist}
\item\label{thm.rigid-SLn-Fp.one} The additive group $K = \F_q$ is rigid relative to $\F_q^\times < \Aut K$.
\item\label{thm.rigid-SLn-Fp.two} If $n \geq 2$, the additive group $K = \F_q^n$ is strictly rigid relative to $\GL_n(\F_q) < \Aut K$.
\item\label{thm.rigid-SLn-Fp.three} If $n \geq 3$, the groups $K = \SL_n(\F_q)$ and $K = \PSL_n(\F_q)$ are rigid relative to $\Aut K$.
\item\label{thm.rigid-SLn-Fp.four} If $n \geq 2$, the group $K = \F_q^n \rtimes \SL_n(\F_q)$ is rigid relative to $\Aut K$.
\end{enumlist}
\end{theorem}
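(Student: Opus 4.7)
The plan is to prove each of the four parts by elementary group-theoretic reconstruction, following the blueprints of Theorems \ref{thm.rigid-SL2} and \ref{thm.relative-rigid-An-tilde}. Since each $K$ is a finite group, the analysis reduces via Lemma \ref{lem.finite-group-rigid} (and its obvious variant for a subgroup $\cG < \Aut K$) to studying a bijection $\pi : K \to \Lambda$ with $\pi \, \cG \, \pi^{-1} \subset \Aut \Lambda$, together with the compatibility required by Definition \ref{def.relative-rigidity-cqg}.

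For part (i), the action $\F_q^\times \actson \F_q$ fixes only $0$, and since the fixed-point set of each $\pi \al \pi^{-1}$ is a subgroup of $T$ containing $e_T$ and the single element $\pi(0)$, one forces $\pi(0) = e_T$. Simple transitivity on $\F_q \setminus \{0\}$ then makes all non-identity elements of $T$ share a common order, which must be $p$ by Cauchy applied to the $p$-group $T$. The characteristic subgroup $Z(T)$ is $\F_q^\times$-invariant and meets the unique non-trivial orbit $T \setminus \{e_T\}$, hence contains it, forcing $T \cong (\Z/p\Z)^k \cong \F_q$ as additive groups. To produce the matching $\zeta \in \Aut \F_q^\times$, pick any group isomorphism $\pi_0' : \F_q \to T$ and a generator $\rho$ of $\F_q^\times$; then $(\pi_0')^{-1} \be_\rho \pi_0'$ is an element of $\GL_k(\F_p)$ of order $q-1$, so its characteristic polynomial (of degree $k$) is the minimal polynomial over $\F_p$ of some primitive $(q-1)$-th root of unity $\rho^m$, making it conjugate in $\GL_k(\F_p)$ to multiplication by $\rho^m$ on $\F_q$. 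Setting $\pi_0 := \pi_0' \circ B$, where $B \in \Aut(\F_q)$ realizes this conjugation, and defining $\zeta(\rho^j) := \rho^{m^{-1} j}$, produces the required data.

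For part (ii), the same opening argument gives $T \cong (\Z/p\Z)^{nk}$ as abelian groups. Since $\GL_n(\F_q)$ has only $0$ as a fixed vector, the only $\cG$-invariant character is the co-unit, so by Remark \ref{rem.strictly-rigid-no-invariant-characters} strict rigidity boils down to showing $\pi$ itself is a group isomorphism. Picking any group iso $\pi_0' : \F_q^n \to T$ turns $\be$ into an embedding $\GL_n(\F_q) \hookrightarrow \GL_{nk}(\F_p)$, and the key step is to show this embedding is conjugate inside $\GL_{nk}(\F_p)$ to the natural inclusion. This follows from an eigenvalue analysis of the central subgroup $\F_q^\times \cdot I_n$, whose image consists of elements of order $q-1$ with eigenvalues forming Galois orbits of primitive $(q-1)$-th roots of unity over $\F_p$; up to a Frobenius twist that can be absorbed into $\pi_0'$, this forces the $nk$-dimensional $\F_p$-module to be the natural $\F_q$-module on $\F_q^n$. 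After this adjustment, $\pi_0^{-1} \pi$ is a bijection of $\F_q^n$ fixing $0$ and commuting with the whole $\GL_n(\F_q)$-action; since $\GL_n$ is transitive on $\F_q^n \setminus \{0\}$ and the stabilizer of $e_1$ has exactly $\F_q e_1$ as common fixed locus, this bijection must be scalar multiplication by some $\lambda_0 \in \F_q^\times$, hence a group automorphism, so $\pi$ is itself a group isomorphism.

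For parts (iii) and (iv), one runs a staircase induction interleaving the two statements. For part (iv) at rank $n$, the normal subgroup $\F_q^n \lhd K$ is characteristic (being the unique maximal normal abelian subgroup for $n \geq 2$, since the quotient $\SL_n(\F_q)$ is almost simple), so $\pi(\F_q^n)$ is characteristic in $\Lambda$; part (ii) then identifies it as $\F_q^n$ with its standard $\GL_n$-structure. The quotient $\Lambda/\pi(\F_q^n)$ is handled by part (iii) for $n \geq 3$, while for $n = 2$ one runs a specialised Sylow analysis inside $\Lambda$ modelled on Theorem \ref{thm.rigid-SL2}. The resulting extension $\F_q^n \to \Lambda \to \SL_n(\F_q)$ is then split by lifting a generating set of $\SL_n(\F_q)$ to $\Lambda$ using $\GL_n$-equivariance, recovering $\Lambda \cong K$. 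For part (iii) at rank $n \geq 3$, one follows the Sylow-theoretic blueprint of Theorem \ref{thm.rigid-SL2}, with $P^{n-1}$ in place of $P^1$ indexing the maximal parabolics (line-stabilizers), and the mirabolic subgroup $\F_q^{n-1} \rtimes \SL_{n-1}(\F_q) < \SL_n(\F_q)$ appearing as the stabilizer of a line; by part (iv) at rank $n-1$, this mirabolic is rigid, which bootstraps the recognition of $\Lambda$ as $\SL_n(\F_q)$, the $\PSL_n$ case following by quotienting by the centre $\mu_n$. The main obstacle will be the combinatorial Sylow/parabolic bookkeeping in part (iii) together with the $n = 2$ base case of part (iv), whose intricacy mirrors the lengthy arguments already needed for $\SL_2(\F_p)$ and $A_6$ in Theorems \ref{thm.rigid-SL2} and \ref{thm.relative-rigid-An-tilde}.
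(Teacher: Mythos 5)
Your part (i) is correct and takes a genuinely different (and clean) route: where the paper builds a subfield of $\End \Lambda$ out of the maps $\be_\al$, you identify $\be_\rho$ as a Singer cycle in $\GL_k(\F_p)$; both work. From part (ii) onward, however, there are genuine gaps. The most serious is structural and affects (iii) and (iv): the hypothesis only provides a \emph{bijection} $\pi : K \to \Lambda$ intertwining automorphisms, so the only subgroups of $K$ whose images under $\pi$ are known to be subgroups of $\Lambda$ are \emph{fixed point subgroups} of families of automorphisms (since $\pi(\Fix \al) = \Fix(\pi\al\pi^{-1})$). Your plan repeatedly ignores this. You claim $\pi(\F_q^n)$ is characteristic in $\Lambda$ ``because'' $\F_q^n$ is the unique maximal normal abelian subgroup of $K$ — a non sequitur for a mere bijection; the correct reason its image is a subgroup is that $\F_q^n$ is the common fixed point set of $\Ad e_1,\dots,\Ad e_n$. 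You then form the quotient $\Lambda/\pi(\F_q^n)$ and feed it to part (iii), but normality of $\pi(\F_q^n)$ in $\Lambda$ is not known at that stage (in the paper it is one of the \emph{last} facts established in the proof of (iv)), and even granting normality, a bijection does not descend to a bijection of quotients. Likewise, restricting $\pi$ to the mirabolic/parabolic subgroups in (iii) is illegitimate: parabolics are not fixed point subgroups, so their images need not be subgroups of $\Lambda$. Finally, the Sylow blueprint of $\SL_2(\F_p)$ does not transfer to $n \geq 3$: there the $p$-Sylow subgroups are full unipotent radicals of Borels and are not in bijection with points of projective space, so the counting argument collapses. The paper's actual strategy for (iii) is entirely different: locate the root subgroups $K_{ij} = \{aI_n + xE_{ij}\}$ as fixed point subgroups, extract canonical generators $t_{ij}(x)$, verify the Steinberg relations, and then rule out proper central quotients by a fixed-point count.

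In part (ii), the key step — that ``an eigenvalue analysis of the central subgroup'' forces the $nk$-dimensional $\F_p$-module to be the natural $\F_q$-module up to a Frobenius twist — is not justified and is false as stated: freeness of the centre on nonzero vectors only shows that the restriction to $\F_q^\times I_n$ splits into $n$ faithful $k$-dimensional summands, each some Frobenius twist of $\F_q$, with possibly different twists on different summands; recovering the $\GL_n(\F_q)$-module structure from this would require genuine modular representation theory (or an analysis of the permutation module on $\F_q^n\setminus\{0\}$) that you have not supplied. The paper avoids this entirely by an elementary hands-on computation: it applies part (i) to the coordinate subgroups $\F_q e_i$ (which are fixed point subgroups), uses permutation matrices and the relations among elementary matrices to show the unknown twist $\zeta$ is a field automorphism, and normalizes the remaining scalar with a Weyl group element. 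You would need to either supply the representation-theoretic input or revert to an argument of that direct kind.
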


Before proving Theorem \ref{thm.rigid-SLn-Fp}, we gather a few background results and notations that will be used in our approach to the four statements of the theorem.

{\bf Fixed point subgroups.} Given a finite group $K$, we say that a subgroup $K_0 < K$ is the \emph{fixed point subgroup} of $\cG_0 < \Aut K$ if $K_0 = \{k \in K \mid \forall \al \in \cG_0 : \al(k) = k\}$.
Throughout the proof, we use the following straightforward observation. If $\Lambda$ is another finite group and $\pi : K \to \Lambda$ a bijection such that $\be_\al := \pi \circ \al \circ \pi^{-1} \in \Aut \Lambda$ for all $\al \in \cG_0$, then the image $\pi(K_0)$ of the fixed point subgroup $K_0$ of $\cG_0$ is a subgroup of $\Lambda$. Indeed, $\pi(K_0) = \{s \in \Lambda \mid \forall \al \in \cG_0 : \be_\al(s) = s\}$, which is a subgroup of $\Lambda$.

{\bf\boldmath Automorphism groups of $\SL_n(\F_q)$, $\SL_n(\F_q) / D$ and $\PSL_n(\F_q)$.} By \cite[Satz 1]{SVdW28}, for every integer $n \geq 2$, the automorphism group of $\PSL_n(\F_q)$ is generated by the automorphisms of the form $\Ad A_0$ with $A_0 \in \PGL_n(\F_q)$, the automorphisms given by applying a field automorphism $\zeta \in \Autfield \F_q$ to every component of a matrix, and the automorphism $\delta : A \mapsto (A^T)^{-1}$, which is only needed when $n \geq 3$ because it is inner when $n=2$.

Write $C = \{a \in \F_q^\times \mid a^n = 1\}$. Then $\cZ := \{a I_n \mid a \in C\}$ is the center of $\SL_n(\F_q)$. Every automorphism $\al$ of $\SL_n(\F_q)$ globally preserves the center $\cZ$ and thus induces an automorphism $\eta(\al)$ of $\PSL_n(\F_q)$. If $\eta(\al) = \id$, we find that $\al(A) = \om(A) A$ for all $A \in \SL_n(\F_q)$, where $\om : \SL_n(\F_q) \to C$ is a group homomorphism. By e.g.\ \cite[Theorem 2.3 in Chapter 16]{Kar93}, the group $\SL_n(\F_q)$ is generated by the elementary matrices, which have order $p$, while the order of $C$ divides $q-1$. So, $\om = 1$ and $\al = \id$. The homomorphism $\eta : \Aut \SL_n(\F_q) \to \Aut \PSL_n(\F_q)$ is thus faithful. From the above description of the automorphisms of $\PSL_n(\F_q)$, it follows that $\eta$ is also surjective. We conclude that the automorphism group of $\SL_n(\F_q)$ has exactly the same description as the one of $\PSL_n(\F_q)$.

When $D < \cZ$ is a subgroup, we similarly define $\eta_0 : \Aut \SL_n(\F_q)/D \to \Aut \PSL_n(\F_q)$, which is a faithful homomorphism. Making a similar analysis as in the previous paragraph, an automorphism $\al$ of $\PSL_n(\F_q)$ belongs to the image of $\eta_0$ if its unique lift to an automorphism $\altil$ of $\SL_n(\F_q)$ satisfies $\altil(D) = D$. It follows that $\Aut \SL_n(\F_q)/D$ is generated by $\Ad A_0$ with $A_0 \in \PGL_n(\F_q)$, field automorphisms $\zeta \in \Autfield \F_q$ with $\zeta(D) = D$, and the automorphism $A \mapsto (A^T)^{-1}$.

Finally observe that it follows that $\Ad \PSL_n(\F_q)$ is a normal subgroup of $\Aut \SL_n(\F_q)/D$ and that the quotient is a subgroup of $\F_q^\times/M \rtimes \Autfield(\F_q) \times \Z/2\Z$, where $M = \{a^n \mid a \in \F_q^\times\}$ is the subgroup of $n$-th powers. Since $\Autfield(\F_q) \cong \Z/k\Z$, this quotient is thus solvable.

Throughout the proof of Theorem \ref{thm.rigid-SLn-Fp}, we denote by $E_{ij}$ the matrix with $1$ in position $(i,j)$ and $0$ elsewhere. We denote by $I_n$ the $n \times n$ identity matrix. We denote by $e_i$ the standard basis elements of $\F_q^n$.

\begin{proof}[{\bf\boldmath Proof of Theorem \ref{thm.rigid-SLn-Fp.one}: the groups $\F_q$.}]
Assume that $\Lambda$ is a finite group and $\pi : \F_q \to \Lambda$ is a bijection such that $\pi(\al x) = \be_\al(\pi(x))$ for all $\al \in \F_q^\times$ and $x \in \F_q$, where $\be_\al \in \Aut \Lambda$. Since $|\Lambda| = |\F_q| = p^k$, $\Lambda$ is a finite $p$-group, so that the center $\cZ(\Lambda)$ is nontrivial. Since the action $(\be_\al)_{\al \in \F_q^\times}$ is transitive on $\Lambda \setminus \{e\}$, it follows that $\cZ(\Lambda) = \Lambda$, so that $\Lambda$ is abelian and of order $q$.

Since $\Lambda$ is abelian, we write the group operation in $\Lambda$ additively and consider the ring $\End \Lambda$ of group homomorphisms $\Lambda \to \Lambda$. Since the action $(\be_\al)_{\al \in \F_q^\times}$ is transitive on $\Lambda \setminus \{0\}$ and since $|\Lambda \setminus \{0\}| = |\F_q^\times|$, the action is also free.

We claim that the subset $\F' := \{0\} \cup \{\be_\al \mid \al \in \F_q^\times\}$ of $\End \Lambda$ is a subfield of $\End \Lambda$. The only nontrivial point is to prove that $\be_\al + \be_{\al'} \in \F'$ for all $\al,\al' \in \F_q^\times$. Fix a nonzero element $s_1 \in \Lambda \setminus \{0\}$. If $(\be_\al + \be_{\al'})(s_1)=0$, we apply $\be_\gamma$ for $\gamma \in \F_q^\times$, use that $\F_q^\times$ is commutative and that the action $(\be_\al)_{\al \in \F_q^\times}$ is transitive on $\Lambda \setminus \{0\}$ to conclude that $\be_\al + \be_{\al'} = 0 \in \F'$. If $(\be_\al + \be_{\al'})(s_1) \neq 0$, by transitivity, we can choose $\al\dpr \in \F_q^\times$ such that $(\be_\al + \be_{\al'})(s_1) = \be_{\al\dpr}(s_1)$. By the same reasoning as above, $\be_\al + \be_{\al'} = \be_{\al\dpr} \in \F'$.

By freeness of the action, $|\F'| = q$. Since there is only one field of order $q$, we can choose a field isomorphism $\phi : \F_q \to \F'$. Then, $\phi(\al) = \be_{\zeta(\al)}$ for all $\al \in \F_q^\times$, where $\zeta \in \Aut(\F_q^\times,\cdot)$. Define the bijection $\pi_0 : \F_q \to \Lambda$ by $\pi_0(0) = 0$ and $\pi_0(\al) = \phi(\al)(s_1) = \be_{\zeta(\al)}(s_1)$. By construction, $\pi_0$ is a group isomorphism between $(\F_q,+)$ and $\Lambda$ and $\pi_0(\al x) = \be_{\zeta(\al)}(\pi_0(x))$ for all $\al \in \F_q^\times$ and $x \in \F_q$. We have thus proven that $(\F_q,+)$ is rigid relative to $\F_q^\times < \Aut(\F_q,+)$.
\end{proof}

\begin{proof}[{\bf\boldmath Proof of Theorem \ref{thm.rigid-SLn-Fp.two}: the groups $\F_q^n$ with $n \geq 2$.}]
We put $K = (\F_q^n,+)$, let $\Lambda$ be a group and $\pi : K \to \Lambda$ a bijection such that $\be_A := \pi \circ A \circ \pi^{-1}$ is a group automorphism of $\Lambda$ for every $A \in \GL_n(\F_q)$. We prove that $\pi$ is a group isomorphism.

Since $|\Lambda| = |K| = p^{kn}$ and since the action of $\SL_n(\F_q)$ is transitive on $K \setminus \{e\}$, by the same argument as in the beginning of the proof of (i), we get that $\Lambda$ is abelian. For every subset $I \subset \{1,\ldots,n\}$, we have the natural subgroup $\F_q^I \subset \F_q^n$. For every $I$, the subgroup $\F_q^I$ is a fixed point subgroup, under the automorphisms $I_n + E_{1i} \in \GL_n(\F_q)$ for all $i \not\in I$. We thus find the subgroups $\Lambda_I < \Lambda$ such that $\pi(\F_q^I) = \Lambda_I$. In particular, we write $\Lambda_i := \Lambda_{\{i\}}$ and get that $\pi(\F_q e_i) = \Lambda_i$.

For every $\al \in \F_q^\times$ and $i \in \{1,\ldots,n\}$, define $A_i(\al) \in \GL_n(\F_q)$ by $A_i(\al) = \al E_{ii} + (I_n - E_{ii})$. Define $\be_{i,\al} \in \Aut \Lambda$ by $\be_{i,\al} = \pi \circ A_i(\al) \circ \pi^{-1}$. Since $A_1(\al)(\F_q e_1) = \F_q e_1$, we get that $\be_{1,\al}$ restricts to an automorphism of $\Lambda_1$, for all $\al \in \F_q^\times$. We can then apply (i) to the restriction of $\pi$ to $\F_q e_1$. There thus exists a group isomorphism $\phi_1 : \F_q \to \Lambda_1$ and $\zeta \in \Aut(\F_q^\times,\cdot)$ such that $\phi_1(\zeta(\al) x) = \be_{1,\al}(\phi_1(x))$ for all $x \in \F_q$ and $\al \in \F_q^\times$.

We view any permutation $\si$ of $\{1,\ldots,n\}$ as a permutation matrix in $\GL_n(\F_q)$ and consider $\be_\si = \pi \circ \si \circ \pi^{-1} \in \Aut \Lambda$. Since $\si(\F_q^I) = \F_q^{\si(I)}$, we get that $\be_\si(\Lambda_I) = \Lambda_{\si(I)}$. In particular, we denote for $i \geq 2$ by $\si_i$ the flip of $1$ and $i$, so that $\Lambda_i = \be_{\si_i}(\Lambda_1)$. Define $\phi_i = \be_{\si_i} \circ \phi_1$. Since $\Lambda$ is abelian and $\Lambda_I \cap \Lambda_J = \Lambda_{I \cap J}$, it follows that $\phi : \F_q^n \to \Lambda : \phi(x) = \phi_1(x_1) \phi_2(x_2) \cdots \phi_n(x_n)$ is a group isomorphism and $\phi(\F_q^I) = \Lambda_I$ for all $I \subset \{1,\ldots,n\}$.

Define the permutation $\phi_0$ of $\F_q$ such that $\pi(x e_1) = \phi_1(\phi_0(x))$ for all $x \in \F_q$. Since $0$ is the only element of $\F_q^n$ that is fixed under all the automorphisms of $\GL_n(\F_q)$, we have that $\phi_0(0) = 0$. By construction, $\phi_0(\al a) = \zeta(\al) \phi_0(a)$ for all $\al \in \F_q^\times$ and $a \in \F_q$.

For the rest of the proof of (i), we replace a few times $\pi$ with its composition with a group isomorphism, up to the point where $\pi$ becomes the identity homomorphism. At that moment, it is proven that the initial $\pi$ was a group isomorphism. As a first step, we replace $\pi$ by $\phi^{-1} \circ \pi$. So from now on, $\Lambda = K$ and the bijection $\pi$ satisfies $\pi(a e_i) = \phi_0(a) e_i$ for all $i \in \{1,\ldots,n\}$ and $a \in \F_q$. We also get that $\pi(\F_q^I) = \F_q^I$ for every $I \subset \{1,\ldots,n\}$.

So after this replacement, $\be_{i,\al}$ is an automorphism of $\F_q^n$ that satisfies $\be_{i,\al}(b e_j) = b e_j$ for all $b \in \F_q$ if $j \neq i$, and $\be_{i,\al}(b e_i) = \zeta(\al) b e_i$ for all $b \in \F_q$. This means that $\be_{i,\al} = A_i(\zeta(\al))$. With a similar reasoning, we find that $\be_{\si_i} = \si_i$.

We now determine the automorphisms $\be_x = \pi \circ (I_n + x E_{12}) \circ \pi^{-1}$ whenever $x \in \F_q$. Since $(I_n + x E_{12})(b e_i) = b e_i$ when $i \neq 2$ and $\pi(b e_i) = \phi_0(b) e_i$, also $\be_x(d e_i) = d e_i$ whenever $i \neq 2$ and $d \in \F_q$. Write $W = \F_q e_1 + \F_q e_2$. Since $(I_n + x E_{12})(W) = W$ and $\pi(W) = W$, also $\be_x(W) = W$. Since $\be_x$ is an automorphism of $(\F_q^n,+)$, we thus find $A_x \in \Aut(\F_q,+)$ and $B_x \in \End(\F_q,+)$ such that $\be_x(b e_2) = B_x(b) e_1 + A_x(b) e_2$ for all $b \in \F_q$.

For $\al \in \F_q^\times$, write $T_\al = \al(E_{11} + E_{22}) + (I_n - E_{11} - E_{22})$ and denote $\gamma_\al = \pi \circ T_\al \circ \pi^{-1}$. In the same way as with $\be_{i,\al}$, we get that $\gamma_\al = T_{\zeta(\al)}$ for all $\al \in \F_q^\times$. Since $T_\al$ commutes with $I_n + x E_{12}$, it follows that $\gamma_\al$ commutes $\be_x$. So, $\be_x$ commutes with $T_{\zeta(\al)}$ for all $\al \in \F_q^\times$. We conclude that $B_x(\al b) = \al B_x(b)$ and $A_x(\al b) = \al A_x(b)$ for all $\al \in \F_q^\times$ and $x,b \in \F_q$. This means that $A_x \in \F_q^\times$ and $B_x \in \F_q$, viewed as multiplication homomorphisms on $\F_q$.

Since $I_n + x E_{12}$ has order $p$, also $\be_x^p = \id$. It follows that $A_x^p = 1$ and thus $A_x^q = 1$. But $A_x^q = A_x$, so that $A_x = 1$. Since $(I_n + x E_{12})(I_n + y E_{12}) = I_n + (x+y)E_{12}$, also $\be_x \circ \be_y = \be_{x+y}$. It follows that $B_{x+y} = B_x + B_y$ for all $x,y \in \F_q$.

Finally note that $(I_n + x E_{12}) A_2(\al) = A_2(\al) (I_n + \al x E_{12})$, so that $\be_x \circ \be_{2,\al} = \be_{2,\al} \circ \be_{\al x}$ for all $\al \in \F_q^\times$ and $x \in \F_q$. Since $\be_{2,\al} = A_2(\zeta(\al))$, we conclude that $B_{\al x} = \zeta(\al) B_x$. When $x = 1$, we have that $\be_1 \neq \id$, so that $B_1 \neq 0$. We find that $\zeta(\al) = B_1^{-1} B_\al$ for all $\al \in \F_q^\times$. Since $B_{x+y} = B_x + B_y$ for all $x,y \in \F_q$, it thus follows that $\zeta$ is the restriction to $\F_q^\times$ of a field automorphism of $\F_q$ that we still denote by $\zeta$.

Replacing $\pi$ by $(\zeta \times \cdots \times \zeta)^{-1} \circ \pi$, we may assume that $\zeta = \id$. Since $\phi_0(\al a) = \zeta(\al) \phi_0(a)$, we conclude that $\phi_0(a) = a c_0$ for all $a \in \F_q$, and some $c_0 \in \F_q^\times$. We also get that $B_x = b_0 x$ for all $x \in \F_q$, where $b_0 = B_1 \in \F_q^\times$. So, $\pi \circ (I_n + x E_{12}) \circ \pi^{-1} = \be_x = I_n + b_0 x E_{12}$ for all $x \in \F_q$. Conjugating with $\be_{\si_2} = \si_2$, we also get that $\pi \circ (I_n + x E_{21}) \circ \pi^{-1} = I_n + b_0 x E_{21}$.

Define $\eta \in \GL_n(\F_q)$ by $\eta = E_{12} - E_{21} + (I_n - E_{11} - E_{22})$. Since $\pi(a e_i) = a c_0 e_i$, it follows that $\be_\eta = \pi \circ \eta \circ \pi^{-1}$ equals $\eta$. A direct computation shows that $(I_n + E_{12}) (I_n - E_{21}) (I_n + E_{12}) = \eta$. Conjugating with $\pi$, it follows that $(I_n + b_0 E_{12}) (I_n - b_0 E_{21}) (I_n + b_0 E_{12}) = \eta$. This is only possible if $b_0 = 1$. We have thus proven that
\begin{equation}\label{eq.good-be-x}
\pi \circ (I_n + x E_{12}) \circ \pi^{-1} = I_n + x E_{12} \quad\text{for all $x \in \F_q$.}
\end{equation}
Since $\be_{\si_i} = \si_i$ for every $i$, we get that $\pi \circ \si \circ \pi^{-1} = \si$ for every permutation matrix $\si \in \GL_n(\F_q)$. Whenever $i \neq j$, we can choose a permutation $\si$ such that $\si(1) = i$ and $\si(2) = j$. Conjugating \eqref{eq.good-be-x} with $\be_\si = \si$, it follows that $\pi \circ (I_n + x E_{ij}) \circ \pi^{-1} = I_n + x E_{ij}$ whenever $i \neq j$. Since the elementary matrices generate $\SL_n(\F_q)$, it follows that $\pi \circ A = A \circ \pi$ for all $A \in \SL_n(\F_q)$.

We have seen above that $\pi(a e_i) = a c_0 e_i$. Replacing $\pi(\cdot)$ by $c_0^{-1} \pi(\cdot)$, we get moreover that $\pi(e_1) = e_1$. So, $\pi(A \cdot e_1) = A \cdot \pi(e_1) = A \cdot e_1$ for all $A \in \SL_n(\F_q)$. Since $\SL_n(\F_q)$ acts transitively on $\F_q^n \setminus \{0\}$, we have proven that $\pi = \id$. So (ii) is proven.
\end{proof}

\begin{proof}[{\bf\boldmath Proof of Theorem \ref{thm.rigid-SLn-Fp.three}: the groups $\SL_n(\F_q)$ and $\PSL_n(\F_q)$.}]
Fix $n \geq 3$. In the first steps of the proof, we consider the case $k=1$, i.e.\ $K = \SL_n(\F_p)$ where $p$ is prime. Let $\Lambda$ be a finite group and $\pi : K \to \Lambda$ a bijection such that $\be_\al := \pi \circ \al \circ \pi^{-1} \in \Aut \Lambda$ for all $\al \in \Aut K$.

Define the subgroup $C < \F_p^\times$ by $C = \{a \in \F_p^\times \mid a^n = 1\}$. Note that the center of $K$ is given by $\cZ(K) = \{a I_n \mid a \in C\}$. Being the fixed points of $\Ad A$ for all $A \in \GL_n(\F_p)$, we see that $\cZ(K)$ is a fixed point subgroup, so that $\Lambda_\cZ := \pi(\cZ(K))$ is a subgroup of $\Lambda$. Write $c := |C|$ and note that $c \mid p-1$.

Note that $K_{12} = \{a I_n + x E_{12} \mid a \in C, x \in \F_p\}$ is the fixed point subgroup of $\Ad (I_n + E_{ij})$ with $i \neq 2$, $j \neq 1$ and $i \neq j$. So $\pi(K_{12})$ is a subgroup of order $cp$ of $\Lambda$. Since the number $n_p$ of $p$-Sylow subgroups of $\pi(K_{12})$ is congruent to $1$ modulo $p$ and divides $c$, which divides $p-1$, we get that $n_p = 1$. So, $\pi(K_{12})$ has a unique $p$-Sylow subgroup $\Lambda_{12}$, which is normal in $\pi(K_{12})$ and has order $p$. So, $\Lambda_{12} \cong \F_p$. Also, every element of order $p$ in $\pi(K_{12})$ belongs to $\Lambda_{12}$. Choose a generator $s_{12} \in \Lambda_{12}$. Note that $s_{12}$ is an element of order $p$.

Whenever $i \neq j$, we similarly define the fixed point subgroup $K_{ij} = \{a I_n + x E_{ij} \mid a \in C, x \in \F_p\}$. Whenever $\si$ is a permutation of $\{1,\ldots,n\}$, we view $\si$ as a permutation matrix in $\GL_n(\F_p)$ and consider the automorphism $\be_\si = \pi \circ (\Ad \si) \circ \pi^{-1}$ of $\Lambda$. Since $\si K_{ij} \si^{-1} = K_{\si(i)\si(j)}$, we find that $\be_\si(\pi(K_{ij})) = \pi(K_{\si(i)\si(j)})$.

If $\si(1) = 1$ and $\si(2) = 2$, $\si$ commutes with $K_{12}$, so that $\be_\si$ acts as the identity on $\pi(K_{12})$. We can thus unambiguously define generators $s_{ij}$ for the unique $p$-Sylow subgroup $\Lambda_{ij}$ of $\pi(K_{ij})$ such that $\be_\si(s_{ij}) = s_{\si(i)\si(j)}$ for all $i \neq j$. The main result in the first part of the proof of (iii) is to show that for some $f \in \F_p^\times$, the elements $s_{ij}^f$ satisfy the defining relations of $\SL_n(\F_p)$, i.e.\ the same relations as the elementary matrices $I_n + E_{ij}$.

{\bf\boldmath Commutation of $s_{ik}$ and $s_{jk}$ and commutation of $s_{ij}$ and $s_{ik}$.} Define $K_1 = \{a I_n + x E_{13} + y E_{23} \mid a \in C , x,y \in \F_p\}$. Note that $K_1$ is the fixed point subgroup of $\Ad (I_n + E_{ij})$ with $i \neq 3$ and $j \not\in \{1,2\}$. So, $\pi(K_1)$ is a subgroup of $\Lambda$ of order $c p^2$. By the same reasoning as above, $\pi(K_1)$ has a unique $p$-Sylow subgroup $\Lambda_1$, which is normal in $\pi(K_1)$ and contains all elements of order $p$ of $\pi(K_1)$. Since $\Lambda_1$ is a group of order $p^2$, $\Lambda_1$ is abelian. Since $K_{13}$ and $K_{23}$ are subgroups of $K_1$ and $s_{13}, s_{23}$ have order $p$, it follows that $s_{13},s_{23} \in \Lambda_1$. In particular, $s_{13}$ commutes with $s_{23}$. Applying an arbitrary $\be_\si$, we conclude that $s_{ik}$ commutes with $s_{jk}$ whenever $i \neq k$ and $j \neq k$. For later use, we also note that, because $s_{23} \not\in \pi(K_{13})$, the elements $s_{13}$ and $s_{23}$ generate $\Lambda_1$.

By symmetry, we also find that $s_{ij}$ commutes with $s_{ik}$ whenever $i \neq j$ and $i \neq k$.

{\bf\boldmath Commutation of $s_{ij}$ and $s_{rt}$ if $i,j,r,t$ are distinct.} When $n \geq 4$, we prove that $s_{13}$ commutes with $s_{24}$. This is slightly more delicate, especially when $p=2$. For every $a \in C$ and $X \in \F_p^{2 \times 2}$, define the matrix
$$\psi_a(X) = \begin{pmatrix}\begin{array}{c|c|c} a I_2 & X & 0 \\ \hline 0 & a I_2 & 0 \\ \hline 0 & 0 & a I_{n-4}\end{array}\end{pmatrix} \in \SL_n(\F_p) \; .$$
Note that $S := \{\psi_a(X) \mid a \in C, X \in \F_p^{2 \times 2}\}$ is the fixed point subgroup of $\Ad (I_n + E_{ij})$ with $i \not\in \{3,4\}$ and $j \not\in \{1,2\}$. Then $\pi(S)$ is a subgroup of $\Lambda$ of order $c p^4$. By the same reasoning as above, it has a unique $p$-Sylow subgroup $T < \pi(S)$, which is normal in $\pi(S)$, contains all elements of order $p$ in $\pi(S)$, and has order $p^4$.

For all $A,B \in \GL_2(\F_p)$, define $\be_{A,B} \in \Aut \Lambda$ by $\be_{A,B} = \pi \circ \Ad(A \oplus B \oplus I_{n-4}) \circ \pi^{-1}$. Note that $\Ad(A \oplus B \oplus I_{n-4})(\psi_a(X)) = \psi_a(A X B^{-1})$. So, $\be_{A,B}(\pi(S)) = \pi(S)$ and thus, $\be_{A,B}(T) = T$ for all $A,B \in \GL_2(\F_p)$.

The orbits of the action of $\GL_2(\F_p) \times \GL_2(\F_p)$ on $\F_p^{2 \times 2}$ by $(A,B) \cdot X = A X B^{-1}$ are $\{0\}$, the matrices of rank $1$, and $\GL_2(\F_p)$, which have respectively $1$, $(p-1)(p+1)^2$ and $p (p-1)^2 (p+1)$ elements. Since $T$ is a finite $p$-group, the center $\cZ(T)$ is nontrivial. So, the order of $\cZ(T)$ is $p^r$ with $r \in \{1,2,3,4\}$. Since $\be_{A,B}(T) = T$, also $\be_{A,B}(\cZ(T)) = \cZ(T)$ for all $A,B \in \GL_2(\F_p)$.

Since $T$ is a $p$-group and $|\pi(\cZ(K))| = c \mid p-1$, we have that $T \cap \pi(\cZ(K)) = \{e\}$. So all orbits of the action by $\be_{A,B}$ on $\cZ(T) \setminus \{e\}$ have size $(p-1)(p+1)^2$ or $p(p-1)^2 (p+1)$. The number of elements of $\cZ(T) \setminus \{e\}$ is $p^r-1$, which is not divisible by $p$ and thus, not divisible by $p(p-1)^2 (p+1)$. We conclude that there must be at least one orbit of size $(p-1)(p+1)^2$. We thus find $a_0 \in C$ such that for every rank $1$ matrix $X \in \F_p^{2 \times 2}$, we have $\pi(\psi_{a_0}(X)) \in \cZ(T) \setminus \{e\}$.

Taking $X_{11} = 1$ and $X_{ij} = 0$ when $(i,j) \neq (1,1)$, we find that $\pi(a_0 I_n + E_{13}) \in \cZ(T) \setminus \{e\}$. In particular, $\pi(a_0 I_n + E_{13})$ has order $p$ and also belongs to $\pi(K_{13})$, so that it generates $\Lambda_{13}$. We conclude that $\Lambda_{13} \subset \cZ(T)$. We similarly find that $\Lambda_{24} \subset \cZ(T)$. Since $\cZ(T)$ is abelian, we get that $s_{13}$ commutes with $s_{24}$. Applying an arbitrary $\be_\si$, it follows that $s_{ij}$ commutes with $s_{rt}$ whenever $i,j,r,t$ are all distinct.

{\bf\boldmath For some $f \in \F_p^\times$, we have $[s_{12}^f,s_{23}^f] = s_{13}^f$.} Note that
\begin{equation}\label{eq.subgroup-H}
H = \left\{ \begin{pmatrix}\begin{array}{ccc|c} a & x & z & \\ 0 & a^{-n+1} & y & 0 \\ 0 & 0 & a & \\ \hline & 0 & & a I_{n-3} \end{array}\end{pmatrix} \middle| a \in \F_p^\times, x,y,z \in \F_p \right\}
\end{equation}
is the fixed point subgroup of $\Ad (I_n + E_{ij})$ with $i \not\in\{2,3\}$ and $j\not\in\{1,2\}$. Then $\pi(H)$ is a subgroup of $\Lambda$ of order $(p-1)p^3$. By the same reasoning as before, $\pi(H)$ has a unique $p$-Sylow subgroup $\Lambda_H$, which is a normal subgroup of $\pi(H)$ of order $p^3$ that contains all elements of order $p$ of $\pi(H)$.

Since $K_{12}$, $K_{23}$ and $K_{13}$ are all subgroups of $H$, we thus find that $\Lambda_{12}$, $\Lambda_{23}$ and $\Lambda_{13}$ are all subgroups of $\Lambda_H$. Above we have seen that $\Lambda_{13}$ and $\Lambda_{23}$ commute and that they generate the unique $p$-Sylow subgroup $\Lambda_1$ of $\pi(K_1)$, where $K_1 = \{a I_n + z E_{13} + y E_{23} \mid a \in C , z,y \in \F_p\}$.

Since $\Lambda_{12} \cap \Lambda_1 \subset \pi(K_{12} \cap K_1) = \pi(\cZ(K))$ and $\pi(\cZ(K))$ has order $c \mid p-1$, we get that $\Lambda_{12} \cap \Lambda_1 = \{e\}$. Since $\Lambda_1$ has order $p^2$ and $\Lambda_H$ has order $p^3$, we conclude that $\Lambda_H$ is generated by $\Lambda_{12}$ and $\Lambda_1$.

We now prove that $\Lambda_1$ is a normal subgroup of $\Lambda_H$. Note that $\Lambda_{13}$ commutes with $\Lambda_{12}$, $\Lambda_{13}$ and $\Lambda_{23}$. So, $\Lambda_{13} \subset \cZ(\Lambda_H)$. It follows that $\Lambda_H / \Lambda_{13}$ is a group of order $p^2$, which is thus abelian. In particular, $\Lambda_1 / \Lambda_{13}$ is normal in $\Lambda_H / \Lambda_{13}$, so that $\Lambda_1$ is normal in $\Lambda_H$. We retain that $\Lambda_{12}$ normalizes $\Lambda_1$.

We claim that $\Lambda_{12}$ does not commute with $\Lambda_1$. Assume the contrary. Let $\si_0$ be the flip of $1$ and $2$. Note that $\si_0 K_1 \si_0^{-1} = K_1$, so that $\be_{\si_0}(\pi(K_1)) = \pi(K_1)$ and thus $\be_{\si_0}(\Lambda_1) = \Lambda_1$. Since we assumed that $\Lambda_{12}$ commutes with $\Lambda_1$, applying $\be_{\si_0}$ gives us that also $\Lambda_{21}$ commutes with $\Lambda_1$.

Define the subgroup $U = \{ A \oplus b I_{n-2} \mid A \in \GL_2(\F_p), b \in \F_p^\times, b^{n-2} \det A = 1 \}$ of $K$. When $p \neq 2$ or $n \geq 4$, note that $U$ is the fixed point subgroup of $\Ad (a I_2 \oplus B)$ with $a \in \F_p^\times$ and $B \in \GL_{n-2}(\F_p)$. When $p=2$ and $n=3$, consider the automorphism $\delta$ of $K$ given by $\delta(A) = (A^T)^{-1}$. Define the matrix $\Sigma = E_{12} + E_{21}$ in $\SL_2(\F_2)$. Then $U$ is the fixed point subgroup of $\Ad(\Sigma \oplus I_1) \circ \zeta$ in $\SL_3(\F_2)$.

Denote by $R < \Lambda$ the subgroup generated by $\Lambda_{12}$ and $\Lambda_{21}$. By definition, $R$ is a subgroup of $\pi(U)$. By our assumptions, $R$ commutes with $\Lambda_1$. Define $Q < \Lambda$ as the subgroup generated by $R$ and $\Lambda_1$. Since $R \cap \Lambda_1 \subset \pi(U \cap K_1) = \pi(\cZ(K))$ and since $\Lambda_1$ has order $p^2$, while $\pi(\cZ(K))$ has order $c \mid p-1$, we find that $R \cap \Lambda_1 = \{e\}$. So, $Q \cong R \times \Lambda_1$ and we can define a group homomorphism $\theta : Q \to \Lambda_1$ such that $\theta(s) = s$ for all $s \in \Lambda_1$ and $\Ker \theta = R$.

Since the order of $U$ divides $|\GL_2(\F_p)| \, (p-1) = p(p-1)^3 (p+1)$, we get that $p^2 \nmid |R|$. We will reach a contradiction by constructing a surjective group homomorphism from $R$ to $\F_p^2$.

Define $\al_1 = \Ad(I_n + E_{23})$ and denote $\be_1 = \pi \circ \al_1 \circ \pi^{-1} \in \Aut \Lambda$. Since $\al_1$ acts as the identity on $K_{21}$, also $\be_1$ acts as the identity on $\Lambda_{21}$. With $H$ defined by \eqref{eq.subgroup-H}, we have $\al_1(H) = H$. So, $\be_1(\pi(H))$ and thus $\be_1(\Lambda_H) = \Lambda_H$. Since $\Lambda_H$ is generated by $\Lambda_{12}$ and $\Lambda_1$, and since $\be_1(\Lambda_{21}) = \Lambda_{21}$, it follows that $\be_1$ globally preserves the subgroup generated by $\Lambda_{12}$, $\Lambda_1$ and $\Lambda_{21}$, which is precisely $Q$. So, $\be_1$ restricts to an automorphism of $Q$ that acts as the identity on $\Lambda_{21}$.

Note that $\be_1(\Lambda_{12}) \not\subset R$. Indeed, we otherwise find that
$$\be_1(\Lambda_{12}) = \be_1(\Lambda_{12}) \cap R \subset \be_1(\pi(K_{12})) \cap \pi(U) = \pi(\al_1(K_{12}) \cap U) = \pi(\cZ(K)) \; .$$
Since $\pi(\cZ(K))$ is a subgroup of $\Lambda$ with order $c \mid p-1$ and $\be_1(\Lambda_{12})$ is a subgroup of order $p$, this is absurd.

Define $\psi_1 : R \to \Lambda_1 : \psi_1(r) = \theta(\be_1(r))$. Then $\psi_1$ is a group homomorphism, $\psi_1(s_{21}) = e$ and $\psi_1(s_{12}) \neq e$ because $\be_1(s_{12}) \not\in R = \Ker \theta$. By symmetry, we also find a group homomorphism $\psi_2 : R \to \Lambda_1$ such that $\psi_2(s_{21}) \neq e$ and $\psi_2(s_{12}) = e$. Recall that $\Lambda_1 \cong \F_p^2$. So, the image of the homomorphism $\psi_1 \oplus \psi_2 : R \to \Lambda_1 \times \Lambda_1 \cong \F_p^4$ is isomorphic to $\F_p^m$ for $2 \leq m \leq 4$. It follows that $\F_p^2$ is a quotient of $R$, which is absurd. So, the claim that $\Lambda_{12}$ does not commute with $\Lambda_1$ is proven.

We proved above that $\Lambda_1$ is a normal subgroup of $\Lambda_H$. So, $\Ad s_{12}$ defines an automorphism of $\Lambda_1$. Since $\Lambda_{12}$ does not commute with $\Lambda_1$, this automorphism is not the identity. We also know that $\Lambda_1$ is generated by $\Lambda_{13}$ and $\Lambda_{23}$. We thus find a unique group isomorphism $\rho : \Lambda_1 \to \F_p^2$ satisfying $\rho(s_{13}) = e_1$ and $\rho(s_{23}) = e_2$, where $e_1 = (1,0)$ and $e_2 = (0,1)$. Then $\gamma = \rho \circ (\Ad s_{12}) \circ \rho^{-1}$ is an automorphism of $\F_p^2$ and $\gamma \neq \id$. Since $s_{12}$ commutes with $s_{13}$, we have that $\gamma(e_1) = e_1$. Since $\gamma$ is an automorphism of $\F_p^2$, we get that $\gamma(e_2) = b e_1 + a e_2$ for some $b \in \F_p$ and $a \in \F_p^\times$. Since $s_{12}$ has order $p$, also $\gamma^p = \id$. It follows that $a^p = 1$ in $\F_p$, which means that $a=1$. Since $\gamma \neq \id$, we get that $b \neq 0$. We have thus found $b \in \F_p^\times$ such that $(\Ad s_{12})(s_{23}) = s_{13}^b s_{23}$.

When $f \in \F_p^\times$ and using that $s_{12}$ commutes with $s_{13}$, we get that $(\Ad s_{12}^f)(s_{23}) = s_{13}^{bf} s_{23}$. Taking the power $f$ of this expression, gives us that $(\Ad s_{12}^f)(s_{23}^f) = s_{13}^{bf^2} s_{23}^f$. Applying this with the specific choice $f = b^{-1}$, we find that $(\Ad s_{12}^f)(s_{23}^f) = s_{13}^f s_{23}^f$.

{\bf\boldmath We replace all $s_{ij}$ by $s_{ij}^f$.} We still have that $s_{ij}$ generates $\Lambda_{ij}$ and that $\be_\si(s_{ij}) = s_{\si(i)\si(j)}$ for all $i \neq j$ and permutations $\si$. We now also have that the commutator $[s_{12},s_{23}]$ equals $s_{13}$. Applying $\be_\si$ for an arbitrary permutation $\si$, it follows that
\begin{equation}\label{eq.rel-1}
[s_{ij},s_{jk}] = s_{ik} \quad\text{for all distinct $i,j,k$.}
\end{equation}
In those cases where we proved that $s_{ij}$ commutes with $s_{rt}$, this remains valid. We thus find that
\begin{equation}\label{eq.rel-2}
[s_{ij},s_{kr}] = e \quad\text{whenever $i \neq j$, $k \neq r$, $j \neq k$ and $i \neq r$, and}\qquad s_{ij}^p = e \quad\text{if $i \neq j$.}
\end{equation}
By e.g. \cite[Theorem 1.16 in Chapter 16]{Kar93}, because $n \geq 3$, the group $K = \SL_n(\F_p)$ is the universal group with generators $s_{ij}$ for $i \neq j$ and relations \eqref{eq.rel-1} and \eqref{eq.rel-2}. We could use this to finish the proof of (iii) for $\SL_n(\F_p)$, but for efficiency, we now turn to $\SL_n(\F_q)$ when $q$ is a prime power.

{\bf\boldmath The general case $K = \SL_n(\F_q)$: finding copies of $\F_q$ in $\Lambda$.} From now, we write $K = \SL_n(\F_q)$ and assume that $\Lambda$ is a group, $\pi : K \to \Lambda$ is a bijection and $\be_\al := \pi \circ \al \circ \pi^{-1} \in \Aut \Lambda$ for all $\al \in \Aut K$.

Define the subgroup $C < \F_q^\times$ by $C = \{a \in \F_q^\times \mid a^n = 1\}$. Note that the center of $K$ is given by $\cZ(K) = \{a I_n \mid a \in C\}$. Being the fixed points of $\Ad A$ with $A \in \GL_n(\F_q)$, we see that $\cZ(K)$ is a fixed point subgroup, so that $\Lambda_\cZ := \pi(\cZ(K))$ is a subgroup of $\Lambda$. Write $c := |C|$ and note that $c \mid q-1$.

Similarly as above, define the subgroup $K_1 = \{a I_n + x E_{13} + y E_{23} \mid a \in C, x,y \in \F_q\}$. Note that $K_1$ consists of the fixed points of $\Ad (I_n + E_{ij})$ for all $i \neq j$ with $j \not\in \{1,2\}$ and $i \neq 3$. So, $L_1 := \pi(K_1)$ is a subgroup of $\Lambda$. An important part of the proof consists in realizing $\F_q^2$ as a subgroup of $L_1$. Contrary to the case $q=p$, it is not immediately clear that $L_1$ has a unique $p$-Sylow subgroup, neither that this would be abelian.

We view every field automorphism $\zeta \in \Autfield(\F_q)$ as an automorphism of $K$ by applying $\zeta$ to every component of a matrix. Since $\F_p \subset \F_q$ is a Galois extension, $\SL_n(\F_p) < \SL_n(\F_q)$ is the subgroup of elements fixed by all $\zeta \in \Autfield(\F_q)$, so that $\SL_n(\F_p)$ is a fixed point subgroup of $K$. Then also $K_1 \cap \SL_n(\F_p)$ is a fixed point subgroup, so that $\pi(K_1 \cap \SL_n(\F_p))$ is a subgroup of $L_1$ of order $c_0 p^2$, where $c_0 = |\{a \in \F_p^\times \mid a^n = 1\}|$. First choosing a $p$-Sylow subgroup of $\pi(K_1 \cap \SL_n(\F_p))$, we can next fix a $p$-Sylow subgroup $\Lambda_1 < L_1$ such that the intersection $\Lambda_1 \cap \pi(K_1 \cap \SL_n(\F_p))$ is nontrivial (actually, has order $p^2$).

Since $\Lambda_\cZ < L_1$ is a subgroup of order $c \mid q-1$, while $\Lambda_1$ is a subgroup of order $q^2$, and $\gcd(c,q^2) = 1$, we first conclude that $\Lambda_\cZ \cap \Lambda_1 = \{e\}$, and then conclude that $L_1 = \Lambda_\cZ \Lambda_1$. Define $\Lambda_0 := \{s \in \Lambda_\cZ \mid s \Lambda_1 s^{-1} = \Lambda_1\}$. Since all $p$-Sylow subgroups of $L_1$ are conjugate and $L_1 = \Lambda_\cZ \Lambda_1$, it follows that $s \mapsto s \Lambda_1 s^{-1}$ defines a bijection between $\Lambda_\cZ/\Lambda_0$ and the set of $p$-Sylow subgroups of $L_1$.

For $A \in \GL_2(\F_q)$, we define $\be_A = \pi \circ \Ad(A \oplus I_{n-2}) \circ \pi^{-1} \in \Aut \Lambda$. Since $\Ad (A \oplus I_{n-2})(K_1) = K_1$, also $\be_A(L_1) = L_1$. Since automorphisms of $L_1$ permute the $p$-Sylow subgroups of $L_1$, we find an action $\eta$ of $\GL_2(\F_q)$ by permutations of $\Lambda_\cZ/\Lambda_0$ such that
$$\be_A(s \Lambda_1 s^{-1}) = (\eta_A(s\Lambda_0)) \Lambda_1 (\eta_A(s\Lambda_0))^{-1} \quad\text{for all $s \in \Lambda_\cZ$.}$$
On the other hand, $\Ad (A \oplus I_{n-2})$ acts as the identity on $\cZ(K)$, so that $\be_A(s) = s$ for all $s \in \Lambda_\cZ$. Choosing $s_A \in \Lambda_\cZ$ such that $\be_A(\Lambda_1) = s_A \Lambda_1 s_A^{-1}$, we find that $\be_A(s \Lambda_1 s^{-1}) = s \be_A(\Lambda_1) s^{-1} = s s_A \Lambda_1 s_A^{-1} s^{-1}$. We conclude that $\eta_A(s\Lambda_0) = s s_A \Lambda_0$ for all $s \in \Lambda_\cZ$. In particular, $s_A$ belongs to the normalizer $\cN_{\Lambda_\cZ}(\Lambda_0)$, so that $\GL_2(\F_q) \to \cN_{\Lambda_\cZ}(\Lambda_0) / \Lambda_0 : A \mapsto s_A \Lambda_0$ is a well-defined group homomorphism. The order of the group at the right hand side divides $|\Lambda_\cZ| = c$ and thus divides $q-1$. The subgroup $\SL_2(\F_q) < \GL_2(\F_q)$ is generated by the elementary matrices, which have order $p$. It follows that $s_A \Lambda_0 = \Lambda_0$, meaning that $\be_A(\Lambda_1) = \Lambda_1$ for all $A \in \SL_2(\F_q)$.

Since the intersection of $\Lambda_1$ with $\pi(K_1 \cap \SL_n(\F_p))$ is nontrivial, we can choose $a_1 \in \F_p^\times$ with $a_1^n = 1$ and $x_1,y_1 \in \F_p$ such that $\pi(a_1 I_n + x_1 E_{13} + y_1 E_{23})$ is a nontrivial element of $\Lambda_1$. Since $\Lambda_1 \cap \Lambda_\cZ = \{e\}$, it follows that $(x_1,y_1) \neq (0,0)$. For all $(x,y) \in \F_q^2 \setminus \{(0,0)\}$, denote $\lambda(x,y) = a_1 I_n + x E_{13} + y E_{23} \in K_1$ and $\phi(x,y) = \pi(\lambda(x,y))$. Since $\Ad(A \oplus I_{n-2})(\lambda(x,y)) = \lambda(A \cdot (x,y))$ for all $A \in \GL_2(\F_q)$ and $(x,y) \in \F_q^2 \setminus \{(0,0)\}$, also $\be_A(\phi(x,y)) = \phi(A \cdot (x,y))$. Since $\phi(x_1,y_1) \in \Lambda_1$, $\be_A(\Lambda_1) = \Lambda_1$ for all $A \in \SL_2(\F_q)$ and $\SL_2(\F_q)$ acts transitively on $\F_q^2 \setminus \{(0,0)\}$, we conclude that $\phi(x,y) \in \Lambda_1 \setminus \{e\}$ for all $(x,y) \in \F_q^2 \setminus \{(0,0)\}$. Since $\F_q^2$ has $q^2$ elements, complementing with $\phi(0,0) = e$, we have found a bijection $\phi : \F_q^2 \to \Lambda_1$ satisfying $\be_A \circ \phi = \phi \circ A$ for all $A \in \GL_2(\F_q)$.

In (ii), we proved that $\F_q^2$ is strictly rigid relative to $\GL_2(\F_q) < \Aut(\F_q^2,+)$. Using Remark \ref{rem.strictly-rigid-no-invariant-characters}, it follows that $\phi : \F_q^2 \to \Lambda_1$ is an isomorphism of groups.

{\bf\boldmath Construction of the generators $t_{ij}(x)$ of $\Lambda$.} As explained above, $\SL_n(\F_p)$ is a fixed point subgroup of $K$, fixed by all the field automorphisms. By the description of all automorphisms of $\SL_n(\F_p)$, we get that each such automorphism $\al$ can be extended to an automorphism $\altil$ of $K$. It follows that $\pi \circ \al \circ \pi^{-1}$ is an automorphism of $\pi(\SL_n(\F_p))$ for every $\al \in \Aut \SL_n(\F_p)$. So, we can apply the first part of the proof to the restriction of $\pi$ to $\SL_n(\F_p)$.

For $i \neq j$, denote as above $K_{ij} = \{a I_n + x E_{ij} \mid a \in C , x \in \F_q\}$. Again view every permutation $\si$ as a permutation matrix and write $\be_\si = \pi \circ (\Ad \si) \circ \pi^{-1}$. By the first part of the proof, we find elements $s_{ij} \in \pi(K_{ij} \cap \SL_n(\F_p))$ that generate the unique $p$-Sylow subgroup $\Lambda_{ij}$ of $\pi(K_{ij} \cap \SL_n(\F_p))$. Moreover, $\Lambda_{ij}$ contains all elements of order $p$ of $\pi(K_{ij} \cap \SL_n(\F_p))$ and the relations \eqref{eq.rel-1} and \eqref{eq.rel-2} hold. Also, $\be_\si(s_{ij}) = s_{\si(i)\si(j)}$ for all $i \neq j$ and all permutations $\si$.

Since $\phi : \F_q^2 \to \Lambda_1$ is an isomorphism of groups, $\phi(1,0)$ is an element of order $p$ in $\Lambda_1$. By definition, $\phi(1,0) = \pi(a_1 I_n + E_{13})$. So, $\phi(1,0)$ is an element of order $p$ in $\pi(K_{12} \cap \SL_n(\F_p))$. By the discussion above, $\phi(1,0) \in \Lambda_{13}$ so that $\phi(\F_p,0) = \Lambda_{13}$. Take $x_1 \in \F_p^\times$ such that $s_{13} = \phi(x_1,0) = \pi(a_1 I_n + x_1 E_{13})$. We then define the group homomorphism $t_{13} : \F_q \to \Lambda : t_{13}(x) = \phi(x_1 x,0)$. By definition,
\begin{equation}\label{eq.good-t}
t_{13}(x) = \pi(a_1 I_n + x_1 x E_{13}) \quad\text{for all $x \in \F_q^\times$, and}\quad t_{13}(1) = s_{13} \; .
\end{equation}
For every $i$ and every $a \in \F_q^\times$, we define $\be_{i,a} \in \Aut \Lambda$ by $\be_{i,a} = \pi \circ \Ad(a E_{ii} + (I_n - E_{ii})) \circ \pi^{-1}$. Note that
$$\Ad(a E_{ii} + (I_n - E_{ii})) (a_1 I_n + x_1 x E_{13}) = \begin{cases} a_1 I_n + a x_1 x E_{13} &\quad\text{if $i=1$,}\\
a_1 I_n + a^{-1} x_1 x E_{13} &\quad\text{if $i=3$,}\\
a_1 I_n + x_1 x E_{13} &\quad\text{if $i\not\in \{1,3\}$.}\end{cases}$$
It then follows from \eqref{eq.good-t} that
\begin{equation}\label{eq.good-t-2}
\begin{split}
& \be_{1,a}(t_{13}(x)) = t_{13}(ax) \;\; , \quad \be_{3,a}(t_{13}(x)) = t_{13}(a^{-1} x) \;\;\text{and}\\ & \be_{i,a}(t_{13}(x))=t_{13}(x) \;\;\text{if $i \not\in \{1,3\}$.}
\end{split}
\end{equation}
If a permutation $\si$ satisfies $\si(1)=1$ and $\si(3) = 3$, then $(\Ad \si)(a_1 I_n + x_1 x E_{13}) = a_1 I_n + x_1 x E_{13}$. So, by \eqref{eq.good-t}, $\be_\si(t_{13}(x)) = t_{13}(x)$. It follows that we can unambiguously define, for all $i \neq j$, group homomorphisms $t_{ij} : \F_q \to \Lambda$ such that $\be_\si \circ t_{ij} = t_{\si(i) \si(j)}$ for every permutation $\si$ and all $i \neq j$.

Whenever $i \neq j$, we can choose a permutation $\si$ such that $\si(1) = i$ and $\si(3) = j$. We conclude that
$$t_{ij}(1) = \be_\si(t_{13}(1)) = \be_\si(s_{13}) = s_{ij} \; .$$
Applying an arbitrary $\be_\si$ to the equalities in \eqref{eq.good-t-2}, we find that for all $i \neq j$, all $k$, $a \in \F_q^\times$ and $x \in \F_q$,
\begin{equation}\label{eq.good-t-3}
\begin{split}
& \be_{i,a}(t_{ij}(x)) = t_{ij}(ax) \;\; , \quad \be_{j,a}(t_{ij}(x)) = t_{ij}(a^{-1} x) \;\;\text{and}\\ & \be_{k,a}(t_{ij}(x))=t_{ij}(x) \;\;\text{if $k \not\in \{i,j\}$.}
\end{split}
\end{equation}
Acting with $\be_{i,a} \circ \be_{k,b^{-1}}$ on \eqref{eq.rel-1}, it follows that
\begin{equation}\label{eq.rel-1-bis}
[t_{ij}(a),t_{jk}(b)] = t_{ik}(ab) \quad\text{for all distinct $i,j,k$ and all $a,b \in \F_q$.}
\end{equation}
When $i,j,k$ are distinct, acting with $\be_{i,a} \circ \be_{j,b}$ on $[s_{ik},s_{jk}] = e$ implies that $[t_{ik}(a),t_{jk}(b)] = e$ for all $a,b \in \F_q$. We similarly get that $[t_{ij}(a),t_{ik}(b)] = e$ for all distinct $i,j,k$ and $a,b \in \F_q$. When $i,j,k,r$ are all distinct, we act with $\be_{i,a} \circ \be_{k,b}$ on $[s_{ij},s_{kr}]=e$ and get that $[t_{ij}(a),t_{kr}(b)] = e$ for all $a,b \in \F_q$. Summarizing, we have proven that
\begin{equation}\label{eq.rel-2-bis}
[t_{ij}(a),t_{kr}(b)] = e \quad\text{whenever $i \neq j$, $k \neq r$, $j \neq k$ and $i \neq r$, and for all $a, b \in \F_q$.}
\end{equation}
Since $t_{ij}$ is a group homomorphism, we have
\begin{equation}\label{eq.rel-3-bis}
t_{ij}(x+y) = t_{ij}(x) t_{ij}(y) \quad\text{whenever $i \neq j$, and for all $x,y \in \F_q$.}
\end{equation}
By e.g. \cite[Theorem 1.16 in Chapter 16]{Kar93}, because $n \geq 3$, the group $K = \SL_n(\F_q)$ is the universal group with generators $t_{ij}(x)$ for $i \neq j$, $x \in \F_q$, and relations \eqref{eq.rel-1-bis}, \eqref{eq.rel-2-bis} and \eqref{eq.rel-3-bis}.

{\bf\boldmath Construction of a subgroup $P < \Lambda$ with $P \cong K/D$.} Define $P < \Lambda$ as the subgroup generated by all $t_{ij}(x)$, $i \neq j$, $x \in \F_q$. Since the homomorphisms $t_{ij}$ are faithful, the group $P$ is a nontrivial quotient of $K$. By e.g. \cite[Theorem 1.12 in Chapter 16]{Kar93}, the group $\PSL_n(\F_q)$ is simple. One deduces that every normal subgroup of $K$ that is different from $K$ is contained in the center $\cZ(K)$. We thus find a subgroup $D < \cZ(K)$ and a group isomorphism $\psi : P \to K/D$ satisfying $\psi(t_{ij}(x)) = (I_n + x E_{ij})D$ for all $i \neq j$ and $x \in \F_q$.

{\bf\boldmath The subgroup $\Lambda_\cZ = \pi(\cZ(K))$ normalizes $t_{12}(\F_q)$.} As above, we consider the subgroup $\Lambda_\cZ = \pi(\cZ(K)) = \pi(\{a I_n \mid a \in C\})$. We claim that $\Lambda_\cZ$ normalizes the subgroup $t_{12}(\F_q)$. If $q=2$, we have that $\Lambda_\cZ = \{e\}$ and there is nothing to prove. So we assume that $q \geq 3$. By construction, $\Lambda_\cZ$ and $t_{12}(\F_q)$ are subgroups of $\pi(K_{12})$ of order $c$, resp.\ $q$, while $\pi(K_{12})$ has order $c q$. Since $\gcd(c,q) = 1$, we get that $\Lambda_\cZ \cap t_{12}(\F_q) = \{e\}$. We then conclude that $\pi(K_{12}) = \Lambda_\cZ t_{12}(\F_q) = t_{12}(\F_q) \Lambda_\cZ$.

Fix $g \in \Lambda_\cZ$ and $x \in \F_q$. We can then uniquely write $g t_{12}(x) = t_{12}(y) h$ with $y \in \F_q$ and $h \in \Lambda_\cZ$. We apply $\be_{1,a}$ to this equality, with $a \in \F_q^\times$. Since $a E_{11} + (I_n-E_{11})$ commutes with $\cZ(K)$, we have that $\be_{1,a}$ acts as the identity on $\Lambda_\cZ$. Using \eqref{eq.good-t-3}, we find that $g t_{12}(ax) = t_{12}(ay) h$. So,
$$g t_{12}((a-1)x) g^{-1} = g t_{12}(ax) (g t_{12}(x))^{-1} = t_{12}(ay) h h^{-1} t_{12}(y)^{-1} =t_{12}((a-1)y) \; .$$
It follows that $g t_{12}((a-1)x) g^{-1} \in t_{12}(\F_q)$ for all $g \in \Lambda_\cZ$, $a \in \F_q^\times$ and $x \in \F_q$. Since $q \geq 3$, we can take $a \in \F_q^\times$ with $a-1 \neq 0$ and the claim that $\Lambda_\cZ$ normalizes $t_{12}(\F_q)$ is proven.

{\bf\boldmath Construction of field automorphisms $\zeta_g$ for every $g \in \Lambda_\cZ$.} For every $g \in \Lambda_\cZ$, define $\zeta_g \in \Aut(\F_q,+)$ such that $g t_{12}(x) g^{-1} = t_{12}(\zeta_g(x))$ for all $x \in \F_q$. Since every permutation matrix $\si$ commutes with $\cZ(K)$, we get that $\be_\si(g) = g$ for all $g \in \Lambda_\cZ$. Applying $\be_\si$, we find that $g t_{ij}(x) g^{-1} = t_{ij}(\zeta_g(x))$ for all $i \neq j$ and $x \in \F_q$. Conjugating the relation \eqref{eq.rel-1-bis} by $g$, it follows that $\zeta_g(a) \zeta_g(b) = \zeta_g(ab)$ for all $a,b \in \F_q$. So, $\zeta_g \in \Autfield(\F_q)$ for all $g \in \Lambda_\cZ$.

{\bf\boldmath Proof that $\zeta_g(D) = D$ for all $g \in \Lambda_\cZ$ and $\zeta_g = \id$ for all $g \in P \cap \Lambda_\cZ$.} Since $(\Ad g)(t_{ij}(x)) = t_{ij}(\zeta_g(x))$, we get that $(\Ad g)(P) = P$ for all $g \in \Lambda_\cZ$, so that $\mu_g := \psi \circ (\Ad g)|_P \circ \psi^{-1}$ are automorphisms of $K/D$ satisfying $\mu_g((I_n + x E_{ij})D) = (I_n + \zeta_g(x) E_{ij})D$ whenever $i \neq j$ and $x \in \F_q$. It follows that $\zeta_g(D) = D$ and that $\mu_g$ equals the field automorphism of $K/D$ given by $\zeta_g$. When $g \in P \cap \Lambda_\cZ$, we get that $\mu_g = \Ad \psi(g)$ is inner. This forces $\zeta_g = \id$, so that $\mu_g = \id$ and thus $\psi(g) \in \cZ(K/D)$. We thus find a subgroup $D_1 < \cZ(K)$ such that $D < D_1$ and $\psi(P \cap \Lambda_\cZ) = D_1/D$.

{\bf\boldmath Proof that $\Lambda = \Lambda_\cZ P$ and $\psi(P \cap \Lambda_\cZ) = \cZ(K)/D$.} To prove these statements, note that
$$|K| = |\Lambda| \geq |\Lambda_\cZ P| = \frac{|\Lambda_\cZ| \, |P|}{|P \cap \Lambda_\cZ|} = \frac{|\cZ(K)| \, (|K|/|D|)}{|D_1| / |D|} = \frac{|\cZ(K)|}{|D_1|} \, |K| \geq |K| \; .$$
It follows that all inequalities must be equalities, so that $\Lambda = \Lambda_\cZ P$ and $D_1 = \cZ(K)$.

{\bf\boldmath For every $A \in \PGL_n(\F_q)$, $\be_A = \pi \circ (\Ad A) \circ \pi^{-1}$ satisfies $\be_A(P) = P$.} Since $\Ad A$ acts as the identity on $\cZ(K)$, we get that $\be_A(g) = g$ for all $g \in \Lambda_\cZ$. It thus suffices to prove that an arbitrary automorphism $\be \in \Aut \Lambda$ with $\be(g) = g$ for all $g \in \Lambda_\cZ$, satisfies $\be(P) = P$.

Define the normal subgroup $\Lambda_\zeta < \Lambda_\cZ$ as the kernel of the homomorphism $\Lambda_\cZ \to \Autfield(\F_q) : g \mapsto \zeta_g$ constructed above. We proved above that $P \cap \Lambda_\cZ < \Lambda_\zeta$. Since $\Autfield(\F_q) \cong \Z/k\Z$, the quotient $\Lambda_\cZ / \Lambda_\zeta$ is abelian. Since $\Lambda_\cZ$ normalizes $P$ and since $\Lambda = \Lambda_\cZ P$, we get that $P$ is a normal subgroup of $\Lambda$ and we denote by $\nu : \Lambda/P \to \Lambda_\cZ / (P \cap \Lambda_\cZ)$ the canonical identification. Then, $P \to \Lambda_\cZ / \Lambda_\zeta : a \mapsto \nu(\be(a)P)\Lambda_\zeta$ is a well-defined group homomorphism. Since $P \cong K/D$ is a perfect group, this homomorphism must be trivial. That means that $\nu(\be(a)P) \in \Lambda_\zeta / (P \cap \Lambda_\cZ)$ for all $a \in P$.

When $g \in \Lambda_\zeta$, we have that $g$ commutes with $P$ and $\be(g) = g$. So, $\nu(\be(a)P)$ belongs to the center of $\Lambda_\zeta / (P \cap \Lambda_\cZ)$ for every $a \in P$. Again using that $P$ is a perfect group, we conclude that $\nu(\be(a)P) = P \cap \Lambda_\cZ$ and thus, $\be(a) \in P$, for all $a \in P$. So, $\be(P) \subset P$, implying that $\be(P) = P$.

{\bf\boldmath Replacing $\psi$ by $\psitil$ such that $\psitil \circ (\be_A)|_P = (\Ad A) \circ \psitil$.} Define for every $A \in \PGL_n(\F_q)$, the automorphism $\gamma_A \in \Aut (K/D)$ by $\gamma_A = \psi \circ (\be_A)|_P \circ \psi^{-1}$. By construction, $A \mapsto \gamma_A$ is a faithful group homomorphism. It follows from our description of the automorphisms of $K/D$ that $\Ad \PSL_n(\F_q)$ is a normal subgroup of $\Aut (K/D)$ and that the quotient is solvable. Since the group $\PSL_n(\F_q)$ is perfect, it follows that $\gamma_A \in \Ad \PSL_n(\F_q)$ for all $A \in \PSL_n(\F_q)$. We may thus view the restriction $\gamma_0$ of $\gamma$ to $\PSL_n(\F_q)$ as an automorphism of $\PSL_n(\F_q)$. From our description of the automorphisms of $\PSL_n(\F_q)$, it follows that $\gamma_0$ lifts uniquely to an automorphism $\rho$ of $K$. Defining $\Dtil = \rho^{-1}(D)$ and $\psitil = \rho^{-1} \circ \psi$, we get that $\psitil : P \to K/\Dtil$ is a group isomorphism. We get that $\psitil \circ (\be_A)|_P = \gammatil_A \circ \psitil$ for all $A \in \PGL_n(\F_q)$, where $A \mapsto \gammatil_A$ is a faithful group homomorphism from $\PGL_n(\F_q)$ to $\Aut(K/\Dtil)$ satisfying $\gammatil_A = \Ad A$ for all $A \in \PSL_n(\F_q)$. When $A \in \PGL_n(\F_q)$, it follows that $(\Ad A^{-1}) \circ \gammatil_A$ is an automorphism of $K/D$ that commutes with all automorphisms $\Ad B$, $B \in \PSL_n(\F_q)$. So, $(\Ad A^{-1}) \circ \gammatil_A = \id$ and we have proven that $\psitil \circ (\be_A)|_P = (\Ad A) \circ \psitil$ for all $A \in \PGL_n(\F_q)$.

{\bf End of the proof.} It suffices to show that $\Dtil = \{I_n\}$. Indeed, it then follows that $\psitil(P \cap \Lambda_\cZ) = \cZ(K)$. Since $|\cZ(K)| = |\Lambda_\cZ|$, this implies that $\Lambda_\cZ \subset P$. Then, $\Lambda = P$ and $P \cong K$, so that $\Lambda \cong K$.

For every $A \in \PGL_n(\F_q)$, we denote by $\Fix_K(\Ad A)$ the fixed point subgroup of $\Ad A$ in $K$. We similarly denote by $\Fix_\Lambda(\be_A)$ the fixed point subgroup of the corresponding automorphism $\be_A \in \Aut \Lambda$. Since $\be_A = \pi \circ (\Ad A) \circ \pi^{-1}$, we get that $|\Fix_K(\Ad A)| = |\Fix_\Lambda(\be_A)|$ for all $A \in \PGL_n(\F_q)$.

On the other hand, $\be_A$ acts as the identity on $\Lambda_\cZ$ and satisfies $\psitil \circ (\be_A)|_P = (\Ad A) \circ \psitil$. Also, because $\psitil(P \cap \Lambda_\cZ) = \cZ(K)/\Dtil$,
$$|\Lambda_\cZ / (P \cap \Lambda_\cZ)| = \frac{|\Lambda_\cZ|}{|P \cap \Lambda_\cZ|} = \frac{|\cZ(K)|}{|\cZ(K)|/|\Dtil|} = |\Dtil| \; .$$
We conclude that
$$|\Fix_\Lambda(\be_A)| = |\Fix_{K/\Dtil}(\Ad A)| \, |\Lambda_\cZ / (P \cap \Lambda_\cZ)| = |\Dtil| \, |\Fix_{K/\Dtil}(\Ad A)| \; .$$
So, $|\Fix_K(\Ad A)| = |\Dtil| \, |\Fix_{K/\Dtil}(\Ad A)|$ for all $A \in \PGL_n(\F_q)$. Since $\Dtil < \cZ(K)$, we have $\Dtil < \Fix_K(\Ad A)$ and thus $\Fix_K(\Ad A) / \Dtil \subset \Fix_{K/\Dtil}(\Ad A)$. We just proved that both sets have the same number of elements. They must thus be equal:
\begin{equation}\label{eq.equal-fixed-points}
\Fix_K(\Ad A) / \Dtil = \Fix_{K/\Dtil}(\Ad A) \quad\text{for all $A \in \PGL_n(\F_q)$.}
\end{equation}
As explained above, it only remains to show that $\Dtil = \{I_n\}$. Assume the contrary. We can then take $d_0 \in \F_q^\times$ such that $d_0 \neq 1$, $d_0^n = 1$ and $d_0 I_n \in \Dtil$. Define $A_0 \in \GL_n(\F_p)$ as the diagonal matrix $A_0 = \sum_{i=1}^n d_0^{i-1} E_{ii}$. Define the element $X \in K$ by
$$X = (-1)^{n+1} E_{n,1} + \sum_{k=1}^{n-1} E_{k,k+1} \; .$$
We have that $AXA^{-1} = d_0^{-1} X$. Then $X \Dtil$ is an element of $\Fix_{K/\Dtil}(\Ad A)$ that does not belong to $\Fix_K(\Ad A)/\Dtil$, contradicting \eqref{eq.equal-fixed-points}. This concludes the proof of the rigidity of $K = \SL_n(\F_p)$ relative to $\Aut K$.

The proof of the rigidity of $\PSL_n(\F_p)$ relative to its automorphism group is identical to the argument above, with the simplification that all considerations about the center disappear.
\end{proof}

\begin{proof}[{\bf\boldmath Proof of Theorem \ref{thm.rigid-SLn-Fp.four}: the groups $\F_q^n \rtimes \SL_n(\F_q)$.}]
Fix $n \geq 2$ and $q = p^k$. Put $K_1 = \F_q^n \rtimes \SL_n(\F_q)$. Let $\Lambda_1$ be a group and $\pi : K_1 \to \Lambda_1$ a bijection such that $\pi \circ \al \circ \pi^{-1} \in \Aut \Lambda_1$ for all $\al \in \Aut K_1$.

We denote the elements of $K_1$ as $(a,A)$ with $a \in \F_q^n$ and $A \in \SL_n(\F_q)$. The product is given by $(a,A) \cdot (b,B) = (a + A \cdot b, AB)$, where we view $a$ as a column matrix. We view $\F_q^n$ and $\SL_n(\F_q)$ as subgroups of $K_1$, identifying $a$ with $(a,I_n)$ and $A$ with $(0,A)$. We also view $K_1$ as a subgroup of $\F_q^n \rtimes \GL_n(\F_q)$, so that conjugation $\Ad A$ with $A \in \GL_n(\F_q)$ is a well-defined automorphism of $K_1$. For the intuition behind some of the computations in the proof, it is useful to see $K_1$ as a subgroup of $\SL_{n+1}(\F_q)$ by identifying $(a,A)$ with the matrix $\bigl(\begin{smallmatrix} A & a \\ 0 & 1\end{smallmatrix}\bigr)$.

We denote by $e_i \in \F_q^n$ the standard basis elements. As in the proof of (iii), we denote by $E_{ij}$ the matrix that has $1$ in position $(i,j)$ and $0$ elsewhere. For every $A \in \GL_n(\F_q)$, we denote $\be_A = \pi \circ (\Ad A) \circ \pi^{-1} \in \Aut \Lambda_1$. We in particular use the notation $\be_\si$ when $\si \in \GL_n(\F_q)$ is a permutation matrix.

Note that $\F_q^n$ is the fixed point subgroup of the automorphisms $\Ad e_i$ for all $i$. We can thus define the subgroup $N_1 = \pi(\F_q^n)$ of $\Lambda_1$. For every $A \in \GL_n(\F_q)$, we have that $(\Ad A)(\F_q^n) = \F_q^n$, and the equality $\pi|_{\F_q^n} \circ \Ad A = \be_A \circ \pi|_{\F_q^n}$ holds on $\F_q^n$. By (ii), the restriction $\pi|_{\F_q^n}$ is a group isomorphism $\pi : \F_q^n \to N_1$. Throughout the proof, we use without mentioning that $\pi|_{\F_q^n}$ is a group homomorphism.

{\bf\boldmath The exceptional case $n=2$, $q=2$.} We claim that $\F_2^2 \rtimes \SL_2(\F_2) \cong S_4$. To prove this claim, it suffices to observe that $(12)(34) \mapsto e_1$, $(14)(23) \mapsto e_2$, together with $(12) \mapsto \bigl(\begin{smallmatrix} 1 & 1 \\ 0 & 1\end{smallmatrix}\bigr)$ and $(123) \mapsto \bigl(\begin{smallmatrix} 0 & 1 \\ 1 & 1\end{smallmatrix}\bigr)$ concretely defines an isomorphism $S_4 \to \F_2^2 \rtimes \SL_2(\F_2)$. By Theorem \ref{thm.relative-rigid-An-tilde.one}, the isomorphism $\F_2^2 \rtimes \SL_2(\F_2) \cong \Lambda_1$ follows.

But for later use, we remark that we get more: using Lemma \ref{lem.finite-group-rigid.two}, we find a group isomorphism $\rho : K_1 \to \Lambda_1$ such that $\rho \circ \al \circ \rho^{-1} = \pi \circ \al \circ \pi^{-1}$ for all $\al \in \Aut K_1$. Looking at the fixed point subgroup of $\Ad e_1$, $\Ad e_2$, it follows that $N_1 = \rho(\F_2^2)$. Since $\rho$ is a group isomorphism, the centralizer of $N_1$ in $\Lambda_1$ equals $N_1$~; a fact that we will use below.

For the rest of the proof, we may thus assume that $(n,q) \neq (2,2)$.

{\bf\boldmath Reduction to the subgroup $K = \F_p^n \rtimes \SL_n(\F_p)$.} If $\zeta \in \Autfield(\F_q)$, applying $\zeta$ to every component induces an automorphism $\al_\zeta$ of $K_1$. The fixed point subgroup of these field automorphisms is $K := \F_p^n \rtimes \SL_n(\F_p)$, and we define the subgroup $\Lambda = \pi(K)$ of $\Lambda_1$. We write $N = \pi(\F_p^n)$ and note that $N$ is a subgroup of $\Lambda$. The main part of the proof consists in showing that the elements $\pi(I_n + E_{ij})$ normalize $N$ and define a nontrivial automorphism of $N$ by conjugation.

{\bf\boldmath The elements $\pi(I_n + E_{12})$ and $\pi(e_1)$ commute.} We note that $\{(a e_1, I_n + b E_{12}) \mid a,b \in \F_p\}$ is the fixed point subgroup of $K$ of the automorphisms $\Ad e_i$ and $\Ad (I_n + E_{1j})$ for all $i \neq 2$ and $j \geq 2$. So its image is a subgroup of $\Lambda$ of order $p^2$, which is therefore abelian. Since this subgroup contains $\pi(e_1)$ and $\pi(I_n+E_{12})$, it follows that these two elements commute.

{\bf\boldmath If $n \geq 3$, the elements $\pi(I_n + E_{13})$ and $\pi(e_2)$ commute.} For every matrix $X = \bigl(\begin{smallmatrix} x & a \\ y & b\end{smallmatrix}\bigr)$ in $\F_p^{2 \times 2}$, define $\psi(X) \in K$ by $\psi(X) = (a e_1+b e_2,I_n + x E_{13} + y E_{23})$. Note that $\psi(\F_p^{2 \times 2})$ is the fixed point subgroup of $K$ of $\Ad e_i$ and $\Ad (I_n + E_{1j})$ for all $i \neq 3$ and $j \geq 3$. We define the subgroup $L = \pi(\psi(\F_p^{2 \times 2}))$ of $\Lambda$.

For every $A \in \GL_2(\F_p)$ and $c \in \F_p$, we have that
$$\Ad(c e_3, A \oplus I_{n-2})(\psi(X)) = \psi\bigl(A X \bigl(\begin{smallmatrix} 1 & -c \\ 0 & 1\end{smallmatrix}\bigr)\bigr) \; .$$
So, $\be_{A,c} := \pi \circ \Ad(c e_3, A \oplus I_{n-2}) \circ \pi^{-1}$ restricts to an action of $\GL_2(\F_p) \times \F_p$ by automorphisms of $L$. Since $L$ is a group of order $p^4$, the center $\cZ(L)$ is nontrivial and thus $|\cZ(L)|$ is divisible by $p$. Then, $|\cZ(L) \setminus \{e\}|$ is not divisible by $p$. The action of $\GL_2(\F_p) \times \F_p$ on $\F_p^{2 \times 2} \setminus \{0\}$ by $(A,c) \cdot X = A X \bigl(\begin{smallmatrix} 1 & -c \\ 0 & 1\end{smallmatrix}\bigr)$ has the following three orbits:
$$\bigl\{ \bigl(\begin{smallmatrix} 0 & a \\ 0 & b\end{smallmatrix}\bigr) \bigm| (a,b) \in \F_p^2 \setminus \{(0,0)\}\bigr\} \; , \quad
\bigl\{ \bigl(\begin{smallmatrix} x & dx \\ y & dy\end{smallmatrix}\bigr) \bigm| (x,y) \in \F_p^2 \setminus \{(0,0)\}, d \in \F_p \bigr\} \; , \quad \GL_2(\F_p) \; .$$
The number of elements in these orbits are $p^2-1$, $p(p^2-1)$ and $p(p-1)^2(p+1)$. Since $\cZ(L) \setminus \{e\}$ is a union of orbits and since $|\cZ(L) \setminus \{e\}|$ is not divisible by $p$, we conclude that the first orbit must occur, so that $\pi\bigl(\psi\bigl(\begin{smallmatrix} 0 & a \\ 0 & b\end{smallmatrix}\bigr)\bigr) \in \cZ(L)$ for all $a,b \in \F_p$. We get that $\pi(e_2)$ belongs to the center of $L$. Since $\pi(I_n + E_{13})$ belongs to $L$, it follows that $\pi(I_n + E_{13})$ commutes with $\pi(e_2)$.

{\bf\boldmath The element $\pi(I_n + E_{12})$ normalizes the subgroup $\pi(\F_p e_1 + \F_p e_2)$.} Note that
\begin{equation}\label{eq.fixed-point-T}
T = \{(z e_1 + y e_2, I_n + x E_{12}) \mid x,y,z \in \F_p\}
\end{equation}
is the fixed point subgroup of $K$ of the automorphisms $\Ad e_i$ and $\Ad (I_n + E_{1j})$ for all $i \neq 2$ and $j \geq 3$. So, $\pi(T)$ is a subgroup of $\Lambda$ of order $p^3$. Also, $N_0 := \pi(\F_p e_1 + \F_p e_2)$ is a subgroup of $\pi(T)$ of order $p^2$ and $\pi(I_n + E_{12}) \not\in N_0$. So, $\pi(T)$ is generated by $\pi(I_n + E_{12})$ and $N_0$. We have proven above that $\pi(I_n + E_{12})$ commutes with $\pi(\F_p e_1)$. Since $\pi(\F_p e_1) < N_0$ and $N_0$ is abelian, it follows that $\pi(\F_p e_1)$ is a subgroup of order $p$ of the center of $\pi(T)$. It follows that $\pi(T)/\pi(\F_p e_1)$ is a group of order $p^2$, which is therefore abelian. In particular, $N_0 / \pi(\F_p e_1)$ is a normal subgroup, so that $N_0$ is normal in $\pi(T)$. We get that $\pi(I_n + E_{12})$ normalizes $N_0$.

{\bf\boldmath The element $\pi(I_n + E_{12})$ does not commute with $\pi(e_2)$.} This is the hardest statement to prove, especially when $q = 2$. Assume by contradiction that $\pi(I_n + E_{12})$ commutes with $\pi(e_2)$. Since we have seen above that $\pi(I_n + E_{12})$ commutes with $\pi(e_1)$ and also with $\pi(e_i)$ for all $i \geq 3$, we get that $\pi(I_n + E_{12})$ commutes with $N$. Applying $\be_\si$ for an arbitrary permutation $\si$, it follows that $s_{ij} := \pi(I_n + E_{ij})$ commutes with $N$ for all $i \neq j$. We derive a contradiction on a case by case basis: first for $q \geq 3$, and then for $q=2$ with resp.\ $n \geq 4$ or $n =3$.

{\bf\boldmath Case 1: $q \geq 3$.} We repeat an argument that we already used in the proof of (iii). Because $q \geq 3$, the subgroup $U := \{A \oplus I_{n-2} \mid A \in \SL_2(\F_p)\}$ is the fixed point subgroup of $K$ of $\Ad (a I_2 \oplus b I_{n-2})$ and $\Ad e_i$ for all $a,b \in \F_q^\times$ and $i \geq 3$. Then $\pi(U)$ is a subgroup of $\Lambda$ of order $|\SL_2(\F_p)| = p (p^2-1)$. We define the subgroup $R < \pi(U)$ generated by $s_{12}$ and $s_{21}$, so that $|R|$ divides $p (p^2-1)$. By assumption, $R$ commutes with $N$. Since $R \cap N \subset \pi(U) \cap N = \{e\}$, the subgroup $Q$ generated by $R$ and $N$ is isomorphic with $R \times N$ and we find a group homomorphism $\theta : Q \to N$ such that $\Ker \theta = R$ and $\theta(g) = g$ for all $g \in N$.

Consider the automorphism $\be_2 = \pi \circ \Ad e_2 \circ \pi^{-1}$ of $\Lambda$. Define the fixed point subgroup $T$ as in \eqref{eq.fixed-point-T}. Since $e_2 \in T$, we have that $\be_2(\pi(T)) = \pi(T)$. Since $(\Ad e_2)(0,I_n + E_{12}) = (-e_1,I_n + E_{12})$, we get that $\be_2(s_{12}) \not\in \pi(U)$. Since $e_2$ commutes with $I_n + E_{21}$, we get that $\be_2(s_{21}) = s_{21}$. We observed above that $\pi(T)$ is generated by $\pi(\F_p e_1 + \F_p e_2)$ and $s_{12}$. So altogether $\be_2(Q) = Q$, $\be_2(s_{12}) \not\in R$ and $\be_2(s_{21}) = s_{21}$. Then, $\psi_2 : R \to N : \psi_2(g) = \theta(\be_2(g))$ is a group homomorphism satisfying $\psi_2(s_{12}) \neq e$ and $\psi_2(s_{21}) = e$. By symmetry, we also find a group homomorphism $\psi_1 : R \to N$ satisfying $\psi_1(s_{12}) = e$ and $\psi_1(s_{21}) \neq e$. Since $N \cong \F_p^n$, the image of the group homomorphism $\psi_1 \oplus \psi_2$ is a group of order $p^m$ with $m \geq 2$. This contradicts the fact that $p^2$ does not divide the order of $R$.

{\bf\boldmath Case 2: $q =2$ and $n \geq 4$.} Note that $K_0 := \{(a e_1 + b e_2 , A \oplus I_{n-2}) \mid a,b \in \F_2, A \in \SL_2(\F_2)\}$ is the fixed point subgroup of $\Ad(I_2 \oplus B)$ for all $B \in \SL_{n-2}(\F_2)$. Since $K_0 \cong \F_2^2 \rtimes \SL_2(\F_2)$, it follows from the exceptional case $(n,q) = (2,2)$ studied in the beginning of the proof that the centralizer of $\pi(\F_2 e_1 + \F_2 e_2)$ in $\pi(K_0)$ equals $\pi(\F_2 e_1 + \F_2 e_2)$. So, $s_{12} \in N$, which is absurd.

{\bf\boldmath Case 3: $q=2$ and $n=3$.} Recall that for every $A \in \SL_3(\F_2)$, we define the automorphism $\be_A := \pi \circ \Ad A \circ \pi^{-1}$. Since $(\Ad A)(\F_2^3) = \F_2^3$, we get that $\be_A(N) = N$. We define $G$ as the subgroup of $\Lambda$ generated by $N$ and the elements $\be_A(s_{12})$ for all $A \in \SL_3(\F_2)$. By construction, $\be_A(G) = G$ for all $A \in \SL_3(\F_2)$. Since $s_{12}$ is assumed to commute with $N$, also $\be_A(s_{12})$ commutes with $N$, so that $N$ is a central subgroup of $G$. We define $\Gamma = G/N$. To reach a contradiction, we prove that the order of $\Gamma$ is not a power of $2$ and we also construct a family of group homomorphisms $\psi_a : \Gamma \to N$ whose kernels have a trivial intersection.

We first prove the following statement: if $B \in \SL_3(\F_2)$ and $\pi(B) \in s_{12} N$, then $B = I_3 + E_{12}$. We start by proving that $s_{32} \not\in s_{12} N$. Note that this is not a triviality since we do not know if $s_{12} \pi(a) = \pi(a,I_3 + E_{12})$ for all $a \in \F_2^3$. We already mentioned above that
\begin{equation}\label{eq.fixed-point-T1}
T_1 := \{(a e_1 , I_3 + x E_{12}) \mid a,x \in \F_2\}
\end{equation}
is the fixed point subgroup of $\Ad e_i$ and $\Ad(I_3 + E_{1j})$ for $i=1,3$ and $j=2,3$. Also $T_2 := \{(b e_2 + c e_3, I_3 + y E_{32}) \mid b,c,y \in \F_2\}$ is the fixed point subgroup of $\Ad e_1$, $\Ad e_3$ and $\Ad (I_3 + E_{31})$. By definition, $T_1 \cap T_2 = \{e\}$. Assume that $s_{32} = s_{12} \pi(a e_1 + b e_2 + c e_3)$ with $a,b,c \in \F_2$. Since $\pi|_{\F_2^3}$ is a group homomorphism, we get that $s_{32} \pi(b e_2 + c e_3) = s_{12} \pi(a e_1)$. Since $s_{32}$ and $\pi(be_2 + c e_3)$ belong to $\pi(T_2)$ and $\pi(T_2)$ is a subgroup, the left hand side belongs to $\pi(T_2)$. Similarly, the right hand side belongs to $\pi(T_1)$. Since $\pi(T_1) \cap \pi(T_2) = \{e\}$, we conclude that $s_{12} \pi(a e_1) = e$. So, $\pi(I_3 + E_{12}) = s_{12} = \pi(a e_1)$, which is absurd.

Since $\Ad(I_3 + E_{13})(I_3 + E_{32}) = I_3 + E_{12} + E_{32}$, while $\Ad(I_3 + E_{13})(I_3 + E_{12}) =I_3 + E_{12}$, applying the automorphism $\pi \circ \Ad(I_3 + E_{13}) \circ \pi^{-1}$ to $s_{32} \not\in s_{12} N$ implies that $\pi(I_3 + E_{12} + E_{13}) \not\in s_{12} N$. Since $s_{12} = \pi(I_3 + E_{12}) \not\in N$, we have thus proven that $\pi(I_3 + x E_{12} + y E_{32}) \not\in s_{12}N$ if $(x,y) \in \F_2^2 \setminus \{(1,0)\}$.

Finally assume that $\pi(B) \in s_{12} N$. Note that $C := \{(a, I_3 + x E_{12} + y E_{32}) \mid a \in \F_2^3, x,y \in \F_2\}$ is the fixed point subgroup of $\Ad e_1$ and $\Ad e_3$. Since $s_{12} \in \pi(C)$, $N \subset \pi(C)$ and $\pi(C)$ is a subgroup, also $s_{12} N \subset \pi(C)$. It follows that $(0,B) \in C$. So, $B = I_3 + x E_{12} + y E_{32}$ for certain $x,y \in \F_2$. Since $\pi(B) \in s_{12} N$, it follows from the previous paragraph that $x=1$ and $y=0$, so that $B = I_3 + E_{12}$. Thus, the statement above is proven.

Define the subset $R_0 \subset \SL_3(\F_2)$ as the orbit $R_0 := \{A (I_3 + E_{12})A^{-1} \mid A \in \SL_3(\F_2)\}$. We claim that if $B,B' \in R_0$ and $B \neq B'$, then $\pi(B) N \neq \pi(B') N$. Since we can use the automorphisms $\be_A = \pi \circ \Ad A \circ \pi^{-1}$, it suffices to note that $\pi(B) N \neq \pi(I_3 + E_{12})N$ if $B \in R_0$ and $B \neq I_3+E_{12}$. But this follows from the statement above.

Denote by $\theta : G \to \Gamma = G/N$ the quotient homomorphism. For every $B = A (I_3 + E_{12})A^{-1} \in R_0$, we have that $\pi(B) = \be_A(s_{12})$ belongs to $G$. By construction, $\pi(B) \not\in N$. So by the previous paragraph, the elements $\theta(\pi(B))$, $B \in R_0$, are distinct elements of $\Gamma \setminus \{e\}$. It follows that $|\Gamma| \geq 1 + |R_0|$. Since $|\SL_3(\F_2)| = 2^3 \cdot 3 \cdot 7$ and the centralizer of $I_3 + E_{12}$ has order $8$, we get that $|R_0| = 21$. So, $|\Gamma| \geq 22$. On the other hand, $N < G < K$, so that $|\Gamma|$ divides $|\SL_3(\F_2)| = 2^3 \cdot 3 \cdot 7$. Both together imply that at least one of the primes $3$ or $7$ divides $|\Gamma|$. We have thus proven that $|\Gamma|$ is not a power of $2$.

We next construct a faithful family of group homomorphisms $\psi_a : \Gamma \to N$. For every $a \in \F_2^3$, define the automorphism $\be_a = \pi \circ \Ad a \circ \pi^{-1}$ of $\Lambda$. Since $(\Ad a)(0,I_3+E_{12}) = (a_2 e_1,I_3 + E_{12})$, we get that $\be_a(\pi(I_3+E_{12})) = \pi(a_2 e_1,I_3 + E_{12})$. We observed above that with $T_1$ defined by \eqref{eq.fixed-point-T1}, the subgroup $\pi(T_1)$ is generated by $s_{12}$ and $\pi(e_1)$, so that $\pi(T_1) < G$. We conclude that $\be_a(s_{12}) \in G$ for all $a \in \F_2^3$.

Acting with $\be_A$ and using that $\be_A \circ \be_a = \be_{A \cdot a} \circ \be_A$, it follows that $\be_a(\be_A(s_{12})) \in G$ for all $A \in \SL_3(\F_2)$. Since $\be_a(g) = g$ for all $g \in N$, we conclude that $\be_a(G) = G$.

We next claim that $\theta(\be_a(g)) = \theta(g)$ for all $g \in G$. Since $\be_a(g) = g$ for all $g \in N$ and since we can act with $\be_A$, it suffices to prove that $\be_a(s_{12}) \in s_{12} N$. We already mentioned above that $\be_a(s_{12}) = \pi(a_2 e_1,I_3 + E_{12})$. We again use the fixed point subgroup $T_1$ defined by \eqref{eq.fixed-point-T1}. Then $\pi(T_1)$ is a group of order $4$. It is thus abelian and $\pi(T_1) / \pi(\F_2 e_1)$ has only one nontrivial element. Since $\pi(a_2 e_1,I_3 + E_{12}) \not\in \pi(\F_2 e_1)$, it follows that $\be_a(s_{12}) \pi(\F_2 e_1)$ is a nontrivial element of $\pi(T_1) / \pi(\F_2 e_1)$ and thus equal to $s_{12} \pi(\F_2 e_1)$. This means that $\be_a(s_{12}) \in s_{12} \pi(\F_2 e_1)$ and the claim is proven.

By construction, we have a central extension $e \to N \to G \to \Gamma \to e$, with quotient homomorphism $\theta : G \to \Gamma$. Choose a map $\vphi : \Gamma \to G$ satisfying $\theta \circ \vphi = \id$. Fix $a \in \F_2^3$. Since $\theta \circ \be_a = \theta$, we can uniquely define for every $g \in \Gamma$, the element $\psi_a(g) \in N$ such that $\be_a(\vphi(g)) = \vphi(g) \psi_a(g)$. Since $\be_a(k) = k$ for all $k \in N$, it follows that $\psi_a(gh) = \psi_a(g) \psi_a(h)$ for all $g,h \in \Gamma$. So, every $\psi_a : \Gamma \to N$ is a group homomorphism.

To reach the desired contradiction, we claim that if $g \in \Gamma$ and $\psi_a(g) = e$ for all $a \in \F_2^3$, then $g = e$. For such an element $g \in \Gamma$, we have by definition that $\be_a(\vphi(g)) = \vphi(g)$ for all $a \in \F_2^3$. But $N$ is the fixed point subgroup of the automorphisms $\be_a$, $a \in \F_2^3$. So, $\vphi(g) \in N$, meaning that $g = e$. We have thus finally proven that $\pi(I_n + E_{12})$ does not commute with $\pi(e_2)$.

{\bf End of the proof.} Define the automorphism $\gamma_{12}$ of $\F_p^n$ such that $\pi \circ \gamma_{12} = (\Ad s_{12}) \circ \pi$. We thus know that $\gamma_{12}(e_i) = e_i$ for all $i \neq 2$, that $\gamma_{12}(e_2) \in \F_p e_1 + \F_p e_2$ and that $\gamma_{12} \neq \id$. Using the fixed point subgroup $T_1$ defined by \eqref{eq.fixed-point-T1}, we also know that $s_{12}^{p^2} = e$, so that $\gamma_{12}^{p^2} = \id$. Altogether, this implies that $\gamma_{12}(e_2) = b_0 e_1 + e_2$ for some $b_0 \in \F_p^\times$. So, $s_{12} \pi(e_2) s_{12}^{-1} = \pi(b_0 e_1 + e_2)$.

We now turn back to the larger groups $K_1 = \F_q^2 \rtimes \SL_n(\F_q)$ and $\Lambda_1$. For every $a \in \F_q^\times$, we define as before $\be_{i,a} = \pi \circ \Ad (a E_{ii} + (I_n - E_{ii})) \circ \pi^{-1}$. Applying the appropriate $\be_i(a)$, as well as $\be_\si$ for permutations $\si$, it follows that the elements $t_{ij}(a) := \pi(I_n + a E_{ij})$ satisfy: $t_{ij}(a)$ commutes with $\pi(b e_k)$ whenever $j \neq k$ and $a,b \in \F_q$, while $(\Ad t_{ij}(a))(\pi(b e_j)) = \pi(ab_0 b e_i + b e_j)$ for all $a,b \in \F_q$ and $i \neq j$.

Define $\cG < \Lambda_1$ as the subgroup generated by $N_1$ and the elements $t_{ij}(a)$. Denote by $\pi_0$ the restriction of $\pi$ to $\F_q^n$, so that $\pi_0 : \F_q^n \to N_1$ is a group isomorphism. By the previous paragraph, $N_1$ is a normal subgroup of $\cG$ and $\pi_0^{-1} \circ \Ad t_{ij}(a) \circ \pi_0 = I_n + a b_0 E_{ij}$ in $\Aut \F_q^n = \GL_n(\F_q)$. Since $\SL_n(\F_q)$ is generated by the elementary matrices, we find that $\Psi : \cG \to \SL_n(\F_q) : \Psi(g) = \pi_0^{-1} \circ \Ad g \circ \pi_0$ is a surjective group homomorphism.

By definition, $N_1 < \Ker \Psi$. So,
$$|\SL_n(\F_q)| = |\Im \Psi| = |\cG| / |\Ker \Psi| \leq |\cG| / |N_1| \leq |\Lambda_1| / |N_1| = |K_1| / |\F_q^n| = |\SL_n(\F_q)| \; .$$
It follows that all inequalities are equalities. Thus, $\cG = \Lambda_1$ and $\Ker \Psi = N_1$. In particular, $N_1$ is a normal subgroup of $\Lambda_1$ and we get the extension $e \to N_1 \to \Lambda_1 \to \SL_n(\F_q) \to e$.

Choose a map $\vphi : \SL_n(\F_q) \to \Lambda_1$ such that $\Psi \circ \vphi = \id$. By definition, $\vphi(A) \pi_0(a) \vphi(A)^{-1} = \pi_0(A \cdot a)$ for all $A \in \SL_n(\F_q)$ and $a \in \F_q^n$.

When $q \geq 3$, we have that $\SL_n(\F_q)$ is the fixed point subgroup of $K_1$ of $\Ad a I_n$ for all $a \in \F_q^\times$. Then $\pi(\SL_n(\F_q))$ is a subgroup of $\Lambda_1$ whose intersection with $N_1$ is trivial. Then
the restriction of $\Psi$ to $\pi(\SL_n(\F_q))$ is an isomorphism. We can thus choose $\vphi$ as the inverse of this isomorphism and conclude that $(a,A) \mapsto \pi_0(a) \vphi(A)$ defines an isomorphism $K_1 \cong \Lambda_1$.

Next assume that $q=2$, so that $n \geq 3$. We claim that the group $\Out \Lambda_1 := \Aut \Lambda_1 / \Inn \Lambda_1$ is abelian. Define $\cB_1 = \{\be \in \Aut \Lambda_1 \mid \forall a \in N_1 : \be(a) = a\}$. It then suffices to prove that $\cB_1$ is abelian and that for every automorphism $\be \in \Aut \Lambda_1$, there exists a $g \in \Lambda_1$ such that $(\Ad g) \circ \be \in \cB_1$.

Take $\be \in \cB_1$. We prove that $\Psi(\be(h)) = \Psi(h)$ for all $h \in \Lambda_1$. To prove this statement, define $A,B \in \SL_n(\F_2)$ by $A = \Psi(h)$ and $B = \Psi(\be(h))$. Then, $h \pi_0(a) h^{-1} = \pi_0(A \cdot a)$ for all $a \in \F_2^n$. Applying $\be$ and using that $\be$ acts as the identity on $N_1$, we get that $\be(h) \pi_0(a) \be(h)^{-1} = \pi_0(A \cdot a)$ for all $a \in \F_2^n$. But the left hand side equals $\pi_0(B \cdot a)$. We thus find that $B = A$ and we have proven that $\Psi \circ \be = \Psi$. It follows that $\be(\vphi(g)) = \mu(\be,g) \vphi(g)$ for certain elements $\mu(\be,g) \in N_1$.

Take $\be,\be' \in \cB_1$. Since $N_1$ is abelian and $\be,\be'$ act as the identity on $N_1$, it follows that $(\be \circ \be')(\vphi(g)) = (\be' \circ \be)(\vphi(g))$ for all $g \in \SL_n(\F_2)$. Since $\be \circ \be'$ and $\be' \circ \be$ both act as the identity on $N_1$, we get that $\be \circ \be' = \be' \circ \be$, so that $\cB_1$ is abelian.

Next take any $\be \in \Aut \Lambda_1$. Since $\Psi(\be(N_1))$ is an abelian normal subgroup of $\SL_n(\F_2)$ and since $\SL_n(\F_2) = \PSL_n(\F_2)$ is simple because $n \geq 3$, we get that $\be(N_1) = N_1$. Since $\GL_n(\F_2) = \SL_n(\F_2)$, we can take $A \in \SL_n(\F_2)$ such that $\be(\pi_0(a)) = \pi_0(A \cdot a)$ for all $a \in \F_2^n$. It follows that $(\Ad \vphi(A)^{-1}) \circ \be \in \cB_1$. So the claim is proven.

We now consider the group homomorphism $\Phi : K_1 \to \Aut \Lambda_1 : g \mapsto \pi \circ \Ad g \circ \pi^{-1}$, which is faithful because $K_1$ has trivial center. Since $n \geq 3$, the group $K_1$ is perfect. We have proven above that $\Out \Lambda_1$ is abelian. It follows that $\Phi(K_1) < \Inn \Lambda_1$. Since $\SL_n(\F_2)$ has trivial center and since its action on $\F_2^n$ has $0$ as the only fixed point, we get that the center of $\Lambda_1$ is trivial. We thus find a unique group homomorphism $\eta : K_1 \to \Lambda_1$ such that $\Phi(g) = \Ad \eta(g)$. Since $\Phi$ is faithful, also $\eta$ is faithful. Since $|K_1| = |\Lambda_1|$, it follows that $\eta$ is surjective, so that $\Lambda_1 \cong K_1$.
\end{proof}

\begin{question}\label{question.all-finite-simple-rigid}
We have proven in Theorems \ref{thm.rigid-SL2}, \ref{thm.relative-rigid-An-tilde} and \ref{thm.rigid-SLn-Fp} that the alternating groups and (almost all) the projective special linear groups over finite fields are rigid relative to their automorphism group. It is therefore natural to pose the following question, for which we expect the answer to be positive: is every finite simple group $K$ rigid relative to its automorphism group $\Aut K$~?
\end{question}

\subsection{Relative rigidity of direct products and counterexamples}

The following permanence property, in combination with Theorems \ref{thm.rigid-SL2}, \ref{thm.relative-rigid-An-tilde} and \ref{thm.rigid-SLn-Fp}, provides more examples of relatively rigid compact groups. The main reason to also include this result however is to illustrate how relative rigidity may fail in subtle ways.

\begin{proposition}\label{prop.direct-product-relative-rigid}
Let $K_0$ be a second countable connected compact abelian group and $n \geq 3$ an integer. Consider $K_1 = K_0^n$ and take any countable group $\Gamma$ such that $\SL_n(\Z) < \Gamma < \Autgr(K_1)$, as in Theorem \ref{thm.rigid-Kn}. Let $K_2$ be any finite group that is rigid relative to $\Aut K_2$.
\begin{enumlist}
\item If $K_2$ is generated by $\{\zeta(s) s^{-1} \mid s \in K_2, \zeta \in \Aut K_2\}$, then $K_1 \times K_2$ is rigid relative to $\Gamma \times \Aut K_2 < \Aut(K_1 \times K_2)$.
\item If $k \geq 2$, the group $K_1 \times S_k$ is not rigid relative to any action $\cG \actson K_1 \times S_k$.
\end{enumlist}
\end{proposition}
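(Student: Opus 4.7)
Proof plan.

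Both relevant source algebras are commutative, so any Kac type compact quantum group $(B,\Delta_B)$ that receives a state preserving isomorphism from it must also be commutative and hence, by Theorem~\ref{cqg.9}, equals $(L^\infty(L),\Delta_L)$ for a uniquely determined compact second countable group $L$. The given isomorphism $\pi$ is then implemented by a measure-preserving isomorphism $\phi : L \to K_1 \times K_2$ (resp.\ $K_1 \times S_k$) intertwining two actions by continuous group automorphisms, and in each case the goal becomes to decide whether $\phi$ agrees almost everywhere with a group isomorphism, possibly after post-composition with an automorphism of $\cG$.

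For part~(i), I would first show that $\phi$ essentially respects the product decomposition: writing $\phi = (\phi_1,\phi_2)$, the map $\phi_1$ is $\Gamma$-equivariant and $\Aut K_2$-invariant, and dually for $\phi_2$. The hypothesis that $K_2 = \langle \{\zeta(s)s^{-1}\}\rangle$ implies that no nontrivial character of $K_2$ is $\Aut K_2$-invariant; combining this with the weak mixing of $\Gamma \actson K_1$ (coming from $\SL_n(\Z) \subset \Gamma$, $n \geq 3$, and the torsion freeness of $\widehat{K_0}$) should identify $\pi(L^\infty(K_2)) = B^\Gamma$ and $\pi(L^\infty(K_1)) = B^{\Aut K_2}$ as $\Delta_B$-coideal subalgebras in the sense of Lemma~\ref{lem.invariant-subalg-quotient}. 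That lemma then provides closed normal subgroups $L_1,L_2 \lhd L$ such that these subalgebras equal $L^\infty(L/L_i)$, with $\cG$-compatibility forcing $L_1 \cap L_2 = \{e\}$ and $L_1 L_2 = L$, hence an internal direct product $L = L_2 \times L_1$. Theorem~\ref{thm.rigid-Kn.one} applied to the induced map $L_2 \to K_1$, and the assumed rigidity of $K_2$ together with Lemma~\ref{lem.finite-group-rigid} applied to $L_1 \to K_2$, upgrade each factor to a group isomorphism; combining these produces the required quantum group isomorphism $\pi_0$ and automorphism $\zeta$ of $\cG = \Gamma \times \Aut K_2$. The most delicate step is verifying that $B^\Gamma$ and $B^{\Aut K_2}$ are $\Delta_B$-coideal subalgebras, i.e.\ that $\Delta_B$ sends them into their own tensor squares rather than merely into the larger diagonal-fixed subalgebras of $B \ovt B$; here I expect the full strength of the generating hypothesis on $K_2$ and of the weak mixing on $K_1$ to enter.

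For part~(ii), the failure of the hypothesis of~(i) for $S_k$ is due to the $\Aut S_k$-invariant sign character $\eps : S_k \to \{\pm 1\}$. Given any action $\cG \actson K_1 \times S_k$ by group automorphisms, every $\al_g$ decomposes as $\al_g(k,s) = (A_g(k) + c_g \eps(s),\, \si_g(s))$ with $A_g \in \Aut K_1$, $\si_g \in \Aut S_k$ and $c_g \in K_1[2]$ (the $2$-torsion subgroup). Define the homomorphism $\rho : S_k \to \Aut K_1$ by $\rho(s) = (-\id_{K_1})^{\eps(s)}$. Since $-\id_{K_1}$ lies in the center of $\Aut K_1$ (because $K_1$ is abelian) and $\si_g$ preserves parity, the same set-theoretic formula $\al_g$ defines a group automorphism not only of $K_1 \times S_k$ but also of the semidirect product $L := K_1 \rtimes_\rho S_k$; the only nontrivial compatibility reduces to the identity $-c_g = c_g$ in $K_1$, which holds because $c_g$ is $2$-torsion. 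Since $K_0$ is nontrivial and connected, $K_1 \supsetneq K_1[2]$, and a direct computation shows $Z(L) = K_1[2] \times Z(S_k)$ whereas $Z(K_1 \times S_k) = K_1 \times Z(S_k)$, so $L \not\cong K_1 \times S_k$ as compact groups. The identity map on the common underlying measurable space is then a $\cG$-equivariant pmp isomorphism between the two groups, yet by Theorem~\ref{cqg.9} no quantum group isomorphism $K_1 \times S_k \to L$ exists, so the rigidity property fails for the given $\cG$-action.
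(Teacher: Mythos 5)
Your part~(ii) is correct and is essentially the paper's argument: the same semidirect product $K_1 \rtimes_{\eps} S_k$, the same verification that every automorphism of $K_1 \times S_k$ also defines an automorphism of the semidirect product (using that the homomorphic part $\psi : S_k \to K_1$ takes values in the $2$-torsion), and the same distinction via the center being totally disconnected versus containing the connected group $K_1$. (Minor slips: $(-\id)^{\eps(s)}$ equals $-\id$ for both values of $\eps(s)$, so you mean $k \mapsto k^{\eps(s)}$; and for $k=2$ the center of the semidirect product is $K_1[2] \times \{e\}$, not $K_1[2] \times Z(S_2)$. Neither affects the argument.)

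Part~(i), however, has a genuine gap, and it sits exactly at the step you flag as ``most delicate.'' The identification $\pi(L^\infty(K_1)) = B^{\Aut K_2}$ is false: the action of $\Aut K_2$ on the finite group $K_2$ fixes $e$ and in general has many orbits, so $\ell^\infty(K_2)^{\Aut K_2} \neq \C 1$ and hence $A^{\id \times \Aut K_2} = L^\infty(K_1) \ovt \ell^\infty(K_2)^{\Aut K_2}$ strictly contains $L^\infty(K_1) \ot 1$. The weak mixing argument that makes $B^\Gamma = \pi(1 \ot \ell^\infty(K_2))$ a coideal has no dual counterpart on the $K_2$ side, so your route to the second normal subgroup $L_2$, and therefore to the internal direct product decomposition $L = L_1 \times L_2$, collapses. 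Worse, the assertion that $\pi(L^\infty(K_1))$ is a coideal in the sense of Lemma \ref{lem.invariant-subalg-quotient} is essentially \emph{equivalent} to the conclusion of the proposition: in your own counterexample from part~(ii), $\pi(L^\infty(K_1))$ is the algebra of functions on $K_1 \rtimes S_k$ depending only on the non-normal ``$S_k$-coset,'' which is not a two-sided coideal. So it cannot be obtained by soft arguments; the generating hypothesis must be used in an essential structural way. The paper's route is different: weak mixing produces only the quotient $\theta : \cT \to K_2$ with kernel $\cS$; Theorem \ref{thm.rigid-Kn} identifies $\cS \cong K_1$ equivariantly (after first upgrading the pmp map $\pi$ to a genuine homeomorphism, a nontrivial step you also skip, since a pmp isomorphism cannot a priori be restricted to the null sets $K_1 \times \{s\}$ or $\{e\} \times K_2$); the fixed point subgroup $\cR = \pi(\{e\} \times K_2)$ then splits the extension $1 \to \cS \to \cT \to K_2 \to 1$; and the generating hypothesis is used, together with the fact that each $\be_\zeta$ acts trivially on $\cS$, to show that the conjugation automorphisms $\eta_r = \Ad \vphi(r)|_{\cS}$ satisfy $\eta_{\gamma_\zeta(r) r^{-1}} = \id$ and hence $\eta_r = \id$ for all $r$, i.e.\ that $\cR$ centralizes $\cS$. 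Your observation that the hypothesis kills $\Aut K_2$-invariant characters of $K_2$ is true but is not the way the hypothesis enters; what is needed is that it kills a quotient of $K_2$ acting on $\cS$ by conjugation.
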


Note that the generating property of $\zeta(s) s^{-1}$ holds in particular when $K_2$ is a perfect group, by only considering the inner automorphisms. So combining Proposition \ref{prop.direct-product-relative-rigid} with Theorems \ref{thm.rigid-SL2}, \ref{thm.relative-rigid-An-tilde} and \ref{thm.rigid-SLn-Fp}, we find that the groups $\T^n \times \Atil_k$ with $n \geq 3$ and $k \geq 4$ and $k \neq 6$, as well as $\T^n \times \SL_k(\F_q)$ with $n,k \geq 3$ and $q$ a prime power, or $n \geq 3$, $k = 2$ and $q \geq 5$ prime, are rigid relative to the natural groups of automorphisms.

\begin{proof}
(i) We denote by $\al_g \in \Aut K_1$, $g \in \Gamma$, the given automorphisms. Let $\cT$ be a second countable compact group, $\pi : K_1 \times K_2 \to \cT$ a pmp isomorphism and assume that we have commuting group automorphisms $\be_g \in \Autgr(\cT)$ and $\be_\zeta \in \Autgr(\cT)$, for all $g \in \Gamma$ and $\zeta \in \Aut K_2$, such that
\begin{equation}\label{eq.equiv-0}
(\be_g \circ \be_\zeta) \circ \pi = \pi \circ (\al_g \times \zeta) \quad\text{a.e., whenever $g \in \Gamma$ and $\zeta \in \Aut K_2$.}
\end{equation}
Since the action $\Gamma \actson^\al K_1$ is weakly mixing, by the same reasoning as in the beginning of the proof of Theorem \ref{thm.rigid-Kn}, we find a finite group $\Lambda_2$ and a continuous surjective group homomorphism $\theta : \cT \to \Lambda_2$ such that $L^\infty(\cT)^{\be_\Gamma} = \{F \circ \theta \mid F \in \ell^\infty(\Lambda_2)\}$. Since we have that $L^\infty(K_1 \times K_2)^{\al_\Gamma \times \id} = 1 \ot \ell^\infty(K_2)$, we find a bijection $\pi_2 : K_2 \to \Lambda_2$ such that
\begin{equation}\label{eq.equiv-1}
\theta(\pi(k,s)) = \pi_2(s) \quad\text{for a.e.\ $(k,s) \in K_1 \times K_2$.}
\end{equation}
Since $\be_g$ and $\be_\zeta$ commute, the group automorphisms $\be_\zeta$ leave $L^\infty(\cT)^{\be_\Gamma}$ globally invariant. We thus find an action $(\gamma_\zeta)_{\zeta \in \Aut K_2}$ by automorphisms of $\Lambda_2$ such that $\gamma_\zeta \circ \theta = \theta \circ \be_\zeta$. Using \eqref{eq.equiv-1} and \eqref{eq.equiv-0}, it follows that $\gamma_\zeta \circ \pi_2 = \pi_2 \circ \zeta$ for all $\zeta \in \Aut K_2$. Since $K_2$ is rigid relative to $\Aut K_2$, we may thus assume that $\Lambda_2 = K_2$ and that $\zeta \mapsto \gamma_\zeta$ is an automorphism of $\Aut K_2$.

Define the open subgroup $\cS < \cT$ as $\cS = \Ker \theta$. By definition of $\theta$, we have that $\theta \circ \be_g = \theta$ for all $g \in \Gamma$, so that $\be_g$ restricts to a group of automorphisms $\be^1_g$ of $\cS$. Define $s_2 \in K_2$ by $s_2 = \pi_2^{-1}(e)$. By \eqref{eq.equiv-1}, the map $\pi_1 : K_1 \to \cS : \pi_1(k) = \pi(k,s_2)$ is a pmp isomorphism. By construction, for every $g \in \Gamma$, we have that $\be^1_g \circ \pi_1 = \pi_1 \circ \al_g$ a.e. By Theorem \ref{thm.rigid-Kn.one} and Remark \ref{rem.strictly-rigid-no-invariant-characters}, it follows that $\pi_1$ is a.e.\ equal to a group isomorphism $K_1 \to \cS$. Modifying $\pi$ on a set of measure zero, we may thus assume that $\pi_1 : K_1 \to \cS$ is a group isomorphism satisfying $\be_g(\pi_1(k)) = \pi_1(\al_g(k))$ for all $g \in \Gamma$ and $k \in K_1$.

Fix $s \in K_2$. By \eqref{eq.equiv-1}, $\pi$ restricts to a pmp isomorphism between $K_1 \times \{s\}$ and $\theta^{-1}(\pi_2(s))$. Choose a map $\psi : K_2 \to \cT$ such that $\theta \circ \psi = \pi_2$. We then find a pmp automorphism $\pi_s$ of $K_1$ such that $\pi(k,s) = \pi_1(\pi_s(k)) \psi(s)$ for a.e.\ $k \in K_1$. Since $\theta \circ \be_g = \theta$, we can uniquely define $\nu_g(s) \in K_1$ such that $\be_g(\psi(s)) = \pi_1(\nu_g(s)) \psi(s)$. Since $\be_g \circ \pi = \pi \circ (\al_g \times \id)$ and since $\be_g$ is an automorphism of $\cT$, it follows that for all $g \in \Gamma$ and $s \in K_2$,
$$\al_g(\pi_s(k)) \nu_g(s) = \pi_s(\al_g(k)) \quad\text{for a.e.\ $k \in K_1$.}$$
Define the pmp automorphism $\pi'_s : K_1 \times K_1 \to K_1 \times K_1$ by $\pi'_s(a,b) = (\pi_s(a) \pi_s(b)^{-1},b)$. We conclude that $\pi'_s \circ (\al_g \times \al_g) = (\al_g \times \al_g) \circ \pi'_s$ a.e.

Applying Theorem \ref{thm.rigid-Kn.one} and Remark \ref{rem.strictly-rigid-no-invariant-characters} to $K_1 \times K_1 = (K_0 \times K_0)^n$, we find that $\pi'_s$ is a.e.\ equal to a group automorphism $\theta_s$ of $K_1 \times K_1$. By definition, the second component of $\theta_s(a,b)$ is equal to $b$ for a.e.\ $(a,b) \in K_1 \times K_1$. By continuity, this equality holds everywhere and we conclude that $\theta_s(a,b) = (\theta_{s,b}(a),b)$ for all $a,b \in K_1$, where every $\theta_{s,b}$ is a group automorphism of $K_1$. Using the Fubini theorem, there exists a $b \in K_1$ such that $\pi_s(a) = \theta_{s,b}(a) \pi_s(b)$ for a.e.\ $a \in K_1$. We only retain that $\pi_s$ is a.e.\ equal to a homeomorphism of $K_1$. Modifying $\pi$ on a set of measure zero, we may thus assume that $\pi : K_1 \times K_2 \to \cT$ is a homeomorphism, so that the equalities \eqref{eq.equiv-0} and \eqref{eq.equiv-1} now hold everywhere.

The fixed point subgroup of $(\al_g \times \id)_{g \in \Gamma}$ equals $\{e\} \times K_2$. So, $\pi(\{e\} \times K_2)$ equals the set of fixed points of $(\be_g)_{g \in \Gamma}$ and is thus equal to a subgroup $\cR < \cT$. It then follows from \eqref{eq.equiv-1} that $\theta|_\cR : \cR \to K_2$ is a group isomorphism. Since the automorphisms $\be_\zeta$ commute with $\be_g$, we have that $\be_\zeta(\cR) = \cR$. So, $\theta \circ \beta_\zeta|_\cR = \gamma_\zeta \circ \theta|_\cR$.

Fix $\zeta \in \Aut K_2$. We prove that $\be_\zeta$ acts as the identity on $\cS = \Ker \theta$. We defined above $s_2 = \pi_2^{-1}(e)$. Since $\gamma_\zeta \circ \pi_2 = \pi_2 \circ \zeta$, it follows that $\zeta(s_2) = s_2$. Since $\cS = \pi(K_1 \times \{s_2\})$ and $\be_\zeta \circ \pi = \pi \circ (\id \times \zeta)$, it follows that $\be_\zeta(k) = k$ for all $k \in \cS$.

We finally prove that $\cR$ commutes with $\cS$. Define the group isomorphism $\vphi : K_2 \to \cR$ by $\vphi = (\theta|_{\cR})^{-1}$. Note that $\be_\zeta \circ \vphi = \vphi \circ \gamma_\zeta$ for every $\zeta \in \Aut K_2$. Since $\cS < \cT$ is a normal subgroup, we define for every $r \in K_2$, the automorphism $\eta_r \in \Aut \cS$ by $\eta_r(s) = \vphi(r)s\vphi(r)^{-1}$ for all $s \in \cS$. Take $\zeta \in \Aut K_2$. Since $\be_\zeta$ acts as the identity on $\cS$, applying $\be_\zeta$ to the definition of $\eta_r(s)$ implies that $\eta_{\gamma_\zeta(r)} = \eta_r$, so that $\eta_{\gamma_\zeta(r)r^{-1}} = \id$ for all $\zeta \in \Aut K_2$ and $r \in \cR$. Since $\zeta \mapsto \gamma_\zeta$ is an automorphism of $\Aut K_2$ and since $\{\zeta(r) r^{-1} \mid r \in K_2, \zeta \in \Aut K_2\}$ generates $K_2$, it follows that $\eta_r = \id$ for all $r \in \cR$, so that $\cS$ and $\cR$ commute.

We conclude that $\pi' : K_1 \times K_2 \to \cT : \pi'(k,s) = \pi_1(k) \, \vphi(s)$ is a group isomorphism. By construction, $(\be_g \circ \be_\zeta) \circ \pi' = \pi' \circ (\al_g \times \gamma_\zeta)$ for all $g \in \Gamma$ and $\zeta \in \Aut K_2$. Since $\zeta \mapsto \gamma_\zeta$ is an automorphism of $\Aut K_2$, it follows that $K_1 \times K_2$ is rigid relative to $\Gamma \times \Aut K_2$.

(ii) Denote by $\eps : S_k \to \{\pm 1\}$ the sign of a permutation. Since the kernel of $\eps$ equals the commutator subgroup of $S_k$, we have that $\eps \circ \zeta = \eps$ for every $\zeta \in \Aut S_k$. For the same reason, $(\psi(s))^2 = e$ for every homomorphism $\psi : S_k \to K_1$, because $K_1$ is abelian.

Define the group $\cT = K_1 \rtimes_\eta S_k$, where $\eta_s(k) = k^{\eps(s)}$. Denote by $\pi : K_1 \times S_k \to \cT$ the canonical bijection. Since $K_1$ is connected, every automorphism of $K_1 \times S_k$ leaves $K_1$ globally invariant and is thus of the form $(k,s) \mapsto (\al(k) \psi(s),\zeta(s))$ where $\al \in \Aut K_1$, $\zeta \in \Aut S_k$ and $\psi : S_k \to K_1$ is a group homomorphism. Since for every $s \in S_k$, $(\psi(s))^2 = e$, also $\eta_{s'}(\psi(s)) = \psi(s)$ for all $s,s' \in S_k$. Furthermore, $\eta_{\zeta(s)} = \eta_s$ for all $s \in S_k$. We conclude that all the maps $(k,s) \mapsto (\al(k) \psi(s),\zeta(s))$ also define automorphisms of the group $\cT$.

Note however that $\cT \not\cong K_1 \times S_k$. Indeed, defining $L < K_1$ by $\{k \in K_1 \mid k^2=e\}$, the group $L$ is totally disconnected. Since the center of $\cT$ is contained in $L \times S_k$, it is totally disconnected, while the center of $K_1 \times S_k$ contains $K_1$ as a connected subgroup. So, $K_1 \times S_k$ is not rigid relative to any action by automorphisms.
\end{proof}

\begin{proposition}\label{prop.no-go-connected-Lie}
No connected non abelian compact second countable group $K$ is rigid relative to any action $\cG \actson K$ by automorphisms.

More precisely, there exists a nontrivial totally disconnected group $L$ and a pmp isomorphism $\pi : K \to K \times L$ such that for every $\al \in \Aut K$, we have that $\pi \circ \al \circ \pi^{-1}$ is a.e.\ equal to a group automorphism of $K \times L$.
\end{proposition}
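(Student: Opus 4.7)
The plan is to take $L = \Z/2\Z$ and construct $\pi : K \to K \times L$ so that it intertwines every $\al \in \Aut K$ with the group automorphism $(k',l) \mapsto (\al(k'),l)$ of $K \times L$. Since $K$ is connected while $K \times \Z/2\Z$ is not, such a $\pi$ automatically witnesses failure of rigidity relative to every action $\cG \actson K$ by automorphisms.

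First I would set up an $\Aut K$-invariant partition of $K$ into two pieces of Haar measure $1/2$, and then equivariantly spread each half over $K$. Because $K$ is connected and non-abelian, $Z(K)$ is a proper closed subgroup of $K$; for any noncentral $k$, the $\Inn K$-orbit $\{gkg^{-1} : g \in K\}$ is homeomorphic to $K/C_K(k)$, and passing to a suitable compact connected Lie quotient of $K$ (using that $K$ is pro-Lie and nonabelianness is detected at some Lie quotient) shows that this orbit is Haar-null in $K$. Consequently every $\Aut K$-orbit on $K$ is Haar-null and the orbit space $K/\Aut K$ is nonatomic. Moreover, the stabilizer stratification $K = \bigsqcup_{[H]} K_{[H]}$, where $[H]$ runs over conjugacy classes of closed subgroups of $\Aut K$, is $\Aut K$-invariant, and each individual stratum still has nonatomic orbit-space measure. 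Using this, I would pick an $\Aut K$-invariant Borel set $A \subset K$ with $\mathrm{Haar}(A) = 1/2$ that meets every stratum in exactly half of its Haar measure, and then define $\chi = \mathbf 1_A - \mathbf 1_{A^c}$, an $\Aut K$-invariant Borel map $K \to \Z/2\Z$ pushing Haar forward to the uniform measure.

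The key technical step will be to produce, via Mackey's cross-section theorem for tame Polish group actions on standard Borel spaces, an $\Aut K$-equivariant pmp bijection $\psi_+ : A \to K$ (and likewise $\psi_- : A^c \to K$). Two such pmp actions of a Polish group are equivariantly measurably isomorphic as soon as they agree on the orbit-space measure together with the stabilizer-type function, and the $\Aut K$-spaces $(A, 2\,\mathrm{Haar}|_A)$ and $(K,\mathrm{Haar})$ match on both invariants by construction. Given $\psi_+$ and $\psi_-$, I would then set
\[
\pi : K \to K \times \Z/2\Z, \qquad \pi(k) = \bigl(\psi_{\chi(k)}(k),\,\chi(k)\bigr).
\]
This is a pmp isomorphism. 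For any $\al \in \Aut K$, the $\Aut K$-invariance of $\chi$ together with the $\Aut K$-equivariance of $\psi_\pm$ gives
\[
\pi(\al(k)) = \bigl(\psi_{\chi(k)}(\al(k)),\,\chi(k)\bigr) = \bigl(\al(\psi_{\chi(k)}(k)),\,\chi(k)\bigr) = \tilde\al(\pi(k)),
\]
where $\tilde\al(k',l) = (\al(k'),l)$ is a genuine group automorphism of $K \times \Z/2\Z$, which is the required equivariance.

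The hard part will be making the Mackey-theoretic step rigorous, namely ensuring that the $\Aut K$-action on $K$ really is tame (standard Borel orbit space with Borel stabilizer-type function). When $K$ is a compact connected Lie group, $\Aut K$ is itself a Lie group and tameness follows from Weyl integration and standard slice arguments; the general compact connected second countable case I would handle by writing $K$ as an inverse limit of its compact connected Lie quotients and assembling the construction through the tower, using that nonabelianness of $K$ forces some such Lie quotient to be nonabelian.
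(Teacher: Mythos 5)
Your overall strategy for the \emph{Lie} case --- decompose $(K,\mathrm{Haar})$ equivariantly over the orbit space of the compact group $\Aut K$, use that all orbits are null so the orbit-space measure is nonatomic on each positive-measure stabilizer stratum, and then build an equivariant pmp bijection $A\to K$ from a matching of orbit spaces plus Borel cross-sections --- is a legitimate alternative to the paper's argument, which instead parametrizes $K$ explicitly by $(Z\rtimes\Aut K)/\Ker\psi\times T_0$ using a maximal torus and a fundamental domain for the Weyl group. For a compact connected Lie group, $\Aut K$ is a compact Lie group (since $\Inn K$ has finite index), the action is smooth, and your Mackey-type selection argument can be made to work. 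Note also that the paper conjugates by the larger group $Z\rtimes\Aut K$ (including central translations), which is harmless for the statement; your restriction to $\Aut K$ is fine.

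The genuine gap is in the general case, and it is not a technicality you can defer. When $K$ is not Lie, $\Aut K$ need not be compact or even locally compact: by Levi--Mal'cev, $K\cong (K_0\times K_1^J)/D$ with $J$ possibly infinite, and $\Aut K$ contains the full permutation group of the simple factors. The resulting orbit equivalence relation on $K$ is then of Friedman--Stanley type (sequences up to permutation and conjugation) and is \emph{not} smooth, so there is no standard Borel orbit space, no Borel stabilizer-type function, and no cross-section theorem to invoke; the key sentence ``two such pmp actions of a Polish group are equivariantly measurably isomorphic as soon as they agree on the orbit-space measure together with the stabilizer-type function'' is simply unavailable. Your fallback --- ``assembling the construction through the tower'' of Lie quotients --- does not repair this: the individual Lie quotients are not $\Aut K$-invariant (automorphisms permute the kernels), no single Lie quotient sees the whole of $\Aut K$, and an equivariant pmp isomorphism built on one quotient does not lift equivariantly to $K$. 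The paper circumvents all of this by using the structure theorem for $\Aut K$ (every automorphism is a product of automorphisms of $K_0$ and of the factors $K_1$, composed with a permutation of $J$): it performs the torus construction once on the single simple Lie group $K_1$, equivariantly for $\cZ(K_1)\rtimes\Aut K_1$, takes the $J$-fold product so that permutations of $J$ are automatically intertwined, and passes to the quotient by $D$. To complete your proof you would need to replace the Mackey step in the non-Lie case by some such structural reduction.
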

\begin{proof}
First assume that $K$ is a connected compact simple Lie group and that $L$ is any compact second countable group $L$. Denote by $Z < K$ the center of $K$, which is a finite group. Define $\cG = Z \rtimes \Aut K$ and consider the action $\theta$ of $\cG$ on $K$ by pmp automorphisms $\theta_{(z,\al)}(k) = z \al(k)$. We construct a pmp isomorphism $\pi : K \to K \times L$ such that $\pi \circ \theta_g \circ \pi^{-1} = \theta_g \times \id$ a.e., for all $g \in \cG$.

As discussed before \cite[Proposition 6.59]{HM23}, the subgroup $\Inn K$ of inner automorphisms of $K$ has finite index in $\Aut K$. Identifying $\Inn K = K/Z$, we turn $\Inn K$ into a compact Lie group. We then turn $\cG$ into a compact Lie group such that $\{e\} \times \Inn K$ is an open subgroup.

Fix a maximal torus $T < K$ (see e.g.\ \cite[Definition 6.22]{HM23}) and note that $Z < T$ by \cite[Corollary 6.32]{HM23}. Define the closed subgroup $\cA < \Aut K$ as $\cA := \{\al \in \Aut K \mid \al(T) = T\}$. Define $\cG_1 := Z \rtimes \cA$ and note that the action $\cG \actson^\theta K$ restricts to an action $\cG_1 \actson^\psi T$. By definition, $\Inn K \cap \cA$ is isomorphic with the Weyl group (see e.g.\ \cite[Definition 6.22]{HM23}), which is a finite group. Since $\Inn K < \cG$ has finite index, $\Inn K \cap \cA$ has finite index in $\cG_1$, so that the image $\Lambda = \psi_{\cG_1}$ is a finite group of pmp automorphisms of $T$.

Take an integer $n \geq 1$ such that $T \cong \T^n$. We equip $T \cong \T^n$ with the Lebesgue measure $\lambda_T$. We denote by $\That$ the Pontryagin dual of $T$. Since $T \cong \T^n$, for every $\om \in \That \setminus \{1\}$, the kernel $\Ker \om = \{t \in T \mid \om(t) = 1\}$ has measure zero. So, $T_1 := T \setminus \bigcup_{\om \in \That \setminus \{1\}} \Ker \om$ is a conull Borel set of $T$. When $t \in T_1$, every character $\om \in \That$ that is equal to $1$ on $t$, must be equal to $1$ everywhere. So, the closed subgroup generated by $t$ equals $T$, for every $t \in T_1$. Since $Z < T$ is a finite subgroup, also $T_2 := \bigcap_{z \in Z} z T_1$ is a conull Borel subset of $T$.

By construction, $\be(T_2) = T_2$ for every $\be \in \Lambda$. Define $\Fix \be = \{t \in T_2 \mid \be(t) = t\}$. Since $T = \T^n$, if $\be \neq \id$, $\Fix \be$ has measure zero. It follows that $T_3 := T_2 \setminus \bigcup_{\be \in \Lambda \setminus \{\id\}} \Fix \be$ is a conull Borel set of $T$. By construction, for every $\be \in \Lambda$, we have that $\be(T_3) = T_3$ and the action of the finite group $\Lambda$ on $T_3$ is free. We can thus choose a Borel subset $T_0 \subset T_3$ that meets each orbit of this action in exactly one point.

We equip $K$ and $\cG$ with their respective Haar probability measures $\lambda_K$ and $\lambda_\cG$. We claim that the map $\vphi : \cG / \Ker \psi \times T_0 \to K : (g,t) \mapsto \theta_g(t)$ is an injective Borel map whose image is conull in $(K,\lambda_K)$ and that satisfies $\vphi_*(\lambda_\cG \times \lambda_T) \sim \lambda_K$.

To prove that $\vphi$ is injective, it suffices to prove the following: if $s,t \in T_0$ and $(z,\al) \in \cG$ are such that $z \al(t) = s$, then $\al \in \cA$, $\psi(z,\al) = \id$ and $t=s$. Since $t,s \in T_0$, we get that $t \in T_2$ and $z^{-1} s \in T_2$, so that the closed subgroups generated by $t$, resp.\ $z^{-1} s$, are equal to $T$. Since $\al(t) = z^{-1} s \in T$ and $\al^{-1}(z^{-1} s) = t$, we conclude that $\al(T) = T$. So, $(z,\al) \in \cG_1$ and $\be := \psi(z,\al)$ belongs to $\Lambda$. Since $\be(t) = s$ and $s,t \in T_0$, it follows that $\be = \id$ and $t=s$.

By considering the derivative, it follows that $K/T \times T \to K : (k,t) \mapsto k t k^{-1}$ is locally a diffeomorphism. It follows that also $\Phi : \cG / \Ker \psi \times T \to K : (g,t) \mapsto \theta_g(t)$ is locally a diffeomorphism. Since every element of $K$ is conjugate to an element in $T$ (see e.g.\ \cite[Theorem 6.30]{HM23}), the map $\Phi$ is surjective. It then follows that the image of $\vphi$ is conull and that $\vphi_*(\lambda_\cG \times \lambda_T) \sim \lambda_K$.

Since $\lambda_K$ is invariant under the action $\cG \actson^\theta K$, the measure $(\vphi^{-1})_*(\lambda_K)$ on $\cG / \Ker \pi \times T_0$ is invariant under translation by $\cG$ in the first variable. It is therefore equal to $\lambda_\cG \times \mu_0$ for a probability measure $\mu_0$ on $T_0$. Since by the claim above, it is also equivalent to $\lambda_\cG \times \lambda_T$, we find that $\mu_0 \sim \lambda_T$. In particular, $\mu_0$ is nonatomic.

Take any compact second countable group $L$ with Haar probability measure $\lambda_L$. Since $(T_0,\mu_0)$ is a standard nonatomic probability space, we can choose a pmp isomorphism $\pi_0 : (T_0,\mu_0) \to (T_0 \times L,\mu_0 \times \lambda_L)$. Then, $\pi := (\vphi \times \id) \circ (\id \times \pi_0) \circ \vphi^{-1}$ defines a pmp isomorphism between $K$ and $K \times L$ such that $\pi \circ \theta_g \circ \pi^{-1} = \theta_g \times \id$ for all $g \in \cG$.

Now assume that $K$ is an arbitrary non abelian connected compact second countable group. By the Levi-Mal'cev structure theorem (see \cite[Theorem 9.24]{HM23}), we can choose a connected compact second countable group $K_0$, a connected simply connected compact simple Lie group $K_1$, a nonempty finite or countably infinite set $J$ and a closed subgroup $D < \cZ(K_0) \times \cZ(K_1)^J$ such that $K \cong (K_0 \times K_1^J)/D$ and such that $K_1/\cZ(K_1)$ is not a quotient of $K_0$. Moreover, combining \cite[Theorems 9.76(iv) and 9.86]{HM23}, for every $\al \in \Aut K$, there exists a permutation $\si_\al$ of the set $J$, an automorphism $\al_0 \in \Aut K_0$ and, for every $j \in J$, an automorphism $\be_j \in \Aut K_1$ such that the associated automorphism $\al_0 \times (\si_\al \circ (\prod_j \be_j))$ of $K_0 \times K_1^J$ globally preserves $D$ and defines $\al$ on the quotient $K$.

Take any nontrivial totally disconnected compact group $L_1$, e.g.\ $L_1 = \Z/2\Z$. As in the first part of the proof, we define $\cG_1 = \cZ(K_1) \rtimes \Aut K_1$ and consider the action $\cG_1 \actson^\theta K_1 : \theta_{(z,\al)}(k) = z\al(k)$ by pmp automorphisms. By the first part of the proof, we find a pmp isomorphism $\pi_1 : K_1 \to K_1 \times L_1$ such that $\pi_1 \circ \theta_g = (\theta_g \times \id) \circ \pi_1$ a.e., for all $g \in \cG_1$.

Define the pmp isomorphism $\pi_2 : K_0 \times K_1^J \to K_0 \times (K_1 \times L_1)^J$ by $\pi_2 = \id \times \pi_1^J$. Denote by $\eta$ the natural action of $\cG := (\cZ(K_0) \rtimes \Aut K_0) \times (\cZ(K_1) \rtimes \Aut K_1)^J$ on $K_0 \times K_1^J$. Identifying $K_0 \times (K_1 \times L_1)^J = (K_0 \times K_1^J) \times L_1^J$, we get that for every $g \in \cG$, we have $\pi_2 \circ \eta_g = (\eta_g \times \id) \circ \pi_2$ a.e. This holds in particular for all $g \in D < \cZ(K_0) \times \cZ(K_1)^J < \cG$. We can thus take the quotient by $D$ and define the pmp isomorphism $\pi : K \to K \times L_1^J$.

Every permutation $\si$ of $J$ defines an automorphism of $K_1$ and an automorphism of $L_1$. By construction, $\pi_2 \circ \si = (\id \times \si \times \si) \circ \pi_2$. From the description that we gave above for arbitrary automorphisms $\al \in \Aut K$, it now follows that $\pi \circ \al = (\al \times \si_\al) \circ \pi$ a.e. So, for every $\al \in \Aut K$, we have that $\pi \circ \al \circ \pi^{-1}$ is a.e.\ equal to the group automorphism $\al \times \si_\al$ of $K \times L$.
\end{proof}

\subsection{\boldmath Compact groups with trivial $2$-cohomology}\label{sec.about-trivial-2-cohomology}

In order to prove quantum W$^*$-superrigidity of certain co-induced left-right Bernoulli crossed products, we need as input Kac type compact quantum groups $(A_0,\Delta_0)$ that are rigid relative to an action $\Gamma \actson^\be (A_0,\Delta_0)$ and that have the following vanishing of $2$-cohomology: every unitary $2$-cocycle on $(A_0,\Delta_0)$ must be a coboundary and every unitary bicharacter on $(A_0,\Delta_0)$ (see Proposition \ref{prop.cocycles-products}) must be equal to $1$.

\begin{remark}\label{rem.compact-and-finite-group-reductions}
Let $K$ be a second countable compact group. The unitary $2$-cocycles on $(L^\infty(K),\Delta_K)$ are precisely the measurable $2$-cocycles $\Om : K \times K \to \T$. So, every unitary $2$-cocycle on $(L^\infty(K),\Delta_K)$ is a coboundary if and only if Moore's measurable $2$-cohomology $H^2(K,\T)$ is trivial (see \cite{Moo61}).

We next notice that every unitary bicharacter on $(L^\infty(K),\Delta_K)$ is $1$ if and only if $K$ has no open normal subgroup $K_0$ such that $K/K_0$ is a nontrivial abelian group. Indeed, by Lemma \ref{lem.bichar}, every unitary bicharacter in $\cU(A \ovt A)$ is given by a continuous map $\om : K \times K \to \T$ such that for all $k \in K$, the maps $\om(k,\cdot)$ and $\om(\cdot,k)$ belong to the discrete abelian group $\Hom(K,\T)$ of continuous group homomorphisms $K \to \T$. So, $\om$ induces a continuous group homomorphism $K \to \Hom(K,\T)$. Since $K$ is compact and $\Hom(K,\T)$ is discrete abelian, the range of this homomorphism must be a finite abelian group. So if $\om \neq 1$, $K$ admits a quotient that is a nontrivial finite abelian group. Conversely, if $K$ admits a quotient that is a nontrivial finite abelian group, there also exists a prime $p$ and a surjective continuous group homomorphism $\theta : K \to \Z/p\Z$. Then, $\om(x,y) = \exp((2\pi i / p) \theta(x)\theta(y))$ defines a unitary bicharacter that is not equal to $1$.

When $K$ is a finite group, $H^2(K,\T) \cong H^2(K,\C^\times)$ and this abelian group is called the \emph{Schur multiplier} $M(K)$ of $K$. So in this case, every unitary $2$-cocycle on $(L^\infty(K),\Delta_K)$ is a coboundary if and only if $M(K)=1$. Also, by the previous paragraph, every unitary bicharacter is $1$ if and only if $K$ is perfect.
\end{remark}

\begin{lemma}\label{lem.trivial-2-cohom-examples}
For the following Kac type compact quantum groups $(A,\Delta)$, every unitary $2$-cocycle is a coboundary and every unitary bicharacter is equal to $1$.
\begin{enumlist}
\item\label{lem.trivial-2-cohom-examples.one} $(A,\Delta) = (L^\infty(K),\Delta_K)$, where $K$ is one of the following compact groups.
\begin{enumlist}
\item $K$ is a connected compact abelian second countable group.
\item For every integer $n \geq 5$ with $n \not\in\{6,7\}$, the double cover $K=\Atil_n$ of the alternating group $A_n$.
\item For every integer $n \geq 2$ and prime power $q$ with $(n,q) \not\in \{(2,2), (2,3), (2,4), (2,9),\allowbreak (3,2), (3,4), (4,2)\}$, the group $K = \SL_n(\F_q)$.
\item For every integer $n \geq 3$ and prime power $q$ with $(n,q) \not\in \{(3,2), (3,4), (4,2)\}$, the group $K = \F_q^n \rtimes \SL_n(\F_q)$.
\item A direct product of the groups above.
\end{enumlist}
\item\label{lem.trivial-2-cohom-examples.two} $(A,\Delta) = (L(G),\Delta_G)$, where $G$ is a torsion free group.
\end{enumlist}
\end{lemma}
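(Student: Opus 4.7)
The strategy is to handle the co-commutative case (ii) directly, reduce the connected abelian case (i)(a) to (ii) via Pontryagin duality, treat the finite cases (i)(b)--(d) by invoking Remark \ref{rem.compact-and-finite-group-reductions} together with classical facts on Schur multipliers and perfectness, and derive the product case (i)(e) from Proposition \ref{prop.cocycles-products} combined with Remark \ref{rem.compact-and-finite-group-reductions}.

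For Part (ii), the 2-cocycle vanishing is exactly \cite[Proposition 7.3]{DeC10}. For bicharacters, let $Z \in L(G) \ovt L(G)$ be a bicharacter. Using the slice maps $(\omega_g \ot \id)$ with $\omega_g(a) = \tau(u_g^* a)$, I would write $Z = \sum_{g \in G} u_g \ot Z_g$ where $Z_g := (\omega_g \ot \id)(Z) \in L(G)$. Applying $\omega_g \ot \omega_h \ot \id$ to both sides of $(\Delta_G \ot \id)(Z) = Z_{13}Z_{23}$ yields $\delta_{g,h} Z_g = Z_g Z_h$, so the $Z_g$ form a family of pairwise orthogonal projections, and unitarity of $Z$ forces $\sum_g Z_g = 1$. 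The second bicharacter identity becomes $\Delta_G(Z_k) = \sum_h Z_h \ot Z_{k h^{-1}}$; applying $\tau \ot \tau$ and using its $\Delta_G$-invariance, the function $p(g) := \tau(Z_g)$ satisfies $p(k) = \sum_h p(h)\, p(k h^{-1})$, i.e., $p$ is an idempotent probability measure on $G$ under convolution. By the classical Kawada--Ito theorem, $p$ is the uniform measure on a finite subgroup of $G$; since $G$ is torsion free, this subgroup must be trivial, so $p = \delta_e$, and faithfulness of $\tau$ gives $Z_e = 1$ and $Z_g = 0$ for $g \neq e$, hence $Z = 1$.

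For Part (i)(a), Pontryagin duality identifies $(L^\infty(K), \Delta_K) \cong (L(\Khat), \Delta_{\Khat})$, and $\Khat$ is a discrete torsion free abelian group because $K$ is connected, so (i)(a) reduces to (ii). For Parts (i)(b)--(d), Remark \ref{rem.compact-and-finite-group-reductions} translates the two vanishings into triviality of the Schur multiplier $M(K)$ and perfectness of $K$. Perfectness of $\Atil_n$ for $n \geq 5$ is Lemma \ref{lem.better-generators.1}, and $M(\Atil_n) = 1$ for $n \geq 5$ with $n \notin \{6,7\}$ because $\Atil_n \to A_n$ is the universal central extension in that range (see \cite[Chapter 12]{Kar93}). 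For $K = \SL_n(\F_q)$ in the stated range, $K$ is the perfect universal central extension of the simple group $\PSL_n(\F_q)$, so $M(K) = 1$ (see \cite[Chapter 16]{Kar93}). For $K = \F_q^n \rtimes \SL_n(\F_q)$ in the stated range, perfectness follows from $[I_n + E_{ij}, e_j] = e_i$ combined with the perfectness of $\SL_n(\F_q)$; triviality of $M(K)$ requires a more careful analysis via the Lyndon--Hochschild--Serre spectral sequence for the split extension $1 \to \F_q^n \to K \to \SL_n(\F_q) \to 1$, using $M(\SL_n(\F_q)) = 1$ together with vanishing of the relevant lower cohomology groups with coefficients in $\widehat{\F_q^n}$.

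For case (i)(e), the 2-cocycle statement follows from Proposition \ref{prop.cocycles-products}: its first hypothesis is provided by the previous cases; its second (no cross-bicharacter between two distinct factors) amounts in the commutative setting to a continuous bicharacter $K_k \times K_l \to \T$, inducing a continuous homomorphism $K_k \to \Hom(K_l,\T)$ into a discrete abelian group, which is trivial since each $K_k$ in our list is either connected or perfect. The same observation, applied to $K = \prod_j K_j$, shows that $K$ has no nontrivial finite abelian quotient, so by Remark \ref{rem.compact-and-finite-group-reductions} the bicharacter statement for the product also holds. The main obstacle in this strategy will be the bicharacter argument for $(L(G),\Delta_G)$ in Part (ii): recognizing the Fourier coefficients as pairwise orthogonal projections whose traces assemble into a convolution-idempotent probability measure on $G$ is the key conceptual step, and the rest then reduces cleanly via Kawada--Ito and torsion-freeness; the other parts of the lemma are direct citations or routine applications of earlier results in the paper.
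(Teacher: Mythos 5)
Your overall decomposition matches the paper's, but two of your arguments take genuinely different (valid) routes, and one part contains a real gap.

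The different routes. For the bicharacter statement in (ii), the paper simply observes that $L(G)\ovt L(G)=L(G\times G)$ has only coboundary $2$-cocycles (torsion-freeness plus Proposition \ref{prop.cocycle-reduce-to-core.one}) and then reads off triviality of bicharacters from the ``only if'' direction of Proposition \ref{prop.cocycles-products}. Your direct Fourier-coefficient argument is an acceptable alternative, but two points need repair: the relation $Z_gZ_h=\delta_{g,h}Z_g$ only makes the $Z_g$ idempotents, not projections (harmless, since the trace of an idempotent in a finite von Neumann algebra is nonnegative and vanishes only if the idempotent does); and to know that $p$ really is a probability measure (i.e.\ that $\sum_g\tau(Z_g)=1$ and that the convolution identity is legitimate) you should first invoke Lemma \ref{lem.bichar}, which places $Z$ in the algebraic tensor product $\C[G]\otalg\cA$ so that only finitely many $Z_g$ are nonzero. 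Unitarity alone gives $\sum_g Z_g^*Z_g=1$, not $\sum_g Z_g=1$; the latter follows from slicing the corepresentation identity (or applying the co-unit) once the sum is known to be finite. Your reduction of (i)(a) to (ii) via Pontryagin duality is clean and correct ($\widehat K$ is discrete, abelian and torsion free because $K$ is connected) and is shorter than the paper's direct argument, which instead proves that every central extension $1\to\T\to\cK\to K\to1$ splits.

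The gap is in (i)(d). Perfectness is fine, but the triviality of $M(\F_q^n\rtimes\SL_n(\F_q))$ is the hardest part of the entire lemma, and you dispose of it by asserting ``vanishing of the relevant lower cohomology groups with coefficients in $\widehat{\F_q^n}$''. That vanishing is precisely what must be proven, and it is where all the excluded parameters come from: after killing the alternating bicharacter on $\F_q^n$ (which already uses $n\ge3$), one needs $H^1(\SL_n(\F_q),\F_q^n)=0$, which the paper establishes by a page-long hands-on computation in Lemma \ref{lem.1-cohom-Fpn} and which genuinely fails for $(n,q)=(3,2)$ (Remark \ref{rem.nontrivial-n-q-3-2}); likewise $H^2(\F_q^2\rtimes\SL_2(\F_q),\T)\neq1$ for every $q$ (Lemma \ref{lem.Schur-nontrivial-SL_2-F_2}), so any spectral-sequence argument must detect exactly these exceptions. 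As written, your treatment of (d) assumes the key computation rather than carrying it out.
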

\begin{proof}
(i.a) By Remark \ref{rem.compact-and-finite-group-reductions} and by \cite[page 43]{Moo61}, it suffices to prove that every central extension $1 \to \T \overset{\vphi}{\to} \cK \overset{\pi}{\to} K \to 1$, where $\cK$ is a compact second countable group and the morphisms are continuous group homomorphisms, is split.

We choose a Borel map $\theta : K \to \cK$ such that $\pi \circ \theta = \id$. Since $K$ is abelian, the formula $\theta(a) \theta(b) = \vphi(\om(a,b)) \theta(b) \theta(a)$ defines a Borel bicharacter $\om : K \times K \to \T$. Because $K$ is connected and by Remark \ref{rem.compact-and-finite-group-reductions}, $\om(a,b) = 1$ for a.e.\ $(a,b) \in K \times K$. It follows that $\cK$ is abelian. We thus obtain the exact sequence $0 \to \widehat{K} \overset{\widehat{\pi}}{\to} \widehat{\cK} \overset{\widehat{\vphi}}{\to} \Z \to 0$. Choosing any lift of $1 \in \Z$, it follows that this sequence is split. We can then also define a group homomorphism $\widehat{\theta} : \widehat{\cK} \to \widehat{K}$ such that $\widehat{\theta} \circ \widehat{\pi} = \id$. Its dual $\theta : K \to \cK$ is a continuous group homomorphism satisfying $\pi \circ \theta = \id$.

(i.b) For the proofs of (b), (c) and (d), we make use of Remark \ref{rem.compact-and-finite-group-reductions}. Take $n \geq 5$ with $n \not\in\{6,7\}$ and denote $K = \Atil_n$. By Lemma \ref{lem.better-generators}, $K$ is perfect. Combining \cite[Proposition 1.12 in Chapter 10]{Kar93} with \cite[Theorem 3.2 in Chapter 12]{Kar93}, the Schur multiplier of $K$ is trivial.

(i.c) By \cite[Theorems 2.3 and 3.2 in Chapter 16]{Kar93}, these groups $K$ are perfect and have trivial Schur multiplier.

(i.d) Write $K = \F_q^n \rtimes \SL_n(\F_q)$. To prove that $K$ is perfect, fix a homomorphism $\om : K \to \T$. As mentioned in (i.c), $\SL_n(\F_q)$ is perfect. So, $\om$ is equal to $1$ on $\SL_n(\F_q)$. Fix a group homomorphism $\psi_0 : (\F_q,+) \to (\T,\cdot)$ satisfying $\psi_0(1) = \exp(2\pi i/p)$. Then every homomorphism $\mu : \F_q \to \T$ is of the form $\mu(a) = \psi_0(a_1 a)$ for a unique $a_1 \in \F_q$. We thus find a unique column matrix $a_0 \in \F_q^n$ such that $\om(a) = \psi_0(a_0^T a)$ for all $a \in \F_q^n$. Since the restriction of $\om$ to $\F_q^n$ is $\SL_n(\F_q)$-invariant, it follows that $A^T \cdot a_0 = a_0$ for all $A \in \SL_n(\F_q)$. So, $a_0 = 0$ and $\om = 1$, implying that $K$ is perfect.

To prove that $H^2(K,\T) = 1$, it suffices to prove that every central extension $1 \to \T \to \cK \overset{\pi}{\to} K \to 1$ is split. Choose a lift $\zeta_0 : \F_q^n \to \cK$. Because $\F_q^n$ is abelian, we can define the map $\Om : \F_q^n \times \F_q^n \to \T$ by $\zeta_0(a) \zeta_0(b) = \Om(a,b) \zeta_0(b) \zeta_0(a)$ for all $a,b \in \F_q^n$. Then $\Om$ is a bihomomorphism: for all $a,b \in \F_q^n$, the maps $\Om(\cdot,b)$ and $\Om(a,\cdot)$ are group homomorphisms. Fix $b \in \F_q^n$. Since $\Om(\cdot,b)$ is a group homomorphism, we find a unique element $D(b) \in \F_q^n$ such that $\Om(a,b) = \psi_0(a^T D(b))$. Since $\Om(a,\cdot)$ is a group homomorphism, we get that $D : \F_q^n \to \F_q^n$ is an additive group homomorphism.

Conjugating the defining relation of $\Om$ by a lift of $A \in \SL_n(\F_q)$, it follows that $\Om(A \cdot a, A \cdot b) = 1$ for all $a,b \in \F_q^n$. This means that $D(A \cdot b) = \eta(A) \cdot D(b)$ for all $b \in \F_q^n$, where $\eta(A) = (A^T)^{-1}$. Consider the basis vector $e_1 \in \F_q^n$ and define the subgroup $\Gamma_1 < \SL_n(\F_q)$ of matrices $A$ satisfying $A \cdot e_1 = e_1$. Since $n \geq 3$, $0$ is the only element of $\F_q^n$ that is fixed by $\eta(A)$ for all $A \in \Gamma_1$. So, $D(e_1) = 0$. It follows that $D(A \cdot e_1) = 0$ for all $A \in \SL_n(\F_q)$. This means that $D(b) = 0$ for all $b \in \F_q^n \setminus \{0\}$. Since also $D(0) = 0$, we get that $D = 0$ and $\Om = 1$. So, $\zeta(a)$ commutes with $\zeta(b)$ for all $a,b \in \F_q^n$. Since every element of $\T$ has a $p$'th root, we may thus assume that the lift $\zeta_0 : \F_q^n \to \cK$ is a homomorphism.

As mentioned in (i.c), the group $\SL_n(\F_q)$ has a trivial Schur multiplier. We may thus also choose a homomorphic lift $\zeta_1 : \SL_n(\F_q) \to \cK$. We then define $\gamma : \SL_n(\F_q) \times \F_q^n \to \T$ by $\zeta_1(A) \zeta_0(a) = \gamma(A,a) \zeta_0(A \cdot a) \zeta_1(A)$. It follows that for every $A \in \SL_n(\F_q)$, the map $\gamma(A,\cdot) : \F_q^n \to \T$ is a group homomorphism. We can thus uniquely define a map $\mu : \SL_n(\F_q) \to \F_q^n$ such that $\gamma(A,a) = \psi_0(\mu(A)^T a)$.

From the definition of $\gamma$, we also get that $\gamma(AB,a) = \gamma(A,B\cdot a) \gamma(B,a)$, so that $\mu(AB) = B^T \cdot \mu(A) + \mu(B)$ for all $A,B \in \SL_n(\F_q)$. Then, $c(A) := A \cdot \mu((A^T)^{-1})$ defines a $1$-cocycle $c : \SL_n(\F_q) \to \F_q^n$ as in Lemma \ref{lem.1-cohom-Fpn}. By that lemma, we find $a_0 \in \F_q^n$ such that $c(A) = a_0 - A \cdot a_0$ for all $A \in \SL_n(\F_q)$. Replacing $\zeta_0$ by the homomorphism $a \mapsto \psi_0(a_0^T a) \zeta_0(a)$, the homomorphisms $\zeta_0$ and $\zeta_1$ now satisfy $\zeta_1(A) \zeta_0(a) = \zeta_0(A \cdot a) \zeta_1(A)$ for all $A \in \SL_n(\F_q)$ and $a \in \F_q^n$. They thus combine into a homomorphism $\F_q^n \rtimes \SL_n(\F_q) \to \cK$, so that the central extension is split.

(i.e) Since none of the groups in a)--d) admit a nontrivial finite abelian quotient, also all bicharacters on their products are trivial. The conclusion then follows from Proposition \ref{prop.cocycles-products}.

(ii) Since both $G$ and $G \times G$ are torsion free, it follows from Proposition \ref{prop.cocycle-reduce-to-core.one} that every unitary $2$-cocycle on $(L(G),\Delta_G)$ and $(L(G \times G),\Delta_{G \times G})$ is a coboundary. By Proposition \ref{prop.cocycles-products}, it follows that all unitary bicharacters on $(L(G),\Delta_G)$ are trivial.
\end{proof}

\begin{lemma}\label{lem.1-cohom-Fpn}
Let $n \geq 3$ be an integer and $q$ a prime power with $(n,q) \neq (3,2)$. If $c : \SL_n(\F_q) \to \F_q^n$ is a map satisfying $c(AB) = c(A) + A \cdot c(B)$ for all $A,B \in \SL_n(\F_q)$, there exists an $a_0 \in \F_q^n$ such that $c(A) = A \cdot a_0 - a_0$ for all $A \in \SL_n(\F_q)$.
\end{lemma}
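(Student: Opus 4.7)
The plan is to use the Steinberg presentation of $\SL_n(\F_q)$ recalled in the proof of Theorem \ref{thm.rigid-SLn-Fp.three}: for $n \geq 3$, the group is generated by the elementary matrices $e_{ij}(x) = I_n + xE_{ij}$ (with $i \neq j$, $x \in \F_q$), subject to $e_{ij}(x+y) = e_{ij}(x) e_{ij}(y)$, $[e_{ij}(x), e_{jk}(y)] = e_{ik}(xy)$ for distinct $i,j,k$, and $[e_{ij}(x), e_{kl}(y)] = 1$ when $i \neq l$ and $j \neq k$. Given the $1$-cocycle $c \colon \SL_n(\F_q) \to \F_q^n$, my goal is to produce a vector $a_0 \in \F_q^n$ such that the modified cocycle $c'(A) := c(A) - (A \cdot a_0 - a_0)$ vanishes on every $e_{ij}(x)$; since the elementary matrices generate $\SL_n(\F_q)$, this then forces $c' \equiv 0$ and the lemma follows.

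The first step is to analyze the restriction of $c$ to a single root subgroup. Writing $c(e_{ij}(x)) = \sum_k f_k^{ij}(x)\, e_k$ and using the cocycle identity on $e_{ij}(x)\,e_{ij}(y) = e_{ij}(x+y)$ together with the explicit action $e_{ij}(x) \cdot e_k = e_k + \delta_{jk} x e_i$, one obtains that $f_k^{ij}$ is $\F_p$-additive for every $k \neq i$, while the component $f_i^{ij}$ satisfies
$$f_i^{ij}(x+y) - f_i^{ij}(x) - f_i^{ij}(y) = x\, f_j^{ij}(y).$$
Symmetrizing $x \leftrightarrow y$ in this identity forces $f_j^{ij}(x) = \alpha_{ij}\, x$ for some $\alpha_{ij} \in \F_q$, after which $f_i^{ij}$ is determined by an $\F_p$-additive function plus an explicit quadratic-in-$x$ correction.

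Next I would substitute the cocycle identity into the nontrivial commutator relation $[e_{ij}(x), e_{jk}(y)] = e_{ik}(xy)$ for distinct $i, j, k$; this step is where the hypothesis $n \geq 3$ genuinely enters. Expanding each side using the formulas from the previous paragraph produces a system of bilinear equations relating the scalars $\alpha_{\bullet\bullet}$ and the additive functions $f_k^{ij}$. Combined with the trivial commutator relations $[e_{ij}(x), e_{kl}(y)] = 1$ for disjoint index pairs and with conjugation by permutation matrices lying in $\SL_n(\F_q)$ (realizable as even permutations, again using $n \geq 3$) and by the diagonal matrices $\operatorname{diag}(t, t^{-1}, 1, \ldots, 1)$, these bilinear constraints should force the scalars $\alpha_{ij}$ to be of the shape $\alpha_{ij} = -a_j$ for a single vector $a_0 = \sum_k a_k\, e_k$, and force each additive function $f_k^{ij}$ with $k \neq i$ to coincide with the corresponding coefficient of the coboundary $\partial a_0(e_{ij}(x)) = x a_j e_i$. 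Subtracting $\partial a_0$ from $c$ then kills the modified cocycle on every elementary matrix.

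The main obstacle is precisely this consistency step: showing that a single $a_0$ simultaneously absorbs all the scalars $\alpha_{ij}$ and the $\F_p$-additive components $f_k^{ij}$ across every pair $(i,j)$. This is where $n \geq 3$ is used most heavily, via both the three-distinct-index commutator relation and the availability of enough permutation matrices as even permutations; it is also where the excluded case $(n, q) = (3, 2)$ becomes degenerate, since over $\F_2$ the commutator relation $[e_{ij}(x), e_{jk}(y)] = e_{ik}(xy)$ produces only the single nontrivial instance $xy = 1$, leaving too little rigidity to pin down the cocycle up to a coboundary.
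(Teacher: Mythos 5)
Your overall strategy is the same as the paper's: reduce to the elementary matrices $I_n + xE_{ij}$, exploit the cocycle identity on the root subgroups and on the Steinberg commutator relations, and exhibit $a_0$ coordinatewise. Your first step (additivity of the components $f^{ij}_k$ for $k \neq i$, the symmetrization argument forcing $f^{ij}_j(x) = \alpha_{ij}x$) is correct. But the proposal stops exactly where the lemma's content begins: you yourself flag the ``consistency step'' as the main obstacle and then assert that the bilinear constraints ``should force'' the desired conclusion. That step is not routine, and it is precisely where the hypotheses $n \geq 3$ and $(n,q) \neq (3,2)$ do their work in a non-uniform way. Concretely: substituting the cocycle identity into $[I_n + aE_{ij},\, I_n + bE_{jk}] = I_n + abE_{ik}$ only yields, at first, that $c(I_n + aE_{ik})$ lies in $\F_q e_i + \F_q e_j$ for every choice of auxiliary index $j$. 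When $n \geq 4$ one can vary $j$ and intersect to conclude $c(I_n + aE_{ik}) \in \F_q e_i$, after which linearity falls out easily. When $n = 3$ there is only one auxiliary index, a residual ``quadratic'' component $a^2 b_i$ survives, and one must kill it by a separate argument: for $p$ odd by rescaling with $d \in \F_p^\times$ and using $p \geq 3$, and for $p = 2$ by a functional-equation analysis that needs $q \geq 4$. None of this case analysis appears in your plan.

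Your diagnosis of why $(n,q) = (3,2)$ is excluded is also not quite right: the scarcity of nontrivial instances of $[e_{ij}(x), e_{jk}(y)] = e_{ik}(xy)$ over $\F_2$ afflicts every $n$, yet the lemma holds for $n \geq 4$, $q = 2$. The genuine obstruction at $(3,2)$ is the combination of having a single auxiliary index \emph{and} characteristic $2$ with $q < 4$, which leaves the quadratic term unkilled; indeed the paper exhibits (Remark \ref{rem.nontrivial-n-q-3-2}) an explicit non-coboundary $1$-cocycle for $\SL_3(\F_2) \actson \F_2^3$. To turn your proposal into a proof you would need to carry out the three-way case split ($n \geq 4$; $n = 3$ with $p$ odd; $n = 3$ with $p = 2$, $q \geq 4$) explicitly, as the paper does.
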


Note that in Remark \ref{rem.nontrivial-n-q-3-2}, we show that the conclusion of Lemma \ref{lem.1-cohom-Fpn} does not hold with $(n,q) = (3,2)$.

\begin{proof}
For all $i \neq j$ and $a \in \F_q$, write $c_{ij}(a) = c(I_n + a E_{ij})$. Denote by $e_i \in \F_q^n$ the standard basis elements. We claim that it is sufficient to prove that there exist elements $a_j \in \F_q$ such that $c_{ij}(a) = a a_j e_i$ for all $i \neq j$ and $a \in \F_q$. Indeed, next defining $a_0 = \sum_{j=1}^n a_j e_j$, it follows that $c(I_n + a E_{ij}) = (I_n + a E_{ij}) \cdot a_0 - a_0$ for all $i \neq j$ and $a \in \F_q$. Since the elementary matrices generate $\SL_n(\F_q)$, we then conclude that $c(A) = A \cdot a_0 - a_0$ for all $A \in \SL_n(\F_q)$.

Note that the $1$-cocycle relation implies that $c(I_n) = 0$ and $c(A^{-1}) = - A^{-1} \cdot c(A)$. If $i,j,k$ are distinct, applying the $1$-cocycle relation to $[I_n + a E_{ij}, I_n + b E_{jk}] = I_n + ab E_{ik}$ gives
\begin{equation}\label{eq.nice}
c_{ik}(ab) = - b(a E_{ik} + E_{jk}) \cdot c_{ij}(a) + a (E_{ij} - b E_{ik}) \cdot c_{jk}(b)
\end{equation}
for all distinct $i,j,k$ and $a,b \in \F_q$.  It follows from \eqref{eq.nice} that $c_{ik}(a) \in \F_q e_i + \F_q e_j$ whenever $i,j,k$ are distinct.

First assume that $n \geq 4$. Let $i,k$ be distinct. Since $n \geq 4$, we can choose $j,r$ such that $i,k,j,r$ are all distinct. Then $c_{ik}(a)$ belongs to both $\F_q e_i + \F_q e_j$ and $\F_q e_i + \F_q e_r$. Thus, $c_{ik}(a) \in \F_q e_i$. We define the maps $A_{ik} : \F_q \to \F_q$ such that $c_{ik}(a) = A_{ik}(a) e_i$. Then \eqref{eq.nice} says that $A_{ik}(ab) = a A_{jk}(b)$ whenever $i,j,k$ are distinct and $a,b \in \F_q$. Defining $a_{jk} \in \F_q$ by $a_{jk} = A_{jk}(1)$, it follows that $A_{ik}(a) = a a_{ik}$ and that $a_{ik} = a_{jk}$ when $i,j,k$ are distinct. The latter implies the existence of elements $a_k \in \F_q$ such that $a_{ik} = a_k$ for all $i \neq k$, so that $c_{ik}(a) = a a_k e_i$, and the lemma is proven in this case.

Next assume that $n = 3$. Since $c_{jk}(a) \in \F_q e_j + \F_q e_i$ whenever $i,j,k$ are distinct, one of the terms in \eqref{eq.nice} is zero and we get that
\begin{equation}\label{eq.nice-2}
c_{ik}(ab) = - b(a E_{ik} + E_{jk}) \cdot c_{ij}(a) + a E_{ij} \cdot c_{jk}(b)
\end{equation}
for all distinct $i,j,k$ and $a,b \in \F_q$. Write $q = p^m$. We first consider the case where $p$ is odd. Since $c_{ik}(a) \in \F_q e_i + \F_q e_j$, we get that $E_{ik} \cdot c_{ik}(a) = 0$ for all $i \neq k$ and $a \in \F_q$. So, applying the $1$-cocycle relation to the equality $(I_n + a E_{ik})(I_n + b E_{ik}) = I_n + (a+b)E_{ik}$, it follows that $c_{ik}(a+b) = c_{ik}(a) + c_{ik}(b)$ for all $a,b \in \F_q$ and $i \neq k$. In particular, $c_{ik}(d a) = d c_{ik}(a)$ for all $d \in \F_p$, $a \in \F_q$ and $i \neq k$. We use this in \eqref{eq.nice-2} with $b=1$ and $da$ instead of $a$ with $d \in \F_p^\times$ and $a \in \F_q$. Dividing by $d$, we find that
$$c_{ik}(a) = -(d a E_{ik} + E_{jk}) \cdot c_{ij}(a) + a E_{ij} \cdot c_{jk}(1)$$
for all distinct $i,j,k$, $a \in \F_q$ and $d \in \F_p^\times$. In particular, the term $d a E_{ik} \cdot c_{ij}(a)$ does not depend on $d \in \F_p^\times$. Since $p \geq 3$, it follows that $a E_{ik} \cdot c_{ij}(a)=0$ for all distinct $i,j,k$ and $a \in \F_q$. Since $c_{ij}(0) = 0$, we have proven that $c_{ij}(a) \in \F_q e_i$ for all $i \neq j$ and $a \in \F_q$. Then \eqref{eq.nice-2} with $b=1$ says that $c_{ik}(a) = a E_{ij} \cdot c_{jk}(1)$ for all distinct $i,j,k$ and $a \in \F_q$. With $a_{ik} := c_{ik}(1)$, it follows that $c_{ik}(a) = a_{ik} e_i$ and $a_{ik} = a_{jk}$ for all distinct $i,j,k$. We thus find unique elements $a_k \in \F_q$ such that $a_{ik} = a_k$ and thus $c_{ij}(a) = a a_j e_i$ for all $i \neq j$ and $a \in \F_q$. Again the lemma is proven in this case.

Finally assume that $n=3$ and $p=2$. So, $\F_q$ has characteristic $2$. Since we excluded $(n,q) = (3,2)$, we get that $q \geq 4$. Since $c_{ij}(a) \in \F_q e_i + \F_q e_k$, we define the functions $A_{ij} : \F_q \to \F_q$ and $B_{ij} : \F_q \to \F_q$ such that $c_{ij}(a) = A_{ij}(a) e_i + B_{ij}(a) e_k$, where $k \in \{1,2,3\}$ is the unique element such that $\{i,j,k\} = \{1,2,3\}$. Multiplying \eqref{eq.nice-2} with $E_{ii}$ and $E_{jj}$, it follows that
\begin{equation}\label{eq.nice-3}
A_{ik}(ab) = a b B_{ij}(a) + a A_{jk}(b) \quad\text{and}\quad B_{ik}(ab) = b B_{ij}(a)
\end{equation}
for all distinct $i,j,k$ and $a,b \in \F_q$. By the second equation, we find elements $b_i \in \F_q$ such that $B_{ij}(a) = a b_i$ for all $i \neq j$ and $a \in \F_q$. We define $a_{ij} \in \F_q$ by $a_{ij} := A_{ij}(1)$. Applying \eqref{eq.nice-3} with $b=1$, we conclude that $A_{ik}(a) = a^2 b_i + a a_{jk}$ for all distinct $i,j,k$ and $a \in \F_q$. With this expression, \eqref{eq.nice-3} becomes
$$a^2 b^2 b_i + a b a_{jk} = a^2 b b_i + a b^2 b_i + a b a_{ik} \quad\text{for all distinct $i,j,k$ and $a,b \in \F_q$.}$$
When $a,b \in \F_q^\times$, we can divide by $ab$ and find that $a b b_i + a_{jk} = a b_i + b b_i + a_{ik}$. Taking $a = b$, we conclude that $a^2 b_i = a_{ik} + a_{jk}$ for all distinct $i,j,k$ and $a \in \F_q^\times$. So, $a^2 b_i$ does not depend on $a \in \F_q^\times$. Since $q \geq 4$, it follows that $b_i = 0$ for all $i$. So also $B_{ij}(a)=0$ for all $i \neq j$ and $a \in \F_q$. We get as well that $a_{ik} = a_{jk}$ for all distinct $i,j,k$. We thus find elements $a_k \in \F_q$ such that $a_{ik} = a_k$ whenever $i \neq k$. Going back to \eqref{eq.nice-3} with $b=1$, we have proven that $A_{ik}(a) = a a_k$, so that $c_{ij}(a) = a a_j e_i$ for all $i \neq j$ and $a \in \F_q$. This concludes the proof of the lemma.
\end{proof}

\begin{remark}\label{rem.nontrivial-n-q-3-2}
In case $(n,q) = (3,2)$, the conclusion of Lemma \ref{lem.1-cohom-Fpn} fails. Indeed, define the elements $c_{ij} \in \F_2^3$ by
$$c_{12} = e_3 \; , \quad c_{23} = e_1+e_2 \; , \quad c_{31} = e_2 \;  ,\quad c_{13} = e_2 \; , \quad c_{32} = e_1 + e_3 \; , \quad c_{21} = e_2 + e_3 \; .$$
By e.g.\ \cite[Theorem 1.16 in Chapter 16]{Kar93}, the group $\SL_3(\F_2)$ is the universal group with generators $s_{ij}=I_3 + E_{ij}$ for all distinct $i,j$ in $\{1,2,3\}$ and relations
$$s_{ij}^2 = e \; , \quad [s_{ik},s_{jk}] = e = [s_{ij},s_{ik}] \; , \quad [s_{ij},s_{jk}] = s_{ik} \quad\text{for all distinct $i,j,k$.}$$
It is straightforward to check that the assignment $I_n + E_{ij} \mapsto c_{ij}$ respects these relations, so that there is a unique $1$-cocycle $c : \SL_3(\F_2) \to \F_2^3$ satisfying $c(I_3 + E_{ij}) = c_{ij}$ for all $i \neq j$. If $a_0 = (a_1,a_2,a_3) \in \F_2^3$ would satisfy $c(A) = A \cdot a_0 - a_0$ for all $A \in \SL_3(\F_2)$, we get that $c_{ij} = a_j e_i$ for all $i \neq j$, which is not the case.

So, the $1$-cohomology of the action $\SL_3(\F_2) \actson \F_2^3$ is nontrivial. One can easily show that the above $1$-cocycle is the unique nontrivial $1$-cocycle, up to a coboundary.
\end{remark}

In statement d) of Lemma \ref{lem.trivial-2-cohom-examples.one}, it is necessary to take $n \geq 3$, as the following lemma shows.

\begin{lemma}\label{lem.Schur-nontrivial-SL_2-F_2}
For every prime power $q$, the Schur multiplier of $\F_q^2 \rtimes \SL_2(\F_q)$ is nontrivial .
\end{lemma}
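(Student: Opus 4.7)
The plan is to split the proof into three cases, $q$ odd, $q$ even with $q\geq 4$, and the exceptional small case $q=2$, and in each case to produce a nontrivial central \emph{stem} extension $1 \to A \to \widetilde{G} \to K \to 1$ of $K = \F_q^2 \rtimes \SL_2(\F_q)$; by the $5$-term exact sequence in group homology, the kernel of any central stem extension is a quotient of $H_2(K,\Z) = M(K)$, so $A \neq 1$ will force $M(K) \neq 1$. For $q$ odd I would use the classical Heisenberg recipe: form $\widetilde{H} = \F_q \times \F_q^2$ with product $(t,a)(s,b) = (t + s + \om(a,b),\, a+b)$, where $\om(a,b) = a_1 b_2 - a_2 b_1$ is the symplectic form. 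Since $\SL_2(\F_q)$ preserves $\om$, the rule $A \cdot (t,a) = (t, A a)$ defines an action by group automorphisms, and $\widetilde{G} = \widetilde{H} \rtimes \SL_2(\F_q)$ projects onto $K$ with central kernel $\F_q \times \{0\} \times \{I\}$. A short computation gives $[(0,a,I),(0,b,I)] = (2\om(a,b), 0, I)$, and because $2 \in \F_q^\times$ and $\om$ is surjective onto $\F_q$, the whole central $\F_q$ lies in $[\widetilde{G}, \widetilde{G}]$, so $\widetilde{G} \to K$ is the required stem extension.

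When $q$ is even with $q \geq 4$ the Heisenberg recipe collapses (in characteristic $2$ the formula above makes $\widetilde{H}$ abelian, and $\beta(a) = a_1 a_2$ is an explicit splitting of $\widetilde{H} \to \F_q^2$), so I would instead work at the cocycle level. Define $\phi \colon K \times K \to \F_q$ by $\phi((a,A),(b,B)) = \om(a, A b)$, where now $\om(a,b) = a_1 b_2 + a_2 b_1$ is a nonzero symmetric bilinear form. A direct computation using $\SL_2$-invariance of $\om$ shows $\phi$ is a $2$-cocycle. The main obstacle will be to prove that $\phi$ is not a coboundary. Assuming $\phi = \delta \beta$, plugging $A = I$ and then $a = 0$ and $b = 0$ into the coboundary identity forces the decomposition $\beta(a, A) = g(a) + f(A)$ with $g(0) = f(I) = 0$; substituting back with $B = I$ and using $\SL_2$-invariance of $\om$ produces the two constraints $g(a+b) = g(a) + g(b) + \om(a,b)$ and $g(A b) = g(b)$ for all $A \in \SL_2(\F_q)$ and $b \in \F_q^2$. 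Transitivity of $\SL_2(\F_q)$ on $\F_q^2 \setminus \{0\}$ then forces $g$ to be constant, equal to some $c \in \F_q$, on the nonzero vectors, so the first constraint gives $\om(a, b) = c$ for every pair of distinct nonzero $a, b \in \F_q^2$; this is absurd, because $\om(e_1, \lambda e_2) = \lambda$ takes at least three distinct values as $\lambda$ varies over $\F_q^\times$ when $q \geq 4$. Finally, because $\SL_2(\F_q)$ is perfect for $q \geq 4$ and acts irreducibly on $\F_q^2$, the group $K$ is itself perfect, so by the universal coefficient theorem $H^2(K, \F_q) \cong \Hom(H_2(K,\Z), \F_q)$; nontriviality of the class of $\phi$ therefore forces $M(K) = H_2(K,\Z) \neq 1$.

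The exceptional case $q = 2$ is handled by invoking $\SL_2(\F_2) \cong S_3$ and the standard embedding of the Klein four-group into $S_4$ to identify $K = \F_2^2 \rtimes \SL_2(\F_2) \cong S_4$, and citing the classical fact that $M(S_4) \cong \Z/2\Z$. The main difficulty in the whole argument is precisely the characteristic $2$ situation: the stem extension that appears effortlessly in odd characteristic degenerates, so the nontrivial Schur class must be produced by the ad hoc cocycle $\phi$, and both the perfectness of $K$ (which fails at $q = 2$) and the lower bound $|\F_q^\times| \geq 3$ (which also fails at $q = 2$) are genuinely needed for that argument, which is why the tiny case $q = 2$ must be handled by the classical $S_4$ identification.
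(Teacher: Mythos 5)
Your proof is correct and follows essentially the same route as the paper's: the same $2$-cocycle $\bigl((a,A),(b,B)\bigr) \mapsto \om(a, A\cdot b)$ built from the $\SL_2(\F_q)$-invariant bilinear form on $\F_q^2$, the same reduction of $q=2$ to $M(S_4)\neq 1$, and in characteristic $2$ the same contradiction obtained from transitivity of $\SL_2(\F_q)$ on $\F_q^2\setminus\{0\}$ together with $|\F_q^\times|\geq 2$. The only differences are in packaging: you work with $\F_q$-valued cocycles and pass to the Schur multiplier via perfectness of $K$ and universal coefficients (the paper works directly with $\T$-valued cocycles), and in odd characteristic you exhibit a Heisenberg stem extension rather than observing that the cocycle restricted to the abelian subgroup $\F_q^2$ has nontrivial antisymmetrization; both variants are sound.
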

\begin{proof}
When $q=2$, we observed at the start of the proof of Theorem \ref{thm.rigid-SLn-Fp.four} that $\F_2^2 \rtimes \SL_2(\F_2) \cong S_4$. By e.g.\ \cite[Theorem 2.2 in Chapter 12]{Kar93}, the Schur multiplier of $S_4$ is nontrivial. We may thus assume that $q \neq 2$.

As in the proof of statement d) of Lemma \ref{lem.trivial-2-cohom-examples.one}, choose a group homomorphism $\psi_0 : (\F_q,+) \to (\T,\cdot)$ such that $\psi_0(1) = \exp(2\pi i / p)$. Define the bihomomorphism $\om : \F_q^2 \times \F_q^2 \to \T$ by $\om(a,b) = \psi_0(a_1 b_2 - a_2 b_1)$. First, $\om(A \cdot a,A \cdot b) =\om(a,b)$ for all $A \in \SL_2(\F_q)$ and $a,b \in \F_q^2$, so that $\Om((a,A),(b,B)) = \om(a,A \cdot b)$ defines a $\T$-valued $2$-cocycle on $\F_q^2 \rtimes \SL_2(\F_q)$.

It suffices to prove that $\Om$ is not a coboundary. Write $q = p^m$ for a prime $p$. When $p$ is odd, we find that $\Om((a,I_2),(b,I_2)) \, \overline{\Om((b,I_2),(a,I_2))}$ is not identically $1$, so that $\Om$ is not a coboundary.

For the rest of the proof, assume that $p=2$ and thus $q \geq 4$, since we are assuming that $q\neq 2$. Denote by $1 \to \T \to \cG \to \F_q^2 \rtimes \SL_2(\F_q) \to e$ the central extension given by $\Om$, with a lift $\vphi : \F_q^2 \rtimes \SL_2(\F_q) \to \cG$ satisfying $\vphi(a,A) \vphi(b,B) = \Om((a,A),(b,B)) \vphi(a + A \cdot b , AB)$. It follows that $\gamma : \F_q^2 \to \cG : \gamma(a) = \psi_0(a_1 a_2) \vphi(a,I_2)$ is a group homomorphism.

Assume that $\Om$ is a coboundary. There then exists a homomorphic lift $\zeta : \F_q^2 \to \cG$ such that $\vphi(0,A) \zeta(a) \vphi(0,A)^{-1} = \zeta(A \cdot a)$ for all $a \in \F_q^2$ and $A \in \SL_2(\F_q)$. Since $\gamma$ is a homomorphic lift, we find a group homomorphism $\eta : \F_q^2 \to \T$ such that $\zeta(a) = \eta(a) \gamma(a)$ for all $a \in \F_q^2$. Take $b_1,b_2 \in \F_q$ such that $\eta(a) = \psi_0(b_1 a_1 + b_2 a_2)$ for all $a \in \F_q^2$. Define the map $\Psi : \F_q^2 \to \T : \Psi(a) = \psi_0(a_1 a_2 + b_1 a_1 + b_2 a_2)$. We have thus shown that $\zeta(a) = \Psi(a) \vphi(a,I_2)$.

Since $\vphi(0,A) \vphi(a,I_2) \vphi(0,A)^{-1} = \vphi(A \cdot a,I_2)$, we get that $\Psi(A \cdot a) = \Psi(a)$ for all $A \in \SL_2(\F_q)$ and $a \in \F_q^2$. Since the action of $\SL_2(\F_q)$ on $\F_q^2 \setminus \{(0,0)\}$ is transitive, we find $\eps \in \T$ such that $\Psi(a) = \eps$ for all $a \in \F_q^2 \setminus \{(0,0)\}$. Fix $a_1 \in \F_q^\times$. Since $\Psi(a_1,0) = \eps$, we get that $\psi_0(b_1 a_1) = \eps$. It follows that $\psi_0((a_1+b_2)a_2) = 1$ for all $a_2 \in \F_q$. This implies that $a_1 + b_2 = 0$. We have thus proven that every nonzero element of $\F_q$ is equal to $b_2$, which is absurd since $q \geq 4$.
\end{proof}

\section{\boldmath Quantum W$^*$-superrigidity}\label{sec.quantum-Wstar-superrigidity}

As explained in the introduction, a discrete group $G$ is called W$^*$-superrigid if for every discrete group $\Lambda$ with $L(G) \cong L(\Lambda)$, we have that $G \cong \Lambda$. In the most rigid situations, e.g.\ already in \cite[Theorem 1.1]{IPV10}, we moreover have that any isomorphism $L(G) \to L(\Lambda)$ is automatically the composition of an inner automorphism and a group like isomorphism given by combining a group isomorphism $G \to \Lambda$ and a character $G \to \T$ on $G$ (see Section \ref{sec.quantum-group-like-aut}). In Definition \ref{def.quantum-group-like-aut}, we introduced the analogous concept of a \emph{quantum group like} isomorphism, combining a quantum group isomorphism with a left translation automorphism.

We thus introduce the following form of quantum W$^*$-superrigidity, which is a stricter version of Definition \ref{def.quantum-Wstar-superrigid}.

\begin{definition}\label{definition:strict-quantum-wstar-superrigidity}
We say that a compact quantum group $(A,\Delta_A)$ is \emph{strictly quantum W$^*$-superrigid} if the following holds: if $(B,\Delta_B)$ is any compact quantum group and $\pi: A \to B$ is any von Neumann algebra isomorphism, there exists a unitary $v \in B$ and a quantum group like isomorphism $\pi_0 : (A,\Delta_A) \to (B,\Delta_B)$, in the sense of Definition \ref{def.quantum-group-like-aut}, such that $\pi = (\Ad v) \circ \pi_0$.
\end{definition}

To formulate our main quantum W$^*$-superrigidity theorem, we fix a Kac type compact quantum group $(A_0,\Delta_0)$ with Haar state $\tau_0$ and we assume that $A_0$ is an amenable von Neumann algebra. Note that by \cite[Theorem 4.5]{Rua95}, the amenability of the tracial von Neumann algebra $(A_0,\tau_0)$ is equivalent with every variant of amenability of the discrete dual quantum group of $(A_0,\Delta_0)$, as well as with every variant of co-amenability of $(A_0,\Delta_0)$.

Take a group $\Gamma$ in class $\cC$. Let $\Gamma \actson^\be (A_0,\Delta_0)$ be an action by quantum group automorphisms. This means that $\Delta_0 \circ \be_g = (\be_g \ot \be_g) \circ \Delta_0$ for all $g \in \Gamma$. Assume that the kernel of $\be$ is infinite.

As in Section \ref{sec.coarse-embeddings-co-induced}, we consider the co-induced left-right Bernoulli action $\Gamma \times \Gamma \actson^\al (A,\tau) = (A_0,\tau_0)^\Gamma$. As before, we denote by $\pi_k : A_0 \to A$ the embedding as the $k$'th tensor factor and we recall that $\al_{(g,h)}(\pi_k(a)) = \pi_{gkh^{-1}}(\be_g(a))$. The tensor product $(A,\tau)$ naturally is a compact quantum group; see the discussion before Proposition \ref{prop.cocycles-products}. Note that all $\al_{(g,h)}$ are quantum group automorphisms. So also the crossed product $M = A \rtimes_\al (\Gamma \times \Gamma)$ naturally becomes a compact quantum group $(M,\Delta)$, as explained before Proposition \ref{prop.cocycle-reduce-to-core}. By definition $\Delta(u_g) = u_g \ot u_g$ for all $g \in \Gamma \times \Gamma$ and $\Delta \circ \pi_e = (\pi_e \ot \pi_e) \circ \Delta_0$.

\begin{theorem}\label{thm.generic-theorem}
Define $(M,\Delta)$ as above. The following statements are equivalent.
\begin{enumlist}
\item\label{thm.generic-theorem.1} $(M,\Delta)$ is quantum W$^*$-superrigid in the sense of Definition \ref{def.quantum-Wstar-superrigid}.
\item\label{thm.generic-theorem.2} $(M,\Delta)$ satisfies the following stronger quantum W$^*$-superrigidity property: if $(Q,\Delta_Q)$ is any other compact quantum group and $p \in M$ is a nonzero projection such that $pMp \cong Q$ as von Neumann algebras, then $p=1$ and $(M,\Delta) \cong (Q,\Delta_Q)$ as compact quantum groups.
\item\label{thm.generic-theorem.3} Our initial data satisfy the following properties.
\begin{enumlist}
\item The group $\Gamma$ is torsion free.
\item Every unitary $2$-cocycle of $(A_0,\Delta_0)$ is a coboundary and every unitary bicharacter on $(A_0,\Delta_0)$ is equal to $1$ (see Proposition \ref{prop.cocycles-products}).
\item $(A_0,\Delta_0)$ is rigid relative to $\Gamma \actson^\be (A_0,\Delta_0)$, in the sense of Definition \ref{def.relative-rigidity-cqg.two}.
\end{enumlist}
\end{enumlist}
Also the following two statements are equivalent.
\begin{enumRom}
\item The compact quantum group $(M,\Delta)$ is strictly quantum W$^*$-superrigid in the sense of Definition \ref{definition:strict-quantum-wstar-superrigidity}.
\item Conditions (iii.a) and (iii.b) hold and $(A_0,\Delta_0)$ is strictly rigid relative to the subgroup $\be(\Gamma)$ of $\Aut(A_0,\Delta_0)$, in the sense of Definition \ref{def.relative-rigidity-cqg.one}.
\end{enumRom}
\end{theorem}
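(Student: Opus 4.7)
I would derive both implications from the already-established main equivalence (i)$\Leftrightarrow$(ii)$\Leftrightarrow$(iii), by tracking a left translation factor through the coarse embedding classification of Section~\ref{sec.coarse-embeddings-co-induced}.

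For (II)$\Rightarrow$(I), strict relative rigidity trivially implies the weaker rigidity in (iii.c), so (iii) holds and $(M,\Delta)$ is quantum W$^*$-superrigid. Given any von Neumann algebra isomorphism $\pi : M \to B$ with $(B,\Delta_B)$ a Kac type CQG, a CQG isomorphism $(B,\Delta_B) \to (M,\Delta)$ provided by~(i) reduces us to the case $B = M$, $\Delta_B = \Delta$ and $\pi \in \Aut M$. Following the arguments of Sections~\ref{sec.coarse-embeddings-co-induced} and~\ref{sec.proof-superrigidity-theorem} applied to the twisted comultiplication $\Delta_1 := (\pi \ot \pi) \circ \Delta \circ \pi^{-1}$, the classification of coarse embeddings represents $\Delta_1$, up to conjugation by some unitary $v \in M$, as the comultiplication of a co-induced left-right Bernoulli CQG built from a modified action $\Gamma \actson^{\be'} (A_0,\Delta'_0)$, where $\Delta'_0$ is a priori a different Kac CQG structure on the von Neumann algebra $A_0$ and each $\be'_g$ is a CQG automorphism of $(A_0,\Delta'_0)$. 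The identification produces a state preserving von Neumann algebra isomorphism $\phi : A_0 \to A_0$ satisfying $\phi \circ \be_g \circ \phi^{-1} = \be'_g$ for every $g \in \Gamma$. Strict rigidity of $(A_0,\Delta_0)$ relative to $\be(\Gamma)$ then furnishes a decomposition $\phi = \phi_{00} \circ \lambda_{\om_0}$ with $\phi_{00} : (A_0,\Delta_0) \to (A_0,\Delta'_0)$ a CQG isomorphism and $\om_0 \in \Char(A_0,\Delta_0)$ a $\be(\Gamma)$-invariant character. Extending $\om_0$ canonically to $\om \in \Char(M,\Delta)$ by $\om(\pi_k(a)) = \om_0(a)$ for every $k \in \Gamma$ and $a \in A_0$, and $\om(u_g) = 1$ for $g \in \Gamma \times \Gamma$ (which is well defined precisely because $\om_0$ is $\Gamma$-invariant), and reassembling, yields $\pi = (\Ad v) \circ \pi_{00} \circ \lambda_\om$ with $\pi_{00} \in \Aut(M,\Delta)$.

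For (I)$\Rightarrow$(II), conditions (iii.a) and (iii.b) follow since (I) implies (ii), hence (iii). To verify strict relative rigidity, suppose $(A'_0,\Delta'_0)$ is a Kac CQG with Haar state and $\rho : A_0 \to A'_0$ is a state preserving von Neumann algebra isomorphism with $\be'_g := \rho \circ \be_g \circ \rho^{-1} \in \Aut(A'_0,\Delta'_0)$ for every $g \in \Gamma$. Form the co-induced left-right Bernoulli CQG $(M',\Delta')$ for the data $\Gamma \actson^{\be'} (A'_0,\Delta'_0)$. Since $\be$ and $\be'$ are conjugate via $\rho$ as actions on von Neumann algebras, the map $\Pi : M \to M'$ equal to $\rho^{\ot \Gamma}$ on $A$ and to the identity on the group ring is a well-defined von Neumann algebra isomorphism. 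Applying (I) to $\Pi$ produces a unitary $v \in M'$, a CQG isomorphism $\Pi_{00} : (M,\Delta) \to (M',\Delta')$ and a character $\mu \in \Char(M,\Delta)$ with $\Pi = (\Ad v) \circ \Pi_{00} \circ \lambda_\mu$. The plan is to show that, after absorbing suitable factors into $\Pi_{00}$ and $\lambda_\mu$, one can arrange that $\Ad v$ preserves the zero-site subalgebra $\pi_e(A'_0) \subset M'$ and $\Pi_{00}(\pi_e(A_0)) = \pi_e(A'_0)$. Granted that, restricting the equality $\Pi = (\Ad v) \circ \Pi_{00} \circ \lambda_\mu$ to $\pi_e(A_0) \subset M$ and using that $\lambda_\mu$ restricts to a left translation $\lambda_{\om_0}$ on $\pi_e(A_0) \cong A_0$ — with $\om_0 := \mu \circ \pi_e \in \Char(A_0,\Delta_0)$ automatically $\Gamma$-invariant because $\mu|_A$ is $\Gamma \times \Gamma$-invariant — yields the required decomposition $\rho = \rho_{00} \circ \lambda_{\om_0}$, where $\rho_{00} : (A_0,\Delta_0) \to (A'_0,\Delta'_0)$ is the CQG isomorphism extracted from $\Pi_{00}$, and intertwines $\be$ and $\be'$ by the construction of $\Pi$.

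The main obstacle in both directions is the identification of the appropriate canonical representative of the unitary $v$. In (II)$\Rightarrow$(I), one must extract $v$ from the coarse embedding classification so that $\Delta_1$ equals the canonical Bernoulli comultiplication conjugated by $v$; in (I)$\Rightarrow$(II), one must show that the $v$ provided by strict superrigidity of $M$ can be placed inside the zero-site subalgebra $\pi_e(A'_0)$. Both reductions should proceed via Proposition~\ref{prop.cocycle-reduce-to-core} together with the weak mixing of $\Gamma \times \Gamma \actson^\al A$ to absorb contributions from other sites and from the group ring into $\Pi_{00}$ and $\lambda_\mu$, analogously to how unitary $2$-cocycles on crossed product quantum groups are reduced to their core in that proposition.
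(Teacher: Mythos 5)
Your proposal does not prove the main body of the theorem. You open by saying you will derive everything ``from the already-established main equivalence (i)$\Leftrightarrow$(ii)$\Leftrightarrow$(iii)'', but that equivalence is itself the core of the statement to be proved, and its proof is where almost all the work lies: for (iii)$\Rightarrow$(ii) one must apply the coarse embedding classification (Theorem \ref{theorem:coarse-embeddings-gen} via Lemma \ref{lem.comult-coarse}) to the comultiplication $\Delta_1 = (\pi^{-1}\ot\pi^{-1})\circ\Delta_Q\circ\pi$, show the resulting unitary $\Om$ is a genuine $2$-cocycle (Lemma \ref{lem.2-cocycle-up-to-scalar}), kill it using conditions (iii.a)--(iii.b) together with Propositions \ref{prop.cocycle-reduce-to-core} and \ref{prop.cocycles-products}, and then invoke relative rigidity of $(A_0,\Delta_0)$; for (i)$\Rightarrow$(iii) one must produce counterexamples from torsion, from nontrivial $2$-cocycles, and from failure of relative rigidity (via Propositions \ref{prop.not-rigid} and \ref{prop.crossed-product-not-rigid}). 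None of this appears in your proposal, so as a proof of the stated theorem it is incomplete in an essential way.

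Concerning the part you do address, (I)$\Leftrightarrow$(II), your outline is in the right spirit but has two concrete gaps. In (II)$\Rightarrow$(I), the coarse embedding classification does not hand you a conjugate of the canonical Bernoulli comultiplication directly: it produces $\Om\Delta_1(u_g)\Om^* = \chi(g)(u_{\delta_1(g)}\ot u_{\delta_2(g)})$, and you must first prove $\Om$ is a unitary $2$-cocycle and then that it is a coboundary --- this is exactly where (iii.a) and (iii.b) enter, and your sketch omits it. Moreover your extension of the character sets $\om(u_g)=1$, which loses the group character $\chi$ on $\Gamma\times\Gamma$; the correct translation automorphism of $(M,\Delta)$ must carry $\om(u_{(g,h)})=\chi(g,h)$. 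In (I)$\Rightarrow$(II), the ``main obstacle'' you flag (locating $v$ in the zero-site subalgebra) is resolved in the paper not by Proposition \ref{prop.cocycle-reduce-to-core} but by a direct comparison: since $\Pi(u_g)=u_g$ while $(\Pi_0\circ\Pi_1)(u_g)=\chi(g)u_{\delta(g)}$, the symmetric automorphism $\delta$ is forced to be inner, $\delta_0=\Ad s$, and $w$ is forced to be a scalar multiple of $u_{(s,s)}^*$, after which restriction to $\pi_e(A_0)$ gives the decomposition. You should make that comparison explicit rather than leaving it as a plan.
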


The proof of Theorem \ref{thm.generic-theorem} uses the comultiplication method introduced in \cite{PV09,IPV10}. If a given group von Neumann algebra $(L(G),\Delta_G)$ or compact quantum group von Neumann algebra $(M,\Delta)$ carries another compact quantum group structure, this provides another comultiplication embedding $\psi : M \to M \ovt M$. We use Popa's deformation/rigidity theory to prove that, essentially, $\psi$ and $\Delta$ must be unitarily conjugate. We first establish such a generic classification result for embeddings $M \to M \ovt M$ in Section \ref{sec.coarse-embeddings-co-induced}, and then give the proof of Theorem \ref{thm.generic-theorem} in Section \ref{sec.proof-superrigidity-theorem}.

\begin{remark}\label{rem.no-go-H2}
We consider the restrictions imposed on $n$ and $q$ in Theorem \ref{thm.main.two}. When $(n,q) \in \{(3,2),(3,4),(4,2)\}$, it follows from \cite[Theorem 3.2 in Chapter 16]{Kar93} that $\SL_n(\F_q)$ has a nontrivial Schur multiplier. Then the same holds for $\F_q^n \rtimes \SL_n(\F_q)$. When $n=6,7$, the same holds for $\Atil_n$ by \cite[Theorem 3.2 in Chapter 12]{Kar93}. When $n = 4$, the group $A_4$ is not perfect and therefore, also $\Atil_4$ is not perfect.

When $n=2$, it follows from Lemma \ref{lem.Schur-nontrivial-SL_2-F_2} that $\F_q^2 \rtimes \SL_2(\F_q)$ has a nontrivial Schur multiplier.

So in all the cases mentioned in the previous two paragraphs, by Theorem \ref{thm.generic-theorem} and Remark \ref{rem.compact-and-finite-group-reductions}, the associated compact quantum group $(M,\Delta)$ is not quantum W$^*$-superrigid.

It remains to discuss what happens if we take $K = \SL_2(\F_q)$. When $q=2$ or $q=3$, the group $K$ is not perfect. When $q=4$ or $q=9$, the Schur multiplier of $K$ is nontrivial. When $q$ is prime and $q \geq 5$, W$^*$-superrigidity holds. For all other values of $q$, i.e.\ prime powers that are not prime and not equal to $4$ or $9$, the group $K = \SL_2(\F_q)$ is perfect and has a trivial Schur multiplier. It is plausible that these groups $K$ are rigid relative to their automorphism group so that quantum W$^*$-superrigidity holds, but we were unable to prove this.
\end{remark}

\subsection{Coarse embeddings of co-induced Bernoulli crossed products}\label{sec.coarse-embeddings-co-induced}

Comultiplication embeddings $\psi : M \to M \ovt M$ are always \emph{coarse embeddings} in the sense of \cite[Definition 5.1]{DV24a}; see Lemma \ref{lem.comult-coarse}. By definition, this means that the bimodules $\bim{\psi(M)}{L^2(M \ovt M)}{M \ot 1}$ and $\bim{\psi(M)}{L^2(M \ovt M)}{1 \ot M}$ are coarse.

A key ingredient in the proof of W$^*$-superrigidity is to provide a complete classification of all possible coarse embeddings $\psi : M \to M \ovt M$ when $M$ is a II$_1$ factor such as in Theorem \ref{thm.main}. We prove such a classification theorem in this section, generalizing \cite[Theorem~5.11]{DV24a}.

Recall from \cite[Definition 3.1]{PV21} that a countable group $\Gamma$ is said to belong to class $\cC$ if $\Gamma$ is nonamenable, weakly amenable, biexact, and every nontrivial element of $\Gamma$ has an amenable centralizer. This class of groups contains all torsion free hyperbolic groups, all free groups $\F_n$ with $2 \leq n \leq \infty$, and all free products $\Gamma_1 \ast \Gamma_2$ of nontrivial amenable groups with $|\Gamma_2| \geq 3$.

In \cite[Theorem~5.11]{DV24a}, we classified coarse embeddings $M \to M_1 \ovt \cdots \ovt M_k$ when the II$_1$ factors $M$ and $M_i$ are left-right Bernoulli crossed products $(A_0,\tau_0)^{\Gamma} \rtimes (\Gamma \times \Gamma)$ with $\Gamma$ in $\cC$, possibly twisted with a $2$-cocycle. In this section, in Theorem \ref{theorem:coarse-embeddings-gen}, we consider left-right Bernoulli crossed products that arise as co-inductions of a given trace preserving action $\Gamma \curvearrowright^\beta (A_0, \tau_0)$. The usual left-right Bernoulli action corresponds to taking the trivial action $\be = \id$. Both the formulation and the proof of Theorem \ref{theorem:coarse-embeddings-gen} are very similar to \cite[Theorem~5.11]{DV24a}. We therefore only give a brief presentation, highlighting the necessary modifications, and we do not state the theorem in its highest possible generality, since that is not needed for this paper.

Given a countable group $\Gamma$ and a trace preserving action $\Gamma \actson^\be (B,\tau)$, we thus define $(A, \tau) = (B, \tau)^\Gamma$, with the embeddings $\pi_k : B \to A$ as the $k$'th tensor factor. We then consider $\Gamma \times \Gamma \curvearrowright^\alpha (A, \tau)$ via
\begin{equation*}
\alpha_{(g,h)}(\pi_k(a)) = \pi_{gkh\inv}(\beta_g(a)) \quad\text{for all $(g,h) \in \Gamma$ and $a \in B$.}
\end{equation*}
This is the \emph{co-induced left-right Bernoulli action} associated with $\Gamma \actson^\be (B,\tau)$. Exactly as in \cite{DV24a}, we say that a group homomorphism $\delta: \Gamma \times \Gamma \to \Gamma_1 \times \Gamma_1$ is \emph{symmetric} if $\delta$ is of the form $\delta = \delta_1 \times \delta_1$, or of the form $\delta = \sigma \circ (\delta_1 \times \delta_1)$ where $\sigma$ is the flip map, and $\delta_1 : \Gamma \to \Gamma_1$ is a group homomorphism.

\begin{theorem}[{Variant of Theorem~5.11 in \cite{DV24a}}]\label{theorem:coarse-embeddings-gen}
For $0 \leq i \leq 2$, let $\Gamma_i$ be countable groups in class $\cC$, $(B_i,\tau_i)$ nontrivial amenable tracial von Neumann algebras and $\Gamma_i \actson^{\beta_i} (B_i,\tau_i)$ trace preserving actions. Consider the co-induced left-right Bernoulli action $\Gamma_i \times \Gamma_i \curvearrowright^{\alpha_i} (A_i,\tau_i) := (B_i, \tau_i)^{\Gamma_i}$. Set $M_i = A_i \rtimes_{\alpha_i} (\Gamma_i \times \Gamma_i)$. Assume that $\Ker \be_0$ is infinite.

If $\psi: M_0 \to p(M_1 \ovt M_2)p$ is a coarse embedding for some projection $p$, then $p=1$ and after a unitary conjugacy, $\psi(\pi_e(B_0)) \subset \pi_e(B_1) \ovt \pi_e(B_2)$ and $\psi(u_r) = \om(r) \, u_{\delta_1(r)} \ot u_{\delta_2(r)}$ for all $r \in \Gamma_0 \times \Gamma_0$, where $\delta_i : \Gamma_0 \times \Gamma_0 \to \Gamma_i \times \Gamma_i$ are faithful symmetric homomorphisms and $\om : \Gamma_0 \times \Gamma_0 \to \T$ is a character.

Also, if there is no irreducible infinite index subfactor $P \subset M_1$ such that $\psi(M_0)$ can be unitarily conjugated into $P \ovt M_2$, then $\delta_1$ is an isomorphism.
\end{theorem}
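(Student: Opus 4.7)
The plan is to follow closely the strategy of the proof of Theorem~5.11 in \cite{DV24a}, while incorporating two modifications specific to the co-induced setting: the twist of the $\Gamma_0 \times \Gamma_0$-action on $A_0 = B_0^{\Gamma_0}$ by $\be_0$ in the first coordinate, and the use of the hypothesis that $\Ker\be_0$ is infinite. The role played by the diagonal of $\Gamma$ (acting trivially) in the ordinary Bernoulli case is taken over here by $\Ker\be_0$: for $k \in \Ker\be_0$, the automorphism $\al_{0,(k,k)}$ sends $\pi_\ell(b)$ to $\pi_{k\ell k^{-1}}(b)$, so in particular fixes $\pi_e(B_0)$ pointwise and globally preserves the tensor decomposition $A_0 = \bigotimes_{\ell \in \Gamma_0} \pi_\ell(B_0)$.

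The first step is to locate $\psi(L(\Gamma_0 \times \Gamma_0))$ inside $M_1 \ovt M_2$. The groups $\Gamma_i \times \Gamma_i$ with $\Gamma_i$ in class $\cC$ are in the scope of the Popa-Vaes dichotomy of \cite{PV21}, and combining this with the coarseness of $\psi$ on both legs, one proceeds as in \cite[Sections~5.2--5.3]{DV24a} to show, via intertwining-by-bimodules and the usual quasi-normalizer arguments, that (after a unitary conjugacy by an element of $M_1 \ovt M_2$) $\psi$ maps the Cartan-like group algebra $L(\Gamma_0 \times \Gamma_0)$ into $L(\Gamma_1 \times \Gamma_1) \ovt L(\Gamma_2 \times \Gamma_2)$. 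The coarseness of both bimodules then forces $\psi(u_r)$ to be of the form $\om(r) \, u_{\delta_1(r)} \ot u_{\delta_2(r)}$ for group homomorphisms $\delta_i : \Gamma_0 \times \Gamma_0 \to \Gamma_i \times \Gamma_i$ and a character $\om$. Symmetry of $\delta_1, \delta_2$ follows from compatibility with the flip $(g,h) \mapsto (h,g) \in \Gamma_0 \times \Gamma_0$, which is an inner automorphism of $M_0$ up to the exchange of left and right copies, combined with the biexactness/malnormality arguments of \cite[Lemma~5.7 and Lemma~5.9]{DV24a}.

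The second step is to locate $\psi(\pi_e(B_0))$. Here the infiniteness of $\Ker\be_0$ is essential. Since $u_{(k,k)}$ commutes with $\pi_e(B_0)$ for every $k \in \Ker\be_0$, the image $\psi(\pi_e(B_0))$ commutes with the infinite family $\{\psi(u_{(k,k)})\}_{k \in \Ker\be_0}$, which by the first step consists of group-like unitaries in $L(\Gamma_1 \times \Gamma_1) \ovt L(\Gamma_2 \times \Gamma_2)$ parametrized by an infinite subgroup of $\Gamma_1 \times \Gamma_1$ acting on each leg by a mixing (co-induced Bernoulli) action. A weak mixing argument on both legs of the tensor product, exactly as in the closing part of the proof of \cite[Theorem~5.11]{DV24a}, then forces $\psi(\pi_e(B_0))$ after a further inner conjugacy into $\pi_e(B_1) \ovt \pi_e(B_2)$. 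Once this is achieved, tracking traces on both sides gives $p=1$, and faithfulness of $\delta_1$ and $\delta_2$ follows from faithfulness of $\psi$ together with the faithfulness of $\pi_e(B_0)$ and of the $\Gamma_0 \times \Gamma_0$-action on it.

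The final statement, that $\delta_1$ is an isomorphism under the irreducible infinite index subfactor hypothesis, is obtained by contradiction: if $\delta_1(\Gamma_0 \times \Gamma_0) < \Gamma_1 \times \Gamma_1$ had infinite index, then $P := \psi(u_r) : r \in \Gamma_0 \times \Gamma_0 \dpr \cdot \pi_e(B_1) \ovt 1$ lifted to an appropriate intermediate subfactor $P \subset M_1$ containing $\psi(M_0)$'s first leg would be an infinite index irreducible subfactor into which $\psi(M_0)$ embeds, contradicting the hypothesis. The main technical obstacle in this plan is the mixing step for $\psi(\pi_e(B_0))$: in \cite{DV24a} the relevant ``diagonal'' acts by site-permutations of the Bernoulli base, whereas here the acting group $\Ker\be_0$ acts both by conjugation on $\Gamma_0$ and by conjugation on $\pi_k(B_0)$, so one must check that the image under $\delta_i$ still yields enough mixing on the co-induced Bernoulli base $B_i^{\Gamma_i}$ to rule out that $\psi(\pi_e(B_0))$ spreads over multiple tensor sites.
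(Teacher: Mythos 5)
Your overall plan — run the machinery of \cite[Theorem~5.11]{DV24a} and patch it at the points where the twist by $\be_0$ matters — is reasonable in spirit, but it misses the actual reduction that makes the argument work, and it omits the part of the proof where most of the new work lies. The paper does not attempt to run the seven deformation/rigidity steps of \cite{DV24a} on the full twisted algebra $M_0$. Instead it observes that $N_0 := (B_0,\tau_0)^{\Lambda} \rtimes_{\al_0} (\Lambda \times \Lambda)$, with $\Lambda := \Ker\be_0$, is an \emph{ordinary} (untwisted) left-right Bernoulli crossed product, and applies the cited steps verbatim to the restriction $\psi|_{N_0}$. This is the real reason the hypothesis that $\Ker\be_0$ is infinite is needed: since $\Lambda$ is a nontrivial normal subgroup of $\Gamma_0 \in \cC$, it is nonamenable and icc, so $N_0$ falls squarely within the scope of \cite{DV24a}. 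Your proposal instead uses $\Ker\be_0$ only in the step locating $\psi(\pi_e(B_0))$, and you flag the ``mixing step'' as the main obstacle; in fact that step is unproblematic (it is exactly Lemma \ref{lem.relative-commutant} applied to the relatively icc diagonal of $\Ker\be_0$). The genuine difficulty is elsewhere.

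Concretely, two things are missing. First, even in the untwisted case the cited steps only produce the group-like form $\psi(u_r) \in \T(u_{\delta_1(r)} \ot u_{\delta_2(r)})$ for $r$ in $\Lambda_0 \times \Lambda_0$, where $\Lambda_0 \lhd \Lambda$ is merely of \emph{finite index}, together with homomorphisms $\gamma_i$ defined only on $\Lambda_0$. Your Step~1 asserts the conclusion for all of $\Gamma_0 \times \Gamma_0$ at once, which the machinery does not deliver. Second, and consequently, the entire extension argument — showing that each $\gamma_i$ extends uniquely from $\Lambda_0$ to $\Lambda$ and then from $\Lambda$ to $\Gamma_0$, and that \eqref{eq.form-of-psi}-type identities persist under these extensions — is absent from your proposal. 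This is where the co-induced structure genuinely intervenes (in \cite{DV24a} one has $\Lambda = \Gamma_0$, so no such extension is needed), and it is carried out in the paper by combining coarseness of $\psi$ with \cite[Lemma~2.6(iii)]{DV24a} and the relative commutant computation of Lemma \ref{lem.relative-commutant}, using normality of $\Lambda_0$ in $\Lambda$ and of $\Lambda$ in $\Gamma_0$. Without this bootstrapping, you have no control of $\psi(u_r)$ outside $\Lambda_0 \times \Lambda_0$, and neither the faithfulness of $\delta_i$ on $\Gamma_0 \times \Gamma_0$ nor the final surjectivity statement can be addressed. Your treatment of the last statement is also slightly off: the relevant dichotomy is whether $T_1 := \gamma_1(\Gamma_0)$ is a \emph{proper} subgroup of $\Gamma_1$ (in which case $P = (B_1,\tau_1)^{T_1} \rtimes (T_1 \times T_1)$ is automatically an irreducible infinite index subfactor, because the base inclusion $(B_1)^{T_1} \subset (B_1)^{\Gamma_1}$ already has infinite index), not whether $\delta_1(\Gamma_0 \times \Gamma_0)$ has infinite index in $\Gamma_1 \times \Gamma_1$.
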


\begin{proof}
Since the proof is very similar to the proof of \cite[Theorem~5.11]{DV24a}, we only give a sketch, indicating the necessary modifications. Write $\Lambda := \Ker \be_0$. Since $\Gamma_0$ belongs to class $\cC$ and $\Lambda$ is a nontrivial normal subgroup of $\Gamma_0$, we get that $\Lambda$ is a nonamenable icc group. Note that the von Neumann subalgebra $N_0 := (B_0,\tau_0)^\Lambda \rtimes_{\al_0} (\Lambda \times \Lambda)$ of $M_0$ is an ordinary left-right Bernoulli crossed product, precisely because $\Lambda = \Ker \be_0$.

We can then literally repeat the first 7 steps of the proof of \cite[Theorem~5.11]{DV24a}, applied to the restriction of $\psi$ to $N_0$, and arrive at the following point: $p=1$ and, after a unitary conjugacy, $\psi(\pi_e(B_0)) \subset \pi_e(B_1) \ovt \pi_e(B_2)$ and
\begin{equation}\label{eq.form-of-psi}
\psi(u_r) \in \T (u_{\delta_1(r)} \ot u_{\delta_2(r)}) \quad\text{for all $r \in \Lambda_0 \times \Lambda_0$,}
\end{equation}
where $\Lambda_0 \lhd \Lambda$ is a finite index normal subgroup and $\delta_i$ is either of the form $\gamma_i \times \gamma_i$, or of the form $\si \circ (\gamma_i \times \gamma_i)$, where $\gamma_i : \Lambda_0 \to \Gamma_i$ is a faithful group homomorphism.

We first claim that $\gamma_i$ uniquely extends to a group homomorphism $\Lambda \to \Gamma_i$. By symmetry, it suffices to consider $i=1$. Since $\Lambda_0$ is nonamenable and $\Gamma_i$ belongs to class $\cC$, the subgroup $\gamma_1(\Lambda_0) < \Gamma_1$ is relatively icc. Assume that $\gamma_1$ does not uniquely extend to a homomorphism $\Lambda \to \Gamma_1$. Consider the left-right action $\Gamma_1 \times \Gamma_1 \actson \Gamma_1$ and define the subgroup $S_k < \Lambda_0 \times \Lambda_0$ by $S_k = \{(g,k^{-1} g k)  \mid g \in \Lambda_0\}$. By \cite[Lemma 2.6(iii)]{DV24a}, $\delta_1(S_k) \cdot s$ is infinite for all $s \in \Gamma_1$.

Since $\delta_1(S_k) < \Gamma_1 \times \Gamma_1$ is relatively icc and since $\psi(\pi_k(B_0))$ commutes with $\psi(u_r)$ for all $r \in S_k$, it then follows from \eqref{eq.form-of-psi} and Lemma \ref{lem.relative-commutant.two} that $\psi(\pi_k(B_0)) \in 1 \ot A_2$. Conjugating with $\psi(u_r)$ for $r \in \Lambda_0 \times \Lambda_0$, it follows that $\psi(B_0^{\Lambda_0 k}) \subset 1 \ot A_2$, which contradicts the coarseness of $\psi$.

So $\gamma_i$ uniquely extends to a group homomorphism $\Lambda \to \Gamma_i$ that we still denote by $\gamma_i$. We still write $\delta_i = \gamma_i \times \gamma_i$, resp.\ $\delta_i = \sigma \circ (\gamma_i \times \gamma_i)$. Take $g \in \Lambda$. Since $\Lambda_0$ is a normal subgroup of $\Lambda$, it follows from \eqref{eq.form-of-psi} that $\psi(u_g)^* (u_{\delta_1(g)} \ot u_{\delta_2(g)})$ commutes up to a scalar with $u_{\delta_1(r)} \ot u_{\delta_2(r)}$ for all $r \in \Lambda_0 \times \Lambda_0$. By Lemma \ref{lem.relative-commutant.two}, this forces $\psi(u_g)^* (u_{\delta_1(g)} \ot u_{\delta_2(g)})$ to be a scalar. We have thus shown that \eqref{eq.form-of-psi} actually holds for all $r \in \Lambda$.

Since $\Lambda$ is a normal subgroup of $\Gamma$, we can repeat the same argument and find that $\gamma_i : \Lambda \to \Gamma_i$ uniquely extends to a group homomorphism $\Gamma_0 \to \Gamma_i$ that we still denote as $\gamma_i$. Extending accordingly $\delta_i = \gamma_i \times \gamma_i$, resp.\ $\delta_i = \sigma \circ (\gamma_i \times \gamma_i)$, the same argument shows that \eqref{eq.form-of-psi} remains valid for all $r \in \Gamma_0 \times \Gamma_0$.

To prove that $\delta_i : \Gamma_0 \times \Gamma_0 \to \Gamma_i \times \Gamma_i$ is faithful, assume the contrary. Since $\Gamma_0$ is icc, the kernel of $\delta_i$ is infinite. Then \eqref{eq.form-of-psi} contradicts the coarseness of $\psi$.

Finally assume that $\delta_1 : \Gamma_0 \times \Gamma_0 \to \Gamma_1 \times \Gamma_1$ is not surjective. Define $T_1 := \gamma_1(\Gamma_0)$. The form of $\psi$ implies that $\psi(M_0) \subset P \ovt M_2$, where $P = (B_1,\tau_1)^{T_1} \rtimes_{\al_1} (T_1 \times T_1)$. If $T_1 \neq \Gamma_1$, $P$ is an irreducible infinite index subfactor of $M_1$.
\end{proof}

We apply Theorem \ref{theorem:coarse-embeddings-gen} to embeddings given by comultiplications on compact quantum groups. We thus need the following lemma, which is a straightforward generalization of \cite[Proposition~7.2(2)]{IPV10}. In the second statement, we make use of Popa's intertwining-by-bimodules, denoted by $\prec$.

\begin{lemma}\label{lem.comult-coarse}
Let $(A, \Delta)$ be a Kac type compact quantum group.
\begin{enumlist}
\item\label{lem.comult-coarse.one} The embedding $\Delta : A \to A \ovt A$ is coarse.
\item\label{lem.comult-coarse.two} If $P \subset A$ is a von Neumann subalgebra and $\Delta(A) \prec_{A \ovt A} A \ovt P$, then $A \prec_A P$.
\end{enumlist}
\end{lemma}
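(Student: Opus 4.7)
The plan is to exhibit a Galois-type unitary intertwiner. Define
\[
\Phi \colon L^2(A) \ot L^2(A) \to L^2(A \ovt A), \qquad \Phi(a\xi_0 \ot b\xi_0) = \Delta(a)(1 \ot b)(\xi_0 \ot \xi_0).
\]
Using cyclicity of $\tau \ot \tau$ together with the Haar invariance $(\id \ot \tau)\Delta(x) = \tau(x)\cdot 1$, a short calculation yields
$(\tau \ot \tau)[(1 \ot b_2^*)\Delta(a_2^* a_1)(1 \ot b_1)] = \tau(a_2^* a_1)\tau(b_2^* b_1)$,
so $\Phi$ is isometric. Surjectivity comes from the weak density of $\Delta(A)(1 \ot A)$ in $A \ovt A$, established in the proof of Proposition~\ref{coc.3}. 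The routine checks $\Phi(xa\xi_0 \ot b\xi_0) = \Delta(x)\Phi(a\xi_0 \ot b\xi_0)$ and $\Phi(a\xi_0 \ot b\xi_0)\cdot(1 \ot y) = \Phi(a\xi_0 \ot by\xi_0)$ yield the bimodule isomorphism
\[
\bim{\Delta(A)}{L^2(A \ovt A)}{1 \ot A} \;\cong\; \bim{A}{L^2(A) \ot L^2(A)}{A},
\]
the right-hand side being the standard coarse $A$-$A$-bimodule. The coarseness of $\bim{\Delta(A)}{L^2(A \ovt A)}{A \ot 1}$ follows by the symmetric construction $\Phi'(a\xi_0 \ot b\xi_0) = (a \ot 1)\Delta(b)(\xi_0 \ot \xi_0)$, which identifies $\bim{A \ot 1}{L^2(A \ovt A)}{\Delta(A)}$ with the standard coarse bimodule.

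\textbf{Part (ii).} The plan is to use Popa's bimodule characterization of intertwining. Assuming $\Delta(A) \prec_{A \ovt A} A \ovt P$, there exist $n \geq 1$ and a nonzero $\Delta(A)$-$(A \ovt P)$-sub-bimodule $\cK \subset L^2(A \ovt A)^{\oplus n}$ of finite right $A \ovt P$-dimension. Restricting the right action to $1 \ot P \subset A \ovt P$ and transporting $\cK$ through the unitary $\Phi$ of part~(i), one obtains an $A$-$P$-sub-bimodule $\cKtil \subset (L^2(A) \ot L^2(A))^{\oplus n}$ with left $A$ acting on the first factor and right $P$ on the second factor. To conclude $A \prec_A P$, the plan is then to slice the second tensor factor against a suitable non-zero right-$P$-bounded vector $\eta \in L^2(A)$, producing a non-zero $A$-$P$-sub-bimodule $\cK_\eta \subset L^2(A)^{\oplus n}$ of finite right $P$-dimension.

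\textbf{Main obstacle.} The central difficulty is that the unitary $\Phi$ intertwines only the $\Delta(A)$-$(1 \ot A)$-structure and not the full right action of $A \ovt P$, so the transported subspace $\cKtil$ is a priori not finitely generated as a right $P$-module, even though $\cK$ was finitely generated as a right $A \ovt P$-module. The key is to exploit the extra data: $\cK$ has finite $A \ovt P$-trace, and the right action of $A \ot 1 \subset A \ovt P$ (which $\Phi$ does not preserve) constrains $\cKtil$ to have controlled behaviour in the first factor of $L^2(A) \ot L^2(A)$. A careful dimension-counting argument, comparing the right $(A \ovt P)$-trace of $\cK$ to the right $P$-trace of the slice $\cK_\eta$, then shows that for a suitable $\eta$ taken from a finite right-$P$-generating set of the second factor, $\cK_\eta$ is non-zero and has finite right $P$-dimension. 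An alternative (less conceptual but perhaps more direct) route is to instead use Popa's sequential characterization: picking unitaries $v_n \in A$ witnessing $A \not\prec_A P$, one uses cyclicity of $\tau \ot \tau$, the invariance $(\tau \ot \id)\Delta = \tau\cdot 1$, and self-adjointness of $E_P$ to rewrite $\|E_{A \ovt P}((a \ot c)\Delta(v_n)(b \ot d))\|_2^2$ as a finite sum of terms controlled by $\|E_P(a_j' v_n b_j')\|_2^2$. The essential technical point in either approach is the same: managing the $L^2$-convergent Peter--Weyl expansion of $\Delta(v_n)$, which in the group case of \cite{IPV10} is the trivial simple tensor $u_g \ot u_g$.
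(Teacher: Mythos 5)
Your part~(i) is correct and is essentially the paper's argument: the paper invokes the multiplicative-unitary type operator $W$ from Theorem~\ref{cqg.2} (equivalently your $\Phi$, which is the operator $V$ appearing in the proof of Theorem~\ref{cqg.5}) to identify $\bim{\Delta(A)}{L^2(A \ovt A)}{1 \ot A}$ with the coarse bimodule, and concludes by symmetry for $A \ot 1$. The isometry computation and the density of $\Delta(A)(1\ot A)(\xi_0\ot\xi_0)$ are exactly as you say (you cite left invariance $(\id\ot\tau)\Delta$ where the computation actually uses right invariance $(\tau\ot\id)\Delta$, but both hold, so this is immaterial).

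Part~(ii), however, has a genuine gap: you have correctly identified the central obstacle but you have not overcome it, and I do not believe the route you sketch can be completed as stated. The unitary $\Phi$ does not intertwine the right $A\ot 1$-action, so the transported subspace $\cKtil\subset (L^2(A)\ot L^2(A))^{\oplus n}$ carries no finiteness whatsoever over $P$: as a right $P$-module (acting on the second leg), $L^2(A)\ot L^2(A)$ has infinite $P$-dimension in every corner, and $L^2(A)$ is not finitely generated over $P$ unless $P\subset A$ has finite index, so there is no ``finite right-$P$-generating set of the second factor'' to slice against. The phrase ``a careful dimension-counting argument then shows'' is doing all the work and is not justified. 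The alternative sequential route suffers from the same unproven step: $E_{A\ovt P}=\id\ot E_P$ applied to $(a\ot c)\Delta(v_n)(b\ot d)$ involves $(\id\ot E_P)$ of the full Peter--Weyl expansion of $\Delta(v_n)$, and controlling this by finitely many terms $\|E_P(a'_j v_n b'_j)\|_2$ is precisely what needs to be proved. The paper's proof avoids all of this by working in the basic construction $\cN=\langle A,e_P\rangle$: the hypothesis yields a positive $p\in\Delta(A)'\cap(A\ovt\cN)$ with $0<(\vphi\ot\Tr)(p)<\infty$; one sets $q=(\vphi\ot\id)(p)\in\cN$, which has finite trace, and then uses the identity $p\ot 1=(\Delta\ot\id)(X)(p\ot 1)(\Delta\ot\id)(X)^*$ for each $n$-dimensional unitary corepresentation $X$, together with traciality of $\vphi$, to derive the averaging relation $nq=\sum_{r,j}X_{rj}\,q\,X_{rj}^*$. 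A convexity computation then forces $X_{rj}q=qX_{rj}$ for all coefficients, so $q\in A'\cap\cN$ by Theorem~\ref{cqg.5}, and $A\prec_A P$ follows. This averaging over corepresentation coefficients is the quantum-group substitute for the triviality of $\Delta(u_g)=u_g\ot u_g$ in the group case, and it is the ingredient missing from your proposal.
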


\begin{proof}
We denote by $\vphi$ the Haar state on $(A,\Delta)$. Denote by $L^2(A)$ its GNS Hilbert space.

(i) By Theorem \ref{cqg.2}, there is a unitary $W : L^2(A \ovt A) \to L^2(A \ovt A)$ such that $W^*(b \ot a) = \Delta(a)(b \ot 1)$ for all $a,b \in M$. Then $W$ defines a unitary equivalence between the bimodule $\bim{\Delta(A)}{L^2(A \ovt A)}{A \ot 1}$ and a coarse $A$-$A$-bimodule. By symmetry, also $\bim{\Delta(A)}{L^2(A \ovt A)}{1 \ot A}$ is coarse, so that $\Delta : A \to A \ovt A$ is a coarse embedding.

(ii) Denote by $\cN := \langle A , e_P \rangle$ the basic construction for the inclusion $P \subset A$. So, $\cN \subset B(L^2(A))$ is the commutant of the right action of $P$ on $L^2(A)$. Also, $\cN$ is generated by the left action of $A$ and the orthogonal projection $e_P$ of $L^2(A)$ onto $L^2(P)$. Recall that $\cN$ has a canonical semifinite faithful trace $\Tr$ satisfying $\Tr(a e_P b) = \vphi(ab)$ for all $a,b \in A$.

Since we can view $A \ovt \cN$ as the basic construction for the inclusion $A \ovt P \subset A \ovt A$, by the assumption that $\Delta(A) \prec_{A \ovt A} A \ovt P$, we find a positive element $p \in \Delta(A)' \cap A \ovt \cN$ such that $0 < (\vphi \ot \Tr)(p) < +\infty$. Define $q \in \cN$ by $q = (\vphi \ot \id)(p)$. Then $q$ is a positive element of $\cN$ and $0 < \Tr(q) < +\infty$. We prove that $q$ commutes with $A$. Once this is proven, the intertwining $A \prec_A P$ follows.

By Theorem \ref{cqg.5}, the coefficients of the finite dimensional unitary corepresentations of $(A,\Delta)$ form a dense $*$-subalgebra of $A$. It thus suffices to take a unitary corepresentation $X \in A \ot M_n(\C)$ and prove that $X_{ij} q = q X_{ij}$ for all $i,j$.

We also use the notation $\Tr$ to denote the non normalized trace on $M_n(\C)$. Since $p \otimes 1 = (\Delta \otimes \id)(X) (p \otimes 1) (\Delta \otimes \id ) (X)^*$, applying $\vphi \ot \id \ot \Tr$ gives us
\begin{align*}
n q &= \sum_{i,j,r,s} (\vphi \otimes \id) \bigl( (X_{ir} \otimes X_{rj}) \, p \, (X_{is} \otimes X_{sj})^* \bigr) = \sum_{i,j,r,s} (\vphi \otimes \id) \bigl( (X_{is}^* X_{ir} \otimes X_{rj}) \, p \, (1 \otimes X_{sj}^*) \bigr)\\ &= \sum_{j,r} (\vphi \otimes \id) \bigl((1 \otimes X_{rj}) \, p \, (1 \otimes X_{rj}^*) \bigr) = \sum_{j,r}X_{rj} q X_{rj}^* \; .
\end{align*}
Since
\begin{align*}
\| X_{rj} q - qX_{rj}\|_2^2 &= \vphi \bigl( (X_{rj} q - q X_{rj})(qX_{rj}^* - X_{rj}^* q) \bigr)\\
&= \vphi(X_{rj} q^2 X_{rj}^*) + \vphi(q X_{rj} X_{rj}^* q) - \vphi(q X_{rj}qX_{rj}^*) - \vphi(X_{rj}qX_{rj}^* q) \\
&= \vphi(q^2 X_{rj}^*X_{rj}) + \vphi(q X_{rj} X_{rj}^* q) - \vphi(q X_{rj}qX_{rj}^*) - \vphi(X_{rj}qX_{rj}^* q) \; ,
\end{align*}
summing over $r,j$ and using the equality $\sum_{j,r}X_{rj} q X_{rj}^* = n q$ established above, we find that $\sum_{r,j} \| X_{rj} q - qX_{rj}\|_2^2 = 0$. So, $X_{rj} q = q X_{rj}$ for all $r,j$ and the lemma is proven.
\end{proof}

In the proof of Theorem \ref{theorem:coarse-embeddings-gen}, we used the following elementary result on relative commutants, which we will also use in the proof of Theorem \ref{thm.generic-theorem} below.

\begin{lemma}\label{lem.relative-commutant}
Let $G \actson I$ be an action of a countable group $G$ on a countable set $I$. Let $(B_0,\tau_0)$ be any tracial von Neumann algebra, and consider the tensor power $(D,\tau) = (B_0,\tau_0)^I$ with the embeddings $\pi_i : B_0 \to D$ as the $i$'th tensor factor. Let $G \actson^\gamma (D,\tau)$ be any trace preserving action satisfying $\gamma_g(\pi_i(B_0)) = \pi_{g \cdot i}(B_0)$ for all $g \in G$, $i \in I$.

Let $\Lambda$ be a group and for $i \in \{1,2\}$, let $\delta_i : \Lambda \to G$ be group homomorphisms such that $\delta_i(\Lambda) < G$ is relatively icc. Let $X \in D \rtimes_\gamma G$ such that $u_{\delta_1(s)} X u_{\delta_2(s)}^* = \om(s) X$ for all $s \in \Lambda$, where $\om : \Lambda \to \T$ is a group homomorphism.
\begin{enumlist}
\item\label{lem.relative-commutant.one} If $X \neq 0$, there exists a $g_0 \in G$ such that $\delta_1 = (\Ad g_0) \circ \delta_2$.
\item\label{lem.relative-commutant.two} If $\delta_1 = \delta_2$, define $I_1$ as the set of $i \in I$ for which $\delta_1(\Lambda) \cdot i$ is finite. Then $X \in (B_0,\tau_0)^{I_1}$.
\end{enumlist}
\end{lemma}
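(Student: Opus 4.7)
The natural approach is to Fourier-decompose $X = \sum_{g \in G} X_g u_g$ with $X_g \in D$, where convergence is in $L^2(D \rtimes_\gamma G)$ and $\sum_g \|X_g\|_2^2 < \infty$. Rewriting the relation $u_{\delta_1(s)} X u_{\delta_2(s)}^* = \om(s) X$ via $u_h b = \gamma_h(b) u_h$ and equating Fourier coefficients yields
\begin{equation*}
\gamma_{\delta_1(s)}(X_g) = \om(s) \, X_{\delta_1(s) g \delta_2(s)^{-1}} \qquad (s \in \Lambda,\ g \in G).
\end{equation*}
Since $\gamma$ is trace preserving and $|\om(s)|=1$, the $L^2$-norms of $X_g$ are constant along the twisted conjugation orbit $s \cdot g := \delta_1(s) g \delta_2(s)^{-1}$; combined with $L^2$-summability, this forces every such orbit with $X_g \neq 0$ to be finite.

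For part (i), I would first pick some $g$ with $X_g \neq 0$ and reduce to $X_e \neq 0$ by the substitution $X \mapsto X u_g^{-1}$, $\delta_2 \mapsto \delta_2' := (\Ad g) \circ \delta_2$, which preserves all hypotheses since relative icc in $G$ is stable under inner conjugation. Finiteness of the orbit of $e$ then translates into the statement that $\Lambda_0 := \{s \in \Lambda : \delta_1(s) = \delta_2'(s)\}$ has finite index in $\Lambda$. To promote this to equality on all of $\Lambda$, fix $s \in \Lambda$ and set $\Lambda_1 := \Lambda_0 \cap s^{-1} \Lambda_0 s$, which still has finite index. For any $t \in \Lambda_1$, both $t$ and $sts^{-1}$ lie in $\Lambda_0$; comparing the two resulting identities shows that $\delta_2'(s)^{-1} \delta_1(s)$ commutes with $\delta_2'(t)$. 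Relative icc in $G$ is inherited by finite index subgroups (a disjoint coset decomposition writes a $\delta_2'(\Lambda)$-orbit as a finite union of translates of a $\delta_2'(\Lambda_1)$-orbit, so the latter is infinite when the former is), so $\delta_2'(\Lambda_1)$ remains relatively icc in $G$. This forces $\delta_2'(s)^{-1} \delta_1(s) = e$, and unwinding the reduction gives $\delta_1 = (\Ad g) \circ \delta_2$, proving (i) with $g_0 = g$.

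For part (ii), the hypothesis $\delta_1 = \delta_2 = \delta$ converts the orbit relation into ordinary conjugation in $G$, so relative icc of $\delta(\Lambda)$ rules out $X_g \neq 0$ for any $g \neq e$, giving $X = X_e \in D$. To exploit the tensor product structure, decompose
\begin{equation*}
L^2(D) = \bigoplus_{S \subset I \text{ finite}} \bigotimes_{i \in S} L^2_0(B_0,\tau_0),
\end{equation*}
where $L^2_0$ denotes the orthogonal complement of the scalars, and write $X = \sum_S X_S$ accordingly. Because $\gamma_g$ sends $\pi_i(B_0)$ onto $\pi_{g \cdot i}(B_0)$, it permutes the summands via $S \mapsto g \cdot S$ isometrically, so the relation $\gamma_{\delta(s)}(X) = \om(s) X$ yields $\|X_S\|_2 = \|X_{\delta(s) \cdot S}\|_2$. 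Summability then forces every $S$ with $X_S \neq 0$ to have finite $\delta(\Lambda)$-orbit. If such an $S$ contained a point $i \notin I_1$, projecting the orbit onto its $i$-coordinate would force $\delta(\Lambda) \cdot i$ to be finite, contradicting the definition of $I_1$. Hence $X_S = 0$ whenever $S \not\subset I_1$, so $X \in (B_0,\tau_0)^{I_1}$.

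The only subtle step is the promotion in (i) from equality on the finite index subgroup $\Lambda_0$ to equality on all of $\Lambda$; this is precisely what the commutator argument on $\Lambda_0 \cap s^{-1}\Lambda_0 s$ combined with the hereditary nature of relative icc accomplishes. The rest is routine bookkeeping with Fourier coefficients and $L^2$-summability.
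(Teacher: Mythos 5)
Your proof is correct and follows essentially the same route as the paper: Fourier decomposition over $G$, invariance and square-summability of the coefficient norms forcing finite twisted orbits, relative icc-ness to collapse those orbits, and the chaos decomposition of the tensor power for part (ii). The only difference is cosmetic: where the paper asserts in one line that a finite twisted orbit must be a singleton, you spell out the finite-index stabilizer and the commutator argument on $\Lambda_0 \cap s^{-1}\Lambda_0 s$, which is a valid (and slightly more explicit) justification of the same step.
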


If in (ii), $I_1 = \emptyset$, we interpret $(D_0,\tau_0)^{I_1} = \C 1$. Note that if $G_j \actson^{\gamma_j} D_j := (B_0,\tau_0)^{I_j}$ are actions as in the lemma, then also the product action of $G_1 \times \cdots \times G_k$ on $D_1 \ovt \cdots \ovt D_k$ fits into the lemma by taking $I = I_1 \sqcup \cdots \sqcup I_k$.

\begin{proof}
For $x \in D \rtimes_\gamma G$, we denote by $x = \sum_{g \in G} (x)_g u_g$ its Fourier decomposition, with $(x)_g \in D$ and $\sum_{g \in G} \|(x)_g\|_2^2 = \|x\|_2^2$.

(i) Since $(X)_g = \om(s) \, (X)_{\delta_1(s) g \delta_2(s)^{-1}}$ for all $s \in \Lambda$, $g \in G$, the function $g \mapsto |(X)_g|$ is square summable and invariant under the action of $\Lambda$ on $G$ given by $g \mapsto \delta_1(s) g \delta_2(s)^{-1}$. So whenever $(X)_g \neq 0$, the set $\{\delta_1(s) g \delta_2(s)^{-1} \mid s \in \Lambda\}$ is finite. Since $\delta_1(\Lambda) < G$ is relatively icc, this set must then be equal to $\{g\}$, so that $\delta_1 = (\Ad g) \circ \delta_2$.

(ii) From the argument in (i), it already follows that $(X)_g = 0$ for all $g \neq e$. This means that $X \in D$. For every finite subset $\cF \subset I \setminus I_1$, define $H_\cF \subset L^2(D)$ as the closed linear span of $(B_0 \ominus \C 1)^\cF B_0^{I_1}$. Note that if $\cF \neq \cF'$, then $H_\cF \perp H_{\cF'}$. We denote by $P_\cF$ the orthogonal projection of $L^2(D)$ onto $H_\cF$. For every $s \in \Lambda$, we have that $\gamma_{\delta_1(s)}(H_{\cF}) = H_{\delta_1(s) \cdot \cF}$. So, the square summable function $\cF \mapsto \|P_{\cF}(X)\|_2^2$ is $\delta_1(\Lambda)$-invariant. When $\cF$ is a finite nonempty set, by definition, $\delta_1(\Lambda) \cdot \cF$ is infinite, so that $P_{\cF}(X) = 0$. So $X \in H_{\emptyset} = L^2((B_0,\tau_0)^{I_1})$.
\end{proof}

\subsection{\boldmath Proof of Theorem \ref{thm.generic-theorem} and Theorem \ref{thm.main}}\label{sec.proof-superrigidity-theorem}

\begin{proof}[{Proof of Theorem \ref{thm.generic-theorem}}]
{\bf\boldmath (iii) $\Rightarrow$ (ii).} Assume that $(Q, \Delta_Q)$ is a compact quantum group, $p \in M$ a nonzero projection and $\pi : pMp \to Q$ a von Neumann algebra isomorphism. It follows that $Q$ admits a faithful normal tracial state. By \cite[Proposition A.1]{Sol06}, $(Q, \Delta_Q)$ is of Kac type. Since $M$ is a II$_1$ factor, $\pi$ is automatically trace preserving. By Lemma \ref{lem.comult-coarse.one}, the comultiplication $\Delta_Q : Q \to Q \ovt Q$ is a coarse embedding. So, also $\Delta_1 := (\pi^{-1} \ot \pi^{-1}) \circ \Delta_Q \circ \pi$ is a coarse embedding $pMp \to pMp \ovt pMp$ that we amplify to a coarse embedding $\Psi: M \to q(M \ovt M)q$, where $q$ is any projection with the same trace as $p$.

From Theorem \ref{theorem:coarse-embeddings-gen}, it follows that $q=1$, and thus $p=1$. In particular, $\Psi = \Delta_1$. We have that $(M,\Delta_1)$ is a Kac type compact quantum group and that $\pi : M \to Q$ is, tautologically, a quantum group isomorphism. Moreover, Theorem \ref{theorem:coarse-embeddings-gen} gives us a unitary $\Omega \in M \ovt M$, a character $\chi: \Gamma \times \Gamma \to \C$, and faithful symmetric group homomorphisms $\delta_1, \delta_2: \Gamma \times \Gamma \to \Gamma \times \Gamma$ such that $\Omega \Delta_1(\pi_e(A_0)) \Omega^* \subset \pi_e(A_0) \ovt \pi_e(A_0)$, and
\begin{equation}\label{eq.Delta-1-ug}
\Omega \Delta_1(u_g) \Omega^* = \chi(g) (u_{\delta_1(g)} \otimes u_{\delta_2(g)}) \quad\text{for all $g \in \Gamma \times \Gamma$.}
\end{equation}
By Lemma \ref{lem.comult-coarse.two} and the final part of Theorem \ref{theorem:coarse-embeddings-gen}, the $\delta_i$ are symmetric group automorphisms.

Since $\Delta_1$ is co-associative, we write $\Delta_1^{(2)} = (\Delta_1 \ot \id) \circ \Delta_1 = (\id \ot \Delta_1) \circ \Delta_1$. Applying $\Ad(\Om \otimes 1) \circ (\Delta_1 \otimes \id)$ to \eqref{eq.Delta-1-ug} gives
$$(\Omega \ot 1)(\Delta_1 \ot \id)(\Omega) \, \Delta_1^{(2)}(u_g) \, (\Delta_1 \ot \id)(\Omega)^* (\Omega^* \ot 1) = \chi(g) \chi(\delta_1(g)) \, (u_{\delta_1(\delta_1(g))} \otimes u_{\delta_2(\delta_1(g))} \otimes u_{\delta_2(g)}) \; .$$
Applying $\Ad(1 \ot \Om) \circ (\id \ot \Delta_1)$ to \eqref{eq.Delta-1-ug}, we similarly get that
$$(1 \ot \Omega)(\id \ot \Delta_1)(\Omega) \, \Delta_1^{(2)}(u_g) \, (\id \ot \Delta_1)(\Omega)^* (1 \ot \Omega^*) = \chi(g) \chi(\delta_2(g)) \, (u_{\delta_1(g)} \otimes u_{\delta_1(\delta_2(g))} \otimes u_{\delta_2(\delta_2(g))}) \; .$$
Define the unitary $X = (1 \ot \Om)(\id \ot \Delta_1)(\Om) (\Delta_1 \ot \id)(\Om^*) (\Om^* \ot 1)$. It follows that
\begin{equation}\label{eq.comm-X}
X \, \chi(\delta_1(g)) \, (u_{\delta_1(\delta_1(g))} \otimes u_{\delta_2(\delta_1(g))} \otimes u_{\delta_2(g)}) = \chi(\delta_2(g)) \, (u_{\delta_1(g)} \otimes u_{\delta_1(\delta_2(g))} \otimes u_{\delta_2(\delta_2(g))}) \, X
\end{equation}
for all $g \in \Gamma \times \Gamma$. Since $\Gamma \times \Gamma$ is icc, by Lemma \ref{lem.relative-commutant.one}, the automorphisms $\delta_1$ and $\delta_2$ are inner. So after modifying $\Om$, we may assume that $\delta_1 = \delta_2 = \id$. It then follows from \eqref{eq.comm-X} and Lemma \ref{lem.relative-commutant.two} that $X \in \T 1$. By Lemma \ref{lem.2-cocycle-up-to-scalar}, $X=1$ and $\Omega$ is a unitary 2-cocycle for the Kac type quantum group $(M, \Delta_1)$.

By Proposition \ref{coc.3}, defining $\Phi(a) = \Omega \Delta_1(a)\Omega^*$ for all $a \in M$, we obtain the new Kac type compact quantum group $(M, \Phi)$. By construction, $\Phi(u_g) = \chi(g) (u_g \otimes u_g)$ for all $g \in \Gamma \times \Gamma$, and $\Phi(\pi_e(A_0)) \subset \pi_e(A_0) \ovt \pi_e(A_0)$.

Since $\Phi(\pi_e(A_0)) \subset \pi_e(A_0) \ovt \pi_e(A_0)$, we define $\Phi_0 : A_0 \to A_0 \ovt A_0$ such that $(\pi_e \ot \pi_e) \circ \Phi_0 = \Phi \circ \pi_e$. By construction, $(A_0,\Phi_0)$ is a Kac type compact quantum group. Since $\Phi(u_{(g,g)}) = \chi(g,g) (u_{(g,g)} \otimes u_{(g,g)})$ for all $g\in \Gamma$, the automorphisms $(\be_g)_{g \in \Gamma}$ are also quantum group automorphisms of $(A_0,\Phi_0)$. By assumption c), there exists a trace preserving automorphism $\pi_0$ of $A_0$ such that $\pi_0 : (A_0,\Delta_0) \to (A_0,\Phi_0)$ is a quantum group isomorphism, and a $\delta_0 \in \Aut \Gamma$ satisfying $\pi_0 \circ \be_g = \be_{\delta_0(g)} \circ \pi_0$ for all $g \in \Gamma$.

Define $\pi_1$ as the unique automorphism of $M$ satisfying $\pi_1 \circ \pi_e = \pi_e \circ \pi_0$ and $\pi_1(u_{(g,h)}) = \chi(\delta_0(g),\delta_0(h)) \, u_{(\delta_0(g),\delta_0(h))}$ for all $g,h \in \Gamma$. By construction, the equality $\Phi \circ \pi_1 = (\pi_1 \ot \pi_1) \circ \Delta$ holds on $\pi_e(A_0)$ and on $u_{(g,h)}$, so that it holds everywhere and $\pi_1 : (M,\Delta) \to (M,\Phi)$ is a quantum group isomorphism.

Since $\Om^*$ is a unitary $2$-cocycle for $(M,\Phi)$, we get that $(\pi_1 \ot \pi_1)^{-1}(\Om^*)$ is a unitary $2$-cocycle for $(M,\Delta)$. By assumptions a) and b), together with Propositions \ref{prop.cocycle-reduce-to-core.one} and \ref{prop.cocycles-products}, this unitary $2$-cocycle for $(M,\Delta)$ is a coboundary. It follows that $\Om^*$ is a coboundary as a unitary $2$-cocycle for $(M,\Phi)$, so that $\Om$ is a coboundary as a unitary $2$-cocycle for $(M,\Delta_1)$. We thus find a unitary $w \in M$ such that $\Ad w : (M,\Phi) \to (M,\Delta_1)$ is a quantum group isomorphism. Writing $v = \pi(w)$, we have found a unitary $v \in Q$ such that $(\Ad v) \circ \pi \circ \pi_1 : (M,\Delta) \to (Q,\Delta_Q)$ is a quantum group isomorphism. This concludes the proof of (iii) $\Rightarrow$ (ii).

{\bf\boldmath (II) $\Rightarrow$ (I).} We follow the proof of (iii) $\Rightarrow$ (ii), but we replace condition c) by the stronger condition that $(A_0,\Delta_0)$ is strictly rigid relative to $\be(\Gamma) < \Aut(A_0,\Delta_0)$. So in that proof, at the point where we observe that the automorphisms $(\be_g)_{g \in \Gamma}$ are also quantum group automorphisms of $(A_0,\Phi_0)$, we apply Definition \ref{def.relative-rigidity-cqg.one} to the identity isomorphism $\id$ of $A_0$ and conclude that $\id = \pi_0 \circ \lambda_{\om_0}^{-1}$, where $\pi_0$ is a quantum group isomorphism $\pi_0 : (A_0,\Delta_0) \to (A_0,\Phi_0)$ and $\lambda_{\om_0} : A_0 \to A_0$ is the left translation automorphism of $(A_0,\Delta_0)$ given by a $\be(\Gamma)$-invariant character $\om_0$ of $(A_0,\Delta_0)$. More precisely, denote by $\cA_0 \subset A_0$ the dense $*$-subalgebra given by Theorem \ref{cqg.5}, spanned by the coefficients of the finite dimensional unitary representations of $(A_0,\Delta_0)$. As explained in Section \ref{sec.quantum-group-like-aut}, we have that $\om_0 : \cA_0 \to \C$ is a unital $*$-homomorphism and $\lambda_{\om_0}(a) = (\om_0 \ot \id)\Delta_0(a)$ for all $a \in \cA_0$. The $\be(\Gamma)$-invariance of $\om_0$ means that $\om_0 \circ \be_g = \om_0$, and thus $\lambda_{\om_0} \circ \be_g = \be_g \circ \lambda_{\om_0}$, for all $g \in \Gamma$. Since $\id = \pi_0 \circ \lambda_{\om_0}^{-1}$, we get that $\pi_0 = \lambda_{\om_0}$ so that also $\pi_0 \circ \be_g = \be_g \circ \pi_0$ for all $g \in \Gamma$.

In the proof of Proposition \ref{prop.cocycles-products}, we explained that the canonical dense $*$-subalgebra of $(A,\Delta)$ is the algebraic tensor product $\ot^{\text{\rm alg}}_{k \in \Gamma} \cA_0$. By the discussion in the proof of Proposition \ref{prop.cocycle-reduce-to-core}, the canonical dense $*$-subalgebra $\cM$ of $(M,\Delta)$ then is the algebraic crossed product with $\Gamma \times \Gamma$, i.e.\ the $*$-algebra generated by $\pi_e(\cA_0)$ and $u_{(g,h)}$, $g,h \in \Gamma$. We can thus uniquely define the $*$-homomorphism $\om : \cM \to \C$ such that $\om(\pi_e(a)) = \om_0(a)$ and $\om(u_{(g,h)}) = \chi(g,h)$ for all $a \in \cA_0$ and $g,h \in \Gamma$. By construction, the quantum group isomorphism $\pi_1 : (M,\Delta) \to (M,\Phi)$ that we constructed in the proof of (iii) $\Rightarrow$ (ii), then satisfies $\pi_1(b) = (\om \ot \id)\Delta(b)$ for all $b \in \cM$. This means that $\pi_1$ is also a translation automorphism of $(M,\Delta)$.

Since we have shown above that $\theta := (\Ad v) \circ \pi \circ \pi_1$ is a quantum group isomorphism from $(M,\Delta)$ to $(Q,\Delta_Q)$, we have thus written $\pi = (\Ad v^*) \circ \theta \circ \pi_1^{-1}$, where $\theta$ is a quantum group isomorphism and $\pi_1^{-1}$ is a translation automorphism. This concludes the proof of (II) $\Rightarrow$ (I).

{\bf\boldmath (ii) $\Rightarrow$ (i).} This is trivial.

{\bf\boldmath (i) $\Rightarrow$ (iii).} By \cite[Proposition~3.7]{PV21} (or a variant of Theorem \ref{theorem:coarse-embeddings-gen}), every automorphism of the von Neumann algebra $M$ is unitarily conjugate to an automorphism $\theta$ satisfying $\theta(u_g) \in \T u_{\delta(g)}$ for all $g \in \Gamma \times \Gamma$, where $\delta \in \Aut(\Gamma \times \Gamma)$. Since we assume that $(M,\Delta)$ is quantum W$^*$-superrigid, it follows from Proposition \ref{prop.crossed-product-not-rigid} that all unitary $2$-cocycles on $(A,\Delta)$ and on $(L(\Gamma \times \Gamma),\Delta)$ are a coboundary.

If $C_0 < \Gamma$ is a nontrivial finite abelian subgroup, because $H^2(\widehat{C_0} \times \widehat{C_0},\T)$ is nontrivial and $\Gamma$ is icc, it follows from Proposition \ref{prop.not-rigid} that $(L(\Gamma \times \Gamma),\Delta)$ admits a unitary $2$-cocycle that is not a coboundary. So, $\Gamma$ has no nontrivial finite abelian subgroups, i.e.\ a) holds.

Since every unitary $2$-cocycle on $(A,\Delta)$ is a coboundary, it follows from Proposition \ref{prop.cocycles-products} that b) holds.

To prove c), assume that $\Gamma \actson^\theta (A_1,\Delta_1)$ is an action by quantum group automorphisms and that $\pi : A_0 \to A_1$ is a Haar state preserving von Neumann algebra isomorphism such that $\pi \circ \be_g = \theta_g \circ \pi$ for all $g \in \Gamma$. Since the Haar state of $(A_0,\Delta_0)$ is a trace, the same holds for the Haar state of $(A_1,\Delta_1)$, which is thus of Kac type. We have to prove that there exists a quantum group isomorphism $\pi_0 : (A_0,\Delta_0) \to (A_1,\Delta_1)$ and an automorphism $\delta_0 \in \Aut \Gamma$ such that $\pi_0 \circ \be_g = \theta_{\delta_0(g)} \circ \pi_0$ for all $g \in \Gamma$.

Using $\Gamma \actson^\theta (A_1,\Delta_1)$, we consider a similar co-induced left-right Bernoulli action and construct the compact quantum group $(M_1,\Delta_1)$ with underlying von Neumann algebra $M_1 = (A_1,\vphi_1)^\Gamma \rtimes (\Gamma \times \Gamma)$. By definition, $\Delta_1 \circ \pi_e = (\pi_e \ot \pi_e) \circ \Delta_1$ and $\Delta_1(u_g) = u_g \ot u_g$ for all $g \in \Gamma \times \Gamma$. Then $\pi$ induces a von Neumann algebra isomorphism $\Pi : M \to M_1$. Since we assume that (i) holds, we find a quantum group isomorphism $\Pi_0 : (M,\Delta) \to (M_1,\Delta_1)$.

By \cite[Proposition~3.7]{PV21} (or a variant of Theorem \ref{theorem:coarse-embeddings-gen}), there exists a unitary $v \in M_1$, a symmetric automorphism $\delta \in \Aut(\Gamma \times \Gamma)$ and a character $\chi$ on $\Gamma \times \Gamma$ such that $v \Pi_0(u_g) v^* = \chi(g) u_{\delta(g)}$ for all $g \in \Gamma \times \Gamma$. We denote by $\delta_0$ the automorphism of $\Gamma$ such that $\delta = \delta_0 \times \delta_0$ or $\delta = \sigma \circ (\delta_0 \times \delta_0)$. Since $\Pi_0$ is a quantum group isomorphism, we get for all $g \in \Gamma \times \Gamma$ that
\begin{align*}
\chi(g) \, u_{\delta(g)} \ot u_{\delta(g)} &= \Delta_1(\chi(g) u_{\delta(g)}) = \Delta_1(v \Pi_0(u_g) v^*) = \Delta_1(v) \, (\Pi_0 \ot \Pi_0)\Delta(u_g) \, \Delta_1(v^*) \\
&= \chi(g)^2 \, \Delta_1(v)(v^* \ot v^*) \, (u_{\delta(g)} \ot u_{\delta(g)}) \, (v \ot v) \Delta_1(v^*) \; .
\end{align*}
By Lemma \ref{lem.relative-commutant.one}, this commutation property first forces $\Delta_1(v) (v^* \ot v^*)$ to be a scalar and then $\chi = 1$. Since $\Delta_1(v) (v^* \ot v^*)$ is a scalar, $\Ad v$ is a quantum group automorphism of $(M_1,\Delta_1)$. We may thus replace $\Pi_0$ by $(\Ad v) \circ \Pi_0$ and have found a quantum group isomorphism $\Pi_0 : (M,\Delta) \to (M_1,\Delta_1)$ satisfying $\Pi_0(u_g) = u_{\delta(g)}$ for all $g \in \Gamma \times \Gamma$.

In particular, $\Pi_0(u_{(g,g)}) = u_{(\delta_0(g),\delta_0(g))}$ for all $g \in \Gamma$. Considering the relative commutant of the unitaries $u_{(g,g)}$, $g \in \Gamma$, it follows from Lemma \ref{lem.relative-commutant.two} that $\Pi_0(\pi_e(A_0)) = \pi_e(A_1)$. Since $\Pi_0$ is a quantum group isomorphism, we find a quantum group isomorphism $\pi_0 : (A_0,\Delta_0) \to (A_1,\Delta_1)$ such that $\Pi_0 \circ \pi_e = \pi_e \circ \pi_0$. Since $\Pi_0(u_{(g,g)}) = u_{(\delta_0(g),\delta_0(g))}$ for all $g \in \Gamma$, we get that $\pi_0 \circ \be_g = \theta_{\delta_0(g)} \circ \pi_0$ for all $g \in \Gamma$.

{\bf\boldmath (I) $\Rightarrow$ (II).} In the proof of (i) $\Rightarrow$ (iii), we have to reach the stronger conclusion that $\pi$ can be written as $\pi = \pi_0 \circ \pi_1$, where $\pi_0 : (A_0,\Delta_0) \to (A_1,\Delta_1)$ is a quantum group isomorphism and $\pi_1 \in \Aut A_0$ is a left translation automorphism of $(A_0,\Delta_0)$ given by a $\be(\Gamma)$-invariant character on $(A_0,\Delta_0)$.

As in the proof of (i) $\Rightarrow$ (iii), we construct the compact quantum group $(M_1,\Delta_1)$ and the von Neumann algebra isomorphism $\Pi : M \to M_1$ satisfying $\Pi \circ \pi_e = \pi_e \circ \pi$ and $\Pi(u_g) = u_g$ for all $g \in \Gamma \times \Gamma$. Since (I) holds, we can decompose $\Pi = (\Ad w) \circ \Pi_0 \circ \Pi_1$ where $w \in M_1$ is a unitary, $\Pi_0 : (M,\Delta) \to (M_1,\Delta_1)$ is a quantum group isomorphism and $\Pi_1$ is a left translation automorphism of $(M,\Delta)$.

In the proof of (i) $\Rightarrow$ (iii), we analyzed how an arbitrary quantum group isomorphism $\Pi_0 : (M,\Delta) \to (M_1,\Delta_1)$ may look like. We thus find a symmetric automorphism $\delta \in \Aut(\Gamma \times \Gamma)$, either of the form $\delta = \delta_0 \times \delta_0$ or of the form $\delta = \si \circ (\delta_0 \times \delta_0)$, and a unitary $v \in M_1$, such that after replacing $w$ by $wv^*$ and $\Pi_0$ by $(\Ad v) \circ \Pi_0$, we may assume that the quantum group isomorphism $\Pi_0 : (M,\Delta) \to (M_1,\Delta_1)$ satisfies $\Pi_0(u_g) = u_{\delta(g)}$ for all $g \in \Gamma \times \Gamma$ and $\Pi_0(\pi_e(A_0)) = \pi_e(A_1)$. We define the quantum group isomorphism $\pi_0 : (A_0,\Delta_0) \to (A_1,\Delta_1)$ such that $\Pi_0 \circ \pi_e = \pi_e \circ \pi_0$.

In the proof of (II) $\Rightarrow$ (I), we described the canonical dense $*$-subalgebra $\cM \subset M$ as the $*$-algebra generated by $\pi_e(\cA_0)$, where $\cA_0 \subset A_0$ is the canonical dense $*$-subalgebra, and the unitaries $u_g$, $g \in \Gamma \times \Gamma$. Since $\Pi_1$ is a left translation automorphism of $(M,\Delta)$, we can take a unital $*$-homomorphism $\om : \cM \to \C$ such that $\Pi_1(b) = (\om \ot \id) \Delta(b)$ for all $b \in \cM$. Define the character $\chi : \Gamma \times \Gamma \to \T$ by $\chi(g) = \om(u_g)$. Also define the unital $*$-homomorphism $\om_1 : \cA_0 \to \C$ by $\om_1 = \om \circ \pi_e$. For every $g \in \Gamma$ and $a \in \cA_0$, we have that $u_{(g,g)} \pi_e(a) u_{(g,g)}^* = \pi_e(\be_g(a))$. Applying $\om$, it follows that the character $\om_1$ is $\be(\Gamma)$-invariant. Define the left translation automorphism $\pi_1$ of $(A_0,\Delta_0)$ by $\pi_1(a) = (\om_1 \ot \id)\Delta_0(a)$ for all $a \in \cA_0$. By construction, $\Pi_1 \circ \pi_e = \pi_e \circ \pi_1$ and $\Pi_1(u_g) = \chi(g) u_g$ for all $g \in \Gamma \times \Gamma$.

We have thus found that $(\Pi_0 \circ \Pi_1)(u_g) = \chi(g) u_{\delta(g)}$ and $\Pi(u_g) = u_g$ for all $g \in \Gamma \times \Gamma$, while also $\Pi = (\Ad w) \circ \Pi_0 \circ \Pi_1$. It follows that the symmetric automorphism $\delta$ is inner, i.e.\ $\delta_0 = \Ad s$ for some $s \in \Gamma$ and $\delta = \delta_0 \times \delta_0$, and then also that $w$ is a multiple of $u_{(s,s)}^*$. Restricting the equality $\Pi = (\Ad w) \circ \Pi_0 \circ \Pi_1$ to $\pi_e(A_0)$, it follows that $\pi = (\theta_{s^{-1}} \circ \pi_0) \circ \pi_1$. Since $\theta_{s^{-1}} \circ \pi_0$ is a quantum group isomorphism from $(A_0,\Delta_0)$ to $(A_1,\Delta_1)$ and $\pi_1$ is a left translation automorphism of $(A_0,\Delta_0)$, the proof of (I) $\Rightarrow$ (II) is complete.
\end{proof}

Finally, Theorem \ref{thm.main} follows from Theorem \ref{thm.generic-theorem} and the following list of concrete examples satisfying all the assumptions of that theorem.

\begin{examples}\label{ex.main}
In the following cases, all assumptions of Theorem \ref{thm.generic-theorem.3} are satisfied, so that by Theorem \ref{thm.generic-theorem}, the co-induced left-right Bernoulli crossed product construction gives Kac type compact quantum groups $(M,\Delta)$ that are quantum W$^*$-superrigid. In (i) and (ii.a), we even have that $(M,\Delta)$ is strictly quantum W$^*$-superrigid.
\begin{enumlist}
\item Take $(A_0,\Delta_0) = (L(\Lambda),\Delta_\Lambda)$, where $\Lambda$ is any torsion free amenable icc group. Take $\Gamma = \Lambda * \Z$ and the action given by $\be_g = \Ad u_g$ for $g \in \Lambda$ and $\be_a = \id$ for $a \in \Z$.
    This follows from Lemma \ref{lem.trivial-2-cohom-examples.two} and Theorem \ref{thm.rigid-LG-icc}.

\item Take $(A_0,\Delta_0) = (L^\infty(K),\Delta_K)$. Let $\cG < \Aut K$ be one of the following countable subgroups. Define $\Gamma = \F_{\cG}$ as the free group with free generators indexed by $\cG$, and let $\Gamma$ act on $K$ by $\be_\al(a) = \al(a)$ for every $\al \in \cG$.
    \begin{enumlist}
    \item $K = K_0^n$ with $n \geq 3$, $K_0$ a nontrivial abelian compact connected second countable group and $\SL(n,\Z) < \cG < \Aut K$.
    This follows from Theorem \ref{thm.rigid-Kn} and Lemma \ref{lem.trivial-2-cohom-examples.one}.
    \item $K=\SL_2(\F_p)$ with $p$ prime and $p \geq 5$, with $\cG = \Aut K$. This follows from Theorem \ref{thm.rigid-SL2} and Lemma \ref{lem.trivial-2-cohom-examples.one}.
    \item $K = \SL_n(\F_q)$, or $K = \F_q^n \rtimes \SL_n(\F_q)$ with $n \geq 3$, $q$ a prime power, and $(n,q) \not\in \{(3,2),(3,4),(4,2)\}$, with $\cG = \Aut K$.
    This follows from Theorem \ref{thm.rigid-SLn-Fp} and Lemma \ref{lem.trivial-2-cohom-examples.one}.
    \item $K = \Atil_n$, the canonical double cover of the alternating group $A_n$ with $n=5$ or $n \geq 8$, with $\cG = \Aut K$.
This follows from Theorem \ref{thm.relative-rigid-An-tilde} and Lemma \ref{lem.trivial-2-cohom-examples.one}.
    \item A direct product $K_1 \times K_2$ with $K_1$ being one of the groups in a) and $K_2$ one of the groups in b) or c), relative to $\cG \times \Aut K_2$. This follows from Proposition \ref{prop.direct-product-relative-rigid} and Lemma \ref{lem.trivial-2-cohom-examples.one}.
    \end{enumlist}
\end{enumlist}
\end{examples}

\end{document}